\documentclass[11pt]{amsart}

\usepackage{quiver}
\usepackage[paperheight=11in,paperwidth=8.5in,left=1in,top=1.25in,right=1in,bottom=1.25in,]{geometry}
\usepackage{comment}
\usepackage[super]{nth}
\usepackage[none]{hyphenat}
\usepackage{graphicx, amssymb}
\usepackage{epstopdf}
\usepackage{youngtab}
\usepackage{mathtools}
\usepackage{enumitem}
\usepackage[boxsize=.35 em]{ytableau}

\usepackage{tikz}   

\usetikzlibrary{arrows,calc,intersections,matrix,positioning,through, arrows.meta,decorations.markings}
\usepackage{tikz-cd}
\tikzset{commutative diagrams/.cd,every label/.append style = {font = \normalsize}}

\usetikzlibrary{shapes.geometric}
\DeclareMathOperator{\Spec}{Spec}
\DeclareMathOperator{\Aut}{Aut}

\DeclareMathOperator{\inn}{in}
\DeclareMathOperator{\out}{out}

\DeclareMathOperator{\Gr}{Gr}
\DeclareMathOperator{\GL}{GL}
\DeclareMathOperator{\Mat}{Mat}

\DeclareMathOperator{\sgn}{sign}
\DeclareMathOperator{\alt}{alt}
\DeclareMathOperator{\Sk}{Sk}

\DeclareMathOperator{\area}{area}
\DeclareMathOperator{\Pf}{\mathbf{Pf}}
\newcommand{\Mut}{\operatorname{Mut}}

\newcommand{\AAA}{\mathcal{A}_{n,k,m}}

\newcommand{\gt}[1]{Z_{#1}}
\newcommand{\gto}[1]{Z_{#1}^\circ}

\def\FFcal{\mathcal{F}}
\newcommand{\Grk}{\Gr_{k,n}^{\ge 0}}
\newcommand{\lrangle}[1]{\langle #1 \rangle}
\def\AA{\mathcal{A}}

\newcommand{\Z}{\mathbb{Z}}
\newcommand{\R}{\mathbb{R}}
\newcommand{\C}{\mathbb{C}}
\newcommand{\CC}{\mathbb{C}}
\newcommand{\CCC}{\mathbb{C}}
\newcommand{\A}{\mathcal{A}}

\newcommand{\CGr}[2]{\CC[\widehat{\Gr}_{#1,#2}]}

\def\Xcal{\mathcal{X}}

\def\Q{\mathbb{Q}}

\def\ZZ{\mathbb{Z}}

\newcommand{\xx}{\mathbf{x}}

\usepackage{amsthm}
\usepackage{thmtools}
\usepackage{hyperref}
\usepackage[capitalize, nameinlink, noabbrev, nosort]{cleveref}

\newtheorem{theorem}{Theorem}[section]
\newtheorem{lemma}[theorem]{Lemma}
\newtheorem{proposition}[theorem]{Proposition}
\newtheorem{corollary}[theorem]{Corollary}
\newtheorem{conjecture}[theorem]{Conjecture}
\newtheorem{question}[theorem]{Question}

\theoremstyle{definition}
\newtheorem{definition}[theorem]{Definition}
\newtheorem{example}[theorem]{Example}

\theoremstyle{remark}

\newtheorem{remark}[theorem]{Remark}
\newtheorem{notation}[theorem]{Notation}

\setlist[itemize]{leftmargin=*}
\setlist[enumerate]{leftmargin=*}

\begin{document}
\begin{abstract}
The theory of cluster algebras is closely connected to 
the theory of total positivity; indeed, the desire to better understand 
total positivity was 
one of the main motivations for Fomin and Zelevinsky's introduction
of cluster algebras~\cite{ca1}.  In particular, any cluster variety
whose coordinate ring has a cluster structure has a natural notion of 
\emph{positive part}: the subset of the variety where all 
cluster variables are positive.  In this paper, we explain
that there are other \emph{signed cells} contained in cluster varieties
that are equally natural from a cluster-theoretic point of view.
These come from \emph{signed seeds}, which can be thought of as a 
$\Z/2\Z$-grading on cluster variables, and which were introduced in 
\cite{ELPSTW} in the context of the \emph{amplituhedron}.
More generally, given any abelian group $G$, 
we introduce the notion of a \emph{$G$-graded seed} for a cluster algebra,
which is a way of assigning elements of $G$
to each cluster variable which is compatible with the cluster structure.  
When $G$ is the multiplicative group
$\{-1,1\}$, this recovers the above notion of {signed seed};
when $G=\C^*$, this recovers the notion of \emph{cluster
automorphism group} \cite{GSV}
or \emph{cluster dilation group}
~\cite{neville2026mysteriouspointskeystrees};
and when $G=\Z^d$, this 
recovers the notion of \emph{graded cluster algebra} 
	studied by Grabowski-Launois \cite{Launois}, 
	Grabowski \cite{Grabowski} and 
Gekhtman-Shapiro-Vainshtein \cite[Section 5.2]{GSV} 
	(which had previously 
	appeared in special cases in work of Fomin-Zelevinsky \cite{ca4}).
	The examples we study include the space of square matrices,
	symmetric matrices, skew-symmetric matrices, positroid varieties,  and 
	amplituhedron tiles.  We also connect this notion to 
	tropical mutation when $G=\R$ or $\Z$.
\end{abstract}
\title{Signed seeds and $G$-gradings on cluster algebras}
\date{\today}
\author{Lauren K. Williams}
\address{Department of Mathematics, Harvard University, Cambridge, MA, United States}
\email{williams@math.harvard.edu}
\author{Alan Yan}
\address{Department of Mathematics, Harvard University, Cambridge, MA, United States}
\email{alanyan@math.harvard.edu}
\address{}
\maketitle
\setcounter{tocdepth}{1}
\tableofcontents

\section{Introduction}
	The theory of \emph{total positivity} began in the early 1900's as the study of square matrices with all minors positive; it has important connections to probability and stochastic processes. It was revitalized in the 1990's when Lusztig \cite{Lusztig3} realized that total positivity can be generalized in the setting of Lie theory, and that in particular, the totally positive matrices can be viewed as the ``totally positive'' part of the general linear group. Inspired by Lusztig's work, Fomin and Zelevinsky subsequently introduced \emph{cluster algebras}~\cite{ca1}, which provide a unifying algebraic and combinatorial framework for a variety of phenomena that occur in total positivity, quiver representations, Teichm\"uller theory, and Lie theory. The relationship between total positivity and cluster algebras is as follows. Many geometric objects have a natural notion of ``positive part'', and separately, the coordinate rings of many geometric objects have a cluster algebra structure.When both phenomena occur, one can usually describe the ``positive part'' of the geometric object as the locus where all cluster variables take positive values.  For example, the totally positive matrices $\Mat_{n\times n}^{>0}$ are the $n \times n$ matrices where the minors (the determinants of all square submatrices) are all positive. Meanwhile, one can view the set of invertible $n\times n$ matrices $\GL_n$ as a variety whose coordinate ring is generated by the minors.  This coordinate ring can actually be viewed as a cluster algebra, and the minors are all cluster variables, so this provides a very simple example of the phenomenon described above. However, the phenomenon occurs in much broader contexts, including \emph{Teichmuller theory}, and the study of various algebraic varieties such as Grassmannians and flag varieties.

	The above phenomenon is all about \emph{positivity}: all the minors and cluster variables are positive. However, in the last few years we have started to observe that other interesting sign patterns come up in relation to cluster algebras. For example, in recent work of the first author with Even-Zohar, Lakrec, Parisi, Sherman-Bennett, and Tessler~\cite{ELPSTW}, we observed that each \emph{tile} of the \emph{amplituhedron} (a geometric object which is related to scattering amplitudes in physics) can be described as the region where certain cluster variables are positive and others are negative. These sign patterns are not arbitrary: they obey a special algebraic condition (that we called \emph{signed seed}) that ensures that the signs of all cluster variables are well-defined when one expresses the tile in a different coordinate system.

	More generally, given any abelian group $G$, one can define the notion of a \emph{$G$-graded seed} for a cluster algebra, which is a way of assigning elements of $G$ to each cluster variable which is compatible with the cluster structure. When $G$ is the multiplicative group $\{-1,1\}$, this recovers the above notion of {signed seed}. When $G=\C^*$, this recovers the notion of \emph{cluster automorphism group} studied by Gekhtman, Shapiro, and Vainshtein~\cite{GSV}, also called the \emph{cluster dilation group}~\cite{neville2026mysteriouspointskeystrees}. And when $G=\Z^d$, this recovers the notion of \emph{graded cluster algebra} studied by Grabowski~\cite{Grabowski}, Grabowski-Launois~\cite{Launois}, Booker-Price~\cite{Booker}, and Gekhtman-Shapiro-Vainshtein~\cite[Section 5.2]{GSV} (which had previously 
	appeared in special cases in work of Fomin-Zelevinsky~\cite{ca4}).

	In this paper we investigate the notion of $G$-graded seed and signed seed, and prove various basic properties.  For instance, for any affine cluster variety, the \emph{signed cell} defined by a signed seed is homeomorphic to an open ball, and each signed seed provides a finite ``test for membership'' in that signed cell, see~\cref{thm:test}. We also present some useful results that can help enumerate the $G$-gradings of a given quiver, see~\cref{sec:enumeration}.

	We then take a detailed look at a handful of concrete examples. In~\cref{sec:matrices} we recall the cluster structure on the set of square matrices, and we classify the signed seeds and $G$-gradings. We also point out that certain \emph{sign regular} matrices come from signed cells. In~\cref{sec:symm} we prove that the space of symmetric matrices has a cluster structure, and we classify the $G$-gradings. In~\cref{sec:skew} we recall the cluster structure on the space of skew-symmetric matrices (which comes from~\cite{MR2427512}), and we classify the $G$-gradings. Building on work of Boretsky, Calvo Cortes, and El Maazouz~\cite{skew}, we show that the ``totally positive'' part of the space of skew-symmetric matrices (in the sense of Lusztig) comes from a signed seed on the associated cluster structure. We then study signed seeds in Grassmannians and positroid varieties (~\cref{sec:Grassmannian}),and in amplituhedron tiles (~\cref{sec:amplituhedron}). We classify all $G$-graded seeds for Grassmannians and positroid varieties. We recall the connection between amplituhedron tiles and signed seeds, and give a general conjecture (see~\cref{conj:signed}), which slightly strengthens~\cite[Conjecture 7.17 (ii)]{ELPSTW}, and says that any amplituhedron tile for the amplituhedron $\mathcal{A}_{n,k,m}(Z)$ comes from a signed seed in (a freezing of) a cluster for the Grassmannian $\Gr_{k,k+m}$. Finally in~\cref{sec:RZ}, we study $\R$- and $\Z$-graded seeds, connecting them to tropicalized mutation and to $Y$-seed mutation. These $\R$- and $\Z$-graded seeds played an important role in previous work of the first author with Rietsch on Newton-Okounkov bodies and mirror symmetry~\cite{RW, RW2}.

{\bf Acknowledgements.}  We would like to thank Jonathan Boretsky, Amanda Burcroff, Sergey Fomin, Rick Kenyon, Karola Meszaros, David Speyer, Matty Tyler, and Milen Yakimov for useful discussions. LW is supported by the National Science Foundation under Award No. DMS-2152991 and DMS-2552947. Any opinions, findings, and conclusions or recommendations expressed in this material are those of the authors and do not necessarily reflect the views of the National Science Foundation.

We used GPT 5.5 as a tool for doing computations with Pfaffians when we initially wrote the paper. Later we used GPT 5.6 and Claude Opus to generate referee reports on the paper which we used to edit it.

\section{Cluster algebras and cluster varieties}\label{sec:cluster}
We start by quickly reviewing the definition of cluster algebra; for a more leisurely introduction,  see \cite[Chapter 3]{FWZ1}.

\begin{definition}
Let~$s$ and~$r$ be two positive integers such that~$s \geq r$.
An $r \times r$ matrix $B=(b_{ij})$ with integer entries is called
\emph{skew-symmetrizable} if $d_i b_{ij} = -d_j b_{ji}$
for some positive integers $d_1, \dots, d_r$.
In other words, a matrix is skew-symmetrizable if it differs from a
skew-symmetric matrix by a rescaling of its rows by positive scalars.
	An $s \times r$ integer matrix $\tilde{B}$,
 whose top $r\times r$ submatrix is
skew-symmetrizable is called an \emph{extended skew-symmetrizable} matrix.
\end{definition}

\begin{remark}
If $\tilde{B}$ is an $s \times r$ integer matrix whose top $r \times r$ submatrix $B$ is 
skew-symmetric, then we say that $\tilde{B}$ \emph{comes from a quiver}, because $B$
is the adjacency matrix of a quiver $Q$ on $r$ \emph{mutable vertices} $v_1,\dots, v_r$ 
with $b_{ij}$ arrows from 
$v_i$ to $v_j$ for each positive entry $b_{ij}$.  Moreover, we can encode the bottom
$(s-r) \times r$ submatrix of $\tilde{B}$ by adding $(s-r)$ \emph{frozen vertices}
$v_{r+1},\dots, v_s$, where for $r+1 \leq i \leq s$, we have $b_{ij}$ arrows from 
$v_i$ to $v_j$ if $b_{ij}$ is positive, and we have $-b_{ij}$ arrows from
	$v_j$ to $v_i$ if $b_{ij}$ is negative.  The resulting quiver $\tilde{Q}$ is sometimes called an \emph{ice quiver}.
By convention  there are no arrows connecting two frozen vertices.
\end{remark}

\begin{definition}\label{def:matrix-mutation}
Let $\tilde{B} =(b_{ij})$ be an $s \times r$
extended skew-symmetrizable integer matrix.
For $1 \leq k \leq r$, the
\emph{matrix mutation $\mu_k$ in direction~$k$} transforms
$\tilde{B}$ into the $s \times r$ matrix $\mu_k(\tilde{B}) = (b'_{ij})$ whose
entries are given by:
\begin{equation}
\label{eq:matrix-mutation1}
b'_{ij} =
\begin{cases}
-b_{ij} & \text{if $i=k$ or $j=k$;} \\[.05in]
b_{ij}+b_{ik}b_{kj} & \text{if $b_{ik}>0$ and $b_{kj}>0$;}\\[.05in]
b_{ij}-b_{ik}b_{kj} & \text{if $b_{ik}<0$ and $b_{kj}<0$;}\\[.05in]
b_{ij} & \text{otherwise.}
\end{cases}
\end{equation}
\end{definition}

\begin{definition}\label{def:seed}
Let~$s$ and~$r$ be two positive integers such that~$s \geq r$, and 
let $\FFcal$ be a field 
	isomorphic to the field of rational functions over~$\Q$ (alternatively, over $\CC$)
in $s$ independent variables.
A \emph{labeled seed} $\Sigma$ in $\FFcal$ is a pair
$\Sigma:=(\tilde \xx, \tilde B)$~where
\begin{itemize}[leftmargin=.15in]
\item $\tilde \xx = (x_1, \dots, x_s)$ is an $s$-tuple of
elements of~$\FFcal$ forming a
\emph{free generating set}; that is, $x_1, \dots, x_s$
are algebraically independent, and $\FFcal = \Q(x_1, \dots, x_s)$;
\item $\tilde B=(b_{ij})$ is an $s \times r$ extended
  skew-symmetrizable integer matrix.
\end{itemize}
We use the following terminology:
$\tilde \xx$ is the (labeled)
\emph{extended cluster}
of the labeled seed $(\tilde \xx, \tilde B)$;
the $r$-tuple~$\xx = (x_1, \dots, x_r)$ is the (labeled)
\emph{cluster}
of this seed;
the elements $x_1, \dots, x_r$ are its \emph{cluster variables};
the remaining elements $x_{r+1}, \dots, x_s$ of $\tilde \xx$ are the
\emph{frozen variables} (or \emph{coefficient variables});
the matrix~$\tilde B$ is the \emph{extended exchange matrix} of the seed;
its top $r\times r$ submatrix~$B$ is the \emph{exchange matrix}.
\end{definition}

\begin{definition}\label{def:seed-mutation}
Let $(\tilde \xx, \tilde B)$ be a labeled seed, and let 
 $k\in\{1,\dots,r\}$.
The \emph{seed mutation} $\mu_k$ in direction~$k$
transforms
$(\tilde \xx, \tilde B)$ into the new labeled seed
$\mu_k(\tilde \xx, \tilde B) = (\tilde \xx', \tilde B')$
defined as follows:
\begin{itemize}[leftmargin=.15in]
\item
$\tilde B'= \mu_k(\tilde B)$
as in \cref{def:matrix-mutation}.
\item
	the extended cluster $\tilde \xx' = \mu_k(x_1,\dots,x_s):=(x_1',\dots,x_s')$ is
given by $x_j'=x_j$ for~$j\neq k$,
where $x'_k \in \FFcal$ is determined
by the \emph{exchange relation}
\begin{equation}
\label{eq:exch-rel-geom}
x_k \, x'_k = \prod_{b_{ik}>0} x_i^{b_{ik}}
+ \prod_{b_{ik}<0} x_i^{-b_{ik}}.
\end{equation}
\vspace{-10pt}
\end{itemize}
If the indexing set for one of the two monomials above is the empty
set, then by convention we set the corresponding product equal to~$1$.
\end{definition}

\begin{definition}
\label{def:cluster-algebra}
Let $\Sigma = (\tilde \xx, \tilde B)$ be a labeled seed as above, and let 
$\Xcal$ be the set of all cluster variables
that we can obtain by performing any sequence
of mutations starting at $\Sigma$.

We let the \emph{ground ring} be $R = \CCC[x_{r+1}, \dots, x_{s}]$,
the polynomial ring generated by the frozen variables.
(A~common alternative is to take $R = \Q[x_{r+1}^{\pm 1}, \dots, x_{s}^{\pm 1}]$,
the ring of Laurent polynomials in the frozen variables.)
	The \emph{cluster algebra}~$\AA:=\AA(\Sigma)$ (over~$R$)
is the $R$-subalgebra of the ambient field~$\FFcal$
generated by all cluster variables: $\AA= R[\Xcal]$.

\end{definition}

\begin{definition}\label{def:clustervariety}
An \emph{affine cluster variety} is the 
	variety
$V:=\Spec \A$, where $\A$ is a cluster algebra which is finitely generated.
Each cluster $\xx$ for $\A$ can be thought of as coordinates of an open 
	torus ${(\C^*)}^s \subset V$  called a \emph{cluster chart} or \emph{cluster torus}.
\end{definition}
\begin{remark}
The cluster variety agrees with the union of cluster tori up to codimension
$2$.
Fock and Goncharov~\cite{FG} 
gave an alternative definition
of \emph{cluster $A$-variety} (also called \emph{cluster manifold})
as the scheme consisting of 
the algebraic tori corresponding to seeds of $\A$ glued along 
birational maps corresponding to mutations.  This cluster manifold
can be identified with the union of all cluster tori in $\Spec \A$.

\end{remark}

\section{\texorpdfstring{$G$}{G}-balanced exchange matrices and quivers}\label{sec:G}

We now fix an abelian group $(G,+)$ and define the notion of \emph{$G$-balanced exchange matrices} and \emph{$G$-balanced quivers}. This will lead to the related notion of $G$-gradings on cluster variables. If $b$ is a positive integer and $\sigma \in G$, we use the notation $b\sigma$ to mean $\sigma+ \cdots + \sigma$, where the number of terms is $b$. Similarly, if $b$ is a negative integer, $b\sigma$ denotes $(-\sigma) + \cdots + (-\sigma)$,
where the number of terms is $|b|$.

\begin{definition}[$G$-balanced exchange matrix]\label{def:gradedexchange}
Let $(G,+)$ be an abelian group. Choose positive integers $r \leq s$, and let $\Upsilon = (\tilde{B}, (\sigma_1,\dots,\sigma_s))$, where $\tilde{B}=(b_{ij})$ is an $s \times r$ exchange matrix, and $\sigma = (\sigma_1,\dots,\sigma_s) \in G^s$, such that
\begin{equation}\label{gradedseed}
	\sum_{i=1}^s b_{ij} \sigma_i = 0 \hspace{1cm} \text{ for all }1 \leq j \leq r, \qquad \text{ i.e. }\sigma \tilde{B}=0.
\end{equation}
We then call $\Upsilon$ a \emph{$G$-balanced exchange matrix}, or that 
	$\Upsilon$ is \emph{$G$-balanced}.
\end{definition}

If the exchange matrix $\tilde{B}$ in~\cref{def:gradedexchange}
comes from a quiver, then the above definition can be rewritten as follows.
\begin{definition}\label{rem:quiver}
	Let $\Upsilon = (Q,(\sigma_1,\dots,\sigma_s))$, where $Q$ is a quiver on vertices $v_1,\dots,v_s$, and $(\sigma_1,\dots,\sigma_s)\in G^s$. For $v_i$ a mutable vertex of $Q$, let 
	\[
		\inn_{\sigma}^{Q}(v_i) = \sum_{v_j \to v_i} \sigma_j
		\hspace{.5cm} 
		\text{ and }
		\hspace{.5cm} 
		\out_{\sigma}^{Q}(v_i) = \sum_{v_i \to v_j} \sigma_j,
	\]
	where the first sum is over all arrows in $Q$ entering $v_i$,and the second sum is over all arrows in $Q$ leaving $v_i$. We say that $\Upsilon$  is \emph{$G$-balanced} at the mutable vertex $v_i$ if  
	\begin{equation}\label{eq:grading}
		\inn_{\sigma}^Q(v_i) = \out_{\sigma}^Q(v_i).
	\end{equation} 
	We say that $\Upsilon$  is a \emph{$G$-balanced quiver} if it is $G$-balanced at every mutable vertex.
\end{definition}

In the definition above, we will sometimes write $\sigma(v_i)$ to denote $\sigma_i$. We note that in the context of mirror symmetry,\cite{Akhtar:PolygonalQuivers} studied ``decorated balanced quivers''; these quivers are the same as $G$-balanced quivers in the case that $G = \Z^2$.

\begin{remark}\label{rem:balancepropagates} 
	Note that if~\eqref{gradedseed} holds, then we have that 
\begin{equation}\label{newgrading0}
	\left( \sum_{b_{ik}>0} b_{ik} \sigma_i \right)- \sigma_k = 
	\left( \sum_{b_{ik}<0} -b_{ik} \sigma_i \right) - \sigma_k.
\end{equation}
\end{remark}

This leads to the notion of mutation for $G$-balanced exchange matrices.

\begin{definition}\label{def:Gmutation}
	Let $(G,+)$ be an abelian group, and let $\Upsilon = (\tilde{B}, (\sigma_1,\dots,\sigma_s))$ be a $G$-balanced exchange matrix as above. For $k\in [1,r]$, we define the mutation $\mu_k(\Upsilon)$ of $\Upsilon$ by $\mu_k(\tilde{B}, (\sigma_1,\dots,\sigma_s)):= (\mu_k(\tilde{B}), (\sigma'_1,\dots,\sigma'_s))$, where 
	\[
		\sigma'_i :=\begin{cases}  
			\left( \sum_{b_{ik}>0} b_{ik} \sigma_i \right)- \sigma_k & \text{ if }i=k \\
			\sigma_i & \text{ otherwise.}
		\end{cases}
	\]	
\end{definition}

\begin{proposition}\label{prop:propagate}
If $\Upsilon$ is a $G$-balanced exchange matrix, then for 
any mutable direction $k$, we have that 
$\mu_k(\Upsilon)$ is a $G$-balanced exchange matrix.

\end{proposition}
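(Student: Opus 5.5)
We verify directly that $\sigma'\tilde B' = 0$, where $\tilde B' = \mu_k(\tilde B) = (b'_{ij})$ and $\sigma'$ is as in \cref{def:Gmutation}. Write $P := \sum_{b_{ik}>0} b_{ik}\sigma_i$ and $N := \sum_{b_{ik}<0} (-b_{ik})\sigma_i$. The hypothesis $\sigma\tilde B=0$ in column $k$ says $P - N + b_{kk}\sigma_k = 0$. Since $b_{kk}=0$, this gives $P=N$, which is exactly \cref{rem:balancepropagates}. Hence $\sigma'_k = P - \sigma_k = N - \sigma_k$, and we may use whichever expression is convenient.

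\emph{Column $k$.} Here $b'_{ik} = -b_{ik}$ for all $i$, and $b'_{kk}=0$. So
\[
\sum_i b'_{ik}\sigma'_i = -\sum_{i\neq k} b_{ik}\sigma_i = -(P-N) = 0.
\]

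\emph{Columns $j\neq k$.} We have $b'_{kj} = -b_{kj}$, and for $i\neq k$,
\[
b'_{ij} = b_{ij} + \tfrac12\bigl(|b_{ik}|b_{kj} + b_{ik}|b_{kj}|\bigr),
\]
which is a compact restatement of the case analysis in \cref{def:matrix-mutation}. Rather than use this formula, it is cleaner to split into the two cases $b_{kj}>0$ and $b_{kj}<0$; if $b_{kj}=0$ the column is unchanged except for entries that do not involve $\sigma_k$, and there is nothing to prove.

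\emph{Case $b_{kj}>0$.} Then $b'_{ij} = b_{ij} + b_{ik}b_{kj}$ exactly for those $i$ with $b_{ik}>0$, and $b'_{ij}=b_{ij}$ otherwise. Therefore
\[
\sum_i b'_{ij}\sigma'_i = \sum_{i\neq k} b_{ij}\sigma_i + b_{kj}P - b_{kj}(P-\sigma_k) = \sum_i b_{ij}\sigma_i = 0.
\]

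\emph{Case $b_{kj}<0$.} Then $b'_{ij} = b_{ij} - b_{ik}b_{kj}$ exactly for those $i$ with $b_{ik}<0$. The extra contribution is $\sum_{b_{ik}<0} (-b_{ik})b_{kj}\sigma_i = b_{kj}N$. Meanwhile the $k$-th term is $-b_{kj}(N-\sigma_k)$, using $\sigma'_k = N-\sigma_k$. These combine to give $\sum_i b_{ij}\sigma_i = 0$ again.

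The only real subtlety is this last case: it needs the alternative expression $\sigma'_k = N-\sigma_k$, which is precisely where the balance condition at $k$, through \cref{rem:balancepropagates}, is used. Everything else is routine bookkeeping. Skew-symmetrizability is needed only to know that $b_{kk}=0$, and the frozen rows need no special treatment, since they appear only as summation indices $i$.
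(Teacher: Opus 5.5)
Your proof is correct and takes essentially the same route as the paper: a direct column-by-column check that $\sigma'\tilde B'=0$, handling column $k$ separately and splitting the other columns by the sign of $b_{kj}$. You also write out the case $b_{kj}<0$, which the paper dismisses as analogous, and you make explicit that the balance condition at $k$ enters there through $P=N$. One small wording fix: in the case $b_{kj}=0$ the column is in fact entirely unchanged, and $b'_{kj}=0$ is what kills the $\sigma'_k$ term.
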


\begin{proof}
	Suppose that~\eqref{gradedseed} holds. We need to check that after we mutate in direction $k$,~\eqref{gradedseed} still holds for $\mu_k(\Upsilon)$. Let $\tilde{B}'=(b'_{ij}):=\mu_k(\tilde{B})$.

	We first verify that~\eqref{gradedseed} holds when $j=k$. By the definition of mutation, the $k$th column of $\tilde{B}'$ is the negative of the $k$th column of $\tilde{B}$, and $b'_{kk} = 0$. Thus
	\[
		\sum_{i=1}^s b'_{ik} \sigma'_i = \sum_{i=1}^s (-b_{ik}) \sigma_i = 0.
	\]

	Now fix $j$ and suppose that $b_{kj}>0$ (the case where $b_{kj}<0$ is analogous). Then we have 
	\begin{align*}
		\sum_{i=1}^s b'_{ij} \sigma'_i &=
		\sum_{i=1, i\neq k}^s b'_{ij} \sigma_i + b'_{kj} \sigma'_k\\
		&= \sum_{i=1, i\neq k}^s b'_{ij} \sigma_i + 
		(-b_{kj})\left[ \left(\sum_{b_{ik}>0} b_{ik} \sigma_i \right) - \sigma_k \right] \\
		&= \sum_{i \text{ where } b_{ik}>0} (b_{ij} + b_{ik} b_{kj}) \sigma_i 
		+ \sum_{i \text{ where } b_{ik}<0} b_{ij} \sigma_i 
		- b_{kj} \left(\sum_{b_{ik}>0} b_{ik} \sigma_i \right)
		+ b_{kj} \sigma_k
		\\
		&= \sum_{b_{ik}>0} b_{ik} b_{kj} \sigma_i - 
		b_{kj} \left( \sum_{b_{ik}>0} b_{ik} \sigma_i \right) = 0,
	\end{align*}
	as desired. Finally, fix $j$ where $b_{kj}=0$. Then $b'_{ij}=b_{ij}$ for all $i$, and in particular $b'_{kj}=b_{kj}=0$, so 
	\[
		\sum_{i=1}^s b'_{ij} \sigma'_i = \sum_{i=1}^s b_{ij} \sigma_i = 0.
	\]
	This completes the proof that $\mu_k(\Upsilon)$ is a graded seed.
\end{proof}

When $G \subseteq \C^*$, $G$-balanced exchange matrices are closely connected to automorphisms of cluster algebras. Given a cluster algebra $\mathcal{A}$, let $\Aut(\mathcal{A})$ denote the group of algebra automorphisms $\phi$ such that, for each cluster variable $x \in \mathcal{X}$, we have $\phi(x) = \zeta(x) x$ for some $\zeta(x) \in \mathbb{C}^*$. This group is called the \textbf{cluster automorphism group} 
in~\cite{GSV, LamSpeyerCohomologyClusterVarieties,MR4673431,MR5111962} or 
the \textbf{cluster dilation group} in~\cite{neville2026mysteriouspointskeystrees}. We have the following proposition coming from~\cite[Lemma 2.3]{GSV03} and~\cite[Prop 5.1]{LamSpeyerCohomologyClusterVarieties} which specifies how to construct cluster automorphisms by specifying scalar factors on a particular seed. The result is essentially~\cref{prop:propagate} for $G = \mathbb{C}^*$. 

\begin{proposition}\label{prop:lamspeyercohomology}
	Let $((x_1,\dots,x_s),\tilde{B})$ be any seed of $\AA$.
	An element of $\Aut(\AA)$ is determined by its action on 
	$(x_1,\dots,x_s)$.  Given $(\zeta_1,\dots,\zeta_s)\in {(\C^*)}^s$, there
	is an automorphism $\phi$ with $\phi(x_i) =\zeta_i x_i$ if and only if 
	$\zeta \in \ker(\tilde{B}^t)$.\footnote{We interpret this kernel multiplicatively.}
\end{proposition}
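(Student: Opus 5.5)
The plan has three parts. Uniqueness comes from the fact that a seed generates the ambient field. Existence is built seed by seed using the multiplicative version of \cref{prop:propagate}. The converse uses a single exchange relation. Throughout, $\ker(\tilde B^t)$ is read multiplicatively: $\prod_{i} \zeta_i^{b_{ij}} = 1$ for every $1\le j\le r$.

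\emph{Uniqueness.} Let $\phi\in\Aut(\AA)$. Since $\FFcal=\CCC(x_1,\dots,x_s)$ is the fraction field of $\AA$, $\phi$ extends uniquely to a field automorphism of $\FFcal$. That extension is determined by the images $\phi(x_i)$, so $\phi$ is determined by its action on $(x_1,\dots,x_s)$.

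\emph{Necessity.} Suppose $\phi(x_i)=\zeta_i x_i$ for all $i$. For each mutable $j$, let $\phi(x_j')=\zeta_j' x_j'$, which is possible because $x_j'$ is a cluster variable. Apply $\phi$ to the exchange relation \eqref{eq:exch-rel-geom}:
\[
\zeta_j\zeta_j'\, x_jx_j' = \Bigl(\prod_{b_{ij}>0}\zeta_i^{b_{ij}}\Bigr) M_+ + \Bigl(\prod_{b_{ij}<0}\zeta_i^{-b_{ij}}\Bigr) M_-,
\]
where $M_\pm$ are the two exchange monomials. Since $x_jx_j'=M_++M_-$, and $M_+$, $M_-$ are distinct monomials in algebraically independent variables (they share no variable), comparing coefficients gives
\[
\prod_{b_{ij}>0}\zeta_i^{b_{ij}}=\zeta_j\zeta_j'=\prod_{b_{ij}<0}\zeta_i^{-b_{ij}}.
\]
This is exactly $\prod_i\zeta_i^{b_{ij}}=1$. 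Hence $\zeta\in\ker(\tilde B^t)$.

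\emph{Sufficiency.} Take $\zeta\in\ker(\tilde B^t)$. Then $(\tilde B,\zeta)$ is a $G$-balanced exchange matrix for $G=\CCC^*$, written multiplicatively.

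1. Define the field automorphism $\phi$ of $\FFcal$ by $x_i\mapsto\zeta_ix_i$.
2. Use \cref{prop:propagate} and \cref{def:Gmutation} to attach a balanced grading to every seed reachable by mutation.
3. Prove by induction on the length of the mutation sequence that $\phi$ rescales each cluster variable of a reached seed by the grading attached to it. In the inductive step, the new variable satisfies $x_k'=(M_++M_-)/x_k$. Balancedness makes both monomials rescale by the same factor $\prod_{b_{ik}>0}\zeta_i^{b_{ik}}$, so $x_k'$ is rescaled by $\bigl(\prod_{b_{ik}>0}\zeta_i^{b_{ik}}\bigr)\zeta_k^{-1}$, which is precisely the mutated grading $\zeta_k'$.
4. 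Conclude that $\phi$ sends every cluster variable to a scalar multiple of itself. Frozen variables are rescaled as well, so $\phi(R)=R$ and therefore $\phi(\AA)\subseteq\AA$. Running the same argument with $\zeta^{-1}$, which is also in the kernel, gives the inverse map, so $\phi$ restricts to an automorphism of $\AA$.

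\emph{Main obstacle.} The delicate point is well-definedness in step 3. A cluster variable can be reached along different mutation paths, and the scalar we assign must not depend on the path. Working through $\phi$ settles this: $\phi$ is a single, fixed field automorphism, so the scalar is the honest ratio $\phi(x)/x$, which is determined by $x$ alone. Path-independence of the induced grading therefore comes for free.
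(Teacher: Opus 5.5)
Your proof is correct and is essentially the argument the paper intends. The paper writes out no proof of its own: it cites \cite[Lemma 2.3]{GSV03} and \cite[Prop 5.1]{LamSpeyerCohomologyClusterVarieties} and remarks only that the result is essentially \cref{prop:propagate} for $G=\C^*$, which is exactly your sufficiency step, with path-independence obtained from working with a single fixed field automorphism (the paper similarly gets well-definedness from homogeneity in one fixed Laurent ring). Your uniqueness argument via the fraction field and your necessity argument via a single exchange relation fill in the remaining directions; the one degenerate case, a zero column of $\tilde B$ where $M_+=M_-=1$ so coefficients cannot be compared, is harmless because that column imposes no condition on $\zeta$.
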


\section{\texorpdfstring{$G$}{G}-graded seeds and signed seeds for cluster algebras}\label{sec:G2}

We now explain how the notion of $G$-balanced exchange matrix naturally gives
rise to a $G$-grading on cluster variables.

\begin{definition}[$G$-graded seed]\label{def:gradedseed}
Let $\Upsilon = (\tilde{B}, (\sigma_1,\dots,\sigma_s))$ be a $G$-balanced
exchange matrix as in~\cref{def:gradedexchange}, and 
suppose that $\Sigma = (\tilde{B}, (x_1,\dots,x_s))$ is a seed with the same
$s \times r$ exchange
matrix $\tilde{B}$.
We then call the triple
	$(\tilde{B}, (x_1,\dots,x_s), (\sigma_1,\dots,\sigma_s))$ a 
\emph{$G$-graded seed}.
\end{definition}

By~\cref{prop:propagate}, the mutation of a $G$-graded seed is again a $G$-graded seed. Moreover, if we let $\sigma(x_i):=\sigma_i$ be an assignment of elements of $G$ to the initial cluster variables, this extends to a well-defined $G$-grading of the Laurent polynomial ring $\mathbb{Z}[x_1^{\pm}, \ldots, x_s^{\pm}]$ where the Laurent monomials are homogeneous. From~\cref{rem:balancepropagates}, any cluster variable must be homogeneous with respect to the this grading (mutation of a $G$-graded seed produces a new cluster variable which is homogeneous). Thus, we get a grading on the Laurent polynomial ring which restricts to a $G$-grading on all cluster variables. That is, we have the following: 

\begin{lemma}\label{rem:gradingpropagates} 
	Let $(\tilde{B}, (x_1,\dots,x_s), (\sigma_1,\dots,\sigma_s))$ be a {$G$-graded seed}. Let $\sigma(x_i):= \sigma_i$ for $1\leq i \leq s$ be a grading on the initial cluster variables. Since~\eqref{gradedseed} holds, when we mutate in direction $k$, we obtain 
	\[
		x'_k = \frac{\prod_{b_{ik}>0} x_i^{b_{ik}} + \prod_{b_{ik}<0} x_i^{-b_{ik}}}{x_k},
	\]
	and we have a well-defined grading on the new cluster variable $x'_k$, namely
	\begin{equation}\label{newgrading}
		\sigma(x'_k) := 
		\left( \sum_{b_{ik}>0} b_{ik} \sigma_i \right)- \sigma_k = 
		\left( \sum_{b_{ik}<0} -b_{ik} \sigma_i \right) - \sigma_k.
	\end{equation}
\end{lemma}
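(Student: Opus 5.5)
The plan is to grade the Laurent polynomial ring $\Z[x_1^{\pm1},\dots,x_s^{\pm1}]$ by $G$, setting $\deg(x_1^{a_1}\cdots x_s^{a_s}) := \sum_i a_i\sigma_i$. This is well defined because Laurent monomials form a basis and the degree map is a group homomorphism from $\Z^s$ to $G$, so monomials are homogeneous. The claim "well-defined grading on $x'_k$" then means two things: $x'_k$ is a homogeneous element of this graded ring (or of its fraction field, graded in the same way on ratios of homogeneous elements), and its degree is the element given in~\eqref{newgrading}.

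First I would compute the degrees of the two monomials in the exchange relation~\eqref{eq:exch-rel-geom}. Their degrees are $\sum_{b_{ik}>0} b_{ik}\sigma_i$ and $\sum_{b_{ik}<0} (-b_{ik})\sigma_i$. The difference of these two is $\sum_i b_{ik}\sigma_i$, which vanishes by~\eqref{gradedseed} applied to column $j=k$; this is exactly the content of \cref{rem:balancepropagates}. Hence the numerator is a sum of two monomials of the same degree, so it is homogeneous. Dividing by the homogeneous element $x_k$ of degree $\sigma_k$ then gives a homogeneous element of degree equal to either expression in~\eqref{newgrading}, and the two expressions agree by~\eqref{newgrading0}. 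This matches the definition of $\sigma'_k$ in \cref{def:Gmutation}.

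The remaining point is that the grading is consistent across seeds, so that one gets a grading on all cluster variables. By \cref{prop:propagate}, $\mu_k(\Upsilon)$ is again $G$-balanced. The degree function built from the new seed $(x'_1,\dots,x'_s)$ with $(\sigma'_1,\dots,\sigma'_s)$ agrees with the old one: on $x_j$ for $j\neq k$ this is trivial, and on $x'_k$ it was just checked. The two seeds generate the same field, and homogeneity is preserved under products and quotients of homogeneous elements. So the gradings induced on the field of fractions of homogeneous ratios coincide, and iterating along mutation sequences assigns a single degree to each cluster variable.

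The main subtlety I expect is this last consistency step. One has to make precise that "homogeneous of degree $g$" is intrinsic, i.e.\ independent of which seed's Laurent ring is used. I would handle this by working inside $\FFcal$ with the subset of quotients $p/q$ of homogeneous Laurent polynomials in the initial seed, and show that degree is well defined there: if $p/q = p'/q'$, then $pq' = p'q$, and comparing degrees of homogeneous products forces $\deg p - \deg q = \deg p' - \deg q'$. Each new cluster variable lies in this set by induction on mutation distance.
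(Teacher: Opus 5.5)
Your proposal is correct and follows the paper's own argument. You grade $\Z[x_1^{\pm1},\dots,x_s^{\pm1}]$ so that Laurent monomials are homogeneous, use \eqref{gradedseed} at column $k$ (i.e.\ \cref{rem:balancepropagates}) to see that the two exchange monomials have the same degree, so $x'_k$ is homogeneous of degree $\sigma'_k$, and then iterate via \cref{prop:propagate}. Your extra bookkeeping with ratios of homogeneous elements in $\FFcal$ makes precise the paper's informal claim that the grading restricts consistently to all cluster variables. For your induction you also need that a nonzero sum of such ratios of equal degree is again such a ratio, which follows by clearing denominators.
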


\begin{definition}[$G$-gradings on cluster variables]\label{def:gradedclustervariable}
	Let $(\tilde{B}, (x_1,\dots,x_s), (\sigma_1,\dots,\sigma_s))$ be a {$G$-graded seed} as in \cref{def:gradedseed}. We refer to  the assignment $\sigma(x_i):=\sigma_i$ as the \emph{$G$-grading} on the initial cluster variables.
	Then~\eqref{newgrading} gives rise to a well-defined grading
	\[
		\sigma: \Xcal \to G
	\]
	on all cluster variables.
\end{definition}

\subsection{$G$-gradings coming from torus actions}\label{sec:torus-action}

Let $(G, +)$ be an abelian group and $\Sigma = (\widetilde{B}, (x_1, \ldots, x_s))$ a seed.  Then, the set of $G$-gradings on $\Sigma$ has a group structure where we define 
\[
	\left (\widetilde{B}, (\sigma_1, \ldots, \sigma_s) \right ) + \left (\widetilde{B}, (\sigma_1', \ldots, \sigma_s') \right ) := \left (\widetilde{B}, (\sigma_1 + \sigma_1', \ldots, \sigma_s + \sigma_s') \right ). 
\]

\begin{definition}
	Let $(G, +)$ be an abelian group, and let $\widetilde{B} = (b_{ij})$ be an $s \times r$ exchange matrix. We denote $\mathfrak{gr}(\mathcal{A}(\widetilde{B}), G)$ as the group of $G$-gradings on $\widetilde{B}$. 
\end{definition}

%By forgetting the extended exchange matrix in a $G$-graded seed, 
We obtain a natural embedding $\mathfrak{gr}(\mathcal{A}(\widetilde{B}), G) \hookrightarrow G^{\oplus s}$ which identifies $\mathfrak{gr}(\mathcal{A}(\widetilde{B}), G)$ with the following:
\begin{align*}
	\mathfrak{gr}(\mathcal{A}(\widetilde{B}), G) & = \ker (\widetilde{B}^t: G^{\oplus s} \to G^{\oplus r}) = \text{Hom}_{\mathbb{Z}}  \left (\text{coker} (\widetilde{B} : \mathbb{Z}^{\oplus r} \to \mathbb{Z}^{\oplus s}), G \right ). 
\end{align*}
The first equality is a direct restatement of~\cref{def:gradedexchange}, and the second equality is the $\ZZ$-module $G$-valued analogue of $\ker(A^t) = \text{coker}{(A)}^*$ for morphisms of vectors spaces over a field. In particular, the functor $\mathfrak{gr}(\mathcal{A}(\widetilde{B}), -) : \mathsf{Ab} \to \mathsf{Ab}$ is  representable. As a consequence, we have a canonical isomorphism $\mathfrak{gr}(\mathcal{A}(\widetilde{B}), G \oplus H) = \mathfrak{gr}(\mathcal{A}(\widetilde{B}), G) \oplus \mathfrak{gr}(\mathcal{A}(\widetilde{B}), H)$ for any abelian groups $G$ and $H$. For \textit{finitely generated abelian groups}, the group structure of $\mathfrak{gr}(\mathcal{A}(\widetilde{B}), G)$ can be recovered from the cases $G = \mathbb{Z}$ and $G = \mathbb{Z} / p^{\ell} \mathbb{Z}$ for $p$ prime and $\ell \geq 1$. 

\begin{remark}
	In forthcoming work, Burcoff et al.~\cite{enumeration_signed_seeds} enumerate the number of $G$-gradings in the case $G = \mathbb{Z} / p \mathbb{Z}$ for $p$ prime. They prove that if $B$ has full rank over $\mathbb{F}_p$, then there are $p^{\text{\# frozen}}$ $\mathbb{Z}/p \mathbb{Z}$-gradings. 
\end{remark}
Cluster algebras which come from geometry often come with a torus $T = {(\CC^*)}^n$ which acts on the cluster variety. This corresponds to a $\ZZ^n$-graded algebra structure of $\mathcal{A}$ which often gives a $\ZZ^n$-grading as a cluster algebra. Denote this $\ZZ^n$-grading by $\sigma_{\text{std}} \in \mathfrak{gr}(\mathcal{A}, \ZZ^n)$. This allows us to construct many other gradings with respect to other groups $G$. By picking $\mathbf{g} = (g_1, \ldots, g_n) \in G^{\oplus n}$, we have a group homomorphism 
\[
	\Phi_{\mathbf{g}} : \ZZ^n \to G, \quad \mathbf{e}_i \mapsto g_i. 
\]
This induces a morphism ${(\Phi_\mathbf{g})}_* : \mathfrak{gr}(\mathcal{A}, \ZZ^n) \to \mathfrak{gr}(\mathcal{A}, G)$. We say that ${(\Phi_\mathbf{g})}_* (\sigma_{\text{std}})$ is a $G$-grading which \textit{comes from the torus action}. This construction gives a group homomorphism  
\begin{equation}
		\Psi_G: G^{\oplus n} \to \mathfrak{gr}(\mathcal{A}, G), \quad \mathbf{g} \mapsto {(\Phi_\mathbf{g})}_* (\sigma_{\text{std}}). 
\end{equation}
The image of $\Psi_G$ is the set of all $G$-gradings coming from the torus action. If two $G$-gradings $\Upsilon_1, \Upsilon_2 \in \mathfrak{gr}(\mathcal{A}, G)$ differ by an element in $\text{im}(\Psi_G)$, we say that $\Upsilon_1$ and $\Upsilon_2$ differ by the torus action. In particular, $\mathfrak{gr}(\mathcal{A}, G) / \text{im}(\Psi_G)$ is the group of $G$-gradings up to the torus action. The following proposition follows readily from the above discussion. 

\begin{proposition}
	Let $\mathcal{A}$ be a cluster algebra with a torus action giving a $\ZZ^n$-grading $\sigma_{\text{std}} \in \mathfrak{gr}(\mathcal{A}, \ZZ^n)$. For any abelian groups $G$ and $H$, the following diagram commutes 
\[
	\begin{tikzcd}
		{{(G \oplus H)}^{\oplus n}} & {G^{\oplus n} \oplus H^{\oplus n}} \\
		{\mathfrak{gr}(\mathcal{A}, G \oplus H)} & {\mathfrak{gr}(\mathcal{A}, G) \oplus \mathfrak{gr}(\mathcal{A}, H)}
		\arrow["\cong", from=1-1, to=1-2]
		\arrow["{\Psi_{G \oplus H}}", from=1-1, to=2-1]
		\arrow["{\Psi_G \oplus \Psi_H}", from=1-2, to=2-2]
		\arrow["{\cong }", from=2-1, to=2-2]
	\end{tikzcd}
\]
\end{proposition}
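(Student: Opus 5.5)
We prove the commutativity by chasing an arbitrary element $(\mathbf{g},\mathbf{h})$ around the diagram and comparing the two resulting gradings coordinatewise. First we make the maps explicit. Write an element of ${(G\oplus H)}^{\oplus n}$ as $((g_1,h_1),\dots,(g_n,h_n))$. The top isomorphism sends it to $(\mathbf{g},\mathbf{h})$ with $\mathbf{g}=(g_1,\dots,g_n)$ and $\mathbf{h}=(h_1,\dots,h_n)$.

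For the bottom isomorphism we use the identification $\mathfrak{gr}(\mathcal{A},G)=\ker(\widetilde{B}^t:G^{\oplus s}\to G^{\oplus r})$. Under it, a $(G\oplus H)$-grading $(\tau_1,\dots,\tau_s)$ with $\tau_i=(\alpha_i,\beta_i)$ is sent to the pair $((\alpha_i)_i,(\beta_i)_i)$. This map is well defined: the balancing condition $\sum_i b_{ij}\tau_i=0$ in $G\oplus H$ holds if and only if it holds in each component, since $\widetilde{B}^t$ acts componentwise. Equivalently, the bottom map is $(\pi_G)_*\oplus(\pi_H)_*$, where $\pi_G,\pi_H$ are the projections. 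I will use this description, as it is the canonical isomorphism induced by representability.

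Next we make $\Psi$ explicit. Write $\sigma_{\mathrm{std}}=(\sigma_1,\dots,\sigma_s)$ with $\sigma_i=(a_{i1},\dots,a_{in})\in\ZZ^n$. Then $\Psi_G(\mathbf{g})$ is the grading $i\mapsto\Phi_{\mathbf{g}}(\sigma_i)=\sum_\ell a_{i\ell}g_\ell$. This is a $G$-grading because pushforward along a homomorphism preserves $\sigma\widetilde{B}=0$: $\sum_i b_{ij}\Phi(\sigma_i)=\Phi\bigl(\sum_i b_{ij}\sigma_i\bigr)=0$. It is also additive in $\mathbf{g}$.

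Now the chase. Going down then right sends $((g_\ell,h_\ell))_\ell$ to the grading with $i$-th entry $\sum_\ell a_{i\ell}(g_\ell,h_\ell)=\bigl(\sum_\ell a_{i\ell}g_\ell,\ \sum_\ell a_{i\ell}h_\ell\bigr)$, and then to the pair of these component gradings. Going right then down gives $\bigl(\Psi_G(\mathbf{g}),\Psi_H(\mathbf{h})\bigr)$, which has exactly the same entries. More conceptually, $\pi_G\circ\Phi_{(\mathbf{g},\mathbf{h})}=\Phi_{\mathbf{g}}$ and similarly for $H$, and pushforward is functorial: $(\pi_G)_*(\Phi_{(\mathbf{g},\mathbf{h})})_*=(\pi_G\circ\Phi_{(\mathbf{g},\mathbf{h})})_*$.

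The only step needing care is pinning down the canonical bottom isomorphism. Once it is identified with $((\pi_G)_*,(\pi_H)_*)$, whose inverse is $(\alpha,\beta)\mapsto(\iota_G)_*\alpha+(\iota_H)_*\beta$ with $\iota_G,\iota_H$ the inclusions, the rest is a direct one-line check.
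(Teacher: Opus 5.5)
Your proof is correct and is essentially the argument the paper intends. The paper gives no separate proof; it only says the proposition ``follows readily from the above discussion,'' meaning the pushforward description of $\Psi_G$ together with the canonical componentwise isomorphism $\mathfrak{gr}(\mathcal{A}, G\oplus H)\cong\mathfrak{gr}(\mathcal{A}, G)\oplus\mathfrak{gr}(\mathcal{A}, H)$. Your element chase, equivalently the functoriality identity $(\pi_G)_*(\Phi_{(\mathbf{g},\mathbf{h})})_*=(\Phi_{\mathbf{g}})_*$, is exactly that check made explicit.
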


Thus, if we want to classify $G$-gradings up to the torus action for $G$ finite abelian, it is enough to classify them for $G = \ZZ / p^{\ell} \ZZ$.  

\subsection{Multi-graded seeds and $g$-vectors}

If the group $G$ equals $\Z^d$ for some integer $d$,
then 
we can rewrite \eqref{gradedseed}
as 
\begin{equation}\label{eq:multi}
(\sigma_1,\dots,\sigma_s) \begin{pmatrix}
		                    b_{11} & \dots & b_{1r} \\
				    \vdots & \vdots & \vdots \\
				    b_{s1} & \dots & b_{sr}
\end{pmatrix} = 0
\end{equation}
or simply $\sigma \tilde{B}=0$. 
In this case the notion of graded seed recovers the notion of \emph{multi-graded seed}
from \cite[Definition 3.1]{Grabowski}.

$\Z^d$-gradings on cluster variables had appeared earlier, notably in the 
study of \emph{$g$-vectors} \cite{ca4}, as we now explain.  The following result
appears (using slightly different terminology) as \cite[Proposition 6.1]{ca4}.

\begin{proposition}\cite[Definition 3.1 and Proposition 6.1]{ca4}\label{prop:g}
Let $B$ be an $n \times n$ exchange matrix,
and let $\tilde{B}$ be a $2n \times n$ exchange matrix,
whose principal (i.e., top $n \times n$) part is $B$
and whose complementary (i.e., bottom) part is the $n \times n$ identity matrix.
Let $G = \Z^n$, and let $\mathbf{e}_1,\dots, \mathbf{e}_n$ denote the standard
basis vectors of $\Z^n$.
	Let $\Upsilon:=(\tilde{B}, (\sigma_1,\dots,\sigma_{2n}))$, where
$\sigma_i:=\mathbf{e}_i$ for $1 \leq i \leq n$,
and $\sigma_{j+n} = -\mathbf{b}_j$ for $1 \leq j \leq n$,
where $\mathbf{b}_j=\sum_i b_{ij} \mathbf{e}_i$ is the $j$th column of $B$.
Then $\Upsilon$ is a $G$-balanced exchange matrix.
\end{proposition}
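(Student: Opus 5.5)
We need to verify $\sigma\tilde{B}=0$, that is, $\sum_{i=1}^{2n} b_{ij}\sigma_i=0$ for each $1\le j\le n$, directly from \cref{def:gradedexchange}. Since $\tilde{B}$ has principal part $B$ and complementary part the identity, we have $b_{n+i,j}=\delta_{ij}$ for $1\le i,j\le n$. So the plan is to split the sum over $i$ into the principal rows $1\le i\le n$ and the frozen rows $n+1\le i\le 2n$, and to compute each piece in $\Z^n$.

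For the principal rows, $\sigma_i=\mathbf{e}_i$ gives $\sum_{i=1}^n b_{ij}\mathbf{e}_i=\mathbf{b}_j$, which is exactly the $j$th column of $B$ read as a vector in $\Z^n$. For the frozen rows, the identity block leaves a single surviving term, $\sum_{i=1}^n \delta_{ij}\sigma_{n+i}=\sigma_{n+j}=-\mathbf{b}_j$. Adding the two pieces gives $\mathbf{b}_j-\mathbf{b}_j=0$ for every mutable $j$. This is precisely \eqref{gradedseed}, or equivalently \eqref{eq:multi} with $d=n$, so $\Upsilon$ is $\Z^n$-balanced.

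There is essentially no obstacle here. The only point needing care is bookkeeping of the convention that the bottom block of $\tilde{B}$ is indexed by rows $n+1,\dots,2n$, together with the sign convention $\sigma_{n+j}=-\mathbf{b}_j$; these are exactly what make the two contributions cancel. As a remark, I would note that by \cref{prop:propagate} this grading propagates under mutation, and by \cref{def:gradedclustervariable} it induces a $\Z^n$-grading on all cluster variables, which recovers the $g$-vector grading of~\cite{ca4}. That consequence is not needed for the statement itself.
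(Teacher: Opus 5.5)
Your proposal is correct and matches the paper's proof. Both reduce to checking, for each $1\le j\le n$, that $\sum_{i=1}^n b_{ij}\mathbf{e}_i + \sigma_{n+j} = \mathbf{b}_j - \mathbf{b}_j = 0$, using that the identity block contributes only the single term $\sigma_{n+j}$.
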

\begin{proof}
We just need to check that~\eqref{eq:multi} holds.  In other words, we need to show
that for each $1\leq j \leq n$, we have
$\sigma_1 b_{1j} + \cdots + \sigma_n b_{nj} + \sigma_{n+j} = 0$.
But the left hand side equals 
$b_{1j} \mathbf{e}_1 + \cdots + b_{nj} \mathbf{e}_n - \mathbf{b}_j$,
which equals $0$.
\end{proof}

\cref{prop:g} implies that all cluster variables in 
the \emph{principal cluster algebra}
$$\mathcal{A}_{\bullet}(B) \subset 
\Z[x_1^{\pm}, \dots, x_n^{\pm}; y_1,\dots, y_n]$$ 
associated to $\tilde{B}$ are homogeneous with respect to the grading
\[
	\sigma(x_i) = \mathbf{e}_i \text{  and }\sigma(y_i) = -\mathbf{b}_i \text{ for }1\leq i \leq n.
\]
The (multi-) degrees of the cluster variables obtained using this grading
are called $g$-vectors.

\subsection{Signed seeds}
If $G$ is the abelian group with two elements, we prefer to write the group multiplicatively. In this case \cref{def:gradedseed}  recovers the notion of \emph{signed seed}, see \cref{def:signedseed}, which was defined for cluster algebras from quivers in \cite[Definition 9.22]{ELPSTW}.

\begin{definition}[Signed seed] 
	\label{def:signedseed}
	Let $G=\{-1,1\}$ be the abelian group on two elements. Choose positive integers $r \leq s$, and let $\Sigma = (\tilde{B}, (x_1,\dots,x_s))$ be a seed, where $\tilde{B}=(b_{ij})$ is an $s \times r$ exchange matrix, and $(x_1,\dots,x_s)$ is an initial cluster. Let $\tilde{\Sigma}: = (\Sigma, (\sigma_1,\dots,\sigma_s))$, where $\sigma = (\sigma_1,\dots,\sigma_s) \in G^s$, and 
	\begin{equation}\label{signedseed}
		\prod_{i=1}^s (\sigma_i)^{b_{ij}} = 1 
		\hspace{1cm} \text{ for all }1 \leq j \leq r.
	\end{equation}
	We then call $\tilde{\Sigma}$ a \emph{signed seed}.
\end{definition}

It can be useful to rewrite the characterization of signed
seeds in the case of a quiver.

\begin{remark}\label{rem:quiver2}
Consider a seed $\Sigma=(Q,(x_1,\dots,x_s))$ together with 
a tuple $(\sigma_1,\dots,\sigma_s) \in \{-1,1\}^s$.
If $v_i$ is a mutable vertex of $Q$,
let $$\inn_{\sigma}^{Q}(v_i) = \prod_{v_j \to v_i} \sigma_j
	\hspace{.5cm} 
	\text{ and }
	\hspace{.5cm} 
\out_{\sigma}^{Q}(v_i) = \prod_{v_i \to v_j} \sigma_j.$$
	Then $({\Sigma},(\sigma_1,\dots,\sigma_s))$  is a {signed seed} if 
for each mutable vertex $v_i$ we have 
\begin{equation}\label{eq:grading2}
	\inn_{\sigma}^Q(v_i) = \out_{\sigma}^Q(v_i).
\end{equation} 
\end{remark}

\begin{remark}\label{rem:positiveseed}
Suppose that $\sigma_i=1$ for all $1\leq i \leq s$.
Then for any seed $\Sigma$, the pair $(\Sigma, (\sigma_1,\dots, \sigma_s))$
is a signed seed, which gives rise to the constant grading 
$\sigma: \Xcal \to G$ in which $\sigma(x)=1$ for all $x\in \Xcal$.
We call this the \emph{totally positive signed seed}.
\end{remark}

\begin{definition}
Suppose that $V$ is an affine cluster variety as in \cref{def:clustervariety}. The coordinate ring $\C[V]$ can then be identified with a cluster algebra $\A = \A(\Sigma)$. Let $\Xcal$ be the set of cluster variables of $\A$. Given $v\in V$ and $x\in \Xcal$, we say that the \emph{sign of $x$ on $v$ is $1$} and write $\sgn(x(v))=1$ if $x(v)$ is real and strictly positive. Similarly, we say that the \emph{sign of $x$ on $v$ is $-1$} and write $\sgn(x(v))=-1$ if $x(v)$ is real and strictly negative.

Given a signed seed $\tilde{\Sigma}$ for $\C[V]$
with grading $\sigma: \Xcal \to \{-1,1\}$, we define the \textit{(open) signed cell} and \textit{(closed) signed cell} to be the following subsets of real points: 
\begin{align}
	V_{>0}^{\sigma} & :=\{v\in V(\mathbb{R}) \ \vert \ \sigma(x)x(v) > 0 \text{ for all }x\in \Xcal\}, \\
	V_{\geq 0}^{\sigma} & := \{v\in V(\mathbb{R}) \ \vert \ \sigma(x) x(v) \geq 0 \text{ for all }x\in \Xcal\}.
\end{align} 
\end{definition}
The fact that we refer to $V^{\sigma}_{>0}$ as a signed cell will be justified
by \cref{thm:test}, which says that $V^{\sigma}_{>0}$ is in fact a cell, i.e.
homeomorphic to an open ball.

\begin{remark}
If $\sigma$ is identically $1$, as in \cref{rem:positiveseed}, then
\begin{equation}
V_{>0}^{\sigma}=\{v\in V \ \vert \ x(v) >0  \text{ for all } x\in \Xcal.\}
\end{equation}
We may denote this signed cell by $V_{>0}$; it is often called the 
\emph{totally positive} part of $V$ \cite{sf-icm}.

If for example $V$ is the set of $n \times n$ matrices and $\C[V]$ is 
endowed with the usual
cluster structure (see \cref{sec:matrices}) then the cluster variables
include all minors of an $n \times n$ matrix, and 
$V_{>0}$ recovers the set of \emph{totally positive matrices}.
If $V$ is the (affine cone over the) Grassmannian 
$\Gr_{k,n}$ and $\C[\Gr_{k,n}]$ is endowed with the usual
cluster structure (see \cref{sec:Grassmannian}) then 
the cluster variables include all Pl\"ucker coordinates, and 
$V_{>0}$ recovers the \emph{totally positive Grassmannian}, that is,
the subset of the Grassmannian where all Pl\"ucker coordinates are positive.
\end{remark}

\begin{theorem}\label{thm:test}
Let $V$ be an affine cluster variety  whose coordinate ring
can be identified with a cluster algebra $\A$ with cluster variables $\Xcal$. Let $\sigma$ be a grading coming from a signed seed $\tilde{\Sigma}_0$. Then the signed cell $V^{\sigma}_{>0}$ is homeomorphic to an open ball of dimension $s$, where $s$ is the size of any extended cluster.

Moreover, for any signed seed $\tilde{\Sigma}=(\tilde{B}, (x_1,\dots,x_s), 
(\sigma_1,\dots,\sigma_s))$ 
which is mutation equivalent to $\tilde{\Sigma}_0$, we have 
\begin{align}
	V_{>0}^{\sigma}&=\{v\in V(\mathbb{R}) \ \vert \ \sigma(x) x(v) > 0 \text{ for all }x\in \Xcal\} \label{first}\\
	&=\{v\in V(\mathbb{R}) \ \vert \ \sigma(x_i) x_i(v) > 0 \text{ for }1 \leq i \leq s.\} \label{second} 
\end{align}	
In other words, each signed seed provides a finite ``test for membership'' for whether
	a given element $v$ lies in the signed cell $V^{\sigma}_{>0}$.
\end{theorem}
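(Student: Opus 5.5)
The plan is to prove the second statement, the equality of \eqref{first} and \eqref{second}, first, and then deduce the cell structure from it.

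\textbf{Step 1: the finite test.} Fix a signed seed $\tilde{\Sigma}=(\tilde{B},(x_1,\dots,x_s),(\sigma_1,\dots,\sigma_s))$ mutation equivalent to $\tilde{\Sigma}_0$. By \cref{prop:propagate} and \cref{rem:gradingpropagates}, its grading agrees with $\sigma$ on $x_1,\dots,x_s$, since $\sigma$ is computed by the same mutation rule along any path. The inclusion of \eqref{first} in \eqref{second} is trivial.

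For the reverse inclusion, take $v$ with $\sigma_i x_i(v)>0$ for all $i$. I would prove by induction on the mutation distance from $\tilde{\Sigma}$ that every seed $(\tilde{B}',(x'_1,\dots,x'_s),(\sigma'_i))$ satisfies $\sigma'_i x'_i(v)>0$. At a single mutation in direction $k$, write
\[
x'_k(v)=\frac{M_+(v)+M_-(v)}{x_k(v)},
\]
where $M_\pm$ are the two exchange monomials. Because $v$ has nonzero signed coordinates, we have $\sgn M_+(v)=\prod_{b_{ik}>0}\sigma_i^{b_{ik}}$ and $\sgn M_-(v)=\prod_{b_{ik}<0}\sigma_i^{-b_{ik}}$. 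These two signs are equal by the signed-seed condition \eqref{signedseed}, which is the multiplicative form of \eqref{newgrading0}. Hence $M_+(v)+M_-(v)$ is nonzero, and its sign is that common value. Dividing by $x_k(v)$ gives exactly the sign $\sigma(x'_k)$ from \eqref{newgrading}.

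One subtlety has to be handled here. Evaluating $x'_k$ at $v$ must make sense, and it does: $x'_k\in\A=\C[V]$ is a regular function, and the exchange relation holds in $\C[V]$. Since every cluster variable arises from some seed reachable by mutation, this gives \eqref{first}$=$\eqref{second}.

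\textbf{Step 2: the cell structure.} By Step 1 applied to any one seed, $V^{\sigma}_{>0}$ equals $\{v\in V(\R): \sigma_ix_i(v)>0\ \forall i\}$. Every such point has all $x_i(v)\neq 0$, so it lies in the cluster torus $T=\{x_1\cdots x_s\neq 0\}\cong(\C^*)^s$ of \cref{def:clustervariety}. The real points of $T$ are then identified, via the coordinates $(x_1,\dots,x_s)$, with $(\R^*)^s$. This uses that $T\subset V$ is an open embedding whose coordinate ring is $\C[x_1^{\pm1},\dots,x_s^{\pm1}]$, which holds since by the Laurent phenomenon $\A\subset\C[x_1^{\pm1},\dots,x_s^{\pm1}]$ and $\A$ localized at $x_1\cdots x_s$ is this Laurent ring.

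It follows that $V^\sigma_{>0}$ is homeomorphic to the orthant $\{(t_1,\dots,t_s)\in\R^s: \sigma_it_i>0\}$. This orthant is homeomorphic to $\R_{>0}^s$ via $t_i\mapsto\sigma_it_i$, and hence to an open ball of dimension $s$ via $\log$.

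\textbf{Main obstacle.} The step needing the most care is the identification in Step 2: checking that the cluster chart really is an open subvariety of $V=\Spec\A$ with coordinates $x_1,\dots,x_s$, so that real points with nonzero coordinates correspond bijectively and homeomorphically to $(\R^*)^s$. This needs the frozen variables to be inverted in the localization. That holds because they are among the $x_i$ being inverted, even though $R$ is the polynomial ring in the frozen variables. The topology on $V(\R)$ restricts to the Euclidean topology on this open chart, which gives the homeomorphism.
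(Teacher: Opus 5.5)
Your proposal is correct and follows essentially the same route as the paper. The paper also proves the finite test one mutation step at a time: the signed-seed condition forces the two exchange monomials to have the same sign at $v$, which gives the sign of $x'_k(v)$. It then identifies the cell with $(\R_{>0})^s$ through the coordinates of a single cluster via the Laurent phenomenon (\cref{lem:ball}). Your localization argument, that $\{x_1\cdots x_s\neq 0\}\subset V$ is $\Spec \C[x_1^{\pm1},\dots,x_s^{\pm1}]$, justifies the surjectivity and homeomorphism in that last step, which the paper's lemma only sketches.
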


Before proving the theorem, we prove the following lemma.
\begin{lemma}\label{lem:ball}
If $\Sigma=(\tilde{B},(x_1,\dots,x_s))$ is a seed for the cluster variety $V$ and if $(\sigma_1,\dots,\sigma_s)$ is an arbitrary element of $\{\pm {1}\}^{s}$, then
\[
	V_{>0}^{(\sigma_1,\dots,\sigma_s)} :=\{v\in V(\mathbb{R}) \ \vert \ \sigma_i x_i(v) > 0 \text{ for }1 \leq i \leq s\}
\]
is homeomorphic to $(\R_{>0})^s$. 
\end{lemma}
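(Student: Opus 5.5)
The plan is to show that the map
\[
	\Phi: V_{>0}^{(\sigma_1,\dots,\sigma_s)} \to (\R_{>0})^s, \qquad v \mapsto (\sigma_1 x_1(v),\dots,\sigma_s x_s(v))
\]
is a homeomorphism. Continuity of $\Phi$ is immediate, since each $x_i$ is a regular function on $V$ and hence continuous on $V(\R)$ in the Euclidean topology. The whole argument rests on the cluster torus of $\Sigma$. This is the open subset $T_\Sigma=\{v\in V : x_i(v)\neq 0 \text{ for all } i\}$, and on it the coordinates $(x_1,\dots,x_s)$ give an isomorphism $T_\Sigma \cong (\C^*)^s$ (\cref{def:clustervariety}).

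First I check that $V_{>0}^{(\sigma_1,\dots,\sigma_s)}$ lies in $T_\Sigma(\R)$. This holds because every point of the set has all $x_i(v)$ nonzero real numbers.

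Next I justify that $T_\Sigma\cong(\C^*)^s$. The justification is the Laurent phenomenon: every element of $\A$, in particular every generator (cluster variable or frozen variable), is a Laurent polynomial in $x_1,\dots,x_s$ with integer coefficients. Hence the inclusion $\A\subset \C[x_1^{\pm1},\dots,x_s^{\pm1}]$ becomes an equality after inverting $x_1\cdots x_s$. Geometrically, this says the principal open set $\{x_1\cdots x_s\neq 0\}\subset V$ is isomorphic to $\Spec \C[x_1^{\pm1},\dots,x_s^{\pm1}]=(\C^*)^s$. The isomorphism is defined over $\Q$, because the Laurent coefficients are integers, so it restricts to real points: $T_\Sigma(\R)\cong(\R^*)^s$ via $v\mapsto(x_1(v),\dots,x_s(v))$.

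Then I restrict to the orthant cut out by the sign conditions $\sigma_i x_i>0$. Composing with the coordinatewise sign flip $t_i\mapsto\sigma_i t_i$ shows that $\Phi$ is a bijection onto $(\R_{>0})^s$. Its inverse sends $(t_1,\dots,t_s)$ to the point $v$ at which every generator of $\A$ takes the value of its Laurent expression evaluated at $x_i=\sigma_i t_i$. This evaluation is continuous on $(\R_{>0})^s$, since no $x_i$ vanishes there. Therefore $\Phi^{-1}$ is continuous, and $\Phi$ is a homeomorphism.

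The main obstacle is the identification in the second step. It is the one place where the cluster structure is used, namely that $\A$ localized at the product of the cluster variables is exactly the Laurent ring. We also need that $V$ embeds in affine space via finitely many generators, so that the inverse map actually lands in $V(\R)$. This is where the finite generation hypothesis of \cref{def:clustervariety} enters.
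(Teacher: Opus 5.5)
Your proof is correct and takes the same approach as the paper: the sign-twisted coordinate map $v\mapsto(\sigma_1x_1(v),\dots,\sigma_sx_s(v))$, justified by the Laurent phenomenon. Your version is more complete than the paper's short argument. The paper only records that the values on the cluster determine all other values, which is injectivity. Your identification $\A[(x_1\cdots x_s)^{-1}]=\C[x_1^{\pm1},\dots,x_s^{\pm1}]$ also gives surjectivity onto the orthant and continuity of the inverse.
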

\begin{proof}
By the Laurent phenomenon for cluster algebras, if 
the values of the cluster variables
$x_i(v)$ for $1\leq i \leq s$ are fixed and 
nonzero, this uniquely
determines 
the value of $x(v)$ for any $x\in \Xcal$.
Thus 
	$V^{(\sigma_1,\dots,\sigma_s)}_{>0}$
	is homeomorphic to 
$(\R_{>0})^{s}$ via the map 
$$v \mapsto (\pm x_1(v), \dots, \pm x_s(v)),$$
where the sign in front of $x_i(v)$ is given by $\sigma_i$.
\end{proof}

\begin{proof}[Proof of \cref{thm:test}]
We start by proving \eqref{second}.  (We have that \eqref{first} holds
by definition.)
Clearly 
\[
	\{v\in V(\mathbb{R}) \ \vert \ \sigma(x) x(v) > 0 \text{ for all }x\in \Xcal\}  \subseteq \{v\in V \ \vert \ \sigma(x_i)x_i(v) > 0 \text{ for }1 \leq i \leq s.\}.
\]
To show the other inclusion, consider some $v$ such that 
$\sgn(x_i(v))=\sigma(x_i) \text{ for }1 \leq i \leq s.$
It is enough to show that if we mutate one step away from $\tilde{\Sigma}$,
obtaining a new cluster variable $x'_k$, we have 
$\sgn(x'_k(v)) = \sigma(x'_k)$.

By 
\eqref{eq:exch-rel-geom}
	we have 
\begin{equation*}
	x'_k(v) = 
	\frac{\prod_{b_{ik}>0} x_i(v)^{b_{ik}}
	+ \prod_{b_{ik}<0} x_i(v)^{-b_{ik}}}{x_k(v)},
\end{equation*}
and if we write \eqref{newgrading} multiplicatively, we get 
\begin{equation*}
\sigma(x'_k) := 
	\frac{\prod_{b_{ik}>0} \sigma(x_i)^{b_{ik}}}{\sigma(x_k)} = 
	\frac{\prod_{b_{ik}<0} \sigma(x_i)^{|b_{ik}|}}{\sigma(x_k)}.
\end{equation*} 
In particular the sign of the two monomials
	$\prod_{b_{ik}>0} x_i(v)^{b_{ik}}$ and 
	$\prod_{b_{ik}<0} x_i(v)^{-b_{ik}}$ is equal, and 
	$$\sgn(x'_k(v))  = 
	\frac{\prod_{b_{ik}>0} \sgn(x_i(v))^{b_{ik}}}{\sgn(x_k(v))} = 
	\frac{\prod_{b_{ik}>0} \sigma_i^{b_{ik}}}{\sigma_k}=\sigma(x'_k)$$ as desired.

Now using \eqref{second} and \cref{lem:ball},
	it follows that $V_{>0}^{\sigma}$ is homeomorphic to $(\R_{>0})^s$.
\iffalse
an open ball of dimension $s$. 
By the Laurent phenomenon for cluster algebras, if 
the values of the cluster variables
$x_i(v)$ from a given cluster $(x_1,\dots,x_s)$ are fixed and 
nonzero, this uniquely
determines 
the value of $x(v)$ for any $x\in \Xcal$.
Now using \eqref{second}, we see that 
$V^{\sigma}$ is homeomorphic to 
$(\R_{>0})^{s}$ via the map 
$$v \mapsto (\pm x_1(v), \dots, \pm x_s(v)),$$
where the sign in front of $x_i(v)$ is given by $\sigma(x_i)$. The result follows.
\fi
\end{proof}

Whenever open signed cells are non-empty,~\cref{thm:test} implies that they are always homeomorphic to open cells. Moreover,  all non-empty closed signed cells are homeomorphic to each other.

\begin{theorem}
	If $\Sigma = (\tilde{B}, (x_1, \ldots, x_s), (\sigma_1, \ldots, \sigma_s))$ is a signed seed for the cluster variety $V$, then $V_{\geq 0} \cong V_{\geq 0}^{\sigma}$. 
\end{theorem}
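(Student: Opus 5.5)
The plan is to build an explicit homeomorphism of the ambient real variety $V(\R)$ that carries $V_{\geq 0}$ onto $V^{\sigma}_{\geq 0}$. The natural candidate is the \emph{cluster automorphism} attached to the signed seed. View the sign vector $(\sigma_1,\dots,\sigma_s)\in\{\pm1\}^s\subset (\C^*)^s$. The signed seed condition \eqref{signedseed} says exactly that $\prod_i \sigma_i^{b_{ij}}=1$ for every mutable $j$; multiplicatively, this means $\sigma\in\ker(\tilde B^t)$. By \cref{prop:lamspeyercohomology} there is therefore an algebra automorphism $\phi\in\Aut(\A)$ with $\phi(x_i)=\sigma_i x_i$ for $1\le i\le s$.

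The first step is to identify the scalar $\zeta(x)$ with $\phi(x)=\zeta(x)x$ for an arbitrary cluster variable $x\in\Xcal$, and to show $\zeta(x)=\sigma(x)$. I would argue by induction along mutation sequences. If $\phi$ scales each variable of a seed by its grading, then applying $\phi$ to the exchange relation \eqref{eq:exch-rel-geom} gives
\[
\phi(x_k)\,\phi(x_k') = \Bigl(\textstyle\prod_{b_{ik}>0}\sigma_i^{b_{ik}}\Bigr)\Bigl(\prod_{b_{ik}>0}x_i^{b_{ik}}+\prod_{b_{ik}<0}x_i^{-b_{ik}}\Bigr).
\]
Here the balancing condition makes the two monomial prefactors equal. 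Dividing by $\phi(x_k)=\sigma_kx_k$ yields $\phi(x'_k)=\sigma(x'_k)x'_k$, by the multiplicative form of \eqref{newgrading}. So $\phi(x)=\sigma(x)x$ for all $x\in\Xcal$, and also for the frozen variables.

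Next, $\phi$ is defined over $\Q$, since the scalars are $\pm1$, and it is an involution because $\sigma(x)^2=1$. Hence it induces an involutive automorphism $\phi^*$ of the affine variety $V=\Spec\A$. This map preserves real points and is a homeomorphism of $V(\R)$ in the Euclidean topology, being polynomial with polynomial inverse. Because $\A$ is generated by cluster and frozen variables, it is enough to evaluate on these. For $v\in V(\R)$ we have $x(\phi^*v)=(\phi x)(v)=\sigma(x)x(v)$. Thus $v$ satisfies $x(v)\ge 0$ for all $x$ if and only if $\phi^* v$ satisfies $\sigma(x)x(\phi^*v)\ge0$ for all $x$. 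So $\phi^*$ restricts to a homeomorphism $V_{\geq0}\to V^\sigma_{\geq0}$, and likewise on the open cells.

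The main obstacle is the first step: making sure that \cref{prop:lamspeyercohomology} really yields an automorphism of all of $\A$, including the frozen ground ring, and that its scalars agree with $\sigma$ on every cluster variable rather than only on the initial seed. The inductive exchange-relation computation handles the second point. If the reference for the first point is in doubt, I would construct $\phi$ directly instead. Define it on the ambient Laurent ring $\C[x_1^{\pm1},\dots,x_s^{\pm1}]$ by $x_i\mapsto\sigma_ix_i$. The computation above shows it maps each cluster variable to $\pm$ itself, so it preserves the subalgebra $\A$ they generate, and it is invertible as an involution.
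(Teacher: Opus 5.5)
Your proposal is correct and follows the paper's own argument: the paper likewise uses \cref{prop:lamspeyercohomology} to obtain the automorphism $x_i\mapsto\sigma_i x_i$, and notes that the induced map on $V$ carries $V_{\geq 0}$ homeomorphically onto $V^{\sigma}_{\geq 0}$. Your inductive exchange-relation check that $\phi(x)=\sigma(x)\,x$ for every cluster variable, and your remark that $\phi^*$ is a real-point-preserving involution, supply details that the paper's one-line proof leaves implicit.
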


\begin{proof}
	From \cref{prop:lamspeyercohomology}, the signs $(\sigma_1, \ldots, \sigma_s) \in (\CC^*)^s$ determines an automorphism of $\mathcal{A}$ where the induced automorphism on $V$ maps $V_{\geq 0}$ homeomorphically onto $V_{\geq 0}^\sigma$. 
\end{proof}

\subsection{Enumeration of \texorpdfstring{$G$}{G}-gradings}\label{sec:enumeration}
In this section we present two lemmas that can help us
to enumerate the $G$-gradings. Given a  quiver $Q$, a \emph{frozen leaf} is a frozen vertex with degree $1$. An \emph{isolated vertex} is a vertex with degree $0$. When the underlying quiver $Q$ has {frozen leaves} or {isolated vertices}, then we can reduce the enumeration of  $G$-gradings on $Q$ to the enumeration of $G$-gradings of a smaller quiver.

\begin{lemma}\label{lem:frozen-leaf}
	Let $Q$ be a quiver and $x \in Q$ a frozen leaf. Let $Q'$ be the quiver where we remove $x$, freeze the neighbor of $x$, and then remove all edges between frozen vertices. Then, the number of $G$-gradings of $Q$ equals the number of $G$-gradings of $Q'$.  
\end{lemma}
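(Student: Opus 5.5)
We set up an explicit bijection between the $G$-gradings of $Q$ and those of $Q'$. Let $y$ be the unique neighbor of the frozen leaf $x$. Since there are no arrows between frozen vertices, $y$ is mutable in $Q$, and $x$ and $y$ are joined by $m \ge 1$ parallel arrows in one direction; the case $m=1$ is the one meant by ``degree $1$''. The vertex set of $Q'$ is that of $Q$ with $x$ deleted. Its mutable vertices are those of $Q$ other than $y$, and its arrows are those of $Q$ except the arrows at $x$ and the arrows between $y$ and other frozen vertices. By \cref{rem:quiver}, a $G$-grading is a tuple $\sigma\in G^{s}$ satisfying the balance condition $\inn_\sigma(v)=\out_\sigma(v)$ at every mutable vertex $v$.

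The key observation concerns the mutable vertices $v\neq y$ of $Q$. Their incident arrows are the same in $Q$ and in $Q'$: such a $v$ is not adjacent to $x$, and the only deleted arrows involve $x$ or join $y$ to a frozen vertex. Hence the balance condition at $v$ is literally the same equation in $Q$ and in $Q'$. The only difference between the two systems is the extra balance condition at $y$ in $Q$, which involves the new unknown $\sigma_x$. Suppose, say, that the arrows go $x\to y$. Then this condition reads
\[
m\,\sigma_x + \inn'(y) = \out(y),
\]
where $\inn'(y)$ is the in-sum at $y$ over arrows not coming from $x$. Everything except $m\sigma_x$ depends only on the restriction of $\sigma$ to the vertices of $Q'$.

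The map $\sigma\mapsto\sigma|_{Q'}$ therefore sends $G$-gradings of $Q$ to $G$-gradings of $Q'$. For $m=1$ it is a bijection: given a grading $\sigma'$ of $Q'$, the condition at $y$ determines $\sigma_x=\out(y)-\inn'(y)$ uniquely. The arrows-out case is symmetric, with $\sigma_x=\inn(y)-\out'(y)$. The map is additive, so we in fact obtain a group isomorphism $\mathfrak{gr}(\mathcal{A}(Q),G)\cong\mathfrak{gr}(\mathcal{A}(Q'),G)$.

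The only subtle point is bookkeeping. We must confirm that freezing $y$ and then deleting the arrows between $y$ and other frozen vertices does not alter any balance equation at a remaining mutable vertex. This holds because those deleted arrows touch only frozen vertices and $y$, and $y$ is no longer mutable in $Q'$. We must also make sure that the degree-one hypothesis ($m=1$) is used, since for $m>1$ one would need to solve $m\sigma_x=g$ in $G$, and the count could change.
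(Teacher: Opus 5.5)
Your proposal is correct and is the same argument as the paper's: restrict a grading of $Q$ to $Q'$, and conversely extend a grading of $Q'$ by the unique value of $\sigma_x$ forced by the balance condition at $y$. Your extra bookkeeping just spells out what the paper leaves implicit, namely that the balance equations at the other mutable vertices are unchanged and that the single arrow ($m=1$) is what makes $\sigma_x$ uniquely solvable.
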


\begin{proof}
	The procedure for removing a frozen leaf is shown in Figure~\ref{fig:frozenleaf}. Let $y$ be the neighbor of $x$; it is necessarily mutable. Given a $G$-grading of $Q'$ that we want to extend to $Q$, 
there is exactly one element of $G$ we can assign to $x$ so that the 
resulting $G$-grading is $G$-balanced  at $y$. 
Conversely, a $G$-grading on $Q$ gives a $G$-grading on $Q'$. 
These correspondences are inverse, so we have a one-to-one correspondence between the $G$-gradings on $Q$ and the $G$-gradings on $Q'$. 
\end{proof}
\begin{figure}[h]
\begin{center}
	\begin{tikzpicture}[
		>=stealth,
		every node/.style={font=\small}
	]
		
		\node[draw, ellipse, minimum width=1.6cm, minimum height=3.2cm] (Q1)
		{$Q\setminus\{x,y\}$};

		\node[circle, draw, right=1.2cm of Q1] (y1) {$y$};
		\node[draw, rectangle, minimum size=6mm, right=0.9cm of y1] (x1) {$x$};

		\draw (y1.west) -- (Q1.60);
		\draw (y1.west) -- (Q1.0);
		\draw (y1.west) -- (Q1.-60);

		\draw (x1.west) -- (y1.east);

		\draw[->, thick] ($(x1.east)+(0.5,0)$) -- ++(1.8,0);

		\node[draw, ellipse, minimum width=1.6cm, minimum height=3.2cm,
			right=3.0cm of x1] (Q2) {$Q\setminus\{x,y\}$};

		\node[draw, rectangle, minimum size=6mm, right=1.2cm of Q2] (y2) {$y$};

		\draw (y2.west) -- (Q2.60);
		\draw (y2.west) -- (Q2.0);
		\draw (y2.west) -- (Q2.-60);
	\end{tikzpicture}

\end{center}
\caption{Removal of frozen leaf}\label{fig:frozenleaf}
\end{figure}

\begin{lemma}\label{lem:isolated-vertex}
	Let $Q$ be a quiver and $x \in Q$ an isolated vertex. Then,
	\[
		\# \{ \text{$G$-gradings of $Q$} \} = |G| \cdot \# \{ \text{$G$-gradings of $Q \backslash \{x\}$} \}. 
	\]
\end{lemma}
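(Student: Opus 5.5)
The plan is to write down an explicit bijection between $G$-gradings of $Q$ and pairs consisting of a $G$-grading of $Q\setminus\{x\}$ and an arbitrary element of $G$. I will use the description of a $G$-grading as a tuple $(\sigma_v)_{v \in Q}$ that satisfies the balancing condition \eqref{eq:grading} at every mutable vertex, as in \cref{rem:quiver}. Equivalently, it is an element of $\mathfrak{gr}(\mathcal{A}(\widetilde{B}),G)=\ker(\widetilde{B}^t)$.

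The key observation is that $x$ has degree $0$, so no arrow of $Q$ is incident to $x$. There are two consequences.
\begin{enumerate}
\item The value $\sigma_x$ never appears in $\inn^Q_\sigma(v)$ or $\out^Q_\sigma(v)$ for any mutable $v \neq x$.
\item If $x$ is itself mutable, its balancing condition reads $0 = 0$, since both sums are empty, and so it is automatically satisfied.
\end{enumerate}
For every other mutable vertex $v$, the arrows at $v$ in $Q$ and in $Q\setminus\{x\}$ are the same. Hence the balancing condition at $v$ for $Q$ is literally the same equation as the one for $Q \setminus \{x\}$.

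This gives the bijection
\[
\{G\text{-gradings of } Q\} \longrightarrow \{G\text{-gradings of } Q\setminus\{x\}\} \times G, \qquad \sigma \mapsto \bigl(\sigma|_{Q\setminus\{x\}},\, \sigma_x\bigr).
\]
To see that it is well defined, note that the restriction of $\sigma$ satisfies all the balancing conditions of $Q\setminus\{x\}$ by the observations above. Its inverse extends a grading of $Q\setminus\{x\}$ by an arbitrary value at $x$; the extension is balanced at every mutable vertex of $Q$, again by those observations. Counting both sides gives the formula. If $G$ is infinite, both sides are infinite cardinals and the identity is read as an equality of cardinalities.

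In matrix language, the same argument says that $\widetilde{B}$ has a zero row at $x$, and also a zero column there if $x$ is mutable. Therefore $\ker(\widetilde{B}^t) \cong \ker(\widetilde{B}'^t) \oplus G$, where $\widetilde{B}'$ is the exchange matrix of $Q \setminus \{x\}$.

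The only point that needs care is the case where $x$ is mutable, since removing $x$ then changes the set of mutable vertices. This is handled by noting that the balancing condition at $x$ is vacuous. There is no real obstacle beyond that.
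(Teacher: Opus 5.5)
Your proof is correct and is the same argument as the paper's: since $x$ has no incident arrows, $\sigma_x$ appears in no balancing condition, and the condition at $x$ (if $x$ is mutable) is vacuous, so a grading of $Q\setminus\{x\}$ extends by an arbitrary element of $G$ at $x$. Your write-up only makes explicit the bijection and the case where $x$ is mutable, which the paper's one-line proof leaves implicit.
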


\begin{proof}
Given a $G$-grading on $Q \backslash \{x\}$, 
we can assign any element of $G$ to $x$ and obtain a $G$-grading on $Q$,
since $x$ is disconnected from the rest of the quiver.
\end{proof}

\begin{remark}
	There is a generalization of these two results to $G$-balanced exchange matrices. 
	The analogue of an isolated vertex is an index $h$ such that 
	$b_{hi}=0$ for all $1\leq i \leq r$.
	The analogue of a frozen leaf is a frozen index $f$ such that $b_{fi} = 0$ for all $1 \leq i \leq r$ except for exactly one $1 \leq j \leq r$ for which $b_{fj} = \pm 1$. 
	After removing $f$ and freezing $j$, a $G$-grading of the simplified exchange matrix can be extended to a $G$-grading of the original exchange matrix by the formula 
	\[
		\sigma_f = - \frac{1}{b_{fj}} \sum_{i \neq f} b_{ij} \sigma_i. 
	\]
\end{remark}

\section{\texorpdfstring{$G$}{G}-graded seeds for 
square matrices}\label{sec:matrices}

The coordinate ring 
$\CCC[\Mat_{n \times n}]$ 
of the space of $n \times n$
complex matrices is the polynomial ring $\CCC[z_{ij}]$, where $z=(z_{ij})$
denotes a generic $n \times n$ matrix with entries $z_{ij}$.
In this section we will briefly explain how the coordinate ring
$\CCC[\Mat_{n \times n}]$ has a cluster algebra structure, then 
classify its signed seeds.  It turns out that all signed cells
of the space of $n \times n$ matrices can be obtained from the 
\emph{totally positive matrices}
by rescaling some rows and some columns.

\subsection{Totally positive matrices and 
the cluster structure on $\CCC[\Mat_{n \times n}]$}

Let $z$ denote an $n \times n$ matrix.
If $I$ and $J$ are subsets of $[n]$ with the same cardinality, 
let $z_{I,J}$ denote the 
determinant of the submatrix of $z$ which uses rows $I$ and columns $J$.
We say that $z_{I,J}$ is a \emph{solid minor} 
if both $I$ and $J$ are consecutive 
subsets of integers.

\begin{definition}
We say that a matrix $M$ is \emph{totally positive} if all
of its minors are positive; in other words, every square submatrix
of $M$ has positive determinant.
\end{definition}

The first systematic study of totally positive matrices dates back to the 
1930's with works by Gantmacher and Krein \cite{gantmacher-krein}, 
as well as Schoenberg
\cite{schoenberg}.  One natural question to ask about totally positive
matrices concerns \emph{efficient testing} \cite{fz-intel}:
how many minors (and which ones) does one need to check in order to 
ascertain that a given $n \times n$ matrix $M$ is totally positive?
We will return to this question after reviewing the cluster structure
on the space of $n \times n$ matrices, following
 \cite[Section 6.6]{FWZ6}.

The ring 
$\CCC[\Mat_{n \times n}]$  has a cluster structure
with initial seed given as follows.

\begin{definition}
Let $\Sigma = \Sigma^{(n)}$ be the initial seed
shown in 
\cref{fig:initialquiver}, see also \cite[Figure 6.5]{FWZ6}.
(The figure shows the case where $n=4$ but the 
general pattern should be clear.)  
The quiver has $n^2$ total vertices, of which $2n-1$ are frozen.
The initial cluster variables are 
solid minors
which ``stick'' to the left edge 
or the upper edge of the matrix.
Let $\A = \A(\Sigma)$ denote the cluster algebra associated to $\Sigma$.
\end{definition}

\begin{figure}[ht]
\begin{center}
\setlength{\unitlength}{1.8pt}
\begin{picture}(160,60)(0,0)

\put(80,60){\makebox(0,0){$\boxed{1234,\mathbf{1234}}$}}

\put(40,40){\makebox(0,0){$\boxed{234,\mathbf{123}}$}}

\put(80,40){\makebox(0,0){${123,\mathbf{123}}$}}

\put(120,40){\makebox(0,0){$\boxed{123,\mathbf{234}}$}}

\put(-1,20){\makebox(0,0){$\boxed{34,\mathbf{12}}$}}
\put(40,20){\makebox(0,0){${23,\mathbf{12}}$}}
\put(80,20){\makebox(0,0){${12,\mathbf{12}}$}}
\put(120,20){\makebox(0,0){${12,\mathbf{23}}$}}
\put(160,20){\makebox(0,0){$\boxed{12,\mathbf{34}}$}}

\put(-41,0){\makebox(0,0){$\boxed{4,\mathbf{1}}$}}
\put(-1,0){\makebox(0,0){${3,\mathbf{1}}$}}
\put(40,0){\makebox(0,0){${2,\mathbf{1}}$}}
\put(80,0){\makebox(0,0){${1,\mathbf{1}}$}}
\put(120,0){\makebox(0,0){${1,\mathbf{2}}$}}
\put(160,0){\makebox(0,0){${1,\mathbf{3}}$}}
\put(200,0){\makebox(0,0){$\boxed{1,\mathbf{4}}$}}

\put(52,24){\vector(2,1){22}}
\put(108,24){\vector(-2,1){22}}
\put(52,4){\vector(2,1){22}}
\put(108,4){\vector(-2,1){22}}
\put(12,4){\vector(2,1){22}}
\put(148,4){\vector(-2,1){22}}

\put(70,40){\vector(-1,0){17}}
\put(90,40){\vector(1,0){17}}

\put(30,20){\vector(-1,0){21}}
\put(71,20){\vector(-1,0){22}}
\put(89,20){\vector(1,0){22}}
\put(129,20){\vector(1,0){20}}

\put(-7,0){\vector(-1,0){26}}
\put(33,0){\vector(-1,0){26}}
\put(75,0){\vector(-1,0){27}}
\put(87,0){\vector(1,0){26}}
\put(127,0){\vector(1,0){25}}
\put(167,0){\vector(1,0){25}}

\put(0,16){\vector(0,-1){12}}

\put(40,34){\vector(0,-1){12}}
\put(40,16){\vector(0,-1){12}}

\put(80,54){\vector(0,-1){11}}
\put(80,36){\vector(0,-1){12}}
\put(80,16){\vector(0,-1){12}}

\put(120,34){\vector(0,-1){12}}
\put(120,16){\vector(0,-1){12}}

\put(160,16){\vector(0,-1){12}}

\end{picture}
\end{center}

\caption{The initial seed for $\CCC[\operatorname{Mat}_{4 \times 4}]$,
where we denote the minor $z_{I,J}$ by 
$I,\mathbf{J}$.}
\label{fig:initialquiver}
\end{figure}

Let $M$ be an $n \times n$ matrix (either real or complex).
If all 
the cluster variables in a given cluster of $\A$ are positive on $M$, then 
all other cluster variables must also be positive on $M$.  Since all minors 
of an $n \times n$ matrix appear as cluster variables
 \cite[Section 6.6]{FWZ6}, 
it follows that 
all minors are positive on $M$.
For example, if all the minors from \cref{fig:initialquiver} are positive
on $M$, then $M$ must be totally positive. In summary,
the cluster variables of any given cluster comprise a 
``positivity test'' for membership in the set of totally positive matrices.

Our next goal is to classify the signed seeds of $\A$
and hence the signed cells of $\Mat_{n \times n}$.
Note that the totally positive signed seed, which comes from the constant
grading $\sigma=1$, gives rise to a signed cell which is exactly
the set of totally positive matrices.

\subsection{Signed cells of $\Mat_{n \times n}$}
In this section we will classify all signed
seeds for $\CCC[\Mat_{n \times n}]$.

\begin{theorem}\label{thm:matrices}
There are exactly $2^{2n-1}$ signed cells of the set of 
$n \times n$ matrices, each of which is homeomorphic to an open ball.
	Each signed cell can be obtained from the totally
positive matrices by choosing an arbitrary set of rows, and an arbitrary set of columns, and rescaling these rows and these columns by $-1$.
In other words, if $V^{\sigma}$ is a signed cell
of the space of $n\times n$ matrices, and 
$M=(m_{ij})\in V^{\sigma}$, then its signs are completely
determined by the signs of the entries
$m_{11}, m_{12},\dots, m_{1n}$ and $m_{21}, m_{31}, \dots, m_{n1}$.
	In particular, for $I, J \subset [n]$, with $|I|=|J|=\ell$, we have
	\[
		\sgn(M_{I,J}) = \sgn(m_{11})^{\ell} \prod_{i\in I} \sgn(m_{i1}) \prod_{j\in J} \sgn(m_{1j}).
	\]
The analogous result holds for 
arbitrary $G$-gradings: there are exactly $|G|^{2n-1}$ $G$-gradings,
and all of these come from the torus action.
\end{theorem}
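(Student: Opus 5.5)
The plan is to compute the group $\mathfrak{gr}(\A,G)=\ker(\tilde B^t: G^{n^2}\to G^{(n-1)^2})$ directly for the initial seed $\Sigma^{(n)}$ of \cref{fig:initialquiver}, and then compare it with the image of $\Psi_G$ for the torus $(\C^*)^n\times(\C^*)^n$ that rescales rows and columns.

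\textbf{Counting the gradings.} I will show that every $G$-grading is uniquely determined by its values on the $2n-1$ frozen vertices, and that every assignment of values to the frozen vertices extends. To do this, order the mutable vertices from the bottom-left and bottom-right ends inward toward the centre $(1,\mathbf{1})$. The balancing condition \eqref{eq:grading} at a mutable vertex $v$ then expresses the value at one neighbouring vertex (one not yet determined) as a signed sum of values already known, in the same way as the frozen-leaf argument of \cref{lem:frozen-leaf}.

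A more uniform way to organise this is to use the diagonal structure of the quiver. The vertices $z_{[i,i+\ell-1],[1,\ell]}$ on the left and $z_{[1,\ell],[j,j+\ell-1]}$ on the right lie on chains. In each row of the figure, the arrows run outward from the central vertex $[1,\ell],[1,\ell]$ toward the boxed frozen vertex at the end of that row. Balancing at the vertex adjacent to a frozen leaf determines the next vertex inward, with coefficient $\pm1$, from the vertices already known.

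Concretely, I will show that $\tilde B$, restricted to its rows indexed by the $(n-1)^2$ mutable vertices, is unimodular over $\Z$: it is triangular with $\pm1$ on the diagonal in a suitable ordering. Then $\ker \tilde B^t \cong G^{2n-1}$, with the isomorphism given by restriction to the frozen coordinates. This gives exactly $|G|^{2n-1}$ gradings. Establishing this triangularity is the step I expect to be the main obstacle, since it depends on choosing the correct elimination order in the grid quiver. I will first try induction on $n$: delete the bottom row of the figure (the $1\times1$ minors) together with the frozen vertices, using \cref{lem:frozen-leaf} repeatedly, and reduce to $\Sigma^{(n-1)}$ with the frozen vertices shifted.

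\textbf{Matching with the torus action.} Let the torus $T=(\C^*)^n\times(\C^*)^n$ act by $M\mapsto D_1MD_2$. Each minor $z_{I,J}$ is homogeneous of degree $\sum_{i\in I}e_i+\sum_{j\in J}f_j\in\Z^{2n}$. The initial cluster variables are minors, so this is a $\Z^{2n}$-grading $\sigma_{\text{std}}$, and it is balanced because the exchange relations are homogeneous. Its restriction to the frozen minors $z_{[n-\ell+1,n],[1,\ell]}$ and $z_{[1,\ell],[n-\ell+1,n]}$ is a map $\Z^{2n}\to\Z^{2n-1}$. This map is surjective with kernel spanned by $\sum e_i-\sum f_j$: the differences of consecutive frozen degrees isolate the individual $e_i$ and $f_j$, up to the shared vertex $[1,n],[1,n]$. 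Tensoring with $G$ and using the bijection from the previous step shows that $\Psi_G$ is surjective onto $\mathfrak{gr}(\A,G)$. Hence every $G$-grading comes from the torus action.

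\textbf{The sign formula.} Specialising to $G=\{\pm1\}$, $\Psi_G(\epsilon,\delta)$ assigns the sign $\prod_{i\in I}\epsilon_i\prod_{j\in J}\delta_j$ to $z_{I,J}$. Every minor is a cluster variable (by \cite[Section 6.6]{FWZ6}), so this determines the signs of all minors. The signed cell $V^\sigma_{>0}$ therefore equals $\{D_\epsilon M D_\delta : M \text{ totally positive}\}$, which is nonempty and homeomorphic to an open ball by \cref{thm:test}. Distinct gradings give disjoint cells, so there are $2^{2n-1}$ of them. Normalising so that $\delta_1=\sgn(m_{11})\epsilon_1$, reading off $\epsilon_i\delta_1=\sgn(m_{i1})$ and $\epsilon_1\delta_j=\sgn(m_{1j})$, and using $\epsilon_i^2=1$ gives the displayed formula $\sgn(M_{I,J})=\sgn(m_{11})^{\ell}\prod_{i\in I}\sgn(m_{i1})\prod_{j\in J}\sgn(m_{1j})$.
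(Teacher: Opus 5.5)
Your plan (parametrize the gradings by $2n-1$ free coordinates through a triangular system, then compare with the torus degrees) has the right shape. The gap is that you chose the frozen vertices as those coordinates, and both claims built on that choice are false.

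The rows of $\tilde B$ indexed by the mutable vertices form the principal part $B$, which is skew-symmetric of order $(n-1)^2$. For even $n$ that order is odd, so $\det B=0$. For $n=3$ the rows of $z_{2,1}$ and $z_{1,2}$ coincide. Already for $n=2$ you have $B=(0)$ and the single balancing condition $\sigma(z_{12,12})=\sigma(z_{2,1})+\sigma(z_{1,2})$. So the frozen values are constrained while $\sigma(z_{1,1})$ is free, and restriction to frozen coordinates is neither injective nor surjective. Concretely, a totally positive $M$ and $\bigl(\begin{smallmatrix} -m_{11} & m_{12}\\ m_{21} & -m_{22}\end{smallmatrix}\bigr)$ lie in different signed cells but have identical frozen signs.

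Your torus comparison fails for the same reason. The consecutive differences of frozen degrees are $e_{n-\ell+1}+f_\ell$ on the left side and $e_\ell+f_{n-\ell+1}$ on the right side. These are the same $n$ vectors, so the frozen degrees span only a rank-$n$ lattice. The map $\Z^{2n}\to\Z^{2n-1}$ is therefore not surjective, and its kernel has rank $n$, not $1$. Your fallback induction via \cref{lem:frozen-leaf} also stalls. After you shed $z_{n,1}$ and freeze $z_{n-1,1}$, that vertex still has two neighbours, $z_{n-2,1}$ and $z_{[n-2,n-1],[1,2]}$.

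The missing idea, which is exactly the paper's argument, is to take as free coordinates the $2n-1$ vertices of the bottom row of \cref{fig:initialquiver}. These are the entries $z_{i,1}$ and $z_{1,j}$, and only $z_{n,1}$ and $z_{1,n}$ among them are frozen. Process the mutable vertices row by row from the bottom, and within each row from the centre outward. Then each balancing equation contains exactly one undetermined vertex, which lies in the next row up and has coefficient $\pm1$; the first step is balancing at $z_{1,1}$, which fixes $z_{12,12}$. So the $(n-1)^2$ equations determine the $(n-1)^2$ vertices of rows $2,\dots,n$ triangularly, which gives existence, uniqueness and the count $|G|^{2n-1}$.

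With this substitution your torus step works. The degrees $e_i+f_1$ and $e_1+f_j$ of the bottom-row entries define a surjection $\Z^{2n}\to\Z^{2n-1}$ whose kernel is spanned by $\sum e_i-\sum f_j$. Hence $\Psi_G$ is onto, and your derivation of the sign formula then goes through unchanged.
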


Recall the characterization of signed seeds from \cref{rem:quiver2}.

	\begin{proof}[Proof of \cref{thm:matrices}]
Let us use the seed shown in 
\cref{fig:initialquiver}.  Suppose 
we fix the signs of the cluster variables
$z_{i,1}$ and $z_{1,j}$, that is, the signs of the entries 
in the first row and first column of matrix $z$.
This fixes the signs of the cluster variables in the bottom
row of the quiver in \cref{fig:initialquiver}.
Note that $z_{1,1}$ is incident to only one arrow with a vertex
in row $2$.  Thus \eqref{eq:grading2} uniquely determines the 
sign of $z_{12,12}.$  Once we know this sign, \eqref{eq:grading2}
uniquely determines the signs of $z_{23,12}$ and $z_{12,23}$.
We can then propagate this argument and determine the signs
of all cluster variables in the second row of \cref{fig:initialquiver}.
Now we see that $z_{12,12}$ is incident to only one arrow
with a vertex in row $3$.  Thus we can use \eqref{eq:grading2}
to determine the sign of $z_{123,123}$ and all other
cluster variables in the third row.
Continuing in this fashion, we see that if we fix
arbitrary signs for the cluster variables in the bottom row
of \cref{fig:initialquiver}, then  we can find a unique way 
to determine signs for the remaining cluster variables, such that the 
result is a signed seed.  The same argument works for $G$-gradings.

The fact that each signed cell is homeomorphic to an open 
ball is a consequence of \cref{thm:test}.  
\end{proof}

\subsection{Signed cells of matrices and sign patterns for matrix minors}

We describe some classes of real matrices characterized by explicit matrix minor sign pattern rules, and explain their connection with the signed cells of $\Mat_{n \times n}$ coming from the cluster structure. 

\begin{definition}
	A matrix is called \emph{sign regular} if for each $k$, 
	all its $k\times k$ minors have
	the same sign or are zero.
\end{definition}

It is not the case that all sign regular matrices come from 
signed cells.  However, suppose that $M$ is a matrix 
such that its $k \times k$ minors are positive whenever $k$ is even
and negative whenever $k$ is odd.
Then $M$ is sign regular and it comes from
a signed seed.

\begin{remark}
It is well known that totally positive matrices have all positive
eigenvalues.  Moreover, sign regular matrices have all real eigenvalues~\cite{Ando1987}.  
However, square matrices coming from more general signed seeds need not have real eigenvalues.
For example, the matrix 
\[
\begin{bmatrix}
	7 & -7 & 1\\
	4 & -5 & 2\\
	5 & -8 &6 
\end{bmatrix}
\] comes from a signed seed but does not have real eigenvalues.
\end{remark}

Sign-symmetric matrices and their spectral properties have been studied in~\cite{MR2148896,MR872063}. 

\begin{definition}
	A matrix $A$ is \textit{sign symmetric} if $\det A_{\alpha, \beta} \det A_{\beta, \alpha}\geq 0$ for all $\alpha, \beta$. We say a $G$-grading $\sigma : \mathcal{X} \to G$ is \textit{$G$-symmetric} if $\sigma(z_{\alpha, \beta}) = \sigma (z_{\beta, \alpha})$ for all $\alpha, \beta$. 
\end{definition}

Consider an arbitrary $G$-symmetric grading $\sigma : \mathcal{X} \to G$. Since this comes from the torus action, there are $x_1, \ldots, x_n, y_1, \ldots, y_n \in G$ such that 
\[
	\sigma({z_{I, J}}) = \sum_{i \in I} x_i + \sum_{j \in J} y_j. 
\]
Since $\sigma(z_{i, j}) = \sigma (z_{j, i})$ for all indices $i, j$, we must have $x_i - x_j = y_i - y_j$ in $G$. Thus, the $G$-symmetric gradings are determined by the values of $(x_1, x_1 - x_2, \ldots, x_1 - x_n, y_1)$ and it corresponds to the trivial grading if and only if $x_1 + y_1 = 0$ and $x_1 - x_k = 0$ for $2 \leq k \leq n$. Hence, there are $|G|^{n}$ total $G$-symmetric gradings.

\section{$G$-graded seeds for symmetric matrices}\label{sec:symm}

Let $\mathcal{A}_{\text{sym}} = \CC[\Mat_{n, n}] / I_{\text{sym}}$, where $I_{\text{sym}}$ is the ideal generated by $z_{ij} - z_{ji}$ for all $1 \leq i, j \leq n$, be the coordinate ring of the variety of symmetric matrices. We will construct a cluster structure on $\mathcal{A}_{\text{sym}}$. Let $N = \binom{n+1}{2}$, then consider the functions $X_1, \ldots, X_{N} \in \mathcal{A}_{\text{sym}}$ which are defined by 
\begin{equation}\label{eqn:initial-variables-symmetric}
    \mathcal{X}_{\text{initial}} = \left \{ X_{\binom{k}{2} + j} := z_{[j], [k-j+1,k]} \mid 1 \leq j \leq k \leq n \right \}.
\end{equation}
For general $n$, the initial symmetrizable exchange matrix for this cluster structure satisfies a clear pattern. The easiest way to display the exchange matrix is in the form of a valued quiver. 

\begin{definition}[Valued quiver]
    A \textbf{valued quiver} is a quiver $Q = (Q_0, Q_1, s, t)$ with the data of an ordered pair of positive integers $(v_1(\alpha), v_2(\alpha)) \in \ZZ_{>0}^2$ to every edge $\alpha \in Q_1$ and a decomposition $Q_0 = Q_0^{\text{mut}} \sqcup Q_0^{\text{fro}}$ of the vertices into mutable and frozen vertices which satisfies the following conditions:
    \begin{enumerate}[label = (\arabic*)]
        \item There are no loops;
        \item There is at most one edge between any pair of vertices, and there are no edges between any pair of frozen vertices;
        \item There is a choice of positive integers $d_i > 0$ for all $i \in Q_0$ such that for any edge $\alpha \in Q_1$ we have $d_{s(\alpha)} v_1 = d_{t(\alpha)} v_2$. 
    \end{enumerate} 
\end{definition}

\begin{remark}
    The notion of a valued quiver is equivalent to a symmetrizable exchange matrix. Given a valued quiver, we can define a symmetrizable exchange matrix where 
    \[
        b_{ij} = \begin{cases}
            0 & \text{if there is no edge between $i$ and $j$} \\
            v_1 & \text{if there is an edge $i \xrightarrow{(v_1,v_2)} j$} \\
            -v_2 & \text{if there is an edge $j \xrightarrow{(v_1, v_2)} i$}. 
        \end{cases}
    \]
\end{remark}

\begin{equation}\label{eqn:symmetric-initial-n=5}
\begin{tikzcd}
    {\boxed{X_{15}}} &&&&& \\
    X_{10} && {\boxed{X_{14}}} \\
    X_6 && X_9 & {\boxed{X_{13}}} \\
    X_3 && X_5 & X_8 & {\boxed{X_{12}}} \\
    X_1 && X_2 & X_4 & X_7 & {\boxed{X_{11}}}
    \arrow[from=1-1, to=2-1]
    \arrow["{(1,2)}"{description}, from=2-1, to=2-3]
    \arrow[from=2-1, to=3-1]
    \arrow[from=2-3, to=3-3]
    \arrow["{(1,2)}"{description}, from=3-1, to=3-3]
    \arrow[from=3-1, to=4-1]
    \arrow["{(2,1)}"{description}, from=3-3, to=2-1]
    \arrow[from=3-3, to=3-4]
    \arrow[from=3-3, to=4-3]
    \arrow[from=3-4, to=4-4]
    \arrow["{(1,2)}"{description}, from=4-1, to=4-3]
    \arrow[from=4-1, to=5-1]
    \arrow["{(2,1)}"{description}, from=4-3, to=3-1]
    \arrow[from=4-3, to=4-4]
    \arrow[from=4-3, to=5-3]
    \arrow[from=4-4, to=3-3]
    \arrow[from=4-4, to=4-5]
    \arrow[from=4-4, to=5-4]
    \arrow[from=4-5, to=5-5]
    \arrow["{(1,2)}"{description}, from=5-1, to=5-3]
    \arrow["{(2,1)}"{description}, from=5-3, to=4-1]
    \arrow[from=5-3, to=5-4]
    \arrow[from=5-4, to=4-3]
    \arrow[from=5-4, to=5-5]
    \arrow[from=5-5, to=4-4]
    \arrow[from=5-5, to=5-6]
\end{tikzcd}\end{equation}

\subsection{Initial valued quiver for $\mathcal{A}_{\text{sym}}$}
For $n = 5$, the initial valued quiver is given by \cref{eqn:symmetric-initial-n=5} with symmetrizers $d_{\binom{k+1}{2}} = 2$ for all $1\leq k \leq n$ and $d_\ell = 1$ otherwise. In \cref{eqn:symmetric-initial-n=5-with-variables} we draw the same initial valued quiver but replacing the variables with the actual values in \cref{eqn:initial-variables-symmetric}.

\begin{equation}\label{eqn:symmetric-initial-n=5-with-variables}
\begin{tikzcd}
    {\boxed{12345,12345}} &&&&& \\
    {1234,1234} && {\boxed{1234,2345}} \\
    {123,123} && {123,234} & {\boxed{123,345}} \\
    {12,12} && {12,23} & {12,34} & {\boxed{12,45}} \\
    {1,1} && {1,2} & {1,3} & {1,4} & {\boxed{1,5}}
    \arrow[from=1-1, to=2-1]
    \arrow["{(1,2)}"{description}, from=2-1, to=2-3]
    \arrow[from=2-1, to=3-1]
    \arrow[from=2-3, to=3-3]
    \arrow["{(1,2)}"{description}, from=3-1, to=3-3]
    \arrow[from=3-1, to=4-1]
    \arrow["{(2,1)}"{description}, from=3-3, to=2-1]
    \arrow[from=3-3, to=3-4]
    \arrow[from=3-3, to=4-3]
    \arrow[from=3-4, to=4-4]
    \arrow["{(1,2)}"{description}, from=4-1, to=4-3]
    \arrow[from=4-1, to=5-1]
    \arrow["{(2,1)}"{description}, from=4-3, to=3-1]
    \arrow[from=4-3, to=4-4]
    \arrow[from=4-3, to=5-3]
    \arrow[from=4-4, to=3-3]
    \arrow[from=4-4, to=4-5]
    \arrow[from=4-4, to=5-4]
    \arrow[from=4-5, to=5-5]
    \arrow["{(1,2)}"{description}, from=5-1, to=5-3]
    \arrow["{(2,1)}"{description}, from=5-3, to=4-1]
    \arrow[from=5-3, to=5-4]
    \arrow[from=5-4, to=4-3]
    \arrow[from=5-4, to=5-5]
    \arrow[from=5-5, to=4-4]
    \arrow[from=5-5, to=5-6]
\end{tikzcd}
\end{equation}
The pattern for general $n$ should be clear from the picture. Note that the initial valued quiver is the quiver in \cref{fig:initialquiver} but ``folded'' along the center. In order to account for repeated edges along the center, we need to replace standard edges with valued edges. 

Before proving that this provides a cluster structure to $\mathcal{A}_{\text{sym}}$, we first prove some useful identities. Recall that $\mathbf{P}_I := z_{I, I}$ is the principal minor with respect to $I$ and $P_I := z_{[|I|], I}$ is the flag minor with respect to $I$. By convention, we let $\mathbf{P}_\emptyset = P_\emptyset = 1$. All of the subsequent identities hold in $\CC[\Mat_{n \times n}]$ and thus they hold in $\mathcal{A}_{\text{sym}}$ after identifying $z_{ij} = z_{ji}$. 

\begin{theorem}[Desnanot-Jacobi Identity]
	Let $M$ be a $k \times k$ matrix. Let $M_{I}^J$ denote the matrix with the rows indexed by $I$ removed and the columns indexed by $J$ removed. Then, we have the identity
	\[
		\det (M) \det (M^{1, k}_{1, k}) = \det(M^{1}_1) \det (M^k_{k}) - \det(M^k_1) \det (M^1_k). 
	\]
\end{theorem}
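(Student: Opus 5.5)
The plan is to prove the Desnanot--Jacobi identity as a special case of Jacobi's complementary minor theorem, using the adjugate matrix. I will first handle the generic case where $M$ is invertible. Since both sides are polynomials in the entries of $M$, the identity then extends to all matrices by density (or Zariski density) of invertible matrices. Equivalently, one can work over the field of fractions of $\CC[z_{ij}]$ with $M$ the generic matrix.

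Let $C=\operatorname{adj}(M)$, so $C_{ij}=(-1)^{i+j}\det(M^{i}_{j})$ with row $j$ and column $i$ removed; with the paper's notation $M^J_I$, I will track which of rows/columns is deleted. We have $MC=\det(M)\,\mathrm{Id}$, so $C=\det(M)M^{-1}$. The key step is Jacobi's theorem for $2\times 2$ minors of $M^{-1}$. The $2\times2$ minor of $M^{-1}$ on rows $\{1,k\}$ and columns $\{1,k\}$ equals $\pm\det(M^{1,k}_{1,k})/\det(M)$.

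To see this concretely, let $e_i$ be the standard basis vectors, and let $\tilde M$ be $M$ with columns $1$ and $k$ replaced by $e_1$ and $e_k$. Then $M^{-1}\tilde M$ has columns $1$ and $k$ equal to the corresponding columns of $M^{-1}$, and the other columns are standard basis vectors. Hence
\[
\det(M^{-1}\tilde M)=\det\begin{pmatrix}(M^{-1})_{11}&(M^{-1})_{1k}\\(M^{-1})_{k1}&(M^{-1})_{kk}\end{pmatrix}.
\]
Expanding along the inserted basis columns gives $\det\tilde M=\det(M^{1,k}_{1,k})$, with the two signs cancelling because the same indices $1,k$ are removed from rows and columns.

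Therefore
\[
\det(M^{1,k}_{1,k})/\det M = (M^{-1})_{11}(M^{-1})_{kk}-(M^{-1})_{1k}(M^{-1})_{k1}.
\]
Next, substitute $(M^{-1})_{ij}=(-1)^{i+j}\det(M^{j}_{i})/\det M$, where $M^j_i$ means row $j$ and column $i$ are removed. The diagonal terms carry sign $+$. The product $(M^{-1})_{1k}(M^{-1})_{k1}$ carries sign $(-1)^{2(1+k)}=+1$ and equals $\det(M^k_1)\det(M^1_k)/\det(M)^2$. Multiplying through by $\det(M)^2$ yields exactly
\[
\det(M)\det(M^{1,k}_{1,k})=\det(M^1_1)\det(M^k_k)-\det(M^k_1)\det(M^1_k).
\]

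The main obstacle is bookkeeping of signs and of the row/column convention in $M^J_I$. The off-diagonal product is symmetric under swapping the two conventions, since $\det(M^k_1)\det(M^1_k)$ is the same product either way, so the convention does not affect the statement. I would double-check the sign in the expansion of $\det\tilde M$. The edge case $k=2$, where $M^{1,2}_{1,2}$ is empty with determinant $1$, is consistent with the ordinary $2\times2$ determinant and will be verified directly.
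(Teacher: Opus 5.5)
The paper gives no proof of this statement. It quotes the Desnanot--Jacobi identity as a classical fact and uses it only as a polynomial identity in $\CC[\Mat_{n\times n}]$, in the proof of \cref{lem:principal-minors}, which is then specialized to $\mathcal{A}_{\text{sym}}$.

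Your argument is a correct, standard proof via Jacobi's complementary-minor theorem. The details check out:
\begin{itemize}
\item The factorization $\det(M^{-1}\tilde M)=\det(M^{-1})\det(\tilde M)$ does identify the $\{1,k\}\times\{1,k\}$ minor of $M^{-1}$ with $\det(M^{1,k}_{1,k})/\det M$. Every cofactor expansion you perform is along a column whose only nonzero entry lies on the diagonal, so each sign is $+1$ and there is no cancellation to worry about.
\item The off-diagonal adjugate signs multiply to $(-1)^{2(1+k)}=1$.
\item The case $k=2$ needs no separate check, since then $\tilde M$ is the identity.
\end{itemize}
Working with the generic matrix over the fraction field of $\Z[z_{ij}]$ is exactly the right formulation for how the paper uses the identity. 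It gives an identity in the polynomial ring that survives any specialization, including $z_{ij}=z_{ji}$.

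One small notational slip. In $(M^{-1})_{ij}=(-1)^{i+j}\det(M^{j}_{i})/\det M$, the paper's convention ($M^J_I$ deletes rows $I$ and columns $J$) would write the minor with row $j$ and column $i$ deleted as $M^{i}_{j}$, as you did when defining $C_{ij}$. As you note, this is harmless because the off-diagonal product $\det(M^k_1)\det(M^1_k)$ is symmetric under the swap.

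For comparison, there is a proof closer to the paper's own toolkit, which derives its other minor identities from three-term Pl\"ucker relations. The identity is itself the three-term Pl\"ucker relation for the $k\times(k+2)$ matrix $[M\mid e_1\mid e_k]$, with columns $2,\dots,k-1$ held fixed. The six maximal minors involved are, up to sign, $\det M$, $\det(M^{1,k}_{1,k})$, $\det(M^1_1)$, $\det(M^k_k)$, $\det(M^k_1)$ and $\det(M^1_k)$. That route never inverts $M$, so it needs no density step. Your route has the advantage of generalizing immediately to Jacobi's theorem for arbitrary complementary minors.
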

\begin{lemma}\label{lem:principal-minors}
	For $k \geq 2$, we have 
	\[
		\mathbf{P}_{[k-1]} \mathbf{P}_{[2, k+1]} - \mathbf{P}_{[k+1]} \mathbf{P}_{[2, k-1]} = \frac{\mathbf{P}_{[k+1]} z_{[2, k],[k-1]} z_{[k-1], [2,k]} + \mathbf{P}_{[k-1]} z_{[2, k+1], [k]} z_{[k], [2, k+1]} }{\mathbf{P}_{[k]}}. 
	\]
\end{lemma}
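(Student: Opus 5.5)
The plan is to clear the denominator and reduce everything to two applications of the Desnanot--Jacobi identity, applied to the leading principal submatrices $z_{[k],[k]}$ and $z_{[k+1],[k+1]}$ of the generic matrix $z$. Since all quantities are polynomials in $\CC[\Mat_{n\times n}]$ and $\mathbf{P}_{[k]}$ is a nonzero element of this domain, it suffices to prove
\[
\mathbf{P}_{[k]}\mathbf{P}_{[k-1]}\mathbf{P}_{[2,k+1]} - \mathbf{P}_{[k]}\mathbf{P}_{[k+1]}\mathbf{P}_{[2,k-1]}
= \mathbf{P}_{[k+1]}\, z_{[2,k],[k-1]}\, z_{[k-1],[2,k]} + \mathbf{P}_{[k-1]}\, z_{[2,k+1],[k]}\, z_{[k],[2,k+1]} .
\]
The identity then descends to $\mathcal{A}_{\text{sym}}$ as noted before the statement.

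First I apply Desnanot--Jacobi to $M = z_{[k+1],[k+1]}$. Removing the first and last rows and columns gives $\mathbf{P}_{[2,k]}$, and the diagonal deletions give $\mathbf{P}_{[2,k+1]}$ and $\mathbf{P}_{[k]}$. Deleting row $k+1$ and column $1$ gives $z_{[k],[2,k+1]}$, and deleting row $1$ and column $k+1$ gives $z_{[2,k+1],[k]}$. This yields
\[
\mathbf{P}_{[k]}\mathbf{P}_{[2,k+1]} = \mathbf{P}_{[k+1]}\mathbf{P}_{[2,k]} + z_{[k],[2,k+1]}\, z_{[2,k+1],[k]} . \qquad (\ast)
\]
Next I apply the identity to $M = z_{[k],[k]}$ in the same way, which gives
\[
\mathbf{P}_{[k]}\mathbf{P}_{[2,k-1]} = \mathbf{P}_{[k-1]}\mathbf{P}_{[2,k]} - z_{[k-1],[2,k]}\, z_{[2,k],[k-1]} . \qquad (\ast\ast)
\]
For $k=2$ the inner minor $\mathbf{P}_{[2,1]}$ is the empty minor, and the convention $\mathbf{P}_\emptyset=1$ makes $(\ast\ast)$ the ordinary $2\times 2$ determinant formula. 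I will check the signs and index conventions of the Desnanot--Jacobi statement carefully, since that is the only place an error could creep in.

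Finally, I substitute $(\ast)$ into the first term of the cleared left-hand side and $(\ast\ast)$ into the second term. The terms $\mathbf{P}_{[k-1]}\mathbf{P}_{[k+1]}\mathbf{P}_{[2,k]}$ appear with opposite signs and cancel. What remains is exactly $\mathbf{P}_{[k-1]} z_{[k],[2,k+1]} z_{[2,k+1],[k]} + \mathbf{P}_{[k+1]} z_{[k-1],[2,k]} z_{[2,k],[k-1]}$, which is the cleared right-hand side. There is no real obstacle beyond correctly identifying which deleted submatrix corresponds to which minor.
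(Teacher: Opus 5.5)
Your proposal is correct and is essentially the paper's proof: the paper uses the same two Desnanot--Jacobi identities, for $z_{[k+1],[k+1]}$ and $z_{[k],[k]}$, after which the terms $\mathbf{P}_{[k-1]}\mathbf{P}_{[k+1]}\mathbf{P}_{[2,k]}$ cancel. The only difference is that the paper substitutes these identities into the numerator of the right-hand side, whereas you substitute into the left-hand side after clearing $\mathbf{P}_{[k]}$; this is the same computation read in the opposite direction.
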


\begin{proof}
	From the Desnanot-Jacobi identity, we have 
	\[
		\mathbf{P}_{[k+1]} \mathbf{P}_{[2,k]} = \mathbf{P}_{[2, k+1]} \mathbf{P}_{[k]} - z_{[2, k+1], [k]} z_{[k], [2, k+1]}.
	\]
	Thus, we can replace the right term in the numerator of the right hand side by 
	\[
		\mathbf{P}_{[k-1]} (\mathbf{P}_{[2, k+1]} \mathbf{P}_{[k]} - \mathbf{P}_{[k+1]} \mathbf{P}_{[2, k]})
	\]
	The numerator is then equal to 
	\[
		\mathbf{P}_{[k]} \mathbf{P}_{[k-1]} \mathbf{P}_{[2, k+1]} - \mathbf{P}_{[k+1]} (\mathbf{P}_{[k-1]} \mathbf{P}_{[2, k]} - z_{[2, k], [k-1]} z_{[k-1], [2,k]}).
	\]
	We can apply the Desnanot-Jacobi identity again to get 
	\[
		\mathbf{P}_{[k]} \mathbf{P}_{[2, k-1]} = \mathbf{P}_{[k-1]} \mathbf{P}_{[2,k]} - z_{[2, k],[k-1]} z_{[k-1], [2,k]}. 
	\]
	This completes the proof. 
\end{proof}

\begin{lemma}
	Let $2 \leq a < b$ be positive integers. Let $P_I = z_{[|I|], I}$ denote the flag minor corresponding to $I$. Then we have
	\[
		\frac{P_{[a-1, b-1]} P_{[a, b+1]} P_{[a+1, b]} + P_{[a-1, b]} P_{[a, b-1]} P_{[a+1, b+1]}}{P_{[a, b]}} = 
		P_{[a-1, b-1] \cup \{b+1\}} P_{[a+1, b]}
		- 
		P_{[a-1, b]} P_{[a+1, b-1] \cup \{b+1\}}. 
	\] 
\end{lemma}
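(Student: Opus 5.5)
The plan is to clear the denominator and reduce the statement to two applications of the classical \emph{incidence} (two-step flag) Pl\"ucker relation between flag minors of consecutive sizes. Since every flag minor $P_I$ only involves the first $|I|$ rows of $z$, all $P_I$ are Pl\"ucker coordinates of the flag of row spaces of $z$. The relation I will use is: for a set $K$ of column indices and $p<q<r$ not in $K$,
\begin{equation*}
P_{K\cup\{p\}}P_{K\cup\{q,r\}} - P_{K\cup\{q\}}P_{K\cup\{p,r\}} + P_{K\cup\{r\}}P_{K\cup\{p,q\}} = 0,
\end{equation*}
where column sets are listed in increasing order. Note that the minors involved have sizes $|K|+1$ and $|K|+2$.

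I will prove this relation first as a lemma. Fix rows $[|K|+2]$ and expand each $(|K|+2)$-minor $P_{K\cup\{\cdot,\cdot\}}$ along its last row, row $|K|+2$. The alternating sum then becomes a combination of terms $z_{|K|+2,c}\,P_{K\cup\{p\}\cup \cdots}$. Grouping these terms, one obtains the Laplace expansion of a determinant with a repeated row, hence $0$. Equivalently, this is the standard Garnir/Sylvester argument. I expect sign bookkeeping here to be the only real technical work.

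Multiplying the claimed identity by $P_{[a,b]}$, it suffices to show
\begin{equation*}
P_{[a-1,b-1]}P_{[a,b+1]}P_{[a+1,b]} + P_{[a-1,b]}P_{[a,b-1]}P_{[a+1,b+1]} = P_{[a,b]}\bigl(P_{[a-1,b-1]\cup\{b+1\}}P_{[a+1,b]} - P_{[a-1,b]}P_{[a+1,b-1]\cup\{b+1\}}\bigr).
\end{equation*}
Collect the two terms containing $P_{[a+1,b]}$. Their coefficient is $P_{[a-1,b-1]}P_{[a,b+1]} - P_{[a,b]}P_{[a-1,b-1]\cup\{b+1\}}$. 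By the incidence relation with $K=[a,b-1]$ and $(p,q,r)=(a-1,b,b+1)$, this coefficient equals $-P_{[a,b-1]\cup\{b+1\}}P_{[a-1,b]}$.

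After this substitution, everything is divisible by $P_{[a-1,b]}$. The difference LHS$-$RHS becomes
\begin{equation*}
P_{[a-1,b]}\bigl(P_{[a,b-1]}P_{[a+1,b+1]} - P_{[a+1,b]}P_{[a,b-1]\cup\{b+1\}} + P_{[a,b]}P_{[a+1,b-1]\cup\{b+1\}}\bigr).
\end{equation*}
The bracket is exactly the incidence relation with $K=[a+1,b-1]$ and $(p,q,r)=(a,b,b+1)$, so it vanishes. (When $b=a+1$, $K$ is empty and the relation is still valid.)

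The main obstacle is getting the signs right when inserting indices into $K$. Fortunately, all inserted elements are either below $\min K$ or above $\max K$, so no reordering signs arise beyond those already built into the lemma. I will double-check the lemma's signs on the smallest case, $K=\emptyset$ with $1\times1$ and $2\times2$ flag minors, before applying it.
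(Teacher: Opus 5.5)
Your proof is correct and is essentially the paper's argument. Your two instances of the incidence relation are exactly the two three-term relations the paper substitutes:
\begin{itemize}
\item $K=[a,b-1]$ with $(p,q,r)=(a-1,b,b+1)$;
\item $K=[a+1,b-1]$ with $(p,q,r)=(a,b,b+1)$.
\end{itemize}
The only differences are that you clear the denominator before substituting, and that you sketch a proof of the incidence relation, which the paper simply quotes. Your sorted-index form of that relation is sign-correct for arbitrary $p<q<r\notin K$, since passing from the sequence form $P_{pqK}P_{rK}-P_{prK}P_{qK}+P_{qrK}P_{pK}=0$ to sorted indices multiplies all three terms by the same sign.
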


\begin{proof}
	This follows from two applications of the three-term incidence-Pl\"ucker relations for flag minors. We use the following two relations:
	\begin{align}
		P_{[a-1, b-1]} P_{[a, b+1]} & = P_{[a-1, b-1] \cup \{b+1\}} P_{[a, b]} - P_{[a-1, b]} P_{[a, b-1] \cup \{b+1\}} \label{eqn:rel1}\\ 
		P_{[a, b]} P_{[a+1, b-1] \cup \{b+1\}} & =  P_{[a+1, b]} P_{[a, b-1] \cup \{b+1\}} - P_{[a, b-1]} P_{[a+1, b+1]} .\label{eqn:rel2}
	\end{align}
	We have: 
	\begin{align*}
		\mathsf{LHS} & = \frac{(P_{[a-1, b-1]} P_{[a, b+1]}) P_{[a+1, b]} + P_{[a-1, b]} P_{[a, b-1]} P_{[a+1, b+1]}}{P_{[a, b]}} \\
		& = \frac{(P_{[a-1, b-1] \cup \{b+1\}} P_{[a, b]} - P_{[a-1, b]} P_{[a, b-1] \cup \{b+1\}})P_{[a+1, b]} + P_{[a-1, b]} P_{[a, b-1]} P_{[a+1, b+1]}}{P_{[a,b]}}\\
		& = P_{[a-1, b-1] \cup \{b+1\}} P_{[a+1, b]} + \frac{P_{[a-1, b]}}{P_{[a, b]}} \left ( P_{[a, b-1]} P_{[a+1, b+1]} - P_{[a+1, b]} P_{[a, b-1] \cup \{b+1\}} \right ) \\
		& = P_{[a-1, b-1] \cup \{b+1\}} P_{[a+1, b]} + \frac{P_{[a-1, b]}}{P_{[a, b]}} \cdot (- P_{[a, b]} P_{[a+1, b-1] \cup \{b+1\}}) \\
		& = P_{[a-1, b-1] \cup \{b+1\}} P_{[a+1, b]} - P_{[a-1, b]} P_{[a+1, b-1] \cup \{b+1\}} = \mathsf{RHS}.
	\end{align*}
	where the second equality follows from~\cref{eqn:rel1} and the fourth equality follows from~\cref{eqn:rel2}. 
\end{proof}

\begin{lemma}\label{lem:symmetric-minors-irred-coprime}
	For any subsets $I, J \subseteq [n]$ of the same size, let $z_{I, J}$ denote the matrix minor in $\mathcal{A}_{\text{sym}}$ corresponding to row set $I$ and column set $J$. Then, the collection of matrix minors $\{z_{I, J}\}$ are irreducible. 
\end{lemma}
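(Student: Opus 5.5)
The plan is to argue by induction on $\ell=|I|=|J|$, viewing $z_{I,J}$ as a polynomial in the independent variables $z_{ij}$, $i\le j$, of the coordinate ring $\mathcal{A}_{\text{sym}}$. The basic device is to find a variable $t$ that occurs \emph{linearly} in $f:=z_{I,J}$, writing
\[
f = t\,C + D, \qquad C,D \text{ free of } t .
\]
Suppose $f=gh$. Exactly one factor, say $h$, has $t$-degree $1$ and $g$ is free of $t$. Comparing coefficients shows $g\mid C$ and $g\mid D$. If $C$ is irreducible (up to a nonzero scalar), then either $g$ is a constant, or $g\sim C$ and hence $C\mid D$. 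So it suffices to show two things: $C$ is irreducible, which will follow by induction since $C$ will be $\pm$ a smaller minor of the same type; and $C\nmid D$. The base cases are $\ell=1$, where $f$ is a single variable, and, if needed, the $2\times2$ principal minor $z_{aa}z_{bb}-z_{ab}^2$, which is clearly irreducible.

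\textbf{Choice of $t$.} The only subtlety in the symmetric setting is that an entry $z_{ij}$ with both $(i,j)$ and $(j,i)$ in $I\times J$ occurs twice in the submatrix, and so appears quadratically.
\begin{itemize}
\item \emph{Case $I=J$ (principal minor).} Take $t=z_{aa}$ for some $a\in I$. A diagonal entry occupies a single position, so it occurs linearly, with $C=\mathbf{P}_{I\setminus a}$, again a principal minor.
\item \emph{Case $I\neq J$.} Choose $a\in I\setminus J$ and any $b\in J$, and take $t=z_{ab}$. Its transposed position $(b,a)$ would need $a\in J$, so $t$ occurs only once in the submatrix, and $C=\pm z_{I\setminus a,\,J\setminus b}$.
\end{itemize}
In both cases $C$ is a minor of size $\ell-1$ of the symmetric matrix, so it is irreducible by the inductive hypothesis.

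\textbf{The key step: $C\nmid D$.} I expect this to be the main obstacle. The plan is to exhibit a specialization of the variables at which $C=0$ but $D\neq0$.

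Take the principal case first, and let $B$ be the principal submatrix on $I\setminus a$ and $u$ the vector of entries $z_{a,i}$, $i\in I\setminus a$. Cofactor expansion gives
\[
D=-u^{T}\operatorname{adj}(B)\,u .
\]
Specialize $B$ to a symmetric matrix of rank exactly $\ell-2$, for instance $\operatorname{diag}(0,1,\dots,1)$. Then $C=\det B=0$, while $\operatorname{adj}(B)=e_1e_1^{T}\neq0$. Choosing $u=e_1$ gives $D=-1\neq0$. The entries of $B$ and of $u$ are distinct free variables, so this specialization is legitimate.

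The non-principal case is handled the same way:
\[
D=-\,r^{T}\operatorname{adj}(B')\,c ,
\]
where $B'$ is the submatrix on rows $I\setminus a$ and columns $J\setminus b$, $r$ is the rest of row $a$, and $c$ is the rest of column $b$. Here some care is needed with the symmetric identifications among $r$, $c$ and the entries of $B'$. Row $a$ is not among the rows of $B'$, and column $b$ lies outside $B'$. So I will check that one can still specialize $B'$ to a rank-$(\ell-2)$ matrix (for instance a partial permutation matrix, compatible with symmetry) while choosing $r$ and $c$ so that the single relevant entry of $\operatorname{adj}(B')$ is hit. If the identifications cause trouble, a fallback is to set to zero all entries not needed for this pattern.

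This gives $C\nmid D$, so $g$ is constant, which proves that $f$ is irreducible and completes the induction.
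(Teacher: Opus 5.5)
Your framework is sound. Writing $f=tC+D$ with $t$ occurring linearly, any factor free of $t$ divides both $C$ and $D$, so it suffices to show that $C$ is irreducible and $C\nmid D$. Your choices of $t$ are correct. The principal case is complete: $B=\operatorname{diag}(0,1,\dots,1)$ and $u=e_1$ give $C=0$ and $D=-1$. This is arguably cleaner than the paper, which handles principal minors by a separate non-inductive argument tracking which factor contains $z_{nn}$, then each $z_{kn}$, then each $z_{kk}$.

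The gap is the non-principal case of $C\nmid D$. You flag it as the main obstacle but only promise to check it, and it is not automatic. The entries $z_{ib}$ of $c$ reappear in $B'$ whenever $b\in I$ and $i\in J$. For example, take $I=\{1,2,3\}$, $J=\{2,3,4\}$, $a=1$, $b=2$. Then $c=(z_{22},z_{23})^{T}$ and
\[
B'=\begin{pmatrix} z_{23} & z_{24}\\ z_{33} & z_{34}\end{pmatrix}.
\]
The symmetric partial permutation specialization $z_{24}=1$, $z_{23}=z_{33}=z_{34}=0$ gives $C=0$. But it also gives $D=-z_{13}(z_{22}z_{34}-z_{23}z_{24})+z_{14}(z_{22}z_{33}-z_{23}^2)=0$ for all values of the remaining variables, and your fallback of zeroing out unused entries does not rescue it. So a genuine existence argument is needed.

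The easiest repair uses the whole row $a$ rather than a specialization; this is essentially the paper's argument. Since $a\notin J$, the $\ell$ entries $z_{aj}$ ($j\in J$) are distinct variables that occur nowhere else in the submatrix, neither in $c$ nor in $B'$. Hence
\[
D=\sum_{j\in J\setminus b}\pm\, z_{aj}\, z_{I\setminus a,\,J\setminus j},
\]
with coefficients free of these variables. Suppose $C\mid D$. Setting $z_{aj_1}=1$ and the other $z_{aj}=0$ does not affect $C$, so it gives $C\mid z_{I\setminus a,\,J\setminus j_1}$ for some $j_1\neq b$ (such $j_1$ exists since $\ell\ge 2$). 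Both minors are irreducible by induction, hence associate. That is impossible, because they are homogeneous of different degrees for the grading $\deg z_{ij}=\mathbf{e}_i+\mathbf{e}_j$.

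Your specialization route can also be completed, as follows.
\begin{itemize}
\item Put $z_{ss}=1$ for $s\in (I\setminus a)\cap J$.
\item Match $(I\setminus a)\setminus J$ injectively into $J\setminus(I\setminus a)$, which has one more element, so that $b$ is covered. If $b\notin I$ and $I\setminus a\subseteq J$, replace one $z_{ss}=1$ by $z_{sb}=1$ instead.
\item Set all other entries to $0$.
\end{itemize}
Then $B'$ has exactly one zero row. Let $j_0$ be the unique column of $B'$ containing no $1$; then $\operatorname{adj}(B')c=\pm e_{j_0}$, and setting $z_{aj_0}=1$ gives $D=\pm 1$. With either fix, your induction goes through.
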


\begin{proof}
	We induct on the size $|I| = |J| = k$. For the base case $k = 1$, the matrix minors are matrix entries, which are clearly irreducible. Now, we prove that $z_{I, J}$ if irreducible under the assumption that smaller minors are irreducible and $k \geq 2$. 
	
	We first consider the case $I \neq J$. Pick some $i \in I \backslash J$, then via Laplace expansion we can write 
	\[
		z_{I, J} = \sum_{j \in J} \Delta_j \cdot z_{i, j}, \quad \quad \quad  \Delta_j = \pm z_{I \backslash \{i\}, J \backslash \{j\}}.
	\] 
	Consider an arbitrary factorization $z_{I, J} = P(\mathbf{z}) Q(\mathbf{z})$. When we view both sides as polynomials in $z_{i, j}$ with $j \in J$ over the UFD $R_i := \mathbb{C}[\{z_{a,b} : a \in I \backslash \{i\}, b \in J\}]$, we get an equality of the form 
	\[
		\sum_{j \in J} \Delta_j \cdot z_{i, j} = \left (C + \sum_{j \in J} A_j z_{i, j} \right ) \left ( D + \sum_{j \in J} B_j z_{i, j} \right ), \quad A_j, B_j, C, D \in R_i[z_{i, j}, j \in J].
	\]
	We cannot have that $A_j \neq 0$ for some $j$ and $B_{j'} \neq 0$ for some $j'$ because then the right hand side would be degree $2$ over $z_{i, j}$, $j \in J$. Thus, we can assume without loss of generality that $A_j = 0$ for all $j \in J$. This implies that $D = 0$, and all $\Delta_j$ have a common factor $C$. But from the inductive hypothesis, the $\Delta_j$ are distinct and irreducible. Hence, $C$ must be a unit. This proves that $z_{I, J}$ is irreducible. 

	We now show that determinants of generic symmetric matrices
	are irreducible.\footnote{We expect that this result is well-known
	but couldn't find a published reference.}
	Now consider the case where $I = J$, i.e. $z_{I, J}$ is a principal minor. It suffices to prove that if $M_n = [z_{i,j}]_{1 \leq i, j \leq n}$ is a generic symmetric matrix with $z_{i,j} = z_{j,i}$, then $\det (M_n)$ is an irreducible polynomial. Consider an arbitrary factorization 
	\[
		\det (M_n) = P(\mathbf{z}) Q(\mathbf{z}). 
	\] 
	Consider $P$ and $Q$ as polynomials in $z_{n, n}$. Since $\det(M_n)$ is linear in $z_{n, n}$, we can assume without loss of generality that $P(z)$ is linear in $z_{n, n}$ and $Q(z)$ is independent of $z_{n,n}$. For any $1 \leq k < n$, $Q(z)$ must be independent of $z_{k, n}$ or else $\det(M_n)$ would contain terms like $z_{n, n} z_{k, n}$. Hence $P(z)$ contains terms with $z_{k, n}$ for all $1 \leq k < n$. By similarly reasoning, $Q(z)$ must be independent of $z_{k, k}$. Hence $P(z)$ contains terms with $z_{k, k}$ for all $1 \leq k < n$. Finally, we can repeat the same argument to conclude that $Q(z)$ is independent of all variables $z_{a, b}$, i.e. it is constant. This suffices for the proof. 
\end{proof}
\begin{theorem}
    The cluster algebra given by the initial seed in \cref{eqn:symmetric-initial-n=5-with-variables} is a cluster structure for $\mathcal{A}_{\text{sym}}$. 
\end{theorem}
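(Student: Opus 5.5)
Our plan is to use the standard criterion for proving that a given seed defines a cluster structure on a factorial ring, in the form used in \cite[Section 6.4]{FWZ6} (the ``starfish lemma''). Let $R=\mathcal{A}_{\text{sym}}$, a polynomial ring in the $N=\binom{n+1}{2}$ variables $z_{ij}$, $i\le j$; in particular it is a normal Noetherian domain and a UFD. The criterion says: suppose $\Sigma=(\tilde B,(X_1,\dots,X_N))$ is a seed in $\mathrm{Frac}(R)$ such that
\begin{itemize}
\item every $X_i$ lies in $R$;
\item the $X_i$ are pairwise coprime in $R$;
\item for each mutable index $k$, the mutated variable $X'_k$ lies in $R$ and is coprime to $X_k$;
\item the frozen variables generate, or at least the cluster variables together with the frozen variables generate $R$.
\end{itemize}
Then $\mathcal{A}(\Sigma)\supseteq R$ and the upper cluster algebra is contained in $R$, so $\mathcal{A}(\Sigma)=R$. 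We also need $X_1,\dots,X_N$ to be algebraically independent, so that $\Sigma$ is a seed in $\mathrm{Frac}(R)$. This follows from a triangularity argument: $X_{\binom{k}{2}+j}=z_{[j],[k-j+1,k]}$ has a leading term (in a suitable monomial order) equal to a product of entries that includes $z_{j,k}$ as a new variable. So $\mathrm{Frac}(R)=\C(X_1,\dots,X_N)$, and every $z_{jk}$ is a Laurent polynomial in the $X$'s. This also gives generation once we know $R\subseteq\mathcal{A}(\Sigma)$ on the nose, which we get from the $z_{jk}$ being cluster variables or being expressible via exchange relations.

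Coprimality of the initial variables comes from \cref{lem:symmetric-minors-irred-coprime}. Each $X_i$ is an irreducible minor, and distinct minors of distinct index sets are non-associate, for instance by comparing degrees or the variables they involve. Hence any two distinct $X_i$ are coprime.

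The heart of the proof is to write down the exchange relation at each mutable vertex and identify $X'_k$ as an explicit polynomial. We sort the mutable vertices into three types.
\begin{itemize}
\item \emph{Principal vertices} $\mathbf{P}_{[k]}$ (the left column), with valued edges. Here the exchange relation is exactly \cref{lem:principal-minors}. Up to the symmetric identification $z_{[2,k],[k-1]}=z_{[k-1],[2,k]}$, its right-hand numerator is the product of neighbours weighted by the $(1,2)$/$(2,1)$ valuations. So $X'_k=\mathbf{P}_{[k-1]}\mathbf{P}_{[2,k+1]}-\mathbf{P}_{[k+1]}\mathbf{P}_{[2,k-1]}\in R$. (For $k=1$ we use the analogous $2\times2$ computation.)
\item \emph{Interior flag-minor vertices} $z_{[a],[b-a+1,b]}$ away from the diagonal. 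These have the hexagonal neighbourhood of \cref{fig:initialquiver}. The exchange relation is the flag-minor identity of the preceding lemma, giving $X'_k$ as a difference of two products of flag minors, hence in $R$.
\item \emph{Vertices adjacent to the diagonal.} These are handled by the same flag-minor lemma with the boundary convention $P_\emptyset=1$.
\end{itemize}

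Finally we check that $X'_k$ is coprime to $X_k$. Since $X_k$ is irreducible, it suffices to show $X_k\nmid X'_k$. We do this by specializing to a point where $X_k=0$ but $X'_k\neq0$, for example a suitable symmetric matrix built from a diagonal or permutation-like pattern.

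The main obstacle is the bookkeeping of the exchange relations at the diagonal and valued vertices. One must match the $(1,2)$ and $(2,1)$ valuations with the squared factors $z_{I,J}z_{J,I}=z_{I,J}^2$ that arise after symmetrization. One must also handle the edge cases $k=1$ and $a=b$, and choose nondegenerate specializations certifying coprimality. I would first verify everything explicitly for $n=2,3$, then run the general case by induction along the columns of the quiver.
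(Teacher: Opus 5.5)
\textbf{Gap: the reverse inclusion $\mathcal{A}_{\text{sym}}\subseteq\mathcal{A}(\Sigma)$ is not proved.}

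The first half of your plan matches the paper's proof. Both use the starfish lemma \cite[Proposition 6.4.1]{FWZ6}, get pairwise coprimality from \cref{lem:symmetric-minors-irred-coprime}, identify the one-step exchanges via \cref{lem:principal-minors} and the flag-minor identity, and check $x\nmid x'$. But that half only yields $\mathcal{A}(\Sigma)\subseteq\mathcal{U}(\Sigma)\subseteq\mathcal{A}_{\text{sym}}$.

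Your fourth bullet simply assumes the reverse inclusion. The phrase ``which we get from the $z_{jk}$ being cluster variables or being expressible via exchange relations'' is exactly the statement that needs proof.

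The triangularity argument does not supply it. It is true that every $z_{jk}$ is a Laurent polynomial in the initial cluster: expanding $X_{\binom{k}{2}+j}$ along row $j$, the coefficient of $z_{j,k}$ is the initial variable $X_{\binom{k-1}{2}+j-1}$. But that places $z_{jk}$ in a single Laurent ring, not in the algebra generated by cluster variables. Even Laurentness in every cluster would only give $z_{jk}\in\mathcal{U}(\Sigma)$, leaving $\mathcal{A}=\mathcal{U}$ open.

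Nor do one-step exchange relations express the entries polynomially. For example, $z_{1,2}'=z_{1,1}z_{2,3}-z_{1,2}z_{1,3}$ yields $z_{2,3}$ only after dividing by $z_{1,1}$.

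What is missing is a mutation sequence that realizes each generator $z_{ij}$ as a cluster variable. The paper supplies one:
\begin{enumerate}
\item Mutate the mutable vertices of the bottom row from left to right, then those of the next row up, and so on. This reaches the seed \eqref{eqn:mutated-quiver-symmetric}.
\item The lower part of that seed is the initial valued quiver for $n-1$ on the indices $2,\dots,n$. The remaining frozen vertices attach only to vertices that are frozen in the smaller quiver, so the smaller mutation sequence runs unchanged.
\item Induction on $n$ then produces every $z_{ij}$ with $2\le i\le j\le n$. The $z_{1,j}$ are initial variables.
\end{enumerate}
Centrally symmetric double wiring diagrams give an alternative route.

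A smaller point concerns the first-row vertices $z_{1,k}$. The flag-minor identity is stated for $2\le a<b$, so it does not cover them. The convention $P_\emptyset=1$ turns its left side into the correct exchange binomial, but not its right side, whose two terms then have different degrees. There you should compute directly:
\begin{itemize}
\item $z_{1,1}'=z_{2,2}$;
\item $z_{1,k}'=z_{1,k-1}z_{2,k+1}-z_{1,k+1}z_{2,k-1}$ for $2\le k\le n-1$.
\end{itemize}
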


\begin{proof}
    We first prove $\mathcal{U}(Q) \subseteq \mathcal{A}_{\text{sym}}$ using~\cite[Proposition 6.4.1]{FWZ6}. It is enough to show that whenever $x$ is an initial cluster variable, then $x' \in \mathcal{A}_{\text{sym}}$ and $x'$ is coprime to $x$ and that all initial cluster variables are pairwise coprime. The fact that the initial variables are pairwise coprime already follows from Lemma~\ref{lem:symmetric-minors-irred-coprime}. Note that the one-step exchange relations in~\cref{eqn:symmetric-initial-n=5-with-variables} are exactly the same as the one-step exchange relations as~\cref{fig:initialquiver}. Thus, to compute $x'$ for an initial cluster variable $x \in \mathcal{X}_{\text{sym}}$, we can compute the mutation in the initial quiver for $\text{Mat}_{n \times n}$ and then identify $z_{ij} = z_{ji}$ for all $i, j$. 
	
	For any cluster variable $x = z_{1, k}$ on the first row, we get that $x' = z_{1, k-1} z_{2, k+1} - z_{1, k+1} z_{2,k}$ for $k \geq 2$ which is clearly coprime from $z_{1, k}$ and $x' = z_{1,2}$ for $k = 1$ which is coprime to $z_{1, 1}$. For any cluster variable $x = \mathbf{P}_{[k]}$ on the left-most boundary, Lemma~\ref{lem:principal-minors} implies that $x' = \mathbf{P}_{[k-1]} \mathbf{P}_{[2, k+1]} - \mathbf{P}_{[k+1]} \mathbf{P}_{[2, k-1]}$. Suppose for the sake of contradiction that $x$ and $x'$ were not coprime. Since $\mathbf{P}_{[k]}$ is irreducible in $\mathcal{A}_{\text{sym}}$, we must have for some $f \in \mathcal{A}_{\text{sym}}$ and $g \in \mathcal{A}_{\text{sym}}$ independent of $z_{k+1, k+1}$:
	\begin{align*}
		\mathbf{P}_{[k]} \cdot f & = \mathbf{P}_{[k-1]} \mathbf{P}_{[2, k+1]} - \mathbf{P}_{[k+1]} \mathbf{P}_{[2, k-1]} \\
		& = (\mathbf{P}_{[k-1]} \mathbf{P}_{[2, k]} - \mathbf{P}_{[k]} \mathbf{P}_{[2, k-1]}) z_{k+1, k+1} + g. 
	\end{align*}
	Since the right hand side is linear in $z_{k+1, k+1}$, $f$ must also be linear in $z_{k+1, k+1}$ and we must have 
	\[
		\mathbf{P}_{[k]} \,  | \,  \mathbf{P}_{[k-1]} \mathbf{P}_{[2, k]} - \mathbf{P}_{[k]} \mathbf{P}_{[2, k-1]} \implies \mathbf{P}_{[k]} \, | \, \mathbf{P}_{[k-1]} \mathbf{P}_{[2, k]}. 
	\]
	But this is a contradiction. Finally, any other cluster variable $x = P_{[a, b]}$ for positive integer $2 \leq a < b$ has 
	\[
		x' = P_{[a-1, b]} P_{[a+1, b-1] \cup \{b+1\}} - P_{[a-1, b-1] \cup \{b+1\}} P_{[a+1, b]}.
	\]
	When we let $z_{i, b} = 0$ for $1 \leq i \leq b-a + 1$, we get $x = 0$ and $x' = z_{b-a+2, b} P_{[a-1, b-1] \cup \{b+1\}}$. This implies that $x \nmid x'$ and proves that $x$ and $x'$ are coprime.

	To prove $\mathcal{A}_{\text{sym}} \subseteq \mathcal{A}(Q)$, we want to find a mutation sequence which lets us get the generators $z_{ij}$ for $1 \leq i \leq j \leq n$ as cluster variables. We provide this mutation sequence by inducting on the $n$. The inductive hypothesis is for all smaller $n$ we can find a mutation sequence where all of the matrix entries are realized as cluster variables. 
	
	When we mutate the bottom row from left to right, the row above from left to right, and so on, then we get the quiver in  the first row from left to right, the second row from left to right, the third row from left to right, and so on, we get the quiver in~\cref{eqn:mutated-quiver-symmetric}. Note that the bottom part of the valued quiver is the initial valued quiver for $n-1$ but all of the indices are translated up by $1$. From the inductive hypothesis, we can get $z_{i, j}$ for $2 \leq i \leq j \leq n$ and the entries $z_{1,1}, \ldots, z_{1, n}$ are already in the initial quiver. This suffices for the proof. 

An alternative way to check that $\mathcal{A}_{\text{sym}} \subseteq \mathcal{A}(Q)$ is to use a centrally symmetric version of double wiring diagrams, and show that 
	each entry $z_{i,j}$ lies in such a centrally symmetric double wiring diagram. For the non-centrally symmetric case, see~\cite[Section 6.6]{FWZ6}. The argument for the centrally symmetric case is similar. 
	\begin{equation} \label{eqn:mutated-quiver-symmetric}\begin{tikzcd}
    {\boxed{12345,12345}} &&&&& \\
    {2345,2345} && {\boxed{1234,2345}} \\
    {234,234} && {234,345} & {\boxed{123,345}} \\
    {23,23} && {23,34} & {23,45} & {\boxed{12,45}} \\
    {2,2} && {2,3} & {2,4} & {2,5} & {\boxed{1,5}}
    \arrow[from=1-1, to=2-1]
    \arrow[from=2-1, to=3-1]
    \arrow["{(2,1)}"{description}, from=2-3, to=2-1]
    \arrow[from=2-3, to=3-3]
    \arrow["{(1,2)}"{description}, from=3-1, to=3-3]
    \arrow[from=3-1, to=4-1]
    \arrow[from=3-3, to=3-4]
    \arrow[from=3-3, to=4-3]
    \arrow[from=3-4, to=4-4]
    \arrow["{(1,2)}"{description}, from=4-1, to=4-3]
    \arrow[from=4-1, to=5-1]
    \arrow["{(2,1)}"{description}, from=4-3, to=3-1]
    \arrow[from=4-3, to=4-4]
    \arrow[from=4-3, to=5-3]
    \arrow[from=4-4, to=4-5]
    \arrow[from=4-4, to=5-4]
    \arrow[from=4-5, to=5-5]
    \arrow["{(1,2)}"{description}, from=5-1, to=5-3]
    \arrow["{(2,1)}"{description}, from=5-3, to=4-1]
    \arrow[from=5-3, to=5-4]
    \arrow[from=5-4, to=4-3]
    \arrow[from=5-4, to=5-5]
    \arrow[from=5-5, to=5-6]
\end{tikzcd}\end{equation}
\end{proof}

\begin{theorem}
	There are $|G|^{n}$ $G$-gradings for the cluster algebra $\mathcal{A}_{\text{sym}}$. The different $G$-gradings are determined by the $G$-degrees of $z_{1, 1}$ and $z_{[k], [2, k+1]}$ for $1 \leq k \leq n-1$. 
\end{theorem}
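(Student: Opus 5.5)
The plan is to count solutions of the balancing condition \eqref{gradedseed}, $\sigma\tilde{B}=0$, on the initial valued quiver \eqref{eqn:symmetric-initial-n=5-with-variables}. The argument mirrors the proof of \cref{thm:matrices}: we show that the degrees of the $n$ cluster variables $z_{1,1}$ and $z_{[k],[2,k+1]}$ ($1\le k\le n-1$) can be chosen freely, and that balancing then determines every other degree uniquely.

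First I will write down the balancing equations explicitly in terms of the matrix entries $b_{ij}$. For a valued edge $i\xrightarrow{(v_1,v_2)} j$ we have $b_{ij}=v_1$ and $b_{ji}=-v_2$. So at a mutable vertex $j$ the equation is
\[\sum_{i\to j} v_1(i\to j)\,\sigma_i=\sum_{j\to i} v_2(j\to i)\,\sigma_i.\]
In this equation, an arrow from a left-column (principal minor, $d=2$) vertex into the second column contributes with coefficient $1$ or $2$ according to the labels $(1,2)$ and $(2,1)$.

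Next I will index the vertices by the grid $X_{\binom{k}{2}+j}=z_{[j],[k-j+1,k]}$. Here $k$ is the column and $j$ is the height, with $j=1$ the bottom row and $j=k$ the left column. I will then process the mutable vertices in the same order as in \cref{thm:matrices}, row by row from the bottom.
\begin{itemize}
\item The bottom row consists of $z_{1,1},z_{1,2},\dots,z_{1,n}$, with $z_{1,n}$ frozen.
\item The balancing equation at the bottom-left mutable vertex $z_{1,1}$ involves $z_{12,12}$ linearly with coefficient $\pm1$, since $z_{12,12}\to z_{1,1}$ is an ordinary arrow. It also involves $z_{1,2}$ with coefficient $v_1$ or $v_2$ of the edge $(1,2)$, and $z_{1,2}$ in turn sends a $(2,1)$-edge back toward $z_{12,12}$.
\item So I will instead choose equations that isolate new variables with coefficient $\pm1$. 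At the bottom vertex $z_{1,k}$ (column $k\ge 2$), the only neighbor in the row above is $z_{12,[k-1,k]}$, entering with coefficient $1$. Solving these equations left to right expresses the second-row variables $z_{12,[k-1,k]}$ in terms of the bottom row, except that the frozen vertex $z_{12,[n-1,n]}$ is determined by the equation at $z_{1,n-1}$.
\end{itemize}

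Continuing upward, the equation at the vertex in row $j$, column $k$ determines the vertex diagonally above it in row $j+1$, column $k+1$. Each row-$j$ equation therefore fixes the row-$(j+1)$ entries in columns $j+1,\dots,n$. Counting equations: the mutable vertices number $N-n$, and these equations determine exactly the $N-n$ vertices outside the free set. In the coordinates above, that free set is the left column plus the bottom-left vertex, i.e. the principal minors $\mathbf P_{[k]}$.

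The main obstacle is identifying precisely which $n$ vertices are free. The statement says these are $z_{1,1}$ and the frozen-adjacent minors $z_{[k],[2,k+1]}$. I expect the bookkeeping to show that the system is triangular with unit pivots once the unknowns are ordered so that each equation's "new" variable enters through an ordinary (unvalued) arrow. A coefficient $2$ from a $(1,2)$ edge would break unit-triangularity and make the count depend on $2$-torsion in $G$, so the ordering must avoid using any valued edge as a pivot.

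Concretely, I will take the unknowns to be all vertices not in the free set, ordered by decreasing column. For each such vertex I will exhibit a mutable neighbor whose equation contains it via an ordinary arrow and involves only free or earlier-determined vertices otherwise. This gives a unit-triangular square system, hence exactly $|G|^{n}$ solutions. If the column ordering fails near the left column, I will fall back on using the unfolding map to $\Mat_{n\times n}$. A $G$-grading on the symmetric quiver pulls back to a $G$-symmetric grading on the quiver of \cref{fig:initialquiver}, with the valued edges unfolding to pairs of arrows. Conversely, every $G$-symmetric grading descends. The $|G|^n$ count of $G$-symmetric gradings computed at the end of \cref{sec:matrices} then gives the result.
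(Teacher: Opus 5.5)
Your proposal has a genuine gap. The theorem's second sentence, that $z_{1,1}$ and the $z_{[k],[2,k+1]}$ form a free set, is only promised, and the concrete steps toward it are wrong.

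\textbf{The row-by-row sweep.} You say $z_{1,k}$ ($2\le k\le n-1$) has one neighbour in the row above; it has two, $z_{12,[k-1,k]}$ and $z_{12,[k,k+1]}$. Done correctly, as in \cref{thm:matrices}, the sweep determines each row from the one below with unit pivots. So the free set it produces is the bottom row $z_{1,1},\dots,z_{1,n}$. That already proves the count $|G|^n$, but it does not prove the second sentence. Your claim that the free set is the principal minors $\mathbf P_{[k]}$ is false. The balancing equation at $z_{1,1}$ reads $\sigma(\mathbf P_{[2]})=2\sigma(z_{1,2})$, so for $G=\Z/2\Z$ the degree of $\mathbf P_{[2]}$ is forced to be $0$.

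\textbf{The ordering for the actual free set.} Ordering by decreasing column fails at the first step. The unknown $z_{1,n}$ is adjacent only to $z_{1,n-1}$. The equation there also involves $z_{1,n-2}$, $z_{12,[n-2,n-1]}$ and $z_{12,[n-1,n]}$, none of which is free or already determined. You are right that pivots must avoid the valued edges, but the order has to be reversed. The reversed order is exactly the paper's proof read backwards. The paper sheds columns $n,\dots,3$ from the outside in using \cref{lem:frozen-leaf}, then sheds the first column from the top. What remains is $n$ isolated frozen vertices, $z_{1,1}$ and the $z_{[k],[2,k+1]}$. In solving terms:
\begin{enumerate}
\item Get $\mathbf P_{[j+1]}$ from the equation at $\mathbf P_{[j]}$. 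The pivot goes through the ordinary vertical arrow, and the valued edges there only touch free second-column vertices.
\item For $c=3,\dots,n$, working bottom to top, get each column-$c$ vertex from the equation at its left neighbour, via the ordinary horizontal arrow.
\end{enumerate}
Each mutable equation is used exactly once.

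\textbf{The unfolding fallback.} This is a sound and genuinely different route to the count. The reflection of \cref{fig:initialquiver} is a quiver automorphism, and the folded matrix is $b_{[i][j]}=\sum_{i'\in[i]}b_{i'j}$, so invariant gradings upstairs correspond exactly to gradings downstairs. As stated, though, it yields only the count. To get the second sentence from it, parametrize symmetric gradings as $\sigma(z_{I,J})=\sum_{i\in I}x_i+\sum_{j\in J}x_j+|J|c$, modulo the kernel $\{x_i=g,\ c=-2g\}$. You must then check that $\sigma(z_{1,1})=2x_1+c$, $\sigma(z_{1,2})=x_1+x_2+c$, and the successive differences $\sigma(z_{[k],[2,k+1]})-\sigma(z_{[k-1],[2,k]})=x_k+x_{k+1}+c$ determine $(x,c)$ modulo that kernel. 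They do, by solving successively for $c,x_2,x_3,\dots$ once $x_1$ is fixed.
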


\begin{proof}
	Consider the columns $3, 4, \ldots, n$ in the initial quiver and repeatedly apply Lemma~\ref{lem:frozen-leaf} by ``shedding'' the outside frozen vertices until we have removed all vertices in these columns. We are left with the following quiver (which we have drawn for the $n = 5$ case): 
	\[\begin{tikzcd}
	{\boxed{X_{15}}} && \\
		{X_{10}} && {\boxed{X_{14}}} \\
	\\
	{X_6} && {\boxed{X_9}} \\
	\\
	{X_3} && {\boxed{X_5}} \\
	\\
	{X_1} && {\boxed{X_2}}
	\arrow[from=1-1, to=2-1]
	\arrow["{(1,2)}", from=2-1, to=2-3]
	\arrow[from=2-1, to=4-1]
	\arrow["{(1,2)}"{description}, from=4-1, to=4-3]
	\arrow[from=4-1, to=6-1]
	\arrow["{(2,1)}"{description}, from=4-3, to=2-1]
	\arrow["{(1,2)}"{description}, from=6-1, to=6-3]
	\arrow[from=6-1, to=8-1]
	\arrow["{(2,1)}"{description}, from=6-3, to=4-1]
	\arrow["{(1,2)}", from=8-1, to=8-3]
	\arrow["{(2,1)}"{description}, from=8-3, to=6-1]
\end{tikzcd}\]
	Then repeatedly apply Lemma~\ref{lem:frozen-leaf} to the first column from top to bottom. 
\end{proof}

\begin{remark}
	In the paper~\cite{ross2023duallylorentzianpolynomials}, the authors give a lemma about non-negative symmetric matrices which allows one to check the Lorentzian property. Consider a symmetric $m \times m$ matrix $M$ with non-negative entries. Then, they show that $M$ has at most one positive eigenvalue if and only if for every $I \subseteq \{1, 2, \ldots, m\}$, we have $(-1)^{|I|} \det M_I \leq 0$ where $M_I$ denotes the principal minor of $M$ associated to $I$. It is an interesting question to ask if the signs $(-1)^{|I|} \det M_I \leq 0$ are compatible with one of the $2^{n}$ signed cells on $\mathcal{A}_{\text{sym}}$ that we have just constructed. Unfortunately, it is easy to see that none of the signed cells we've constructed can have the signs $(-1)^{|I|} \det M_I \leq 0$. Indeed, looking at the cluster variable $z_{1, 1}$, we immediately see that $z_{12, 12} \geq 0$. In particular, this provides an example where the signed cells do not cover the whole cluster variety. More generally, since the signs of the leading principal minors in a given signed seed of $\mathcal{A}_{\text{sym}}$ are predetermined, the eigenvalue signature is determined from Sylvester's law of inertia.\footnote{Recall that one version of Sylvester's law of inertia states that when every principal minor is non-zero, then the number of negative eigenvalues is equal to the number of sign changes in the sequence $\mathbf{P}_{\emptyset} := 1, \mathbf{P}_{[1]}, \mathbf{P}_{[2]}, \ldots, \mathbf{P}_{[n]}.$}
\end{remark}
\begin{question}
	Is there another cluster structure on symmetric matrices where the sign pattern $(-1)^{|I|} \det M_I < 0$ gives a signed cell? 
\end{question}

\section{$G$-graded
seeds for skew-symmetric matrices}\label{sec:skew}

In this section, we classify the $G$-graded seeds
for skew-symmetric matrices.  We also describe a particular signed seed,
and show how this signed seed is compatible with the positivity test for 
the totally positive orthogonal Grassmannian which was 
explicitly described in~\cite{skew}. For the remainder of this section, we let $\text{Sk}_n = \mathbb{C}[z_{i, j}] / (z_{i, j} + z_{j, i})$ be the \textit{coordinate ring of skew-symmetric matrices}. This is a polynomial ring in the $N := \binom{n}{2}$ variables $z_{i, j}$ for $1 \leq i < j \leq n$.

\subsection{Background on flag minors and Pfaffians}

\begin{definition}
	Let $R = (r_1, \ldots, r_m)$ and $S = (s_1, \ldots, s_m)$ be two sequences of elements in $[n]$. Then, we define $z_{R, S}$ to be the determinant of the $R \times S$ submatrix of $(z_{i,j})_{1 \leq i, j \leq n}$. Explicitly, we have 
	\[
		z_{R, S} := \sum_{\sigma \in \mathfrak{S}_m} \sgn(\sigma) \prod_{i = 1}^m z_{r_i s_{\sigma(i)}} \in \Sk_n. 
	\]
	For a sequence $I = (i_1 , \ldots , i_d)$ of elements in $[n]$, we define $P_I$ denote the \textbf{flag minor} $z_{[d], I}$. If $I_1, \ldots, I_k$ are sequences of elements in $[n]$, we also write $P_{I_1 \ldots I_k}$ for the flag minor corresponding to the concatenation of $I_1, \ldots, I_k$. If $S \subseteq [n]$, we write $P_S$ for the flag minor corresponding to the sequence consisting of elements of $S$ written in increasing order.

\end{definition}

\begin{proposition}[3-term Pl\"ucker incidence relation]\label{prop:3-term-plucker}
	For $\ell \geq 0$, let $S \in [n]^{\ell}$ be any sequence of length $\ell$. 
	Let $i, j, k \in [n] \backslash S$ be distinct indices. Then, we have 
	\[
		P_{ijS}P_{k S} - P_{ikS} P_{jS} + P_{jk S} P_{iS} = 0. 
	\]
\end{proposition}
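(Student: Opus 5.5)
The plan is to derive the three-term relation from a single vanishing determinant, working at the level of the generic matrix $(z_{i,j})$ and then passing to $\Sk_n$. The identity only involves the first $\ell+2$ rows of that matrix, so it holds in $\CC[z_{i,j}]$ for a generic $n\times n$ matrix. The skew-symmetric specialization $z_{j,i}=-z_{i,j}$ is a ring homomorphism, so it suffices to prove the polynomial identity for an arbitrary matrix $Z$. In fact it is enough to prove it for an arbitrary $(\ell+2)\times n$ matrix $Y$, namely the top $\ell+2$ rows of $Z$. Here $P_T$ for a sequence $T$ of length $d$ is the determinant of the top $d\times d$ block of the columns $T$ in order.

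Write $u_m\in\CC^{\ell+2}$ for the $m$-th column of $Y$. Consider the $(2\ell+3)\times(2\ell+3)$ block matrix
\[
N=\begin{pmatrix} u_i & u_j & u_k & u_S & 0\\ u'_i & u'_j & u'_k & 0 & u'_S\end{pmatrix},
\]
where the top block row has the $\ell+2$ rows of $Y$ and $u_S$ lists the columns indexed by $S$. The bottom block row uses only the first $\ell+1$ rows of $Y$, written $u'$. The plan is to show that $\det N=0$ and that its Laplace expansion produces exactly the claimed relation. This is the standard proof of the Plücker relations.

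The vanishing is straightforward. Subtract the top copy of each $S$-column's first $\ell+1$ entries from the bottom copy; equivalently, perform row and column operations so that the last $\ell$ columns become $(u_S;0)$ repeated. This yields a matrix with $2\ell+3$ columns spanning a space of dimension at most $(\ell+2)+(\ell+1)-\ell<2\ell+3$, so $\det N=0$. Cleaner still is the Sylvester-identity style argument: the multilinear alternating form
\[
F(a,b,c)=P_{abS}P_{cS}-P_{acS}P_{bS}+P_{bcS}P_{aS}
\]
in the columns $a,b,c\in\CC^{\ell+2}$ is alternating in $(a,b,c)$. Swapping $a,b$ maps the terms to their negatives up to reordering, since $P_{baS}=-P_{abS}$. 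For instance, $F(a,b,c)$ is linear in each column and antisymmetric. After choosing a basis, linearity reduces the check to finitely many cases.

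The main obstacle is identifying the correct expansion and signs. I would first try the most elementary route. By linearity, $F$ vanishes identically once it vanishes whenever $a,b,c$ together with the columns of $S$ are standard basis vectors. If the $S$-columns are dependent, every term vanishes. Otherwise, extend them by column operations to a basis and reduce to $\ell=0$. That case is the identity $\det(a,b)\,c_1-\det(a,c)\,b_1+\det(b,c)\,a_1=0$, which is the expansion along the first row of the $3\times 3$ matrix whose first row is repeated, i.e. $\det\begin{pmatrix}a_1&b_1&c_1\\a_1&b_1&c_1\\a_2&b_2&c_2\end{pmatrix}=0$.

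The reduction from general $\ell$ to $\ell=0$ uses column operations adding multiples of the $S$-columns to $a,b,c$, which do not change any $P$ involving $S$ in full. Such an operation changes $P_{cS}$ and $P_{aS}$ only through the top $\ell+1$ rows. Making this step rigorous requires care about which rows each minor uses, so I would instead do the reduction by row operations on $Y$ together with a generic change of basis that makes the top $\ell\times\ell$ block of the $S$-columns an identity matrix, followed by a Schur complement.
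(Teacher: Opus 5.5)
The paper states this proposition without proof, citing it as the standard incidence Pl\"ucker relation, so your argument has to stand on its own. As written, none of your three routes is completed. Your reduction to an arbitrary $(\ell+2)\times n$ matrix $Y$ is fine, and the block matrix $N$ is the right object. However, your justification of $\det N=0$ is wrong. The number $(\ell+2)+(\ell+1)-\ell=\ell+3$ is not an upper bound on the rank of $N$ once $\ell\ge 2$ (generically that rank is $2\ell+2$), and at $\ell=0$ the inequality $\ell+3<2\ell+3$ is false. A correct argument: for each $s\in S$, add the column $(0;u'_s)$ to $(u_s;0)$. Then the $\ell+3$ columns $(u_m;u'_m)$, $m\in\{i,j,k\}\cup S$, all lie in the image of the linear map $v\mapsto(v;v')$, which has dimension $\ell+2$. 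So they are dependent and $\det N=0$. More seriously, you never carry out the Laplace expansion, which is the step that actually produces the identity, and you label its signs the main obstacle. It is short. Expand along the top $\ell+2$ rows. The top block vanishes on the last $\ell$ columns and the bottom block vanishes on the middle $\ell$ columns. So the only surviving terms pair the top minor on columns $\{\text{two of } i,j,k\}\cup S$ with the bottom minor on $\{\text{the remaining one}\}\cup S$. These products are exactly $P_{jkS}P_{iS}$, $P_{ikS}P_{jS}$ and $P_{ijS}P_{kS}$. Since the omitted column position runs through $1,2,3$, the Laplace signs alternate, giving $0=\det N=\pm\bigl(P_{ijS}P_{kS}-P_{ikS}P_{jS}+P_{jkS}P_{iS}\bigr)$.

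Your fallback routes also contain errors. You cannot reduce the $S$-columns to standard basis vectors ``by linearity'': $F$ is trilinear in $(a,b,c)$ but quadratic in each $S$-column, since both factors of every term contain it. On the other hand, your worry about column operations is unfounded. Adding a multiple of an $S$-column to $a$, $b$ or $c$ leaves all six minors unchanged, because each minor already contains every $S$-column restricted to the same rows. By contrast, general row operations on $Y$, as in your last paragraph, do not preserve flag minors; only lower unitriangular ones do. The column-operation observation completes your alternating-form idea immediately. If $a\in W=\mathrm{span}(u_s : s\in S)$, then $P_{abS}=P_{acS}=P_{aS}=0$, so $F$ vanishes whenever one of its arguments lies in $W$. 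If the $S$-columns are independent, $F$ therefore descends to an alternating trilinear form on the $2$-dimensional space $\mathbb{C}^{\ell+2}/W$, which must be zero. If they are dependent, every term of $F$ vanishes. Either this quotient argument or the corrected block-matrix argument gives a complete proof.
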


\begin{definition}
    Let $I = (i_1, i_2, \ldots, i_d) \in [n]^d$. We define the \textbf{Pfaffian}, denoted  $\Pf_I$, inductively via the following rules: 
    \begin{enumerate}
        \item If $d$ is odd, then $\Pf_I = 0$.
        \item $\Pf_{\emptyset} = 1$
        \item If $d = 2$, then $\Pf_{(i_1, i_2)} = z_{i_1,i_2}$. 
		\item
		    For $d \geq 3$, we define $\Pf_I = \sum_{r = 2}^d (-1)^r z_{i_1,i_r} \Pf_{i_2, \ldots, \widehat{i_r}, \ldots, i_{d}} $
    \end{enumerate}
	For a subset $S \subset [n]$, we denote by $\Pf_S$ the Pfaffian corresponding to the elements of $S$ sorted in increasing order. 
\end{definition}

The following formula, which first appeared 
in~\cite{MR1576761},
relates Pfaffian and determinants of submatrices of skew-symmetric matrices.
For a modern account of this formula, see~\cite{MR4165809}.

\begin{theorem}[Brill's relation]\label{thm:brill-theorem}
    Let $R, S \in {[n]}^m$. Then, the following identity holds in $\Sk_n$:
    \[
        z_{R, S} = {(-1)}^{\lfloor \frac{m}{2} \rfloor} \sum_{k = 0}^{\lfloor \frac{m}{2} \rfloor} {(-1)}^k \sum_{\substack{|w| = 2k \\ w \subseteq R}} \sgn \binom{R}{w(R \backslash w)} \Pf_w \Pf_{(R\backslash w)S}.
    \]
\end{theorem}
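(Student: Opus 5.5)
The plan is to prove Brill's relation as a polynomial identity in the generic skew-symmetric matrix. It suffices to verify it over $\Q$ after substituting $z_{j,i}=-z_{i,j}$.

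The strategy is a Laplace-type expansion of $z_{R,S}$ combined with the Pfaffian expansion, organized by induction on $m$. Both sides are multilinear and alternating in the sequence $S$: the left side obviously, and the right side because each $\Pf_{(R\setminus w)S}$ is alternating in its arguments. Both sides are also alternating in $R$. For the right side this needs the sign factor $\sgn\binom{R}{w(R\setminus w)}$, which is the sign of the shuffle permutation moving $w$ to the front; alternation then follows from the alternation of $\Pf_w$ and $\Pf_{(R\setminus w)S}$. Checking that the signs combine correctly under a transposition of two entries of $R$ is a routine sign bookkeeping. Consequently it suffices to treat the case where $R$ and $S$ consist of distinct entries, and we may permute freely.

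The main step is induction on $m$ via expansion along the first row index $r_1$. On the left, $z_{R,S}=\sum_{t}(-1)^{t+1} z_{r_1,s_t}\, z_{R\setminus r_1,\,S\setminus s_t}$. On the right, split the sum according to whether $r_1\in w$ or not.
\begin{itemize}
\item If $r_1\notin w$, expand $\Pf_{(R\setminus w)S}$ along its first entry $r_1$. This produces terms $z_{r_1,s_t}\Pf_{(R\setminus w\setminus r_1)(S\setminus s_t)}$, which after applying the inductive hypothesis reassemble into the left side's Laplace expansion. It also produces terms $z_{r_1,r_j}\Pf_{\ldots}$ with $r_j\in R\setminus w$.
\item If $r_1\in w$, expand $\Pf_w$ along $r_1$, giving terms $z_{r_1,r_j}\Pf_{w\setminus\{r_1,r_j\}}\Pf_{(R\setminus w)S}$.
\end{itemize}
The two families of $z_{r_1,r_j}$ terms should cancel pairwise. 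Pair $(w,\ r_j\in R\setminus w)$ with $(w\cup\{r_1,r_j\},\ r_j)$: the factor $(-1)^k$ changes by one step and the shuffle signs compensate. That leaves exactly the $z_{r_1,s_t}$ terms. The global factor $(-1)^{\lfloor m/2\rfloor}$ versus $(-1)^{\lfloor (m-1)/2\rfloor}$ must match the parity shift incurred when moving $r_1$ past the remaining $R$-entries in the Pfaffian expansion.

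As a sanity check, take $m=1$: the formula gives $z_{r,s}=\Pf_{(r,s)}$. For $m=2$ with $R=S$ it gives $\det=\Pf^2$. These small cases fix the sign conventions before running the induction.

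The main obstacle I expect is the sign bookkeeping in the cancellation step: verifying that the $z_{r_1,r_j}$ terms from the two cases appear with opposite signs. The index $r_j$ sits at different positions in the shuffle in the two cases, and $k$ differs by one. I would first handle this by normalizing $R=(1,\dots,m)$ and $S=(m+1,\dots,2m)$ using the alternation established above. The shuffle signs then become explicit as $(-1)^{\sum_{a\in w}a-|w|(|w|+1)/2}$, and the comparison reduces to a parity check. If this gets unwieldy, a fallback is to cite the classical proof in~\cite{MR4165809} via the exterior algebra. There $z_{R,S}$ is a coefficient of $\omega^{\wedge}$-type expressions with $\omega=\sum_{i<j} z_{ij}e_i\wedge e_j$, and the identity comes from expanding $\exp(\omega)$ and contracting.
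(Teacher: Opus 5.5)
The paper does not prove this identity. It quotes it from Brill~\cite{MR1576761} and points to~\cite{MR4165809} for a modern account. So your induction is a genuinely different, self-contained route, and it does go through. It uses only the Laplace expansion and the first-entry recursion for $\Pf$ that the paper takes as its definition. The citation buys the paper brevity; your argument buys a proof checkable with tools already in the section. The sign bookkeeping you flagged as the main obstacle is short.

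\emph{The $z_{r_1,s_t}$ terms.*} Since $r_1\notin w$, you can drop $r_1$ from $R$ without changing $\sgn\binom{R}{w(R\setminus w)}$, because $r_1$ passes the even number $|w|$ of entries. The entry $s_t$ sits in position $m-2k+t$ of $(R\setminus w)S$. Also $\lfloor m/2\rfloor+\lfloor (m-1)/2\rfloor=m-1$. So the net sign is $(-1)^{t+1}$, exactly the Laplace sign.

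\emph{The cancellation.} Index $w$ by a set of positions $W\subseteq\{2,\dots,m\}$. Take $b\notin W$, and let $c$ be the number of elements of $W$ below $b$. Then $r_b$ is in position $b-c$ of $R\setminus w$ and in position $c+2$ of $\tilde w=w\cup\{r_1,r_b\}$. Moreover $\sgn\binom{R}{\tilde w(R\setminus\tilde w)}=(-1)^b\sgn\binom{R}{w(R\setminus w)}$. Combined with $(-1)^{k+1}$ versus $(-1)^k$, the two paired terms have opposite signs. The map $(W,b)\mapsto(W\cup\{1,b\},b)$ is a bijection between the two families.

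One step should be fixed. Alternation in $R$ and in $S$ separately does not justify normalizing to $R=(1,\dots,m)$, $S=(m+1,\dots,2m)$. It says nothing when $R$ and $S$ share entries, and that is exactly the case the paper uses: flag minors $P_I=z_{[d],I}$ and principal minors. There are two repairs:
\begin{enumerate}
\item Prove the disjoint case in $\Sk_{2m}$ and specialize along the ring map $z_{a,b}\mapsto z_{\phi(a),\phi(b)}$, where $\phi(i)=r_i$ and $\phi(m+i)=s_i$. This is well defined because $z_{a,a}=0$ in $\Sk_n$.
\item More simply, drop the normalization and the whole alternation paragraph. Laplace expansion and the Pfaffian recursion hold verbatim for arbitrary sequences, and the sign computation above involves only positions. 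So the induction proves the statement for all $R,S\in[n]^m$ directly.
\end{enumerate}
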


By applying Theorem~\ref{thm:brill-theorem} to the case $R = \{1, \ldots, m\}$, we get the following formulae: 
\begin{corollary}\label{cor:brill-theorem-extremal-case}
	\phantom{h}
	\begin{enumerate}[label = (\roman*)]
		\item When $S = \{1, \ldots, \widehat{i}, \ldots, 2d, j\}$ for $1 \leq i \leq 2d$ and $j > 2d$, we have 
		\[
			P_S = \Pf_{[1, 2d]} \Pf_S. 
		\]
		\item When $S = \{1, \ldots, \widehat{i}, \ldots, 2d+1, j\}$ for $1 \leq i \leq 2d+1$ and $j > 2d+1$, we have 
		\[
			P_S = \Pf_{[1, 2d+1] \backslash \{i\}} \Pf_{[1, 2d+1] \cup \{j\}}. 
		\]
		\item When $S = \{1, \ldots, \widehat{i}, \ldots, \widehat{j}, \ldots, 2d, k, \ell\}$ for $1 \leq i < j \leq 2d$ and $2d < k < \ell$, we have 
		\[
			P_S = \Pf_{[1, 2d]} \Pf_S  - \Pf_{[1, 2d] \backslash \{i, j\}} \Pf_{[1, 2d] \cup \{k, \ell\}}.  
		\]
		\item When $S = \{1, \ldots, \widehat{i}, \ldots, \widehat{j}, \ldots, 2d+1, k, \ell\}$ for $1 \leq i < j \leq 2d+1$ and $2d+1 < k < \ell$, we have 
		\[
			P_S = \Pf_{[1, 2d+1] \backslash \{i\}} \Pf_{[1, 2d+1] \backslash \{j\}, k, \ell} - \Pf_{[1, 2d+1] \backslash \{j\}} \Pf_{[1, 2d+1] \backslash \{i\}, k, \ell}. 
		\]
	\end{enumerate}
\end{corollary}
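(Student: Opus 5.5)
The plan is to derive each of the four identities from \cref{thm:brill-theorem} with $R=(1,\dots,m)$, where $m=|S|$. Since $P_S=z_{[m],S}$, Brill's relation writes $P_S$ as a signed sum over even-size subsets $w\subseteq[m]$ of terms $\Pf_w\,\Pf_{([m]\setminus w)S}$. The key observation is that $\Pf_{([m]\setminus w)S}$ is a Pfaffian of a sequence that may contain repeated indices, and such a Pfaffian vanishes: it is the Pfaffian of a skew-symmetric matrix with two equal rows and columns. So only those $w$ survive for which $([m]\setminus w)$ and $S$ are disjoint as index sets, that is, $[m]\setminus w\subseteq [m]\setminus S$ after accounting for multiplicity. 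Equivalently, $w\supseteq [m]\cap S$. This reduces each case to a very small sum.

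The cases are handled as follows.
\begin{enumerate}[label=(\roman*)]
\item Here $m=2d$ and $[m]\cap S=[2d]\setminus\{i\}$, which has size $2d-1$. Since $w$ must have even size and contain this set, the only option is $w=[2d]$. This gives the single term $\Pf_{[1,2d]}\Pf_{S}$; the complement is empty, so the second factor is just $\Pf_S$.
\item Here $m=2d+1$ and $[m]\cap S=[2d+1]\setminus\{i\}$, which has even size $2d$. Taking $w$ equal to this set is forced, because the only larger subset, $[2d+1]$, has odd size. The complement is $\{i\}$, so the surviving term is $\Pf_{[2d+1]\setminus\{i\}}\Pf_{(i)S}$. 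Sorting $(i)S$ turns it into $[1,2d+1]\cup\{j\}$.
\item Here $m=2d$ and $[m]\cap S=[2d]\setminus\{i,j\}$, which has size $2d-2$. There are two choices: $w=[2d]\setminus\{i,j\}$, which gives $\Pf_w\Pf_{(i,j)S}$, and $w=[2d]$, which gives $\Pf_{[2d]}\Pf_S$.
\item Here $m=2d+1$ and $[m]\cap S=[2d+1]\setminus\{i,j\}$, which has odd size. So $w$ must add exactly one of $i$ or $j$, giving the two terms $\Pf_{[2d+1]\setminus\{j\}}\Pf_{(j)S}$ and $\Pf_{[2d+1]\setminus\{i\}}\Pf_{(i)S}$. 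Sorting $(j)S$ and $(i)S$ gives the sets $[1,2d+1]\setminus\{i\}\cup\{k,\ell\}$ and $[1,2d+1]\setminus\{j\}\cup\{k,\ell\}$ respectively, and these produce the two products in the statement.
\end{enumerate}

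The remaining work is the bookkeeping of signs. Three sources contribute. The first is the global factor $(-1)^{\lfloor m/2\rfloor}$ times $(-1)^k$ with $|w|=2k$. The second is the shuffle sign $\sgn\binom{R}{w(R\setminus w)}$, which is the sign of moving the elements of $[m]\setminus w$ to the end. The third is the sign incurred when reordering the Pfaffian sequence $([m]\setminus w)S$ into increasing order, using the fact that the Pfaffian is alternating under permutations of its index sequence. In case (i) the only term has $k=d=\lfloor m/2\rfloor$, so the global signs cancel. That term needs no shuffle, and $S$ is already sorted. In cases (ii) and (iv), moving $i$ past the indices between $i$ and $2d+1$ costs $(-1)^{2d+1-i}$ in the shuffle sign. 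Moving $i$ back into sorted position inside $(i)S$ costs $(-1)^{i-1}$. The product of these is $(-1)^{2d}=1$.

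I expect the main obstacle to be the relative minus sign in (iii) and (iv), which has to be tracked carefully. In (iii) the two terms have $k=d$ and $k=d-1$, which differ by a factor of $-1$. The shuffle sign for moving $i,j$ to the end and the sorting sign for placing $i,j$ back in front of $k,\ell$ should multiply to $+1$, since the two parity counts match as in (ii). In (iv), both terms have the same $k$, so the relative sign comes entirely from the shuffle and sorting signs. For $i$ these give $(-1)^{(2d+1-i)-1}\cdot(-1)^{i-1}=-1$, because the removed $j$ lies after $i$ and so reduces the shuffle count by one; for $j$ they give $(-1)^{2d+1-j}\cdot(-1)^{j-2}=-1$, because the removed $i$ lies before $j$ and so reduces the sorting count by one. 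The net relative sign would then be $+1$, which contradicts the minus sign in (iv). So I would recheck these counts directly on the smallest example $d=0$, $m=1$, where $i,j$ cannot both exist, and then on $d=1$ with $S=\{3,k,\ell\}$ minus one entry. I would fix the conventions using these examples before writing out the general argument.
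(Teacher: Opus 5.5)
Your route is the paper's: the corollary is just \cref{thm:brill-theorem} with $R=[m]$. Your vanishing criterion ($\Pf_{([m]\setminus w)S}=0$ unless $w\supseteq[m]\cap S$) correctly isolates the surviving terms in all four cases, and your signs for (i)--(iii) are right. However, the proposal does not prove (iv). It ends with a relative sign of $+1$ that contradicts the statement, and that contradiction comes from a miscount, not from the statement.

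The error is in the term with $R\setminus w=\{i\}$. The shuffle sign $\sgn\binom{R}{w(R\setminus w)}$ is computed inside $R=[2d+1]$, and $R$ still contains $j$, which lies in $w$. The index $j$ is missing only from $S$. So moving $i$ to the end passes $2d+1-i$ entries and gives $(-1)^{2d+1-i}$, not $(-1)^{2d-i}$.

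The missing $j$ also does not affect sorting $(i)S$, since $j>i$; that sign is $(-1)^{i-1}$. Hence the $i$-term carries $(-1)^{2d+1-i}(-1)^{i-1}=+1$.

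Your count for the $j$-term is correct. The shuffle gives $(-1)^{2d+1-j}$, and sorting $(j)S$ gives $(-1)^{j-2}$ because $i\notin S$, so the product is $-1$. Both terms have $|w|=2d$, so the global factor $(-1)^{\lfloor m/2\rfloor}(-1)^{d}=+1$ is common to both. This yields exactly
\[
P_S=\Pf_{[1,2d+1]\setminus\{i\}}\Pf_{[1,2d+1]\setminus\{j\},k,\ell}-\Pf_{[1,2d+1]\setminus\{j\}}\Pf_{[1,2d+1]\setminus\{i\},k,\ell}.
\]
The asymmetry between the two terms lives entirely in the sorting step, not in the shuffle.

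Your proposed sanity check at $d=0$ is vacuous for (iv), since no pair $i<j\le 1$ exists. The smallest instance is $d=1$, $i=1$, $j=2$. There, expanding the $3\times 3$ determinant directly confirms $P_{3k\ell}=\Pf_{23}\Pf_{13k\ell}-\Pf_{13}\Pf_{23k\ell}$. In Brill's sum, the $w=\{2,3\}$ term has shuffle sign $+1$ (the $3$-cycle $(2,3,1)$), and the $w=\{1,3\}$ term has shuffle sign $-1$. With this one count corrected, your argument is complete.
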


\subsection{Cluster structure on \texorpdfstring{$\text{Sk}_n$}{Skn}}
We start by describing an initial seed for the cluster structure on $\text{Sk}_n$.

This construction comes from the work of~\cite{MR2427512} 
using cluster categorification and is extended to the quantum setting in~\cite{ZHANG2026677} using the methods from~\cite{MR3633289}. The initial seed for $\Sk_6$ and $\Sk_7$ are shown in \cref{figure:initial-seed-for-skew-symmetric-matrices-n=7}, from which the pattern for general $n$ is evident.

\begin{figure}[h]
\[\begin{tikzcd}
	{\Pf_{1,2}} & {\Pf_{1,3}} & {\Pf_{1,4}} & {\Pf_{1,5}} & {\boxed{\Pf_{1,6}} } \\
	& {\Pf_{2,3}} & {P_{3,4}} & {P_{4,5}} & {\boxed{P_{5,6}}} \\
	&& {\Pf_{1,2,3,4}} & {P_{3,4,5}} & {\boxed{P_{4,5,6}}} \\
	&&& {\boxed{\Pf_{2,3,4,5}}} & {\boxed{P_{3,4,5,6}}} \\
	&&&& {\boxed{\Pf_{1,2,3,4,5,6}}}
	\arrow[from=1-1, to=2-3]
	\arrow[from=1-2, to=1-3]
	\arrow[from=1-3, to=1-4]
	\arrow[from=1-3, to=2-3]
	\arrow[from=1-4, to=1-5]
	\arrow[from=1-4, to=2-4]
	\arrow[from=2-2, to=3-4]
	\arrow[from=2-3, to=1-2]
	\arrow[from=2-3, to=2-4]
	\arrow[from=2-4, to=1-3]
	\arrow[from=2-4, to=2-5]
	\arrow[from=2-4, to=3-4]
	\arrow[from=2-5, to=1-4]
	\arrow[curve={height=-24pt}, dashed, from=3-3, to=1-1]
	\arrow[from=3-3, to=4-5]
	\arrow[from=3-4, to=2-3]
	\arrow[from=3-4, to=3-5]
	\arrow[from=3-5, to=2-4]
	\arrow[curve={height=-30pt}, dashed, from=4-4, to=2-2]
	\arrow[from=4-5, to=3-4]
	\arrow[curve={height=-30pt}, dashed, from=5-5, to=3-3]
	\arrow[from=1-2, to=2-2]
	\arrow[from=2-3, to=3-3]
	\arrow[from=3-4, to=4-4]
\end{tikzcd}
\]

\[\begin{tikzcd}
	{\Pf_{1,2}} & {\Pf_{1,3}} & {\Pf_{1,4}} & {\Pf_{1,5}} & {\Pf_{1, 6}} & {\boxed{\Pf_{1, 7}}} \\
	& {\Pf_{2,3}} & {P_{3,4}} & {P_{4,5}} & {P_{5,6}} & {\boxed{P_{6,7}}} \\
	&& {\Pf_{1,2,3,4}} & {P_{3,4,5}} & {P_{4,5,6}} & {\boxed{P_{5,6,7}}} \\
	&&& {\Pf_{2,3,4,5}} & {P_{3,4,5,6}} & {\boxed{P_{4,5,6,7}}} \\
	&&&& {\boxed{\Pf_{1,2,3,4,5,6}}} & {\boxed{P_{3,4,5,6,7}}} \\
	&&&&& {\boxed{\Pf_{2,3,4,5,6,7}}}
	\arrow[from=1-1, to=2-3]
	\arrow[from=1-2, to=1-3]
	\arrow[from=1-3, to=1-4]
	\arrow[from=1-3, to=2-3]
	\arrow[from=1-4, to=1-5]
	\arrow[from=1-4, to=2-4]
	\arrow[from=1-5, to=1-6]
	\arrow[from=1-5, to=2-5]
	\arrow[from=2-2, to=3-4]
	\arrow[from=2-3, to=1-2]
	\arrow[from=2-3, to=2-4]
	\arrow[from=2-4, to=1-3]
	\arrow[from=2-4, to=2-5]
	\arrow[from=2-4, to=3-4]
	\arrow[from=2-5, to=1-4]
	\arrow[from=2-5, to=2-6]
	\arrow[from=2-5, to=3-5]
	\arrow[from=2-6, to=1-5]
	\arrow[curve={height=-30pt}, dashed, from=3-3, to=1-1]
	\arrow[from=3-3, to=4-5]
	\arrow[from=3-4, to=2-3]
	\arrow[from=3-4, to=3-5]
	\arrow[from=3-5, to=2-4]
	\arrow[from=3-5, to=3-6]
	\arrow[from=3-5, to=4-5]
	\arrow[from=3-6, to=2-5]
	\arrow[curve={height=-30pt}, dashed, from=4-4, to=2-2]
	\arrow[from=4-4, to=5-6]
	\arrow[from=4-5, to=3-4]
	\arrow[from=4-5, to=4-6]
	\arrow[from=4-6, to=3-5]
	\arrow[curve={height=-30pt}, dashed, from=5-5, to=3-3]
	\arrow[from=5-6, to=4-5]
	\arrow[curve={height=-30pt}, dashed, from=6-6, to=4-4]
	\arrow[from=1-2, to=2-2]
	\arrow[from=2-3, to=3-3]
	\arrow[from=3-4, to=4-4]
	\arrow[from=4-5, to=5-5]
\end{tikzcd}\]
	\caption{Initial seed for $\Sk_6$ and $\Sk_7$.}\label{figure:initial-seed-for-skew-symmetric-matrices-n=7}
\end{figure}

In \cref{figure:initial-seed-for-skew-symmetric-matrices-n=7}, the cluster variables are situated as if they were entries in the upper-triangular part of a $n \times n$ matrix. We use this picture to label the cluster variables as $x_1, \ldots, x_N$ for $N = \binom{n}{2}$ where the cluster variables are situated in the following fashion (again shown for $n = 6$ and $n = 7$ but the pattern is evident for general $n$):

\begin{equation}\label{eqn:matrix-position-of-cluster-variables}
\begin{bmatrix}
* & x_1 & x_2 & x_4 & x_7 & x_{11} \\
   * & *  & x_3 & x_5 & x_8 & x_{12} \\
 *   &  * & *  & x_6 & x_9 & x_{13} \\
   * & *  & *  & *  & x_{10} & x_{14} \\
   * & *  & *  &  * & *   & x_{15} \\
   * & * & * & * & * & *
	\end{bmatrix} \quad \text{ and } \quad 
   \begin{bmatrix}
	* & x_1 & x_2 & x_4 & x_7 & x_{11} & x_{16} \\
   * & *  & x_3 & x_5 & x_8 & x_{12} & x_{17} \\
 *   &  * & *  & x_6 & x_9 & x_{13} & x_{18} \\
   * & *  & *  & *  & x_{10} & x_{14} & x_{19}\\
   * & *  & *  &  * & *   & x_{15} & x_{20} \\
   * & * & * & * & * & * & x_{21} \\
	* & * & * & * & * & * & * 
	\end{bmatrix}
\end{equation}

Alternatively, we can label the cluster variables by coordinates $(r, c)$ based on its row and column indices in the matrix of \cref{eqn:matrix-position-of-cluster-variables}. For example, $x_{(1, 2)} = x_1$, $x_{(1, 3)} = x_2$, $x_{(2,3)} = x_3$, etc. The initial cluster variables, the mutable variables, and the frozen variables are 
\begin{align}
	x_{(r, c)} & = \begin{cases}
        \Pf_{[c]} & \quad \text{if } c-r = 1 \text{ and $c$ is even}, \\
        \Pf_{[2, c]} & \quad \text{if } c-r = 1 \text{ and $c$ is odd,} \\
        \Pf_{1, c} & \quad \text{if } r = 1, \\
       	P_{[c-r+1,c]} & \quad \text{otherwise.}
    \end{cases}\label{eqn:initial-cluster-skew-symmetric-matrices} \\ 
	\mathbf{mutable} & = \left \{x_k \mid 1 \leq k \leq \binom{n-1}{2} - 1 \right \} \label{eqn:exchangeable-vertices}, \\
	\mathbf{frozen} & = \left \{ x_k \mid \binom{n-1}{2} \leq k \leq \binom{n}{2} \right \}. \label{eqn:frozen-vertices} 
\end{align}

The fact that this data (\cref{eqn:initial-cluster-skew-symmetric-matrices}, \cref{eqn:exchangeable-vertices}, \cref{eqn:frozen-vertices}, \cref{figure:initial-seed-for-skew-symmetric-matrices-n=7}) gives a cluster structure on $\Sk_n$ is proved in~\cite[Section 13.1]{MR2427512} and later in the quantum setting in~\cite[Theorem 4.6]{ZHANG2026677}.

\subsection{\texorpdfstring{$G$}{G}-gradings on \texorpdfstring{$\Sk_n$}{Sk(n)}}

We can compute the number of $G$-gradings on the cluster structure
on $\Sk_n$, where $G$ is an arbitrary finite abelian group. 
There is a natural ${(\CC^*)}^n$-action on $\Sk_n$ induced by the $\ZZ^n$-grading $\deg(z_{i, j}) = \mathbf{e}_i + \mathbf{e}_j$. In coordinates, we have $(t_1, \ldots, t_n) . z_{i, j} = t_i t_j z_{i, j}$. The cluster variables in \cref{eqn:initial-cluster-skew-symmetric-matrices} are homogeneous with respect to this grading, with 
\[
	\deg (\Pf_I) = \sum_{i \in I} \mathbf{e}_i, \quad \deg P_{I} = \sum_{i \in I} \mathbf{e}_i + \sum_{j \leq |I|} \mathbf{e}_j
\]
whenever they are non-zero. From the initial quiver shown in \cref{figure:initial-seed-for-skew-symmetric-matrices-n=7}, it is evident that all exchange relations are compatible with this $\ZZ^n$-grading. Hence, this torus action defines a $\ZZ^n$-grading on the cluster structure of $\Sk_n$, and in turn, defines up to $|G|^{n}$ distinct $G$-gradings from the construction in~\cref{sec:torus-action}. Explicitly, any choice of $g_1, \ldots, g_n \in G$ gives a $G$-grading of the form 
\[
	\sigma (\Pf_I) = \sum_{i \in I} \mathbf{g}_i, \quad \sigma (P_{I}) = \sum_{i \in I} \mathbf{g}_i + \sum_{j \leq |I|} \mathbf{g}_j
\]
In this section, we compute the exact number of distinct $G$-gradings this produces for arbitrary $G$ along with the structure of $\mathfrak{gr}(\Sk_n, G)$ up to the torus action. 

\begin{theorem}
	There are $|G|^n$ distinct $G$-gradings on the cluster algebra structure of $\Sk_n$ for $n \geq 3$. Moreover, the $G$-gradings are determined by the $G$-degrees of the variables $\Pf_{1,2}, \Pf_{1,3}, \ldots, \Pf_{1,n}$, and $\Pf_{2,3}$.
\end{theorem}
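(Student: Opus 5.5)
The plan is to compute $\mathfrak{gr}(\Sk_n,G)=\ker(\widetilde{B}^t:G^{N}\to G^{r})$ directly from the initial quiver of \cref{figure:initial-seed-for-skew-symmetric-matrices-n=7}. We show by propagation, exactly as in the proof of \cref{thm:matrices}, that the $n$ values
\[
\sigma(\Pf_{1,2}),\ \sigma(\Pf_{1,3}),\dots,\ \sigma(\Pf_{1,n}),\ \sigma(\Pf_{2,3})
\]
can be chosen arbitrarily. Once these are fixed, the balancing condition \eqref{eq:grading} at each mutable vertex determines every remaining degree uniquely and consistently. The count $|G|^n$ follows immediately, since there is one free parameter in $G$ for each of these $n$ variables. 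As a sanity check, the torus-action gradings $\Psi_G(g_1,\dots,g_n)$ should already realise all $|G|^n$ of them. Indeed, $\sigma(\Pf_{1,j})=g_1+g_j$ for $j\ge 2$ and $\sigma(\Pf_{2,3})=g_2+g_3$ form an invertible triangular system: $g_1=\tfrac12(\ldots)$ is not available in general, but $g_2+g_3-(g_1+g_2)-(g_1+g_3)=-2g_1$ has the same issue. So I will not rely on surjectivity of $\Psi_G$ and will instead count directly.

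The key step is an ordering of the vertices in which each vertex is ``solved for'' at a mutable vertex where it is the only not-yet-determined neighbour, with multiplicity one. The labelling \eqref{eqn:matrix-position-of-cluster-variables} organises this: I would proceed diagonal by diagonal, or column by column. Balancing at the mutable vertex $x_{(1,c)}=\Pf_{1,c}$ for $c\ge 3$ involves its neighbours $\Pf_{1,c\pm1}$, $x_{(2,c)}$ and $x_{(2,c+1)}$. With all of row $1$ and $\Pf_{2,3}$ known, I expect to solve for row $2$, namely $P_{c-1,c}$, going left to right. Next, balancing at the row-$2$ vertices determines row $3$ (the $P_{[c-2,c]}$ and $\Pf_{1,2,3,4}$), and so on down the staircase. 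The dashed long arrows, such as $\Pf_{1,2,3,4}\to\Pf_{1,2}$, are used in the balance equation at $\Pf_{1,2}$ to pin down the degree of the diagonal Pfaffian $x_{(c-1,c)}$. Because every arrow has multiplicity $1$ (the quiver is simply laced), each solving step has coefficient $\pm1$, so the argument works for an arbitrary abelian $G$, including $2$-torsion.

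The second requirement is that every balancing condition is actually used, so that no constraint is left over that would cut the count below $|G|^n$. Counting shows this must hold. There are $N=\binom n2$ vertices and $r=\binom{n-1}{2}-1$ mutable vertices, so $N-r=n$ when $\binom n2-\binom{n-1}2+1=n$, which is true. Hence it suffices to show that the linear map from the $N-n$ undetermined degrees to the $r$ balance equations is unitriangular with respect to the chosen orderings, which is a bijection between non-initial vertices and mutable vertices. Concretely, I match the mutable vertex $x_{(i,c)}$ with the vertex $x_{(i+1,c+1)}$ or $x_{(i+1,c)}$ that it determines. Alternatively, I can reduce by \cref{lem:frozen-leaf}: the frozen vertices in the last column are leaves or near-leaves, and I shed them column by column from the right, as was done for $\mathcal{A}_{\text{sym}}$, until only a small quiver remains on row $1$ together with $\Pf_{2,3}$.

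The main obstacle is the bookkeeping near the diagonal. There the dashed arrows and the parity alternation between $\Pf_{[c]}$ and $\Pf_{[2,c]}$ make it unclear which vertex each balance equation should solve for. I would check the cases $n=6$ and $n=7$ from the figure explicitly to fix the matching, and then argue that it extends by the evident periodic pattern. The small cases $n=3,4,5$ must be verified separately because the staircase is too short for that pattern.
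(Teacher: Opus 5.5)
Your strategy is correct and can be completed, but its main route differs from the paper's. Also, the explicit matching you wrote down is wrong at the diagonal, which is exactly the place you flagged as the obstacle.

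\textbf{The diagonal matching.} A diagonal mutable vertex $x_{(i,i+1)}$ is not adjacent to $x_{(i+1,i+2)}$. For example, $\Pf_{1,2}$ and $\Pf_{2,3}$ share no arrow, and $\Pf_{2,3}$ is one of your free values anyway. The neighbours of $x_{(i,i+1)}$ (those that exist) are $x_{(i-1,i+1)}$ and $x_{(i+1,i+3)}$, together with $x_{(i-2,i-1)}$ and $x_{(i+2,i+3)}$ along the dashed arrows. The matching that works is:
\begin{itemize}
\item an off-diagonal mutable $x_{(i,c)}$ solves for $x_{(i+1,c+1)}$;
\item a diagonal mutable $x_{(i,i+1)}$ solves for $x_{(i+2,i+3)}$ along its dashed arrow.
\end{itemize}
Process the rows from the top. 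Within row $i$, handle the off-diagonal vertices left to right, and only then $x_{(i,i+1)}$, since it needs $x_{(i+1,i+3)}$, which was found earlier in the same row. For instance, balance at $\Pf_{1,3}$ gives $\sigma(P_{3,4})=\sigma(\Pf_{1,4})+\sigma(\Pf_{2,3})$. Then balance at $\Pf_{1,2}$ gives $\sigma(\Pf_{1,2,3,4})=\sigma(P_{3,4})$. With this ordering, the $r=N-n$ mutable vertices biject onto the non-free vertices. Each solving step has coefficient $\pm1$ with all other neighbours already known, so your system is unitriangular as intended.

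\textbf{The torus remark.} Delete the torus ``sanity check''. The paper's subsequent theorem shows $\ker\Psi_G\cong G[2]$, so torus gradings account for only $|G|^n/|G[2]|$ of the gradings. That is exactly the factor of $2$ your attempted inversion ran into.

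\textbf{Comparison with the paper.} The paper uses the alternative you mention only in passing. The bottom vertex of the last column is a frozen leaf. Shedding that column from bottom to top with \cref{lem:frozen-leaf} leaves exactly the initial quiver of $\Sk_{n-1}$ plus the isolated vertex $\Pf_{1,n}$. \cref{lem:isolated-vertex} then contributes a factor of $|G|$. Induction down to $\Sk_3$, which has no mutable vertices, yields both the count and the free set $\Pf_{1,2},\dots,\Pf_{1,n},\Pf_{2,3}$. That argument inspects only one column at a time and uses the nesting of the initial quivers, so the dashed-arrow bookkeeping never arises. Your forward elimination must handle that bookkeeping explicitly. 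In exchange, it gives a direct recursion expressing every initial degree in terms of the $n$ free values.
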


\begin{proof}
	Apply Lemma~\ref{lem:frozen-leaf} repeatedly on the last column of \cref{figure:initial-seed-for-skew-symmetric-matrices-n=7} from bottom to top. We are left with the initial quiver for $\Sk_{n-1}$ and an isolated vertex. From Lemma~\ref{lem:isolated-vertex}, we have
	\[
		\# \{\text{$G$-gradings on $\Sk_n$}\} = |G| \cdot \# \{\text{$G$-gradings on $\Sk_{n-1}$}\}. 
	\]
	We get that the number of $G$-gradings on $\Sk_n$ is $|G|^n$ by induction. 
\end{proof}

\begin{theorem}
	Let $G$ be a finite abelian group. Then there are $|G|^n / |G[2]|$ gradings which come from the torus action. As a consequence, there are $|G[2]|$ distinct gradings up to the torus action. 
\end{theorem}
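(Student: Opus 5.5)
The image of $\Psi_G: G^{\oplus n}\to\mathfrak{gr}(\Sk_n,G)$ is isomorphic to $G^{\oplus n}/\ker\Psi_G$, so it suffices to show $\ker\Psi_G\cong G[2]$. Combined with the previous theorem ($|\mathfrak{gr}(\Sk_n,G)|=|G|^n$), this gives $|G|^n/|G[2]|$ gradings from the torus action and a quotient of order $|G[2]|$.

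By the previous theorem, a $G$-grading is determined by its values on $\Pf_{1,2},\Pf_{1,3},\dots,\Pf_{1,n}$ and $\Pf_{2,3}$. Hence $\mathbf{g}=(g_1,\dots,g_n)\in\ker\Psi_G$ if and only if $\sigma(\Pf_{1,j})=g_1+g_j=0$ for $2\le j\le n$ and $\sigma(\Pf_{2,3})=g_2+g_3=0$.

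Solving this system:
\begin{itemize}
\item The first conditions give $g_j=-g_1$ for all $j\ge 2$.
\item The last condition then gives $-2g_1=0$, so $g_1\in G[2]$.
\item Then $-g_1=g_1$, so $\mathbf{g}=(g_1,g_1,\dots,g_1)$.
\end{itemize}
Conversely, any constant vector $(h,\dots,h)$ with $2h=0$ satisfies all the conditions. Thus $\ker\Psi_G=\{(h,\dots,h): h\in G[2]\}\cong G[2]$. Note that $n\ge 3$ is used so that $\Pf_{2,3}$ exists.

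The main point to check is the identification of the torus-action grading on the initial variables. We have $\sigma(\Pf_{1,j})=g_1+g_j$ and $\sigma(\Pf_{2,3})=g_2+g_3$, which follows directly from $\deg\Pf_I=\sum_{i\in I}\mathbf{e}_i$ pushed forward along $\Phi_{\mathbf g}$. We also use that a grading vanishing on the determining variables is the zero grading. This is exactly the uniqueness part of the previous theorem, coming from the fact that Lemma~\ref{lem:frozen-leaf} gives unique extensions. Once these two facts are in place, the counting is immediate.
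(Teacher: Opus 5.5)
Your proof is correct and follows essentially the paper's route: both compute $\ker\Psi_G$ from the conditions $g_1+g_k=0$ for $2\le k\le n$ and $g_2+g_3=0$, which give the diagonal copy of $G[2]$. The only difference is the sufficiency check. The paper verifies directly that every initial cluster variable has torus degree with even coordinate sum, so $(g,\dots,g)$ with $2g=0$ kills it. You instead invoke the uniqueness part of the preceding theorem, which is equally valid.
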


\begin{proof}
	For $2 \leq k \leq n$, the torus degree is given by $\deg (\Pf_{1, k}) = \mathbf{e}_1 + \mathbf{e}_k$. Let $\mathbf{g} = (g_1, \ldots, g_n) \in G^{\oplus n}$ be an element in the kernel of $\Psi_G : G^{\oplus n} \to \mathfrak{gr}(\Sk_n, G)$. Then, $g_1 + g_k = 0$ for all $2 \leq k \leq n$ and $\mathbf{g}$ is in the form 
	\[
		\mathbf{g} = (g, -g, -g, \ldots, -g)
	\]
	for some $g \in G$. From $\deg (\Pf_{2, 3})$, we necessarily need $2g = 0$. Finally, the condition $2g = 0$ is enough to guarantee that $\mathbf{g}$ is in the kernel since all degrees of initial cluster variables lie in the subgroup 
	\[
		\{(t_1, \ldots, t_n) \in \ZZ^n : t_1 + \cdots + t_n \equiv 0 \pmod 2 \}. 
	\]
	Thus, $\ker \Psi_G \cong G[2]$ which suffices for the proof. 
\end{proof}

We can compute all such $G$-gradings for $\Sk_n$ by providing representatives for $\mathfrak{gr}(\Sk_n, G)$ modulo the torus grading. It suffices to do this computation for $G = \ZZ / p^{\ell} \ZZ$ and $p$ prime. When $p$ is odd, then $G[2]$ is trivial. Hence, all $G$-gradings come from the torus action in this case. Otherwise, $G = \ZZ / 2^{\ell} \ZZ$ and $|G[2]| = 2$. To characterize all $G$-gradings up to the torus action, it suffices to find a $G$-grading which does not come from the torus action. Since any assignment of $G$-degrees to $\Pf_{1,k}$ for $2 \leq k \leq n$ and $\Pf_{2,3}$ defines a unique well-defined $G$-grading on $\Sk_n$, there is a $G$-grading for which $\sigma(\Pf_{1,k}) = 1$ and $\sigma(\Pf_{2,3}) = 1$ for all $2 \leq k \leq n$. This does not come from the torus grading because otherwise we would have $g_1, \ldots, g_n \in G$ satisfying $g_1 + g_k = 1$ for $2 \leq k \leq n$ and $g_2 + g_3 = 1$. This implies that $g_2 = \cdots = g_n$ and $2g_2 = 1$. But this cannot happen in $\ZZ / 2^{\ell} \ZZ$, which is a contradiction.

\subsection{Total positivity on \texorpdfstring{$\Sk_n$}{Sk(n)}}

In~\cite{skew}, the authors identify $\Sk_n$ with the coordinate ring of an open affine chart of the orthogonal Grassmannian. 
They study the space of skew-symmetric matrices which are \emph{totally positive} in the sense of~\cite{Lusztig3}. To this end, they provide the following (signed) positivity test which specify the signs of a collection of $\binom{n}{2}$ minors.
	
\begin{theorem}
	\cite[Theorem 1.4]{skew}\footnote{Our
	minors are the transpose of the minors listed in~\cite[Theorem 1.4]{skew}. We have changed the signs accordingly to match the current convention.} 
	\label{thm:total-positivity-test-skew}
	A skew-symmetric matrix $A$ is totally positive if and only if
	\[
		\sgn \left ( P_{[1, j-1] \cup [n-k+j, n]} \right ) = (-1)^{k + (n-j)(n-k)} \text{ for any } 1 \leq j \leq k \leq n-1. 
	\]
	When this holds, we also have $\sgn(\Pf_I) = (-1)^{\sum_{i \in I} i - \binom{|I| + 1}{2}}$ for all subsets $I \subseteq [n]$ of even size. As one special case, we have 
	\[
		\sgn (  P_{[1,a] \cup [b, n]} ) = (-1)^{ab + n(a + b)} \text{ for } a < b \text{ and } b-a \geq 2. 
	\]
\end{theorem}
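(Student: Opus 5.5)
The natural route is to realize the sign pattern as a signed seed for the cluster structure on $\Sk_n$ and then apply \cref{thm:test}. The Lusztig positivity is checked only once, on a single explicit matrix. The argument has four steps.

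\textbf{Step 1: the minors lie in one cluster.} I would show that the $\binom{n}{2}$ flag minors $P_{[1,j-1]\cup[n-k+j,n]}$, for $1\le j\le k\le n-1$, are exactly the extended cluster of some seed mutation-equivalent to the initial seed of \cref{figure:initial-seed-for-skew-symmetric-matrices-n=7}, up to monomial factors in frozen Pfaffians. Several of them factor into Pfaffians by \cref{cor:brill-theorem-extremal-case}. I would find the mutation sequence by mutating the initial quiver column by column. Each exchange relation would be identified with an instance of the three-term relation \cref{prop:3-term-plucker}, or with a Brill-type identity from \cref{thm:brill-theorem}.

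\textbf{Step 2: the signs form a signed seed.} Assign to each variable of that cluster the sign $(-1)^{k+(n-j)(n-k)}$. I would check that this assignment satisfies \eqref{eq:grading2} at every mutable vertex. This is easiest via the torus-action description of $G$-gradings for $G=\{\pm1\}$, combined with the $G[2]$ correction computed earlier. Concretely, I would look for $g_1,\dots,g_n\in\{\pm1\}$, possibly together with the non-torus grading, that reproduces the prescribed signs on the initial variables $\Pf_{1,c}$, $\Pf_{2,3}$ and the $P_{[c-r+1,c]}$.

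The same choice of $g_i$ then gives $\sigma(\Pf_I)=(-1)^{\sum_{i\in I}i-\binom{|I|+1}{2}}$ for every even $I$. Here I would use that every $\Pf_I$ is homogeneous of torus degree $\sum_{i\in I}\mathbf{e}_i$, and that all the $\Pf_I$ occur as cluster variables, or at least have signs determined by the grading. The special case $P_{[1,a]\cup[b,n]}$ follows the same way from $\deg P_I$.

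\textbf{Step 3: the signed cell is the test region.} By \cref{thm:test}, the signed cell $V^\sigma_{>0}$ is homeomorphic to an open ball of dimension $\binom{n}{2}$. Moreover, it coincides with the set cut out by the finite sign test on the cluster of Step 1.

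\textbf{Step 4: comparison with Lusztig positivity, the main obstacle.} Finally I would show that $V^\sigma_{>0}$ equals the Lusztig totally positive part. I would first exhibit one totally positive skew-symmetric matrix, for instance the image of a product of positive one-parameter subgroups in the orthogonal Grassmannian chart, and verify by direct computation that it has the prescribed signs.

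Next, both sets are open cells of the same dimension. The Lusztig positive part is connected, and on it all flag minors are nonvanishing, since they are restrictions of generalized minors that are positive up to the global sign conventions. Its sign vector is therefore constant, which gives containment in $V^\sigma_{>0}$.

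For the reverse inclusion I would parametrize both sets. Lusztig's positive chart is $\mathbb{R}_{>0}^{\binom{n}{2}}$, and the cluster chart is $\mathbb{R}_{>0}^{\binom{n}{2}}$ up to signs. I would show that the transition map is subtraction-free, so the two images agree. Tracking the transpose and sign conventions carefully will be the delicate part.
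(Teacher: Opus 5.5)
The paper does not prove this statement. It is imported from \cite{skew} and then fed into \cref{thm:proof-of-positivity-test-matches-cluster-structure}. That theorem's Claims 1 and 2, using \cref{prop:3-term-plucker} and \cref{cor:brill-theorem-extremal-case}, show that the signs $(-1)^{rc}$ on the initial cluster propagate to the \cite{skew} signs on the irreducible factors. The cited test then identifies the totally positive part with a signed cell. Your Steps 1--3 are a rougher version of that propagation, and Step 4 is meant to replace the citation. That is where the genuine gap lies: everything connecting the sign pattern to Lusztig's positivity is asserted rather than proved.

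\textbf{Step 4.} There are two missing pieces.
\begin{itemize}
\item \emph{Nonvanishing on the positive part.} You need every variable of your cluster to be nonzero there. Lusztig's definition directly controls the spinor coordinates, i.e.\ the Pfaffians. The irreducible test minors with $|I\Delta J|\ge 4$ are not products of Pfaffians and are not extremal-weight coordinates of $OGr(n,2n)$. So ``restrictions of generalized minors'' is not an argument; you would need, say, an identification with dual canonical basis elements or an explicit parametrization.
\item \emph{The reverse inclusion.} ``Both are open cells of the same dimension'' proves nothing, since an open ball can properly contain another. A subtraction-free map from Lusztig's parameters to cluster coordinates also gives only one inclusion. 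You would need either a subtraction-free inverse (a type $D$ Chamber Ansatz), or a proof that the positive part is relatively closed in $V^\sigma_{>0}$, i.e.\ that its boundary in the nonnegative part lies in the zero set of some variable of your cluster. Either one is precisely the content of \cite[Theorem 1.4]{skew}.
\end{itemize}

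\textbf{Steps 1 and 2.} These contain errors.
\begin{itemize}
\item \emph{The minors are not a cluster up to frozen factors.} By \cref{cor:brill-theorem-extremal-case}(i), $P_{[1,2d-1]\cup\{n\}}=\Pf_{[2d]}\,\Pf_{[2d-1]\cup\{n\}}$. For $4\le 2d\le n-2$ both factors are non-frozen; the only frozen Pfaffians are $\Pf_{1,n}$ and the two of size at least $n-2$ in the last two columns of \cref{figure:initial-seed-for-skew-symmetric-matrices-n=7}. The right candidate cluster is the set of irreducible factors in \cref{prop:irreducible-factors-of-skew}. The paper asserts that these form a cluster only in the remark after \cref{thm:proof-of-positivity-test-matches-cluster-structure}, and omits the proof.
\item \emph{Sign assignment.} Consequently you cannot attach $(-1)^{k+(n-j)(n-k)}$ directly to cluster variables. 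These are signs of products, and the factor signs must be solved for, which is what Claims 1 and 2 do.
\item \emph{Pfaffian signs cannot come from torus degrees.} The target pattern has $\sgn z_{ij}=(-1)^{i+j+1}$. No $g\in\{\pm1\}^n$ satisfies $g_ig_j=(-1)^{i+j+1}$ for all pairs: multiplying over the three pairs in any $\{i,j,l\}$ gives $1=-1$. So the grading lies in the non-torus class found in \cref{sec:skew}, and its value on $\Pf_I$ is not determined by $\deg \Pf_I$.
\item \emph{A partial fix.} Grade $\mathbb{C}[z_{ij}]$ by $z_{ij}\mapsto(-1)^{i+j+1}$, which makes each $\Pf_I$ homogeneous of sign $(-1)^{\sum_{i\in I}i+|I|/2}$. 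But homogeneity forces a definite sign on $V^\sigma_{>0}$ only for cluster variables, or for elements with positive Laurent expansions. You would still have to show that every $\Pf_I$ is a cluster variable.
\end{itemize}
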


Theorem~\ref{thm:total-positivity-test-skew} gives us a signed positivity test for skew-symmetric minors with $\dim \Sk_n$ minors. We might 
hope that we can 
explain this result by constructing a cluster structure on $\Sk_n$ with the minors in 
Theorem~\ref{thm:total-positivity-test-skew} 
as cluster variables.

Unfortunately, the collection of minors cannot form a cluster because the minors 
in Theorem~\ref{thm:total-positivity-test-skew} 
are not all irreducible. However, we can show that there are exactly $\binom{n}{2}$ distinct irreducible factors among the minors, suggesting the possibility of having a cluster structure with the irreducible factors as cluster variables.

\begin{proposition}\label{prop:irreducible-factors-of-skew}
    When $n = 2m$ is even, the distinct irreducible factors of the minors in Theorem~\ref{thm:total-positivity-test-skew} are
    \begin{equation}
		\begin{cases}
			\mathcal{M}_1 & = \{ P_{[j-1] \cup [n-k+j, n]}  \mid 1 \leq j < k \leq n-2 \} \\ 
			\mathcal{M}_2 & = \{ \Pf_{[2k]}\mid 1 \leq k \leq m \} \\
			\mathcal{M}_3 & = \{\Pf_{[2k-1] \cup\{n\}}  \mid 1 \leq k \leq m-1 \} \\
			\mathcal{M}_4 & = \{\Pf_{[n-1] \backslash \{k\}}\mid 1 \leq k \leq n-2 \}.
		\end{cases}
	\end{equation}
    When $n = 2m-1$ is odd, the distinct irreducible factors of the minors in Theorem~\ref{thm:total-positivity-test-skew} are 
	\begin{equation}
		\begin{cases}
			\mathcal{M}_1 & = \{ P_{[j-1] \cup [n-k+j, n]} \mid 1 \leq j < k \leq n-2 \} \\
			\mathcal{M}_2 & = \{ \Pf_{[2k]} \mid 1 \leq k \leq m-1 \} \\
			\mathcal{M}_3 & = \{ \Pf_{[2k-1] \cup \{n\}} \mid 1 \leq k \leq m-1 \} \\
			\mathcal{M}_4 & = \{ \Pf_{[n] \backslash \{k\}} \mid 1 \leq k \leq n-2 \}. 
		\end{cases}
	\end{equation}
\end{proposition}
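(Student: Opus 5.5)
The plan is to compute, for each pair $1 \le j \le k \le n-1$, the factorization of the minor $P_{[1,j-1]\cup[n-k+j,n]}$ into Pfaffians and flag minors, sorting the pairs by shape. The sequence has the form $[1,a]\cup[b,n]$ with $a=j-1$ and $b=n-k+j$, and its length is $a+(n-b+1)=k$. We split into three cases according to the gap $b-a$. When the gap is at least $2$, the set $S=[1,a]\cup[b,n]$ is not of the extremal type in \cref{cor:brill-theorem-extremal-case}. When the gap is $1$, i.e.\ $k=n-1$, the set is $[n]$ with one element removed. When $a=0$ (so $j=1$), the minor is a solid flag minor $P_{[n-k+1,n]}$.

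\textbf{Factored cases.}

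1. For $k=n-1$, the index set is $[n]\setminus\{i\}$ with $i=a+1=j$ ranging over $1\le i\le n-1$. Here we apply \cref{cor:brill-theorem-extremal-case}(i) or (ii), after relabeling so that the missing index lies in the initial segment and the last index plays the role of the extra element $j$.
 (a) If $n=2m$ is even, the initial segment is $[1,n-1]$ of odd length $2(m-1)+1$. Case (ii) then gives $P=\Pf_{[n-1]\setminus\{i\}}\,\Pf_{[n]}$.
 (b) If $n$ is odd, case (i) with $2d=n-1$ gives $P=\Pf_{[n-1]}\,\Pf_{[n]\setminus\{i\}}$.
 These produce the families $\mathcal{M}_4$ and the top Pfaffians. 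The index $k=n-1$ (the set $[n-2]$) presumably needs a separate check, together with the identification $\Pf_{[n-1]}\in\mathcal{M}_2$ or $\mathcal{M}_3$ depending on parity.

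2. For $j=k$ (so $b=n$), the set is $[1,k-1]\cup\{n\}$. Case (i) or (ii) with $i$ the missing last element gives:
 (a) for $k-1$ even, $P=\Pf_{[k-1]}\,\Pf_{[k-1]\cup\{n\}}$, which is odd size and hence $0$;
 (b) more carefully, Brill with $R=[k]$ gives the factors $\Pf_{[2t]}$ and $\Pf_{[2t-1]\cup\{n\}}$, i.e.\ the families $\mathcal{M}_2$ and $\mathcal{M}_3$.
 The bookkeeping to pin down exactly which $(j,k)$ fall into each extremal formula, and to check that the index ranges match those stated for $\mathcal{M}_1,\dots,\mathcal{M}_4$, is routine but must be done separately for $n$ even and $n$ odd.

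\textbf{Remaining minors are irreducible.} The minors with $1\le j<k\le n-2$ form $\mathcal{M}_1$, and the claim is that they are irreducible. The Pfaffians $\Pf_I$ are irreducible by the standard argument: expand along the first index. In $\Pf_I=\sum_r \pm z_{i_1 i_r}\Pf_{I\setminus\{i_1,i_r\}}$ the expression is linear in the variables $z_{i_1,\cdot}$, and the coefficients are distinct irreducibles by induction. Mimicking the proof of \cref{lem:symmetric-minors-irred-coprime}, any factorization forces one factor to be a common divisor of these coefficients, hence a unit.

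For $P_S$ with $S=[1,a]\cup[b,n]$ and a gap $b-a\ge 2$, pick a row index $r\in[1,k]\setminus S$; one exists because $b-a\ge2$ forces $a+1\le k$ with $a+1\notin S$. Expanding along row $r$ makes $P_S$ linear in the variables $z_{r,s}$ for $s\in S$. Since $r\notin S$, none of these variables appears among the cofactors, which are $z_{[k]\setminus r,\,S\setminus s}$. Those cofactors may involve $z_{s,r}=-z_{r,s}$ only through column $r$, but column $r$ is not in $S$. So the linearity argument applies, provided the cofactors share no common factor.

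\textbf{Main obstacle.} The hard step is this last coprimality: the cofactors are themselves skew-symmetric submatrix minors that may factor. I would first try to avoid analysing them by specialization. Setting suitable $z$'s to zero to reduce $P_S$ to a product of visibly coprime pieces gives a lower bound on the number of irreducible factors. Comparing degrees under the $\ZZ^n$-grading then rules out extra factors.

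Finally, distinctness of the listed factors follows from their $\ZZ^n$-degrees, which are all different.
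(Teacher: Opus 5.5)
You handle the reducible minors the same way the paper does. They are exactly the ones with $|[k]\,\Delta\,S|=2$, i.e.\ $k=n-1$ or $j=k$, and \cref{cor:brill-theorem-extremal-case} splits them into the Pfaffians of $\mathcal{M}_2,\mathcal{M}_3,\mathcal{M}_4$. There are two slips. First, $b-a=n-k+1\ge 2$ for every minor, so the gap does not separate the cases, and $k=n-1$ has gap $2$, not $1$. Second, in 2(a) the minor for odd $k$ is $\Pf_{[k-1]}\Pf_{[k]\cup\{n\}}$, not $0$.

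The genuine gap is the irreducibility of $\mathcal{M}_1$. Expanding along a row $r\in[k]\setminus S$ correctly shows that $P_S$ has degree one in $\{z_{r,s}\}_{s\in S}$. But given that, ``the cofactors $z_{[k]\setminus r,\,S\setminus s}$ have no common factor'' is \emph{equivalent} to the irreducibility of $P_S$. So the reduction has not yet proved anything, and these cofactors really do factor in general. Your proposed fix also runs in the wrong direction. Under a degree-preserving specialization $\phi$ with $\phi(P_S)\neq 0$, every nonconstant factor of $P_S$ stays nonconstant, so $\phi$ bounds the number of irreducible factors of $P_S$ from \emph{above}. A specialization to a product of coprime pieces therefore gives a bound $\ge 2$ and cannot certify irreducibility. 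You also name no concrete specialization on which a $\ZZ^n$-degree comparison could act.

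The missing idea, which the paper's lemma uses, is to play many linear collections off against each other. Put $A=[k]\setminus S$, $B=S\setminus[k]$, $T=[k]\cap S$; for $\mathcal{M}_1$ one has $|A|=|B|\ge 2$.
\begin{enumerate}
\item $P_S$ has degree one in each row collection $\{z_{a,\cdot}\}$, $a\in A$, and in each column collection $\{z_{\cdot,b}\}$, $b\in B$. So in a factorization $P_S=FG$, each collection lies wholly in one factor.
\item Each $z_{a,b}$ belongs to both the row-$a$ and the column-$b$ collection. It does occur in $P_S$, because its cofactor is a non-principal (hence nonzero) minor when $|A|\ge 2$. Hence all these variables lie in the same factor $F$, and $G$ is a polynomial in the entries of the skew block $T\times T$ alone.
\item One then excludes a nonconstant such $G$ using the coefficients of the expansion. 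The paper first specializes to $|A|=|B|=2$, where the cofactors of the cross variables are products of two Pfaffians by identities of the type in \cref{cor:brill-theorem-extremal-case}. For $|T|$ even these all contain $\Pf_T$, so the cofactors of the $z_{a,t}$, $t\in T$, must be used as well.
\end{enumerate}

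Lastly, your distinctness argument fails as stated. For $n=4$, $P_{[3,4]}\in\mathcal{M}_1$ and $\Pf_{[4]}\in\mathcal{M}_2$ both have $\ZZ^4$-degree $\mathbf{e}_1+\mathbf{e}_2+\mathbf{e}_3+\mathbf{e}_4$. Non-associateness needs another check, e.g.\ $z_{12}$ occurs in $\Pf_{[4]}$ but not in $P_{[3,4]}$.
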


This proposition relies on the computation of Corollary~\ref{cor:brill-theorem-extremal-case}, the fact that Pfaffians are irreducible, and the following lemma which guarantees that the minors not covered by Corollary~\ref{cor:brill-theorem-extremal-case} are irreducible.

\begin{lemma}
    Let $M = (x_{ij})$ be a skew-symmetric matrix with indeterminate entries. Then $z_{I, J}$ is irreducible if and only if $|I \Delta J| \geq 4$, where $I \Delta J$ denotes the symmetric difference of $I$ and $J$.
\end{lemma}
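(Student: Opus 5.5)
We treat the two directions separately. Throughout, write $I=S\cup A$ and $J=S\cup B$, where $S=I\cap J$, $A=I\setminus J$, $B=J\setminus I$, and $|A|=|B|=a$, so that $|I\Delta J|=2a$. We use two facts: Pfaffians of a generic skew-symmetric matrix are irreducible, and Pfaffians with distinct index sets are non-associate.

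\emph{Only if.} Suppose first $a=0$, so $I=J$. Then $z_{I,I}$ is the determinant of a skew-symmetric matrix, so it equals $\Pf_I^2$, which vanishes when $|I|$ is odd. In either case it is not irreducible.

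Next suppose $a=1$, say $A=\{i\}$ and $B=\{j\}$. Apply Brill's relation (\cref{thm:brill-theorem}) with $R=I$, exactly as in the computation behind \cref{cor:brill-theorem-extremal-case}(i),(ii) but with an arbitrary common set $S$ in place of an initial interval. This should give
\[
z_{I,J}=\pm\Pf_S\,\Pf_{S\cup\{i,j\}} \quad (|S|\text{ even}),\qquad z_{I,J}=\pm\Pf_{S\cup\{i\}}\,\Pf_{S\cup\{j\}}\quad (|S|\text{ odd}).
\]
Both factors are nonconstant except in the degenerate case $S=\emptyset$, where $z_{I,J}=z_{i,j}$. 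That case is the $1\times 1$ minor and must be excluded from the statement, i.e.\ one should assume $|I|\ge 2$. The derivation of this factorization for general $S$ is a sign bookkeeping exercise in Brill's formula, which we will not grind through.

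\emph{If.} We induct on $|I|$, following the argument in \cref{lem:symmetric-minors-irred-coprime}. Fix $i\in A$ and Laplace-expand along row $i$:
\[
z_{I,J}=\sum_{j\in J}\pm z_{i,j}\,z_{I\setminus i,\,J\setminus j}.
\]
The cofactors do not involve the variables $z_{i,j}=-z_{j,i}$, because $i\notin J$ and $i\notin I\setminus\{i\}$. Hence $z_{I,J}$ is linear and homogeneous in the distinct variables $\{z_{i,j}:j\in J\}$, with coefficients in a polynomial ring in the remaining variables. The same degree argument as in \cref{lem:symmetric-minors-irred-coprime} then shows that any factorization $z_{I,J}=PQ$ has one factor, say $C$, that is independent of these variables and divides every cofactor. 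It therefore suffices to show that the cofactors $z_{I\setminus i,J\setminus j}$ have no common nonunit factor.

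For $j\in B$, the cofactor $z_{I\setminus i,J\setminus j}$ has symmetric difference $2a-2$.
\begin{itemize}
\item If $a\ge 3$, these cofactors are irreducible by induction, since $2a-2\ge 4$. They are pairwise non-associate because they are distinct minors; for instance, they contain different variables. Since $|B|\ge 3$, their gcd is $1$.
\item If $a=2$, the cofactors for $j\in B$ fall into the $a=1$ case and factor as products of two Pfaffians given by the display above. For $B=\{j_1,j_2\}$, the two cofactors are $\pm\Pf_S\Pf_{S\cup\{i',j_2\}}$ and $\pm\Pf_S\Pf_{S\cup\{i',j_1\}}$ when $|S|$ is even (where $A=\{i,i'\}$). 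They share the factor $\Pf_S$. We must therefore also use cofactors with $j\in S$, whose symmetric difference is $4$, and show $\Pf_S$ does not divide them. To do this, specialize to a point where $\Pf_S=0$ but a suitable cofactor with $j\in S$ is nonzero; for example, take a block matrix where $S$ has a zero row except in positions outside $S$. When $|S|$ is odd, the common Pfaffian factor is $\Pf_{S\cup\{i'\}}$, and the same specialization idea applies.
\end{itemize}

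This base case $a=2$ is the main obstacle. There the cofactors genuinely share Pfaffian factors, so irreducibility hinges on exhibiting one cofactor, or a nonvanishing specialization, that escapes the shared Pfaffian. If a clean specialization is awkward, the fallback is a direct degree argument in a single variable $z_{i',k}$ with $k\in S$.
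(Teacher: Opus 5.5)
Your proof is correct, and the two steps you leave as sketches go through, but it is organized differently from the paper's. The paper argues \emph{downward}. It views $z_{I,J}$ as the determinant of a generic matrix whose $S\times S$ block is skew-symmetric. Any nontrivial factorization is homogeneous, so it survives the further specialization (imposing more skew-symmetry) that turns the minor into one with $|I\Delta J|=4$. Hence only that case needs treatment. There the paper uses linearity in each of the four groups $X_{a,*},X_{b,*},X_{*,c},X_{*,d}$ to push any proper factor into $\mathbb{C}[X_{s_1s_2}]_{s_1,s_2\in S}$, and finishes by appealing to irreducibility of Pfaffians. You argue \emph{upward}. Expansion along a single row $i\in A$ reduces irreducibility to coprimality of the cofactors. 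That is immediate from the induction hypothesis when $a\ge 3$. For $a=2$ you isolate the single common factor $\Pf_S$ (resp.\ $\Pf_{S\cup\{i'\}}$) of the two Pfaffian-product cofactors and exclude it by specialization. The paper's route avoids induction. Yours is explicit exactly where the paper is terse: when $|S|$ is even, the coefficients of the outer monomials of degree $2$ and $3$ in the paper's last step are all divisible by $\Pf_S$, and one must look at degree-$4$ terms to conclude.

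Your specialization does work. For $|S|$ even, set $z_{s_0,s}=0$ for all $s\in S$, so $\Pf_S=0$. Expand the cofactor for $j=s_0$ along row $s_0$, which is now $(0,\dots,0,z_{s_0j_1},z_{s_0j_2})$. With $S'=S\setminus\{s_0\}$ this leaves $\pm z_{s_0j_1}\Pf_{S'\cup\{i'\}}\Pf_{S'\cup\{j_2\}}\pm z_{s_0j_2}\Pf_{S'\cup\{i'\}}\Pf_{S'\cup\{j_1\}}\neq 0$, so $\Pf_S$ does not divide that cofactor. For $|S|$ odd, zero the entries $z_{i',s}$ with $s\in S$ and expand along row $i'$ instead.

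The general-$S$ factorization for $a=1$ needs no fresh sign bookkeeping. A permutation of $[n]$ induces an automorphism of $\Sk_n$ sending minors and Pfaffians to $\pm$ minors and Pfaffians, so the factorization follows from \cref{cor:brill-theorem-extremal-case}(i),(ii).

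Finally, you correctly note that the statement fails for $1\times 1$ minors: $z_{i,j}$ is irreducible although $|I\Delta J|=2$. Together with your treatment of $I=J$ via $z_{I,I}=\Pf_I^2$, this fills gaps in the paper's one-line ``only if'' direction, which cites a corollary covering only flag minors with $|I\Delta J|=2$.
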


\begin{proof}
    From Corollary~\ref{cor:brill-theorem-extremal-case}, $z_{I, J}$ is reducible when $|I \Delta J| \leq 2$. It remains to prove that $z_{I, J}$ is irreducible when $|I \Delta J| \geq 4$. Equivalently, consider a $n \times n$ matrix $(X_{ij})_{1 \leq i, j \leq n}$ and let $1 \leq k \leq n-2$. We want to prove that the determinant polynomial remains irreducible when we set $X_{ji} = -X_{ij}$ for all $1 \leq i < j \leq k$ and $X_{ii} = 0$ for $1 \leq i \leq k$. It is enough to prove this for the case $k = n-2$. Indeed, if the determinant polynomial becomes factorizable for $k < n-2$, then it must be a homogeneous factorization. To recover the determinant polynomial for $k = n-2$, we specialize some of the diagonal variables to $0$ and $X_{ij} = -X_{ji}$ for some more indices $i, j$, we preserve the homogeneous degrees of the factors. Hence, we either get that the determinant polynomial for $k = n-2$ is zero or is factorizable. For the rest of the proof, we prove that the determinant is irreducible in the $k = n-2$ case, i.e. the case where $|I \Delta J| = 4$. 
    
    In this case, we have $I = S \cup \{a, b\}$ and $J = S \cup \{c, d\}$ where $a, b, c, d$ are distinct indices. When $S = \emptyset$, then $z_{I, J}$ is the determinant of a generic $2 \times 2$ matrix, which is irreducible. 
	
	Now let $|S| > 0$ and without loss of generality, we can assume $\max(S) < a < b$ and $\max (S) < c < d$. The polynomial $z_{I, J}$ is linear in each of the collection of variables $\{X_{a, *}\}$, $\{ X_{b, *} \} $, $\{ X_{*, c} \}$, and $\{ X_{*, d} \}$. For example, using Laplace expansion in the $a$th row and Corollary~\ref{cor:brill-theorem-extremal-case}, we get the formula 
	\begin{equation}\label{eqn:laplace-expansion-of-skew-determinant}
		z_{I, J} = \sum_{s \in S} X_{as} \cdot Y_s
	\end{equation}
	where the $Y_s$ are products of two Pfaffians not containing any variable of the form $X_{a*}$. We can find similar formulas for $X_{b,*}$, $X_{*, c}$, and $X_{*, d}$. In particular, this implies that $z_{I, J}$ is non-zero and any variables of the form $X_{a, *}$, $X_{b, *}$, $X_{*, c}$, and $X_{*, d}$ appears in $z_{I, J}$ with non-zero coefficient. 
	
	Suppose for the sake of contradiction that $z_{I, J} = P \cdot Q$ for $P, Q$ non-trivial. Without loss of generality, we can assume that $X_{a, c}$ appears in $P$ and not $Q$. Since $z_{I, J}$ is linear in the collections $\{X_{a, *}\}$ and $\{X_{*, c}\}$, this implies that any variable of the form $X_{a, *}$ and $X_{*, c}$ appears in $P$ and does not appear in $Q$ since $X_{a, c} X_{a, *}$ or $X_{a, c} X_{*, c}$ is forbidden to appear in the product. Thus, $X_{a, d}$ and $X_{b, c}$ appear in $P$, and by the same argument all variables of the form $X_{a, *}, X_{b, *}, X_{*, c}$, and $X_{*, d}$ appear in $P$ and $Q \in \CC[X_{s_1s_2}]_{s_1,s_2 \in S}$. From Equation~\ref{eqn:laplace-expansion-of-skew-determinant}, it is clear that $Q$ must be trivial since Pfaffians are irreducible. This suffices for the proof. 
\end{proof}

We now provide a signed seed on $\Sk_n$ and prove it recovers the positivity test of~\cite{skew}, which connects total positivity to the cluster algebra structure on the orthogonal Grassmannian.

\begin{theorem}\label{thm:proof-of-positivity-test-matches-cluster-structure}
Recall the extended cluster 
	$\{x_{(r,c)}\}$ from 
    \eqref{eqn:initial-cluster-skew-symmetric-matrices}.
	Then the assignment $\sgn(x_{(r, c)}) = (-1)^{rc}$ defines a signed seed 
	$\sigma_{Sk}$ on $\Sk_n$. Using $\sigma_{Sk}$, the irreducible factors in Proposition~\ref{prop:irreducible-factors-of-skew} get the signs specified in Theorem~\ref{thm:total-positivity-test-skew}. As a consequence, a skew-symmetric matrix is totally positive 
if and only if it comes from the signed seed $\sigma_{Sk}$ of the cluster algebra on $\Sk_n$. 
\end{theorem}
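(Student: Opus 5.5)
The plan has three steps. First, show that $\sgn(x_{(r,c)})=(-1)^{rc}$ is a signed seed. Second, identify the signs it forces on the irreducible factors of \cref{prop:irreducible-factors-of-skew} and compare them with \cref{thm:total-positivity-test-skew}. Third, deduce the equivalence from \cref{thm:test}.

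For the first step I would use the torus-action description from the previous subsection rather than check each vertex of \cref{figure:initial-seed-for-skew-symmetric-matrices-n=7}. Multiplicatively, choose signs $g_1,\dots,g_n\in\{\pm1\}$, with $\sigma(\Pf_I)=\prod_{i\in I}g_i$ and $\sigma(P_I)=\prod_{i\in I}g_i\prod_{j\le |I|}g_j$. Take $g_i=(-1)^{i}$, up to a global twist by the non-torus grading with $\sigma(\Pf_{1,k})=\sigma(\Pf_{2,3})=-1$. Since the gradings are determined by the values on $\Pf_{1,2},\dots,\Pf_{1,n},\Pf_{2,3}$, it suffices to find an element of $\mathfrak{gr}(\Sk_n,\{\pm1\})$ agreeing with $(-1)^{rc}$ on these $n$ variables. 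Then check that this grading agrees with $(-1)^{rc}$ on the remaining initial variables $\Pf_{[c]}$, $\Pf_{[2,c]}$ and $P_{[c-r+1,c]}$. On these variables we have $\Pf_{1,c}\mapsto(-1)^c$ and $\Pf_{2,3}\mapsto 1$.

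This reduces to a parity computation of $\sum_{i\in I} i$-type exponents. For $P_{[c-r+1,c]}$ the exponent is $\sum_{i=c-r+1}^{c}i+\binom{r+1}{2}\equiv rc \pmod 2$ after simplification. If the torus grading alone fails on the Pfaffian diagonal, I would compose with the non-torus $\ZZ/2$ grading, which is available by the preceding discussion. As a fallback, one can directly verify balance at each mutable vertex. The quiver has a uniform local shape: a horizontal arrow, a vertical arrow, a diagonal back-arrow, and the dashed arrows near the diagonal. For $(-1)^{rc}$, the in/out products at an interior vertex $(r,c)$ compare $(-1)^{(r-1)(c-1)+r(c+1)+\dots}$ and reduce to an identity mod $2$. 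The boundary rows $r=1$ and the near-diagonal vertices are handled separately.

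For the second step, each irreducible factor in \cref{prop:irreducible-factors-of-skew} is a cluster variable (the paper asserts the cluster structure realizes them), so it receives a sign from the grading. I would compute that sign via the torus formula: $\sgn(\Pf_I)=(-1)^{\sum_{i\in I}i-\binom{|I|+1}{2}}$ and the analogous formula for $P_I$. These match \cref{thm:total-positivity-test-skew} on Pfaffians directly. The reducible minors $P_{[1,j-1]\cup[n-k+j,n]}$ factor by \cref{cor:brill-theorem-extremal-case} into products of two Pfaffians, or into a Pfaffian-difference that is irreducible and lies in $\mathcal M_1$. Multiplying the factor signs should reproduce $(-1)^{k+(n-j)(n-k)}$.

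The main obstacle is this sign bookkeeping: it requires the parity identity
\[
\textstyle\sum_{i\in S}i-\binom{|S|+1}{2}+\sum_{i\in T}i-\binom{|T|+1}{2}\equiv k+(n-j)(n-k)\pmod 2,
\]
handled by cases on the parities of $n$, $j$ and $k$ following \cref{cor:brill-theorem-extremal-case}(i)–(iv). One must also confirm that the factors in $\mathcal M_1$ appear as cluster variables in some seed mutation-equivalent to the initial one.

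Finally, for the equivalence: if $A$ lies in the signed cell $V^{\sigma_{Sk}}_{>0}$, every cluster variable has its prescribed sign. In particular the test minors, being products of cluster variables, have the signs required by \cref{thm:total-positivity-test-skew}, so $A$ is totally positive. Conversely, if $A$ is totally positive, then all the test minors have the prescribed signs, and hence all $\binom n2$ irreducible factors do too. This holds provided the factorization signs determine the factor signs, which can be arranged by ordering the factors so that each new minor introduces one new factor. These factors form an extended cluster, so \cref{thm:test}, equation \eqref{second}, gives $A\in V^{\sigma_{Sk}}_{>0}$.
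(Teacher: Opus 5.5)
The genuine gap is in Steps 2 and 3. Both rest on the assertion that the $\binom n2$ irreducible factors of \cref{prop:irreducible-factors-of-skew} are cluster variables forming a single extended cluster mutation-equivalent to the initial seed. You list this as something to ``confirm,'' but nothing in your proposal establishes it. The paper does not prove it either: its closing remark says the required mutation sequence is omitted. Yet your whole argument hinges on it, in both directions:
\begin{itemize}
\item Without it, $\sigma_{Sk}$ assigns no sign to the factors in $\mathcal{M}_1$. So you cannot conclude that points of the signed cell pass the test of \cref{thm:total-positivity-test-skew}.
\item Without it, you cannot apply \eqref{second} of \cref{thm:test} to these factors. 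So you cannot conclude that totally positive matrices lie in the signed cell.
\end{itemize}
The paper's proof avoids needing any such seed. It fixes a point where the initial variables $x_{(r,c)}$ have signs $(-1)^{rc}$. It then proves, by simultaneous induction over even and odd $n$ (Claims 1 and 2), that every test factor acquires the required sign. Signs are transported through explicit relations whose two right-hand terms have the same sign:
\begin{itemize}
\item the three-term relations $P_{[a]\cup[b,n-1]}P_{[a+1]\cup[b+1,n]}=P_{[a+1]\cup[b+1,n-1]}P_{[a]\cup[b,n]}+P_{[a+1]\cup[b,n-1]}P_{[a]\cup[b+1,n]}$ from \cref{prop:3-term-plucker}, processed in lexicographic order of $(a,b)$, with the case $b-a=3$ factored via \cref{cor:brill-theorem-extremal-case};
\item the Pfaffian identities of \cref{cor:brill-theorem-extremal-case} for $\Pf_{[n]\setminus\{i\}}$ and $\Pf_{[2k-1]\cup\{n\}}$.
\end{itemize}
This explicit sign propagation is the idea your proposal is missing.

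Step 1 also contains an error, although your fallback rescues it. In fact $\sum_{i=c-r+1}^{c}i+\binom{r+1}{2}\equiv rc+r \pmod 2$, not $rc$. Likewise, $g_i=(-1)^i$ gives $(-1)^{\sum_{i\in I}i}$ on $\Pf_I$, not $(-1)^{\sum_{i\in I}i-\binom{|I|+1}{2}}$. Appealing to ``the non-torus grading'' verifies nothing on the remaining initial variables, because the preceding discussion specifies it only on $\Pf_{1,k}$ and $\Pf_{2,3}$.

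The direct vertex check does work. At a generic vertex $(r,c)$ both products have exponent $3rc+1$, and the first row and the two near-diagonal families check similarly.

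A better fix is to identify the twist. It is the reduction mod $2$ of the polynomial-degree grading $\deg\Pf_I=|I|/2$, $\deg P_I=|I|$, which is balanced because it is half the coordinate sum of the torus degree. Equivalently, it is the automorphism $z\mapsto -z$. Hence $\sigma_{Sk}$ is induced by the automorphism $\phi\colon z_{ij}\mapsto -(-1)^{i+j}z_{ij}$ of $\Sk_n$. Induction along exchange relations then gives $\phi(x)=\sigma_{Sk}(x)\,x$ for every cluster variable $x$.

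This is what actually justifies your homogeneity formula for signs, and it makes the factor-by-factor parity bookkeeping unnecessary. For $I=[1,j-1]\cup[n-k+j,n]$, $\phi$ scales $P_I$ by $(-1)^{\sum_{i\in I}i+\binom{k+1}{2}+k}$. Setting $\ell=k-j+1$, one finds $\sum_{i\in I}i+\binom{k+1}{2}\equiv \ell(n+j)\equiv (n-j)(n-k)$.

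This gives a cleaner proof than the paper's that $\sigma_{Sk}$ predicts exactly the signs of \cref{thm:total-positivity-test-skew}. It also proves the signed-seed property, which the paper leaves implicit. But it only says something about cluster variables, so it does not close the gap described above.
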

\begin{remark}
\cite[Theorem 6]{Speyer} shows that there are many skew-symmetric matrices all of whose
Pfaffians are nonnegative.  This subset of skew-symmetric matrices is different
from the totally positive subset of skew-symmetric matrices from 
\cref{thm:proof-of-positivity-test-matches-cluster-structure}; it would be interesting
to explore whether there is a relation.

\end{remark}

\begin{proof}
	Fix some point of the cluster torus defined by the initial seed from which we can evaluate our cluster variables. We will prove the following two claims by simultaneous induction which are equivalent to the total positivity test from~\cite{skew}. 

	\textbf{Claim 1.} Let $n = 2m$ be even. Suppose that the following expressions are real-valued with
	\[
		\begin{cases}
			\text{$\sgn \left (\Pf_{1,k} \right ) = (-1)^k$} \quad & \text{for $3 \leq k \leq n$.} \\
			\text{$\sgn \left (\Pf_{[2k]} \right ) = 1$} \quad & \text{for $1 \leq k \leq m$.} \\
			\text{$\sgn \left (\Pf_{[2, 2k-1]} \right ) = 1$} \quad & \text{for $2 \leq k \leq m$.} \\
			\text{$\sgn(P_{[i, j]}) = (-1)^{ij}$} \quad & \text{for $3 \leq i < j \leq n$}
		\end{cases}
	\]
	Then, the following expressions are real-valued with 
	\[
		\begin{cases}
			\text{$\sgn \left ( \Pf_{[2k]} \right ) = 1$} \quad & \text{for $1 \leq k \leq m$}, \\
			\text{$\sgn (P_{[j, n]}) = 1$} \quad & \text{for $3 \leq j \leq n-1$}, \\
			\text{$\sgn (P_{[a] \cup [b, n]}) = (-1)^{ab}$} \quad & \text{for $4 \leq a+3 \leq b \leq n-1$}, \\
			\text{$\sgn(\Pf_{[n-1] \backslash \{i\}}) = (-1)^{i+1}$} \quad & \text{for $1 \leq i \leq n-2$} \\
			\text{$ \sgn \left ( \Pf_{[2k-1]  \cup \{n\}} \right ) = 1$} \quad & \text{for $1 \leq k \leq m-1$}. 
		\end{cases}
	\]

	\textbf{Claim 2.} Let $n = 2m+1$ be odd. Suppose that the following expressions are real-valued with  
	\[
		\begin{cases}
			\text{$\sgn(\Pf_{1,k}) = (-1)^k$} \quad & \text{for $3 \leq k \leq n$} \\
			\text{$\sgn(\Pf_{[2k]}) = 1$} \quad & \text{for $1 \leq k \leq m$} \\
			\text{$\sgn(\Pf_{[2, 2k+1]}) = 1$} \quad & \text{for $1 \leq k \leq m$} \\
			\text{$\sgn(P_{[i, j]}) = (-1)^{ij}$} \quad & \text{for $3 \leq i < j \leq n$}
		\end{cases}
	\]
	Then, the following expressions are real-valued with 
	\[
		\begin{cases}
			\text{$\sgn(\Pf_{[2k]}) = 1$} \quad & \text{for $1 \leq k \leq m$}, \\ 
			\text{$\sgn(P_{[j, n]}) = (-1)^j$} \quad & \text{for $3 \leq j \leq n-1$}, \\
			\text{$\sgn(P_{[a] \cup [b, n]}) = (-1)^{ab+a+b}$} \quad & \text{for $4 \leq a+3 \leq b \leq n-1$} \\
			\text{$\sgn(\Pf_{[n] \backslash \{i\}}) = (-1)^{i+1}$} \quad & \text{for $1 \leq i \leq n-2$} \\
			\text{$\sgn(\Pf_{[2k-1] \cup \{n\}}) = -1$} \quad & \text{for $1 \leq k \leq m-1$} \\
			\text{$\sgn(\Pf_{[2k-1] \cup \{n\}}) = -1$} \quad & \text{for $k = m$}. 
		\end{cases}
	\] 
	It is easy to check the two claims for $n = 4$ and $n = 5$. Let's prove the claim for $n = 2m$ where $m \geq 3$ assuming that Claim 1 and Claim 2 are true for all smaller values of $n$. Since the inductive hypothesis of $n = 2m$ includes the inductive hypothesis for $n-1 = 2m-1$, we have
	\[
		\begin{cases}
			\text{$\sgn (P_{[j, n-1]}) = (-1)^{j}$} \quad & \text{for $3 \leq j \leq n-2$} \\
			\text{$\sgn(P_{[a] \cup [b,n-1]}) = (-1)^{ab + a + b}$} \quad & \text{for $4 \leq a +3 \leq b \leq n-2$} \\
			\text{$\sgn(\Pf_{[n-1] \backslash \{i\}}) = (-1)^{i+1}$} \quad & \text{for $1 \leq i \leq n-3$} \\
			\text{$\sgn (\Pf_{[2k-1] \cup \{n-1\}}) = -1$} \quad & \text{for $1 \leq k \leq m-2$} \\ 
			\text{$\sgn (\Pf_{[2k-1] \cup \{n-1\}}) = -1$} \quad & \text{for $k = m-1$}.     
		\end{cases}
	\]
	In general, for $3 \leq a+3 \leq b \leq n-1$, Proposition~\ref{prop:3-term-plucker} gives the relation
	\begin{equation}\label{eqn:inductive-3-term-plucker}
		P_{[a] \cup [b, n-1]} P_{[a+1] \cup [b+1, n]} = P_{[a+1] \cup [b+1, n-1]} P_{[a] \cup [b, n]} + P_{[a+1] \cup [b, n-1]} P_{[a] \cup [b+1, n]}. 
	\end{equation}
	In the special case where $b -a = 3$, we can further factor the term $P_{[a+1] \cup [b, n-1]}$ using Corollary~\ref{cor:brill-theorem-extremal-case}
	\begin{equation}\label{eqn:factorization-of-minor}
		P_{[a+1] \cup [b, n-1]} =
		\begin{cases}
			\Pf_{[n-2]} \Pf_{[n-1] \backslash \{a+2\}} & \text{ if $n$ is even} \\
			\Pf_{[n-1]}\Pf_{[n-2] \backslash \{a+2\}} & \text{ if $n$ is odd}. 
		\end{cases}
	\end{equation}
	We will use this relation to prove that $\sgn (P_{[a] \cup [b, n]}) = (-1)^{ab}$ for all $4 \leq a+3 \leq b \leq n-1$. Consider the possible pairs $(a, b)$ satisfying $4 \leq a + 3 \leq b \leq n-1$ in lexicographic order where $(a, b) < (a', b')$ if and only if $a < a'$ or $a = a'$ and $b < b'$. We prove the claim from lowest to highest in lexicographic order. For the base case $a = 1$ and $b = 4$, we can compute  
	\begin{align*}
		P_{[1] \cup [4, n]} P_{[3, n-1]} & = P_{[1] \cup [4, n-1]} P_{[3, n]} + P_{[1] \cup [3, n-1]} P_{[4, n]} \\ 
		& = P_{[1] \cup [4, n-1]} P_{[3, n]} + \Pf_{[n-1] \backslash \{2\}} \Pf_{[n-2]} P_{[4, n]}. 
	\end{align*}
	From the inductive hypothesis, this implies that $\sgn(P_{[1] \cup [4, n]}) = 1$ which matches the claim. In general, from \cref{eqn:inductive-3-term-plucker}, we have 
	\begin{equation}\label{eqn:minor-equation-1}
		P_{[a+1] \cup [b+1, n]} = \begin{dcases*}
			\frac{P_{[a+1] \cup [b+1, n-1]} P_{[a] \cup [b, n]} + P_{[a+1] \cup [b, n-1]} P_{[a] \cup [b+1, n]}}{P_{[a] \cup [b, n-1]}} & \text{if $b-a \geq 4$} \\
			\frac{P_{[a+1] \cup [b+1, n-1]} P_{[a] \cup [b, n]} + \Pf_{[n-2]} \Pf_{[n-1] \backslash \{a+2\}} P_{[a] \cup [b+1, n]}}{P_{[a] \cup [b, n-1]}} & \text{if $b-a = 3$}. 
		\end{dcases*}
	\end{equation}
	The inductive hypothesis gives $\sgn ( P_{[1, a+1] \cup [b+1, n]})  = (-1)^{(a+1)(b+1)}$ which verifies that these types of minors have the desired signs. Now, for $n$ even, it remains to show that $\sgn (\Pf_{[2k-1] \cup \{n\}}) = 1$ for $1 \leq k \leq m-1$. From the inductive hypothesis, this already holds for $k = 1$. For $k \geq 2$, Corollary~\ref{cor:brill-theorem-extremal-case} gives us the relation 
	\begin{equation}
		\Pf_{[2k-1] \cup \{n\}} \Pf_{[2k-3] \cup \{n-1\}} = P_{[2k-3] \cup \{n-1, n\}} + \Pf_{[2k-1] \cup \{n-1\}} \Pf_{[2k-3] \cup \{n\}}. 
	\end{equation}
	From the inductive hypothesis, we get $\sgn (\Pf_{[2k-1] \cup \{n\}}) = 1$ which completes the induction for the case $n$ is even. 

	The proof of the claim for $n = 2m+1$ where $m \geq 3$ is similar. We assume that Claim 1 and Claim 2 are true for all smaller values of $n$. In particular, the claim is true for $n-1 = 2m$ which gives 
\[
\begin{cases}
	\text{$\sgn (P_{[j, n-1]}) = 1$} \quad & \text{for $3 \leq j \leq n-2$}, \\
	\text{$\sgn (P_{[a] \cup [b, n-1]}) = (-1)^{ab}$} \quad & \text{for $4 \leq a+3 \leq b \leq n-2$}, \\
	\text{$\sgn(\Pf_{[n-2] \backslash \{i\}}) = (-1)^{i+1}$} \quad & \text{for $1 \leq i \leq n-3$} \\
	\text{$ \sgn \left ( \Pf_{[2k-1]  \cup \{n-1\}} \right ) = 1$} \quad & \text{for $1 \leq k \leq m-1$}. 
\end{cases}
\]
To prove that for $4 \leq a + 3 \leq b \leq n-1$ we have $\sgn (P_{[a] \cup [b, n]}) = (-1)^{ab + a + b}$, we similarly induct based on the lexicographic order of $(a, b)$. We have 
\[
	P_{[a+1] \cup [b+1, n]} = \begin{dcases*}
		\frac{P_{[a+1] \cup [b+1, n-1]} P_{[a] \cup [b, n]} + P_{[a+1] \cup [b, n-1]} P_{[a] \cup [b+1, n]}}{P_{[a] \cup [b, n-1]}} & \text{if $b-a \geq 4$} \\
		\frac{P_{[a+1] \cup [b+1, n-1]} P_{[a] \cup [b, n]} + \Pf_{[n-1]} \Pf_{[n-2] \backslash \{a+2\}} P_{[a] \cup [b+1, n]}}{P_{[a] \cup [b, n-1]}} & \text{if $b-a = 3$}
	\end{dcases*}
\] 
which is the same as \cref{eqn:minor-equation-1} except for the Pfaffians in the second case due to the parity condition in \cref{eqn:factorization-of-minor}. From the inductive hypothesis, we have the desired signs for minors of this form. Finally, we want to verify that 
\[
	\begin{cases}
		\text{$\sgn(\Pf_{[n] \backslash \{i\}}) = (-1)^{i+1}$} \quad & \text{for $1 \leq i \leq n-2$} \\
		\text{$\sgn(\Pf_{[2k-1] \cup \{n\}}) = -1$} \quad & \text{for $1 \leq k \leq m-1$} \\
		\text{$\sgn (\Pf_{[n-2] \cup \{n\}}) = \sgn(\Pf_{[n] \backslash \{n-1\}}) = -1$} \quad & \text{otherwise.}
	\end{cases}
\]
The last Pfaffian can be considered as a Pfaffian of the first type for $i = n-1$ and a Pfaffian of the second type for $k = m$. For this reason, we save the computation of its sign for last. 

The signs of $\Pf_{[n] \backslash \{1\}}$ and $\Pf_{1, n}$ are determined by the inductive hypothesis. For general $2 \leq i \leq n-2$, Corollary~\ref{cor:brill-theorem-extremal-case} gives us the relation  
	\begin{equation}
		\Pf_{[n] \backslash \{i\}}  = \frac{P_{[i-2] \cup [i+1, n]} + \Pf_{[n] \backslash \{i-1\}} \Pf_{[n-2] \backslash \{i\}}}{\Pf_{[n-2] \backslash \{i-1\}}}.
	\end{equation}
	This implies that we can induct on $i$ to prove $\sgn (\Pf_{[n] \backslash \{i\}}) = (-1)^{i+1}$, which is exactly what we wanted to show. For $2 \leq k \leq m-1$: 
	\[
		\Pf_{[2k-1] \cup \{n\}} = \frac{P_{[2k-3] \cup \{n-1, n\}} + \Pf_{[2k-1] \cup \{n-1\}} \Pf_{[2k-3] \cup \{n\}}}{\Pf_{[2k-3] \cup \{n-1\}}}. 
	\]
	From the inductive hypothesis, we get $\sgn (\Pf_{[2k-1] \cup \{n\}}) = -1$ again by an inductive argument, which matches the claim. 
	
	The final sign we need to verify is $\Pf_{[1, n-2] \cup \{n\}}$. This comes from another relation of Corollary~\ref{cor:brill-theorem-extremal-case}
	\[
		\Pf_{[n-2] \cup \{n\}} = \frac{P_{[n-4] \cup \{n-1, n\}} + \Pf_{[1, n-1]} \Pf_{[1, n-4] \cup \{n\}}}{\Pf_{[1, n-4] \cup \{n-1\}}}
	\]
	From the inductive hypothesis, we get $\sgn (\Pf_{[1, n-2] \cup \{n\}}) = -1$ which completes the induction. 
\end{proof}

\begin{remark}
	In the proof above, we technically only prove the equivalence between signed tests. However, the result as stated is still true. The relations which we use in the proof of \cref{thm:proof-of-positivity-test-matches-cluster-structure} are exactly the exchange relations of a mutation sequence which turns the initial seed of $\Sk_n$ to a seed which contains exactly the irreducible factors of the minors in~\cite{skew}. The mutations occur in exactly the same order in which we introduced the exchange relations. However, we could not find an enlightening proof of this fact other than a monstrous induction, which we omit. 
\end{remark}

\section{$G$-graded seeds for the Grassmannian and positroid varieties}\label{sec:Grassmannian}	
Let $\Gr_{k,n}$ denote the 
Grassmannian of $k$-dimensional subspaces
in~$\CC^n$. Any element of $\Gr_{k,n}$  can be represented (non-uniquely)
as the row span of a full rank $k \times n$ matrix $C$.
Given such a matrix and a $k$-element subset $J$ of $[n]$,
we let $P_J(C)$ denote the determinant of the $k \times k$ submatrix of $C$
located in columns $J$; this is called a \emph{Pl\"ucker coordinate}.
The Grassmannian $\Gr_{k,n}$ 
can be embedded into projective space of dimension $\binom{n}{k}-1$
via the Pl\"ucker embedding $C \mapsto (P_J(C))_{J\in {[n] \choose k}}$.
Note that the torus $(\C^*)^n$ acts on $\Gr_{k,n}$ by rescaling the columns of $C$.
In particular, if $t:=(t_1,\dots,t_n)$

is an element of the torus,  and $C$ is a $k \times n$ matrix as above, then
$P_{J}(t \cdot C) = \prod_{j\in J} t_j P_{J}(C)$.

Let $\widehat{\Gr}_{k,n}$ denote the affine cone over ${\Gr}_{k,n}$
in the Pl\"ucker embedding.
The 
homogeneous coordinate ring 
 $\CGr{k}{n}$
of~$\Gr_{k,n}$
is generated by the Pl\"ucker coordinates~$P_J$,
where $J$ ranges over all elements of ${[n] \choose k}$.
The Pl\"ucker coordinates satisfy the \emph{Grassmann-Pl\"ucker relations},
which are preserved by the torus action on the Grassmannian, and hence
$\CGr{k}{n}$ is $\Z^n_{\geq 0}$-graded.
Concretely,
$F\in \CGr{k}{n}$ 
lies in the component 
$\CGr{k}{n}_{(a_1,\dots,a_n)}$ 
with grading $(a_1,\dots,a_n)\in
\Z^n_{\geq 0}$ if it can be written as a polynomial in Pl\"ucker coordinates
where the index $i$ appears $a_i$ times in each term.

A particularly nice subset of the Grassmannian is the \emph{totally positive part}.
The definition below appeared in \cite{Postnikov}; a Lie-theoretic definition
which turned out to be equivalent in the case of the Grassmannian
\cite{TalaskaWilliams} was given for all partial flag varieties
$G/P$ in \cite{Lusztig3}.

\begin{definition}\label{def:TP}\cite{Postnikov, Lusztig3}
The \emph{(totally) positive part} $\Gr_{k,n}^{>0}$
	of $\Gr_{k,n}$ is the subset where all Pl\"ucker coordinates are strictly positive.  The \emph{totally nonnegative part} $\Gr_{k,n}^{\geq 0}$ of 
	$\Gr_{k,n}$ is the subset where all Pl\"ucker coordinates are nonnegative.
\end{definition}

\begin{remark}
Since the set of Pl\"ucker coordinates are coordinates on
projective space, we could equivalently define the totally positive part of the 
Grassmannian by requiring that all Pl\"ucker coordinates have the same sign.
\end{remark}

\subsection{The cluster structure on the Grassmannian}\label{sec:clusterGr}
As shown in \cite{Scott}, the ring $\CGr{k}{n}$ has a cluster algebra structure, 
see also \cite[Section 6.7]{FWZ6}. 
The $n$ Pl\"ucker coordinates whose indices are cyclically consecutive 
are the frozen variables.

In the case of $\Gr_{2,n}$, the seeds for the cluster structure
are in bijection with the triangulations of an $n$-gon, as shown at the 
left of 
\cref{fig:G37-quiver}, and the set of cluster variables $\Xcal$
coincides with the Pl\"ucker coordinates $\{P_{ij} \ \vert \ 1 \leq i < j \leq n\}$.  
More specifically, 
given a triangulation $T$ of an $n$-gon with vertices labeled $1,\dots, n$ in order,
we construct a quiver $Q_T$ by placing a mutable vertex on every diagonal of $T$ and a
frozen vertex on every side of the polygon.  Inside every triangle of $T$
we place three arrows which form a clockwise cycle (arrows between frozen vertices
can be omitted since they do not affect mutation).  Each vertex of $Q_T$ gets an associated
cluster variable obtained by identifying the corresponding side or diagonal 
 by a Pl\"ucker coordinate whose indices encode the two endpoints.  This gives us the data of 
a seed for the cluster structure, see \cref{fig:G37-quiver}.

\begin{figure}[ht]
\begin{center}
\vspace{-.2cm}
\setlength{\unitlength}{2pt}
\begin{picture}(60,60)(0,0)
\put(56,52){\makebox(0,0){$P_{12}$}}
 
\put(40,-3){\makebox(0,0){$4$}}
\put(63,20){\makebox(0,0){$3$}}
\put(63,40){\makebox(0,0){$2$}}
\put(40,63){\makebox(0,0){$1$}} 
\put(20,63){\makebox(0,0){$8$}} 
\put(-3,40){\makebox(0,0){$7$}}
\put(-3,20){\makebox(0,0){$6$}}
\put(20,-3){\makebox(0,0){$5$}}

\thinlines
  \multiput(0,20)(60,0){2}{\line(0,1){20}}
  \multiput(20,0)(0,60){2}{\line(1,0){20}}
  \multiput(0,40)(40,-40){2}{\line(1,1){20}}
  \multiput(20,0)(40,40){2}{\line(-1,1){20}}

  \multiput(20,0)(20,0){2}{\circle*{1}}
  \multiput(20,60)(20,0){2}{\circle*{1}}
  \multiput(0,20)(0,20){2}{\circle*{1}}
  \multiput(60,20)(0,20){2}{\circle*{1}}

\put(40,0){\line(1,2){20}}
\put(0,40){\line(1,0){60}}
\put(0,20){\line(2,-1){40}}
\put(0,40){\line(1,-1){40}}
\put(20,60){\line(2,-1){40}}
\thicklines

\multiput(-1.5,28.5)(3,0){2}{\line(0,1){3}}
\multiput(-1.5,28.5)(0,3){2}{\line(1,0){3}}

\multiput(58.5,28.5)(3,0){2}{\line(0,1){3}}
\multiput(58.5,28.5)(0,3){2}{\line(1,0){3}}

\multiput(28.5,-1.5)(3,0){2}{\line(0,1){3}}
\multiput(28.5,-1.5)(0,3){2}{\line(1,0){3}}

\multiput(28.5,58.5)(3,0){2}{\line(0,1){3}}
\multiput(28.5,58.5)(0,3){2}{\line(1,0){3}}

\multiput(8.5,8.5)(3,0){2}{\line(0,1){3}}
\multiput(8.5,8.5)(0,3){2}{\line(1,0){3}}

\multiput(48.5,8.5)(3,0){2}{\line(0,1){3}}
\multiput(48.5,8.5)(0,3){2}{\line(1,0){3}}

\multiput(8.5,48.5)(3,0){2}{\line(0,1){3}}
\multiput(8.5,48.5)(0,3){2}{\line(1,0){3}}

\multiput(48.5,48.5)(3,0){2}{\line(0,1){3}}
\multiput(48.5,48.5)(0,3){2}{\line(1,0){3}}

{\put(20,10){\circle{3}}}
{\put(20,20){\circle{3}}}
{\put(30,40){\circle{3}}}
{\put(40,50){\circle{3}}}
{\put(50,20){\circle{3}}}

\linethickness{1.5pt}

\put(12,10){{\vector(1,0){6}}}
\put(22,8){{\vector(1,-1){6}}}

\put(2,29){{\vector(2,-1){16}}}
\put(20,18){{\vector(0,-1){6}}}
\put(18,12){{\vector(-1,1){16}}}

\put(21,22){{\vector(1,2){8}}}
\put(32,38){{\vector(1,-1){16}}}
\put(48,20){{\vector(-1,0){26}}}

\put(28,41){{\vector(-2,1){16}}}
\put(12,50){{\vector(1,0){26}}}
\put(38,48){{\vector(-1,-1){6}}}

\put(38,52){{\vector(-1,1){6}}}
\put(48,50){{\vector(-1,0){6}}}

\put(50,12){{\vector(0,1){6}}}
\put(52,22){{\vector(1,1){6}}}
\end{picture}
\end{center}

\caption{
The quiver $Q_T$ and seed for $\Gr_{2,8}$ obtained from a triangulation $T$ of an octagon.}
\vspace{-.2cm}
\label{fig:G37-quiver}
\end{figure}

For $k>2$, we do not in general have a classification of the cluster variables 
and seeds for the cluster structure on 
$\CGr{k}{n}$; while all Pl\"ucker coordinates are cluster variables, there are in general infinitely
many cluster variables.
However, 
for any $k$ and $n$
there is a nice initial seed called the \emph{rectangles seed}
which is shown at the right of 
\cref{fig:G37-quiver} for the case of $\Gr_{3,7}$.  
The quiver $Q_{k,n}$ is a \emph{triangulated grid quiver}, and 
the cluster/frozen variables of this seed are Pl\"ucker coordinates which are in bijection with the rectangles
which fit inside a $k \times (n-k)$ rectangle.
To map a rectangle $r \subset k \times (n-k)$ to a $k$-element subset of $[n]$, 
we first position $r$ so that it fits into the upper left of the $k \times (n-k)$ rectangle. We then 
traverse the path from the southwest corner to the northeast corner of the $k \times (n-k)$ rectangle
which cuts out $r$, labeling the steps of this path from $1$ to $n$, then read off the labels
of the vertical steps.\footnote{Sometimes one uses the convention in which we traverse the 
path from northeast to southwest. However, our convention has the advantage that the 
Pl\"ucker coordinate labels of $Q_{k,n}$ are a subset of those
of $Q_{k,n+1}$.}

\begin{figure}[ht]
\begin{center}
 \setlength{\unitlength}{2pt}
\begin{picture}(80,55)(0,-1) 
\put(20,20){\makebox(0,0){${\ydiagram{1,1}}$}}
\put(20,40){\makebox(0,0){${\ydiagram{1}}$}}
\put(40,20){\makebox(0,0){${\ydiagram{2,2}}$}}
\put(40,40){\makebox(0,0){${\ydiagram{2}}$}}
\put(60,20){\makebox(0,0){${\ydiagram{3,3}}$}}
\put(60,40){\makebox(0,0){${\ydiagram{3}}$}}

\put(5,55){\makebox(0,0){$\boxed{\scriptstyle\varnothing}$}}
\put(20,-1){\makebox(0,0){$\boxed{\ydiagram{1,1,1}}$}}
\put(40,-1){\makebox(0,0){$\boxed{\ydiagram{2,2,2}}$}} 
\put(60,-1){\makebox(0,0){$\boxed{\ydiagram{3,3,3}}$}} 
\put(80,-1){\makebox(0,0){$\boxed{\ydiagram{4,4,4}}$}}
\put(81,20){\makebox(0,0){$\boxed{\ydiagram{4,4}}$}}
\put(81,40){\makebox(0,0){$\boxed{\ydiagram{4}}$}}

\put(20,16){\vector(0,-1){12}}
\put(40,36){\vector(0,-1){12}}

\put(20,36){\vector(0,-1){12}}
\put(60,36){\vector(0,-1){12}}

\put(26,20){\vector(1,0){8}}
\put(66,20){\vector(1,0){8}}
\put(46,40){\vector(1,0){8}}

\put(26,40){\vector(1,0){8}}
\put(66,40){\vector(1,0){8}}
\put(46,20){\vector(1,0){8}}

\put(10,50){\vector(1,-1){7}}

\put(40,16){\vector(0,-1){12}}
\put(60,16){\vector(0,-1){12}}

\put(36, 4){\vector(-1,1){12}}
\put(56, 4){\vector(-1,1){12}}
\put(76, 4){\vector(-1,1){12}}
\put(36, 24){\vector(-1,1){12}}
\put(56, 24){\vector(-1,1){12}}
\put(76, 24){\vector(-1,1){12}}
\end{picture}\hspace{2.5cm}
\setlength{\unitlength}{2pt}
\iffalse
	\begin{picture}(80,63)(0,0)
\put(20,20){\makebox(0,0){${457}$}}
\put(20,40){\makebox(0,0){${467}$}}
\put(40,20){\makebox(0,0){${347}$}}
\put(40,40){\makebox(0,0){${367}$}}
\put(60,20){\makebox(0,0){${237}$}}
\put(60,40){\makebox(0,0){${267}$}}

\put(1,59){\makebox(0,0){$\boxed{567}$}}
\put(20,-1){\makebox(0,0){$\boxed{456}$}}
\put(40,-1){\makebox(0,0){$\boxed{345}$}}
\put(60,-1){\makebox(0,0){$\boxed{234}$}}
\put(80,-1){\makebox(0,0){$\boxed{123}$}}
\put(81,20){\makebox(0,0){$\boxed{127}$}}
\put(81,40){\makebox(0,0){$\boxed{167}$}}

\put(20,16){\vector(0,-1){12}}
\put(40,36){\vector(0,-1){12}}

\put(20,36){\vector(0,-1){12}}
\put(60,36){\vector(0,-1){12}}

\put(26,20){\vector(1,0){8}}
\put(66,20){\vector(1,0){8}}
\put(46,40){\vector(1,0){8}}

\put(26,40){\vector(1,0){8}}
\put(66,40){\vector(1,0){8}}
\put(46,20){\vector(1,0){8}}

\put(6,54){\vector(1,-1){10}}

\put(40,16){\vector(0,-1){12}}
\put(60,16){\vector(0,-1){12}}

\put(36, 4){\vector(-1,1){12}}
\put(56, 4){\vector(-1,1){12}}
\put(76, 4){\vector(-1,1){12}}
\put(36, 24){\vector(-1,1){12}}
\put(56, 24){\vector(-1,1){12}}
\put(76, 24){\vector(-1,1){12}}
\end{picture}\fi
\begin{picture}(80,63)(0,0)
\put(20,20){\makebox(0,0){${134}$}}
\put(20,40){\makebox(0,0){${124}$}}
\put(40,20){\makebox(0,0){${145}$}}
\put(40,40){\makebox(0,0){${125}$}}
\put(60,20){\makebox(0,0){${156}$}}
\put(60,40){\makebox(0,0){${126}$}}

\put(1,59){\makebox(0,0){$\boxed{123}$}}
\put(20,-1){\makebox(0,0){$\boxed{234}$}}
\put(40,-1){\makebox(0,0){$\boxed{345}$}}
\put(60,-1){\makebox(0,0){$\boxed{456}$}}
\put(80,-1){\makebox(0,0){$\boxed{567}$}}
\put(81,20){\makebox(0,0){$\boxed{167}$}}
\put(81,40){\makebox(0,0){$\boxed{127}$}}

\put(20,16){\vector(0,-1){12}}
\put(40,36){\vector(0,-1){12}}

\put(20,36){\vector(0,-1){12}}
\put(60,36){\vector(0,-1){12}}

\put(26,20){\vector(1,0){8}}
\put(66,20){\vector(1,0){8}}
\put(46,40){\vector(1,0){8}}

\put(26,40){\vector(1,0){8}}
\put(66,40){\vector(1,0){8}}
\put(46,20){\vector(1,0){8}}

\put(6,54){\vector(1,-1){10}}

\put(40,16){\vector(0,-1){12}}
\put(60,16){\vector(0,-1){12}}

\put(36, 4){\vector(-1,1){12}}
\put(56, 4){\vector(-1,1){12}}
\put(76, 4){\vector(-1,1){12}}
\put(36, 24){\vector(-1,1){12}}
\put(56, 24){\vector(-1,1){12}}
\put(76, 24){\vector(-1,1){12}}
\end{picture}
\end{center}
\caption{The quiver $Q_{3,7}$. 
At left, the vertices are labeled by rectangles contained in a $3 \times 4$ rectangle,
arranged so that the width and height of the rectangles increase
from left to right and from top to bottom, respectively.
At right, the vertices are labeled by the corresponding $k$-element subsets of $[n]$;
we identify these with Pl\"ucker coordinates.}\label{fig:G37-Le-quiver2}
\end{figure}

If $C$ is (a matrix representing) 
an element of the Grassmannian $\Gr_{k,n}$ and $t:=(t_1,\dots,t_n)$

is an element of the torus, then for any $k$-element
subset $I=\{i_1, \dots, i_k\}$, we have that 
$P_{I}(t \cdot C) = t_{i_1} \dots t_{i_k} P_{I}(C)$.
It is not hard to see that if $C$ lies in a signed cell
of $\widehat{\Gr}_{k,n}$ and $t\in (\R^{*})^n$, then 
$t \cdot C$ also lies in a signed cell.
Equivalently, if $\sigma: \Xcal \to \{-1,1\}$ is a 
signed grading of $\widehat{\Gr}_{k,n}$, and $t=(t_1,\dots,t_n)\in \{-1,1\}^n$,
then we can define a new
signed grading $t \cdot \sigma: \Xcal \to \{-1,1\}$ as follows.
If $x\in \Xcal$ lies in the component 
$\CGr{k}{n}_{(a_1,\dots,a_n)}$, then  
$$ t\cdot \sigma(x) = t_1^{a_1} \dots t_n^{a_n} \sigma(x).$$

\subsection{The positive and alternating signed gradings for the 
Grassmannian $\Gr_{k,n}$}

Let $\Xcal$ be the set of cluster variables for $\Gr_{k,n}$. In this section, we prefer to work with the geometry of $\Gr_{k, n}$ rather than $\widehat{\Gr}_{k, n}$. A signed cell of $\Gr_{k, n}$ is defined to be the image of a signed cell of $\widehat{\Gr}_{k, n}$ under the projection map $\widehat{\Gr}_{k,n} \backslash \{0\} \to \Gr_{k, n}$. Note that the signed cells in $\widehat{\Gr}_{k, n}$ come in pairs: for a signed grading $\sigma : \mathcal{X} \to \{\pm 1\}$ we have the negative grading $-\sigma : \mathcal{X} \to \{ \pm 1\}$ which is defined by $(-\sigma)(x) := (-1) \cdot \sigma(x)$ whenever $x$ is a Pl\"ucker coordinate. This gives a well-defined signed grading, as we can check on the rectangles seed. Each pair $\{\sigma, -\sigma\}$ of signed gradings of $\widehat{\Gr}_{k, n}$ corresponds to one (projectivized) signed cell of $\Gr_{k, n}$.

\begin{definition}
The \emph{positive grading} for $\Gr_{k,n}$
is the grading \begin{equation} \label{eq:pos}
	\sigma^{+}: \Xcal \to \{-1,1\} \hspace{.5cm} \text{ such that }
	\hspace{.5cm}
	\sigma^{+}(P_{I}) = 1 \text{ for all }I.
\end{equation}
	And the \emph{totally positive part} 
	$\Gr_{k,n}^{>0}$ of the Grassmannian
	$\Gr_{k,n}$ is the subset 
$$\Gr_{k,n}^{>0}:=\{C \ \vert \ \sgn(P_{i_1,\dots,i_k}(C)) = 1.\}.$$

\end{definition}

\begin{definition}\label{def:alt}
	The \emph{alternating grading} for $\Gr_{k,n}$
	is the grading \begin{equation} \label{eq:alt}
		\sigma^{\alt}: \Xcal \to \{-1,1\} \hspace{.5cm} \text{ such that }
		\hspace{.5cm}
		\sigma^{\alt}(P_{i_1,\dots,i_k}) = (-1)^{i_1+ \dots + i_k}.
	\end{equation}
	And the \emph{alternating part} $\Gr_{k,n}^{\alt}$ of the Grassmannian $\Gr_{k,n}$ is the subset 
	\[
		\Gr_{k,n}^{\alt}:=\{C \ \vert \ \sgn(P_{i_1,\dots,i_k}(C)) = (-1)^{i_1+\dots+i_k}\}.
	\]
\end{definition}

For $V\in \Gr_{k,n}$, 
let $V^{\perp}\in \Gr_{n-k,n}$ denote the 
orthogonal complement of $V$. 
For $v=(v_1,\dots,v_n)\in \R^n$, let $\alt(v):=(v_1,-v_2,v_3,-v_4,\dots,
(-1)^{n-1} v_n)\in \R^n$. Similarly for $V\in \Gr_{k,n}$
we define $\alt(V)$ to be the span of all vectors
$\alt(v)$ where $v\in V$.

The following lemma first appeared in \cite{hilbert, hochster}, 
see also \cite[Lemma 1.11]{karp}.

\begin{lemma}\label{lem:alt1}
For any $V\in \Gr_{k,n}$, the subspaces $V$
and $\alt(V^{\perp})\in \Gr_{n-k,n}$ have the same Pl\"ucker coordinates,
that is,
$$P_I(V) = P_{[n]\setminus I}(\alt(V^{\perp}))
	\hspace{.5cm}
	\text{ for all }I \in {[n] \choose k}.$$

\end{lemma}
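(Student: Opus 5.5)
The identity is to be read projectively: Pl\"ucker coordinates are defined only up to a common nonzero scalar. So the plan is to exhibit representatives of $V$ and of $\alt(V^{\perp})$ whose maximal minors agree up to one global constant independent of $I$. I will use Jacobi's complementary minor theorem rather than a chart computation, because it handles all $I$ at once and keeps the signs transparent.

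\emph{Setting up.} Let $C$ be a $k\times n$ matrix with row span $V$. Extend it to an invertible $n\times n$ matrix $M$ whose first $k$ rows are $C$. Put $N := (M^{-1})^{T}$. Since $M N^{T} = \mathrm{Id}$, each of the last $n-k$ rows of $N$ is orthogonal to each of the first $k$ rows of $M$. These $n-k$ rows are linearly independent, so they span $V^{\perp}$. Let $D$ be the $(n-k)\times n$ matrix formed by these rows. Then $P_I(V)=\det M_{[k],I}$ and $P_J(V^{\perp})=\det D_{[n-k],J}=\det N_{[k+1,n],J}$.

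\emph{Jacobi's theorem.} For invertible $M$ and $|R|=|S|$, Jacobi's theorem states
\[
\det (M^{-1})_{S,R} \;=\; (-1)^{\sum R+\sum S}\,\frac{\det M_{[n]\setminus R,\,[n]\setminus S}}{\det M}.
\]
Apply this with $R=[k+1,n]$ and $S=[n]\setminus I$, using $N_{R,S}=\big((M^{-1})_{S,R}\big)^{T}$. This gives
\[
P_{[n]\setminus I}(V^{\perp}) \;=\; \pm\frac{(-1)^{\sum([n]\setminus I)}}{\det M}\,\det M_{[k],I},
\]
where $\pm$ is $(-1)^{\sum_{r=k+1}^{n} r}$, a constant independent of $I$.

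\emph{Applying $\alt$.} The operation $\alt$ multiplies column $j$ of $D$ by $(-1)^{j-1}$. Hence
\[
P_{J}(\alt(V^{\perp}))=(-1)^{\sum J-|J|}P_J(V^{\perp}).
\]
Take $J=[n]\setminus I$. The two factors $(-1)^{\sum([n]\setminus I)}$ cancel, and $|J|=n-k$ is constant. Therefore
\[
P_{[n]\setminus I}(\alt(V^{\perp})) = c\cdot P_I(V),
\]
where $c$ is a nonzero constant depending only on $n$, $k$ and $\det M$. This is exactly equality of Pl\"ucker vectors in projective space.

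The only real obstacle is the sign bookkeeping in Jacobi's formula: the transpose and the ordering convention for rows and columns must be handled correctly. Any error there changes only the $I$-independent constant $c$, except for the factor $(-1)^{\sum([n]\setminus I)}$. That factor is precisely what the alternating twist is designed to cancel, so I would double-check it on the case $k=1$, $n=2$ as a sanity test.
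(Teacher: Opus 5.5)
Your proof is correct. The paper does not prove this lemma itself: it only cites \cite{hilbert, hochster} and \cite[Lemma 1.11]{karp}. So your argument is a self-contained replacement rather than an alternative to a proof in the text.

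The sign bookkeeping checks out.
\begin{itemize}
\item Your form of Jacobi's identity, $\det(M^{-1})_{S,R}=(-1)^{\sum R+\sum S}\det M_{[n]\setminus R,\,[n]\setminus S}/\det M$, is the correct one.
\item Combined with $P_J(\alt(W))=(-1)^{\sum J-|J|}P_J(W)$, it gives $P_{[n]\setminus I}(\alt(V^{\perp}))=c\,P_I(V)$ with $c=(-1)^{(n-k)+\sum_{r=k+1}^{n} r}/\det M$, which is independent of $I$.
\item Your $n=2$ test also passes: if $V$ is spanned by $(a,b)$, then $\alt(V^{\perp})$ is spanned by $(b,a)$.
\end{itemize}
Reading the statement projectively is the right interpretation. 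It is also all the paper needs, since the lemma is only used for sign patterns in \cref{lem:alt2}.

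Compare this with a chart computation. There you write $V$ as the row span of $[\,\mathrm{Id}_k\mid A\,]$, so that $V^{\perp}$ is the row span of $[\,-A^{T}\mid \mathrm{Id}_{n-k}\,]$, and match minors with explicit signs. That route is more elementary, but it first needs $P_{[k]}(V)\neq 0$ and must then extend to all $V$, by density or by permuting columns; a permutation interacts with the signs introduced by $\alt$. Your Jacobi route treats every $V$ and every $I$ uniformly, at the cost of quoting the complementary-minor theorem.

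One point your construction handles implicitly and is worth stating: $V^{\perp}$ must be taken with respect to the bilinear form $\sum_i v_i w_i$, not a Hermitian form on $\mathbb{C}^n$. That is what makes the rows of $(M^{-1})^{T}$ the right spanning set. With the Hermitian complement the identity can fail for non-real $V$, for example $V$ spanned by $(1,i)$.
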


\begin{lemma}\label{lem:alt2}
We have that 
\begin{equation}
	\Gr_{k,n}^{\alt} = \{V^{\perp} \ \vert \ V\in \Gr_{n-k,n}^{>0} \},
\end{equation} that is, 
the alternating part $\Gr_{k,n}^{\alt}$ of the Grassmannian 
$\Gr_{k,n}$
is isomorphic to the totally positive part $\Gr_{n-k,n}^+$
of the Grassmannian $\Gr_{n-k,n}$ via the map 
$V \mapsto V^{\perp}$, where $V^{\perp}$ denotes
the orthogonal complement of $V$.
\end{lemma}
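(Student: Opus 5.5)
The plan is to derive this directly from \cref{lem:alt1}, by tracking the Pl\"ucker coordinates of $V^{\perp}$ when $V$ is totally positive. Fix $V\in \Gr_{n-k,n}$ and set $W:=V^{\perp}\in \Gr_{k,n}$. Since $\alt$ is an involution on $\R^n$ that is orthogonal (it is a diagonal $\pm 1$ matrix), we have $\alt(V)^{\perp}=\alt(V^{\perp})$. Applying \cref{lem:alt1} to the $k$-dimensional space $W$ gives
\[
P_I(V^{\perp}) = P_{[n]\setminus I}(\alt(V))
\quad\text{for all } I\in\tbinom{[n]}{k},
\]
up to a global nonzero scalar, since all equalities are projective.

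Next, compare the Pl\"ucker coordinates of $\alt(V)$ with those of $V$. If $C$ is an $(n-k)\times n$ matrix representing $V$, then $\alt(V)$ is represented by $C$ with column $j$ multiplied by $(-1)^{j-1}$. This is the torus action by $t=(1,-1,1,\dots)$, so
\[
P_J(\alt(V))=\prod_{j\in J}(-1)^{j-1}\,P_J(V)=(-1)^{\sum_{j\in J} j-|J|}P_J(V).
\]
Take $J=[n]\setminus I$. Then $\sum_{j\in J} j=\binom{n+1}{2}-\sum_{i\in I} i$. Combining the two displays,
\[
P_I(V^{\perp}) = \epsilon\,(-1)^{\sum_{i\in I} i}\,P_{[n]\setminus I}(V),
\]
where $\epsilon=\pm1$ depends only on $n$ and $k$. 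After absorbing $\epsilon$ into the projective rescaling, we get $\sgn P_I(V^\perp)=(-1)^{\sum_{i\in I} i}$ for all $I$ if and only if $P_{J}(V)>0$ for all $J$, up to a common sign. This shows both inclusions at once: $V\in\Gr_{n-k,n}^{>0}$ implies $V^{\perp}\in\Gr_{k,n}^{\alt}$. Conversely, if $W\in\Gr^{\alt}_{k,n}$, then $V:=W^{\perp}$ has all Pl\"ucker coordinates of one sign, and hence lies in $\Gr_{n-k,n}^{>0}$ by the remark following \cref{def:TP}; moreover $W=V^\perp$. Finally, $V\mapsto V^\perp$ is a homeomorphism $\Gr_{n-k,n}\to\Gr_{k,n}$, with inverse again $\perp$, so it restricts to the claimed isomorphism.

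I expect the main obstacle to be sign bookkeeping. First, one must check that \cref{lem:alt1} is applied with the right dimensions, that is, to $W$ rather than to $V$. Second, one must track the constant $\epsilon$ and the $|J|$ term in the exponent, and confirm that they contribute only a global sign, which is harmless projectively. To handle this, I would check the identity on a small case such as $\Gr_{1,2}$ to fix the conventions.
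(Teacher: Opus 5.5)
Your proposal is correct and follows essentially the same route as the paper. Both proofs combine \cref{lem:alt1} with the fact that $\alt$ changes each Pl\"ucker coordinate $P_J$ by the sign $(-1)^{\sum_{j\in J} j}$, up to a global sign. The differences are only bookkeeping: you apply \cref{lem:alt1} to $V^{\perp}$ rather than to $V$, and you explicitly track the global constant $\epsilon$ and the reverse inclusion, both of which the paper leaves implicit.
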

\begin{proof}
Note that for $V\in \Gr_{k,n}$ and $I=\{i_1,\dots,i_k\}$, 
we have $P_I(\alt(V)) = (-1)^{i_1+\dots+i_k} P_I(V)$.
	Using this observation together with \cref{lem:alt1}, it follows that
	 for $V\in \Gr_{n-k,n}^{>0}$,
	\begin{equation*}
		P_I(V^{\perp}) =(-1)^{i_1+\dots+i_k}P_I(\alt(V^{\perp})) = 
		(-1)^{i_1+\dots+i_k} P_{[n]\setminus I}(V) = 
		(-1)^{i_1+\dots+i_k}.
	\end{equation*}
	The result follows.
\end{proof}

\begin{proposition}\label{lem:posalt}
The gradings $\sigma^+$ and $\sigma^{\alt}$
come from signed seeds, and hence 
$\Gr_{k,n}^{>0}$ and 
$\Gr_{k,n}^{\alt}$ are homeomorphic to open balls.
\end{proposition}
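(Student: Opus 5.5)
The plan is to check the signed-seed condition \eqref{eq:grading2} for each grading directly on one convenient seed, the rectangles seed with quiver $Q_{k,n}$ from \cref{fig:G37-Le-quiver2}, and then apply \cref{thm:test}.

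For $\sigma^+$ there is nothing to check. It is the totally positive signed seed of \cref{rem:positiveseed}, so \cref{thm:test} applies. Since every Plücker coordinate is a cluster variable, the resulting signed cell of $\widehat{\Gr}_{k,n}$ is exactly the locus where all Plücker coordinates are positive. Its image in $\Gr_{k,n}$ is $\Gr_{k,n}^{>0}$. Scaling by positive reals identifies this signed cell with $\Gr^{>0}_{k,n}\times\R_{>0}$. Hence $\Gr_{k,n}^{>0}$ is homeomorphic to the slice where one frozen variable, say $P_{[k]}$, equals $1$. Via the cluster coordinates of \cref{lem:ball}, that slice is $(\R_{>0})^{s-1}$, an open ball.

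For $\sigma^{\alt}$, the cleanest approach is to realize it as a torus twist of $\sigma^+$. Take $t=(t_1,\dots,t_n)\in\{-1,1\}^n$ with $t_i=(-1)^i$. A Plücker coordinate $P_I$ lies in the graded component where $a_i=1$ for $i\in I$ and $a_i=0$ otherwise. Therefore
\[
t\cdot\sigma^+(P_I)=\prod_{i\in I}(-1)^i=(-1)^{i_1+\dots+i_k}=\sigma^{\alt}(P_I).
\]
It remains to check that $t\cdot\sigma$ is again a signed seed. The cleanest argument is that every exchange relation of the cluster structure is homogeneous for the $\Z^n$-grading coming from the torus action. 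Indeed, the rectangles seed is balanced with respect to $\deg P_I=\sum_{i\in I}\mathbf e_i$, so by \cref{prop:propagate} every cluster variable is homogeneous. Consequently the twist multiplies both monomials of each exchange relation by the same sign, and \eqref{eq:grading2} is preserved.

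Concretely, I would verify the balancing of $\deg P_I=\sum_{i\in I}\mathbf e_i$ on $Q_{k,n}$ directly. At each mutable vertex of the triangulated grid, the in-neighbors and out-neighbors come in pairs whose index multisets agree. This is the standard three-term Plücker relation $P_{Sab}P_{Scd}=P_{Sac}P_{Sbd}+P_{Sad}P_{Sbc}$ type bookkeeping. This local check is the only step with real content, and I expect it to be the main obstacle. It needs a case split into interior vertices, vertices in the top row, and vertices in the left column. Alternatively, one can cite that the exchange relations are homogeneous Plücker relations, so each monomial has the same multidegree. Given the balancing, $\sigma^{\alt}=\Psi(t)$ comes from the torus action as in \cref{sec:torus-action}, hence is a $\{\pm1\}$-grading.

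Finally, \cref{thm:test} shows that the signed cell for $\sigma^{\alt}$ is an open ball in $\widehat{\Gr}_{k,n}$. The same positive-scaling argument as for $\sigma^+$ passes this to the projectivization $\Gr_{k,n}^{\alt}$. As a consistency check, \cref{lem:alt2} gives an independent identification $\Gr_{k,n}^{\alt}\cong\Gr_{n-k,n}^{>0}$.
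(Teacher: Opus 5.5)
Your argument is correct, but it follows a different route from the paper's. The paper only sketches its proof. It checks \eqref{eq:grading2} for both gradings on an initial seed, then appeals to Pl\"ucker relations to see that the propagated signs agree with $(-1)^{i_1+\cdots+i_k}$ on every Pl\"ucker coordinate. Finally it invokes \cref{thm:test}, citing Postnikov for $\Gr_{k,n}^{>0}$.

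You instead realize $\sigma^{\alt}$ as the pushforward of the torus $\Z^n$-grading under $\mathbf e_i\mapsto(-1)^i$, i.e.\ as $t\cdot\sigma^+$ in the language of \cref{sec:torus-action}. Two things then follow:
\begin{enumerate}
\item The only local verification is balancing of $\deg P_I=\sum_{i\in I}\mathbf e_i$ on $Q_{k,n}$.
\item Agreement with the alternating sign on Pl\"ucker coordinates outside the rectangles seed comes for free. A Pl\"ucker coordinate is torus-homogeneous. The propagated $\Z^n$-degree of a cluster variable is its degree as a homogeneous Laurent polynomial in the initial cluster, and that is its torus degree.
\end{enumerate}
This replaces the paper's sign-chasing through Pl\"ucker relations by one structural observation. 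It is the same mechanism the paper later uses in \cref{sec:G-grading-for-Gr}.

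You also supply a step the paper leaves implicit. \cref{thm:test} gives a ball of dimension $s$ in the affine cone $\widehat{\Gr}_{k,n}$. Your positive-scaling argument, using the slice where the frozen variable $P_{[k]}$ (normalized by its sign) equals $1$, correctly descends this to a ball of dimension $s-1$ in $\Gr_{k,n}$.

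One small inaccuracy: at an interior vertex of $Q_{k,n}$ the in- and out-neighbours do not match up in pairs. At $145$ in $Q_{3,7}$, the in-neighbours $125,134,456$ and out-neighbours $124,156,345$ agree only as combined multisets, both equal to $\{1,1,2,3,4,4,5,5,6\}$. Also, the exchange relation there is not a three-term Pl\"ucker relation, since the new variable has degree $2$. The balancing you need is just this total index count, so the argument is unaffected.
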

\begin{proof}
We can prove the first statement by verifying that  
\eqref{eq:grading2} holds, and using Pl\"ucker relations 
to deduce how the value of these gradings propagates away from the 
initial seed.
The fact that $\Gr_{k,n}^{>0}$ and 
$\Gr_{k,n}^{\alt}$ are homeomorphic to open balls then follows from 
	\cref{thm:test}.   Note however that 
	it was already known from \cite{Postnikov} that $\Gr_{k,n}^{>0}$
	is homeomorphic to an open ball.
\end{proof}

\begin{remark}
By Mnev's Universality  Theorem~\cite{Mnev}, the topology of the 
realization space of an oriented matroid (that is, the 
subset of $\Gr_{k,n}$ with a fixed sign pattern on Pl\"ucker coordinates) can be as complicated as that of any algebraic variety. 
However, it follows from \cref{thm:test} that any signed cell of 
$\Gr_{k,n}$ is homeomorphic to an open ball.  
Besides 
the totally positive part of $\Gr_{k,n}$,
the alternating part of $\Gr_{k,n}$
provides another nice subset with a fixed sign pattern which is 
topologically a ball.
\end{remark}

\begin{remark}
Sometimes it is more convenient to work with
 $\sigma^{\alt}(P_{I}) = (-1)^{i_1+ \dots + i_k+1}$ 
	instead of \eqref{eq:alt}.
This does not affect the corresponding (projectivized) signed cell of the Grassmannian, since Pl\"ucker coordinates are only defined up to a common scalar.
\end{remark}

\begin{example}
Consider the Grassmannian $\Gr_{2,4}$ 
and the alternating grading $\sigma^{\alt}: \Xcal \to \{-1,1\}$
defined by $\sigma^{\alt}(P_{ij}) = (-1)^{i+j+1}$.
Then $\Gr_{2,4}^{\alt}$ is the subset of $\Gr_{2,4}$
in which $P_{12}, P_{23}, P_{34},$ and $P_{14}$ are positive, while
$P_{13}$ and $P_{24}$ are negative.
More generally, $\Gr_{2,n}^{\alt}$ is the subset of 
$\Gr_{2,n}$ in which $P_{i,j}$ is positive if and only if 
$i+j$ is odd.
\end{example}

\subsection{A warmup: signed seeds for the Grassmannian $\Gr_{2,n}$}

In this section we will 
show that all signed cells of $\Gr_{2,n}$ can be obtained
from the positive part via the torus action. In the proof below, we actually show that they can be obtained from either the positive part or the alternating part of $\Gr_{2,n}$ via the torus action, but the corresponding cells are already in the same torus orbit. In \cref{sec:Grass} we will generalize this result to the Grassmannian $\Gr_{k,n}$.

\begin{theorem}\label{thm:G2nsigned}
Every signed cell of the Grassmannian $\Gr_{2,n}$ 
(with respect to the cluster structure described in \cref{sec:clusterGr}) is obtained from  the positive part of $\Gr_{2,n}$ via the torus action.
\end{theorem}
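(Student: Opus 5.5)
We plan to classify the signed gradings of the cluster structure on $\CGr{2}{n}$ by counting them and comparing with the gradings produced by the torus action. By \cref{thm:test}, a signed cell is determined by its signed grading $\sigma:\Xcal\to\{-1,1\}$, and the grading is determined by its values on any one seed. We fix a fan triangulation $T$ of the $n$-gon: its diagonals are $\{1,j\}$ for $3\le j\le n-1$ and its sides are $\{i,i+1\}$ together with $\{1,n\}$. In $Q_T$ every frozen side $\{j,j+1\}$ with $2\le j\le n-2$ lies in exactly one triangle $\{1,j,j+1\}$ that contains a diagonal. Similarly, $\{1,2\}$ and $\{1,n\}$ are frozen leaves, each attached to a single mutable vertex. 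Sides adjacent only to frozen vertices can be ignored, since arrows between frozen vertices are omitted.

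\textbf{Counting.} We will apply \cref{lem:frozen-leaf} repeatedly, peeling the triangles of the fan from one end; alternatively we count directly. Balancing at the mutable vertex $P_{1j}$ involves only $P_{1,j-1}$, $P_{1,j+1}$, $P_{j-1,j}$ and $P_{j,j+1}$, where $P_{12}$ and $P_{1n}$ play the role of the boundary diagonals. This relation lets us solve uniquely for $\sigma(P_{j,j+1})$ once the signs of $P_{1j}$ for $2\le j\le n$ and of $P_{j-1,j}$ are known.

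Concretely, we choose $\sigma(P_{1j})$ for all $2\le j\le n$ freely, which gives $n-1$ free signs, and we choose $\sigma(P_{23})$ freely. The balancing conditions at $P_{13},\dots,P_{1,n-1}$ then determine $\sigma(P_{34}),\dots,\sigma(P_{n-1,n})$ in turn. This yields exactly $2^{n}$ signed gradings. We expect this count to agree with the general principle that the group of gradings is $\ker\tilde B^t$. The main bookkeeping to verify is that each balancing equation introduces exactly one new side variable, and that this holds for the fan triangulation.

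\textbf{Torus gradings.} For $t\in\{-1,1\}^n$, the grading $t\cdot\sigma^+$ satisfies $t\cdot\sigma^+(P_{ij})=t_it_j$. The map $t\mapsto t\cdot\sigma^+$ is a homomorphism $\{\pm1\}^n\to\mathfrak{gr}$, and its kernel consists of the $t$ with $t_it_j=1$ for all $i<j$, namely $t=\pm(1,\dots,1)$. Note that $n\ge3$ is needed here; the step uses three indices with $t_it_j=t_it_k=t_jt_k=1$. So the image of this map has $2^{n-1}$ elements.

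Passing to the projectivized cells of $\Gr_{2,n}$, the gradings $\sigma$ and $-\sigma$ give the same cell. We must therefore check that $-\sigma^+$ is not of the form $t\cdot\sigma^+$. This holds because $t_1t_2=t_1t_3=t_2t_3=-1$ is impossible. Hence the $2^{n-1}$ torus gradings together with their negatives give all $2^n$ gradings, and every pair $\{\sigma,-\sigma\}$ contains a torus translate of $\sigma^+$. This proves the theorem.

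The delicate point we expect is making this sign count rigorous, and in particular confirming that the negation $\sigma\mapsto-\sigma$ is a genuine grading, which the paper asserts. We also note that the alternating grading is recovered as $t=(1,-1,1,\dots)$.
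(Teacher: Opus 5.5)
Your proof is correct, but it takes a different route from the paper's proof of this theorem.

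\textbf{The paper's route.} It argues constructively. Given a point $C$ of the cell, it chooses $t\in\{\pm1\}^n$ one coordinate at a time so that $P_{12},P_{23},\dots,P_{n-1,n}$ become positive. It then uses the balancing conditions on the rectangles seed, along the column $P_{1n},\dots,P_{n-2,n}$, to show that only two gradings survive this normalization: the positive one and the alternating one. These two lie in a single torus orbit once one passes to $\Gr_{2,n}$.

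\textbf{Your route.} You work at the level of the group of gradings. The fan seed gives $|\mathfrak{gr}|=2^n$. The torus map $t\mapsto t\cdot\sigma^+$ has kernel $\pm(1,\dots,1)$, so its image $H$ has index $2$. The three-index argument shows $-\sigma^+\notin H$, so the nontrivial coset is $-H$. Since $\sigma$ and $-\sigma$ give the same cell of $\Gr_{2,n}$, that coset contributes nothing new. This is exactly the method the paper uses later, in \cref{sec:G-grading-for-Gr}, to prove \cref{thm:G-gradings-grassmannian} for general $k$ and $G$. Your computation is its case $k=2$, $G=\{\pm1\}$, where $|G[2]|=2$.

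\textbf{What each buys.} Your argument gives the structural statement $\mathfrak{gr}/H\cong\Z/2\Z$ with an explicit coset representative. The paper's argument gives an explicit recipe for the normalizing torus element. It also sets up the column-by-column induction that is reused for $\Gr_{k,n}$ in \cref{thm:Gknsigned}.

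\textbf{Two small points.} First, the kernel computation $t=\pm(1,\dots,1)$ holds for every $n\ge 2$; it is the step $-\sigma^+\notin H$ that needs $n\ge 3$. Second, the fact that $-\sigma^+$ is a grading, which you flag as delicate and attribute to the paper, is immediate on your fan seed. Each balancing condition at $P_{1j}$ says a product of four signs equals $1$, and the all-$(-1)$ assignment satisfies it.
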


\begin{proof}
	Consider an arbitrary signed cell $V^{\sigma}_{>0}$ of $\Gr_{2,n}$, 
	and let $C\in V^{\sigma}_{>0}$. 
If $t:=(t_1,\dots,t_n)$

is an element of the torus,  then 
$P_{ij}(t \cdot C) = t_i t_j P_{ij}(C)$.

Let $\{p_{ij}\ \vert \ 1 \leq i<j \leq n\} \in \{\pm 1\}^{n \choose 2}$
denote the signs of the Pl\"ucker coordinates of $C$.
We will find an element of the torus 
 $t:=(t_1,\dots,t_n)
\in \{\pm 1\}^n$  such that the Pl\"ucker coordinates
$P_{i, i+1}(t \cdot C)$ are positive for all $1 \leq i \leq n-1$.
To do so, let $t_1=1$, and  choose $t_2$ such that 
$p_{12}=t_1 t_2.$  Next we choose $t_3$ such that 
$p_{23} = t_2 t_3.$  We continue in this way, finally choosing $t_n$
such that $p_{n-1,n}=t_{n-1} t_n$.
Now we see that for $1 \leq i \leq n-1$, 
we get $\sgn(P_{i,i+1}(t \cdot C)) = t_i t_{i+1} \sgn(P_{i,i+1}(C)) = 
t_i t_{i+1} p_{i,i+1} =1$.

Our task is thus reduced to classifying the gradings
$\sigma=(\sigma_{ij})_{ij \in {[n] \choose 2}}$ from signed
seeds such that 
$\sigma_{i,i+1}>0$ for $1\leq i \leq n-1$.
Consider the rectangles seed for $\Gr_{2,n}$, shown for $n=9$ in 
\cref{fig:G29-quiver}.

\begin{figure}[ht]
\begin{center}
\setlength{\unitlength}{2pt}
\begin{picture}(145,43)(0,0)
\put(20,20){\makebox(0,0){${79}$}}

\put(40,20){\makebox(0,0){${69}$}}

\put(60,20){\makebox(0,0){${59}$}}
\put(80,20){\makebox(0,0){${49}$}}
\put(100,20){\makebox(0,0){${39}$}}
\put(120,20){\makebox(0,0){${29}$}}

\put(1,39){\makebox(0,0){$\boxed{89}$}}
\put(-8,39){\makebox(0,0){$+$}}
\put(20,-1){\makebox(0,0){$\boxed{78}$}}
\put(20,-8){\makebox(0,0){$+$}}
\put(40,-1){\makebox(0,0){$\boxed{67}$}}
\put(40,-8){\makebox(0,0){$+$}}
\put(60,-1){\makebox(0,0){$\boxed{56}$}}
\put(60,-8){\makebox(0,0){$+$}}
\put(80,-1){\makebox(0,0){$\boxed{45}$}}
\put(80,-8){\makebox(0,0){$+$}}
\put(100,-1){\makebox(0,0){$\boxed{34}$}}
\put(100,-8){\makebox(0,0){$+$}}
\put(120,-1){\makebox(0,0){$\boxed{23}$}}
\put(120,-8){\makebox(0,0){$+$}}
\put(140,-1){\makebox(0,0){$\boxed{12}$}}
\put(140,-8){\makebox(0,0){$+$}}
\put(140,20){\makebox(0,0){$\boxed{19}$}}

\put(20,16){\vector(0,-1){12}}
\put(26,20){\vector(1,0){8}}
\put(66,20){\vector(1,0){8}}
\put(46,20){\vector(1,0){8}}
\put(86,20){\vector(1,0){8}}
\put(106,20){\vector(1,0){8}}
\put(126,20){\vector(1,0){8}}

\put(6,34){\vector(1,-1){10}}

\put(40,16){\vector(0,-1){12}}
\put(60,16){\vector(0,-1){12}}
\put(80,16){\vector(0,-1){12}}
\put(100,16){\vector(0,-1){12}}
\put(120,16){\vector(0,-1){12}}

\put(36, 4){\vector(-1,1){12}}
\put(56, 4){\vector(-1,1){12}}
\put(76, 4){\vector(-1,1){12}}
\put(96, 4){\vector(-1,1){12}}
\put(116, 4){\vector(-1,1){12}}
\put(136, 4){\vector(-1,1){12}}
\end{picture}

\end{center}

		\caption{The rectangles seed for $\Gr_{2,9}$.}
\label{fig:G29-quiver}
\end{figure}

Using 
\eqref{eq:grading2}
 and the fact that $\sigma_{i,i+1}>0$ for 
$1 \leq i \leq n-1$,  we see that  for $\Gr_{2,9}$,
we must have $\sigma_{79}=\sigma_{59}=\sigma_{39} = \sigma_{19}$
and $\sigma_{69}=\sigma_{49}=\sigma_{29}$.
More generally, we have $\sigma_{1n}=\sigma_{in}$ whenever $i$ is odd,
and $\sigma_{2n} = \sigma_{in}$ whenever $i$ is even.
Moreover, applying \eqref{eq:grading2} to the vertex labeled by 
the Pl\"ucker coordinate $P_{n-2,n}$, we see that 
$\sigma_{n-3,n}>0$.
It follows that  $\sigma_{in}=1$ when $i$ and $n$ have different parities.

This gives us the signs shown in  
\cref{fig:G29-quiver2}.
\begin{figure}[ht]
\begin{center}
\setlength{\unitlength}{2pt}
\begin{picture}(145,43)(0,0)
\put(20,20){\makebox(0,0){${79}$}}

\put(40,20){\makebox(0,0){${69}$}}
\put(40,26){\makebox(0,0){$+$}}

\put(60,20){\makebox(0,0){${59}$}}
\put(80,20){\makebox(0,0){${49}$}}
\put(80,26){\makebox(0,0){$+$}}
\put(100,20){\makebox(0,0){${39}$}}
\put(120,20){\makebox(0,0){${29}$}}
\put(120,26){\makebox(0,0){$+$}}

\put(1,39){\makebox(0,0){$\boxed{89}$}}
\put(-8,39){\makebox(0,0){$+$}}
\put(20,-1){\makebox(0,0){$\boxed{78}$}}
\put(20,-8){\makebox(0,0){$+$}}
\put(40,-1){\makebox(0,0){$\boxed{67}$}}
\put(40,-8){\makebox(0,0){$+$}}
\put(60,-1){\makebox(0,0){$\boxed{56}$}}
\put(60,-8){\makebox(0,0){$+$}}
\put(80,-1){\makebox(0,0){$\boxed{45}$}}
\put(80,-8){\makebox(0,0){$+$}}
\put(100,-1){\makebox(0,0){$\boxed{34}$}}
\put(100,-8){\makebox(0,0){$+$}}
\put(120,-1){\makebox(0,0){$\boxed{23}$}}
\put(120,-8){\makebox(0,0){$+$}}
\put(140,-1){\makebox(0,0){$\boxed{12}$}}
\put(140,-8){\makebox(0,0){$+$}}
\put(140,20){\makebox(0,0){$\boxed{19}$}}

\put(20,16){\vector(0,-1){12}}
\put(26,20){\vector(1,0){8}}
\put(66,20){\vector(1,0){8}}
\put(46,20){\vector(1,0){8}}
\put(86,20){\vector(1,0){8}}
\put(106,20){\vector(1,0){8}}
\put(126,20){\vector(1,0){8}}

\put(6,34){\vector(1,-1){10}}

\put(40,16){\vector(0,-1){12}}
\put(60,16){\vector(0,-1){12}}
\put(80,16){\vector(0,-1){12}}
\put(100,16){\vector(0,-1){12}}
\put(120,16){\vector(0,-1){12}}

\put(36, 4){\vector(-1,1){12}}
\put(56, 4){\vector(-1,1){12}}
\put(76, 4){\vector(-1,1){12}}
\put(96, 4){\vector(-1,1){12}}
\put(116, 4){\vector(-1,1){12}}
\put(136, 4){\vector(-1,1){12}}
\end{picture}

\end{center}

	\caption{The rectangles seed for $\Gr_{2,9}$.}
\label{fig:G29-quiver2}
\end{figure}
Finally we can either let $\sigma_{in}=1$ when $i$ and $n$ have the same parity, 
or we can let $\sigma_{in}=-1$ when $i$ and $n$ have the same parity.
Either choice
satisfies \eqref{eq:grading2} and hence gives a signed seed.
The first choice gives the totally positive part of $\Gr_{2,n}$,
and the second choice gives the alternating part.
\end{proof}

\subsection{Signed seeds for the Grassmannian $\Gr_{k,n}$}\label{sec:Grass}

In this section we will show that all signed cells of $\Gr_{k,n}$ can be obtained from the positive part of $\Gr_{k,n}$ via the torus action. Recall that we are working with the geometry of $\Gr_{k, n}$ rather than $\widehat{\Gr}_{k, n}$, so the positive and alternating parts are torus equivalent to the positive part. The signed cells on $\Gr_{k, n}$ are exactly the projections of the signed cells of the affine cone. We will return to general $G$-gradings on $\widehat{\Gr}_{k, n}$ later in~\cref{sec:G-grading-for-Gr}. 

\begin{theorem}\label{thm:Gknsigned}
	Every signed cell of the Grassmannian $\Gr_{k,n}$ (with respect to the cluster structure described in \cref{sec:clusterGr}) is obtained from the positive part $\Gr_{k,n}$ via the torus action.
\end{theorem}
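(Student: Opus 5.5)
We work in the rectangles seed $Q_{k,n}$ and count signed gradings. The aim is to show that the group of $\{\pm1\}$-gradings of $Q_{k,n}$ is generated by the torus gradings $t\cdot\sigma^+$ together with the global sign change $\sigma\mapsto-\sigma$, the latter being invisible on the projectivized $\Gr_{k,n}$. A signed cell is determined by its grading on one seed (\cref{thm:test}), so it then suffices to show that every signed grading of the rectangles seed has the form $\pm\, t\cdot\sigma^+$ for some $t\in\{\pm1\}^n$.

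\textbf{Step 1: count the gradings.} We apply \cref{lem:frozen-leaf} repeatedly to $Q_{k,n}$, as in the proofs for symmetric and skew-symmetric matrices. The frozen vertex $\boxed{\varnothing}$ (i.e.\ $P_{[k]}$) is a leaf attached to the $1\times1$ rectangle. The bottom-row frozen vertices $k\times j$ each have degree two (an arrow up and a diagonal arrow), so we should not start there. A cleaner route is to shed the frozen leaf $P_{[k]}$ first. This freezes the vertex for the $1\times 1$ rectangle, which then has only the frozen neighbours $1\times2$ and $2\times1$ among mutable vertices. After deleting the edges between frozen vertices, the vertices $1\times 2$ and $2\times 1$ become the nodes whose remaining mutable connections we peel next. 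Continuing along anti-diagonals, we peel the grid from the top-left corner and each time freeze a new vertex. When no mutable vertices remain we are left with frozen isolated vertices, and \cref{lem:isolated-vertex} gives $|G|^{\#\text{isolated}}$. Tracking the bookkeeping, we expect the count to be $|G|^{n}$ for the full quiver: each mutable vertex consumes one frozen leaf, so the free parameters are the $n$ original frozen Plücker coordinates (equivalently, $\#\text{vertices}-\#\text{mutable} = k(n-k)+1-(k-1)(n-k-1)=n$). The main obstacle is verifying that a frozen leaf always exists at each stage of this peeling. If that fails, we fall back on the direct linear-algebra argument: the values on the $n$ frozen variables determine the grading uniquely, by propagating the balance condition \eqref{eq:grading2} from the top-left corner in the same row-by-row fashion as in the proof of \cref{thm:matrices}.

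\textbf{Step 2: torus gradings fill the group.} The map $\Psi_{\{\pm1\}}:\{\pm1\}^n\to\mathfrak{gr}$ sends $t$ to $t\cdot\sigma^+$. Its kernel consists of those $t$ with $\prod_{i\in I}t_i=1$ for every $k$-subset $I$, so $t$ is constant, and the constant must satisfy $c^k=1$. Thus the kernel is trivial when $k$ is odd and is $\{\pm \mathbf 1\}$ when $k$ is even. If $k$ is odd, $t=-\mathbf1$ gives $-\sigma^+$, so the $2^n$ torus gradings already exhaust all $2^n$ gradings. If $k$ is even, the image has size $2^{n-1}$ and consists of gradings with $\sigma(P_I)=\prod_{i\in I}t_i$. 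We then check that $-\sigma^+$ is not in the image: that would require $\prod_{i\in I}t_i=-1$ for all $I$, which is impossible because swapping one element of $I$ forces $t$ to be constant, and $c^k=1$ for even $k$. Hence the full group is the image together with its negatives, which is $2^{n}$ gradings. We verify this compatibility with the count from Step 1, which also confirms that $\sigma\mapsto-\sigma$ is a valid grading, as stated earlier.

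\textbf{Step 3: conclusion.} Every signed grading has the form $\pm\, t\cdot\sigma^+$. Since $\pm$ disappears after projectivizing, every signed cell of $\Gr_{k,n}$ equals $t\cdot\Gr_{k,n}^{>0}$.
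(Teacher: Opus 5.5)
Your overall strategy is sound, and Steps 2 and 3 are correct. You count all $\{\pm1\}$-gradings, compute the kernel and image of the torus map, and adjoin the coset of $-\sigma^+$ when $k$ is even. This is not how the paper proves this theorem: the paper inducts on $n$, restricts to the subseed $Q_{k,n-1}$, and uses the balancing conditions at $v_1,\dots,v_{k-1}$ to force the signs on the new column, absorbing the one remaining choice into $t_n$. Your route is instead the one the paper takes in \cref{sec:G-grading-for-Gr} (\cref{prop:number-of-G-gradings-in-Gr} and \cref{thm:G-gradings-grassmannian}).

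However, Step 1 carries the whole argument, since you need the upper bound $|\mathfrak{gr}|\le 2^n$, and it is not established. The top-left peeling stalls at once. After shedding $\boxed{\varnothing}$, the newly frozen $1\times1$ vertex still has the three mutable neighbours $1\times2$, $2\times1$, $2\times2$ (for $k,n-k\ge3$). So no new frozen leaf appears there, and ``continuing along anti-diagonals'' has no content. Your bookkeeping is also backwards. In \cref{lem:frozen-leaf} the value at the shed leaf is \emph{determined} by the others, so the original frozen variables are not the free parameters.

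The fallback claim, that the values on the $n$ frozen variables determine the grading, is false. For $\Gr_{2,4}$ the rectangles seed has a single mutable vertex. Since there are no loops, its own sign never enters any balancing condition, so it is free, while the four frozen signs satisfy one relation. More generally, frozen values determine $\sigma$ only if the principal part $B$ is nonsingular mod $2$. This fails whenever $(k-1)(n-k-1)$ is odd, because an odd-size skew-symmetric matrix is singular. The count $\#\text{vertices}-\#\text{mutable}=n$ is valid only once you know $\tilde B$ has full column rank mod $2$, which is exactly what needs proving.

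The repair is easy. Take as free parameters $\varnothing$, the single-row rectangles $1\times j$, and the single-column rectangles $i\times1$; there are again $n$ of them. The balancing condition \eqref{eq:grading2} at a mutable vertex $i\times j$ involves $(i+1)\times(j+1)$. Apart from that vertex, it involves only vertices in rows $\le i$ (plus $\varnothing$ when $i=j=1$) and the vertex $(i+1)\times j$. So the grading is determined row by row, left to right, as in \cref{thm:matrices}. Alternatively, peel frozen leaves down the right-hand column as in \cref{prop:number-of-G-gradings-in-Gr}. Either argument gives the bound $2^n$, which suffices because Step 2 exhibits $2^n$ distinct gradings $\pm t\cdot\sigma^+$.

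Finally, that $-\sigma^+$ is a grading should be checked directly rather than inferred from the count. Every mutable vertex of $Q_{k,n}$ has equal in- and out-degree, which gives it immediately.
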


\begin{proof}
	Consider an arbitrary signed cell $V^{\sigma}_{>0}$ of $\Gr_{k,n}$, 
	and let $C\in V^{\sigma}_{>0}$. 
If $t:=(t_1,\dots,t_n)$ is an element of the torus, and $I=\{i_1<\dots < i_k\}$, then $P_{I}(t \cdot C) = t_{i_1} \dots t_{i_k} P_{I}(C)$.

We will prove \cref{thm:Gknsigned} by induction on $n$.  The base case
of $\Gr_{k,k+1}$ is trivial.
For the inductive step, we note that the seed for $\Gr_{k,n-1}$ is 
a subseed for $\Gr_{k,n}$: the quiver $Q_{k,n-1}$ and its Pl\"ucker coordinate labels
can be obtained from the corresponding quiver $Q_{k,n}$ by removing the rightmost 
column of vertices (and any incident arrows).
For example, the seed for $\Gr_{3,6}$ can be obtained from 
the seed for $\Gr_{3,7}$ shown in \cref{fig:G37-signed} by removing
the vertices labeled by $127, 167, 567$.

\begin{figure}[ht]
\begin{center}
 \setlength{\unitlength}{2pt}
\begin{picture}(80,63)(0,0)
\put(20,20){\makebox(0,0){${134}$}}
\put(23,25){\makebox(0,0){${+}$}}
\put(20,40){\makebox(0,0){${124}$}}
\put(20,45){\makebox(0,0){${+}$}}
\put(40,20){\makebox(0,0){${145}$}}
\put(43,25){\makebox(0,0){${+}$}}
\put(40,40){\makebox(0,0){${125}$}}
\put(40,45){\makebox(0,0){${+}$}}
\put(60,20){\makebox(0,0){${156}$}}
\put(63,25){\makebox(0,0){${+}$}}
\put(60,40){\makebox(0,0){${126}$}}
\put(60,45){\makebox(0,0){${+}$}}

\put(1,59){\makebox(0,0){$\boxed{123}$}}
\put(10,59){\makebox(0,0){${+}$}}
\put(20,-1){\makebox(0,0){$\boxed{234}$}}
\put(23,5){\makebox(0,0){${+}$}}
\put(40,-1){\makebox(0,0){$\boxed{345}$}}
\put(43,5){\makebox(0,0){${+}$}}
\put(60,-1){\makebox(0,0){$\boxed{456}$}}
\put(63,5){\makebox(0,0){${+}$}}
\put(80,-1){\makebox(0,0){$\boxed{567}$}}
\put(80,5){\makebox(0,0){${c}$}}
\put(81,20){\makebox(0,0){$\boxed{167}$}}
\put(81,26){\makebox(0,0){${b}$}}
\put(81,40){\makebox(0,0){$\boxed{127}$}}
\put(81,46){\makebox(0,0){${a}$}}

\put(20,16){\vector(0,-1){12}}
\put(40,36){\vector(0,-1){12}}

\put(20,36){\vector(0,-1){12}}
\put(60,36){\vector(0,-1){12}}

\put(26,20){\vector(1,0){8}}
\put(66,20){\vector(1,0){8}}
\put(46,40){\vector(1,0){8}}

\put(26,40){\vector(1,0){8}}
\put(66,40){\vector(1,0){8}}
\put(46,20){\vector(1,0){8}}

\put(6,54){\vector(1,-1){10}}

\put(40,16){\vector(0,-1){12}}
\put(60,16){\vector(0,-1){12}}

\put(36, 4){\vector(-1,1){12}}
\put(56, 4){\vector(-1,1){12}}
\put(76, 4){\vector(-1,1){12}}
\put(36, 24){\vector(-1,1){12}}
\put(56, 24){\vector(-1,1){12}}
\put(76, 24){\vector(-1,1){12}}
\end{picture}\hspace{2cm}
\begin{picture}(80,63)(0,0)
\put(20,20){\makebox(0,0){${134}$}}
\put(23,25){\makebox(0,0){${+}$}}
\put(20,40){\makebox(0,0){${124}$}}
\put(20,45){\makebox(0,0){${-}$}}
\put(40,20){\makebox(0,0){${145}$}}
\put(43,25){\makebox(0,0){${+}$}}
\put(40,40){\makebox(0,0){${125}$}}
\put(40,45){\makebox(0,0){${+}$}}
\put(60,20){\makebox(0,0){${156}$}}
\put(63,25){\makebox(0,0){${+}$}}
\put(60,40){\makebox(0,0){${126}$}}
\put(60,45){\makebox(0,0){${-}$}}
\put(1,59){\makebox(0,0){$\boxed{123}$}}
\put(10,59){\makebox(0,0){${+}$}}
\put(20,-1){\makebox(0,0){$\boxed{234}$}}
\put(23,5){\makebox(0,0){${-}$}}
\put(40,-1){\makebox(0,0){$\boxed{345}$}}
\put(43,5){\makebox(0,0){${+}$}}
\put(60,-1){\makebox(0,0){$\boxed{456}$}}
\put(63,5){\makebox(0,0){${-}$}}
\put(80,-1){\makebox(0,0){$\boxed{567}$}}
\put(80,5){\makebox(0,0){${c}$}}
\put(81,20){\makebox(0,0){$\boxed{167}$}}
\put(81,26){\makebox(0,0){${b}$}}
\put(81,40){\makebox(0,0){$\boxed{127}$}}
\put(81,46){\makebox(0,0){${a}$}}
\put(20,16){\vector(0,-1){12}}
\put(40,36){\vector(0,-1){12}}
\put(20,36){\vector(0,-1){12}}
\put(60,36){\vector(0,-1){12}}
\put(26,20){\vector(1,0){8}}
\put(66,20){\vector(1,0){8}}
\put(46,40){\vector(1,0){8}}
\put(26,40){\vector(1,0){8}}
\put(66,40){\vector(1,0){8}}
\put(46,20){\vector(1,0){8}}
\put(6,54){\vector(1,-1){10}}
\put(40,16){\vector(0,-1){12}}
\put(60,16){\vector(0,-1){12}}
\put(36, 4){\vector(-1,1){12}}
\put(56, 4){\vector(-1,1){12}}
\put(76, 4){\vector(-1,1){12}}
\put(36, 24){\vector(-1,1){12}}
\put(56, 24){\vector(-1,1){12}}
\put(76, 24){\vector(-1,1){12}}
\end{picture}
\end{center}
\caption{}
\label{fig:G37-signed}
\end{figure}

Before proving the inductive step in general, we will illustrate it for our running
example in \cref{fig:G37-signed}.  Suppose that we know that the theorem is true for 
$\Gr_{3,6}$; we want to prove it for $\Gr_{3,7}$.
Consider an arbitrary signed cell $V_{3,7}^{\sigma}$
of $\Gr_{3,7}$ and the associated grading $\sigma$ on cluster variables.
This grading restricts to a grading on the cluster variables of $\Gr_{3,6}$ and gives a signed
cell $V_{3,6}^{\sigma}$ on $\Gr_{3,6}$.
Hence by the induction hypothesis
we can apply an element
of the torus $(t_1,\dots,t_6)\in \{-1,1\}^6$ such that the resulting signed cell $V_{3,6}^{\sigma'}$
is either the positive part  
or the alternating part of $\Gr_{3,6}$, as indicated by the signs labeling the 
$\Gr_{3,6}$ vertices at the left and right of 
\cref{fig:G37-signed}, respectively. Let $a,b,c\in\{-1,1\}$ be the resulting signs
on $127, 167, 567$.  The resulting sign pattern represents a signed seed
for $\Gr_{3,7}$, so in particular, 
\eqref{eq:grading2} holds.

If $V_{3,6}^{\sigma'}$ is the positive part of 
$\Gr_{3,6}$, as in the left of \cref{fig:G37-signed}, then this ``balancing equation'' around vertex
$126$ implies that $a=b$, and the balancing equation around vertex $156$ implies that $b=c$. 
Thus $a=b=c$.  If $a=b=c=1$,  we set $t_7=1$, and if $a=b=c=-1$, we set $t_7=-1$.
It follows that acting on $V_{3,7}^{\sigma}$ by $(t_1,\dots,t_6,t_7)$ gives rise to the positive part
of $\Gr_{3,7}$.

Similarly, if $V_{3,6}^{\sigma'}$ is the alternating part of 
$\Gr_{3,6}$, as in the right of \cref{fig:G37-signed}, then the balancing equation around vertex
$126$ implies that $a=b$, and the balancing equation around vertex $156$ implies that $b=c$. 
Thus $a=b=c$.  If $a=b=c=1$,  we set $t_7=1$, and if $a=b=c=-1$, we set $t_7=-1$.
It follows that acting on $V_{3,7}^{\sigma}$ by $(t_1,\dots,t_6,t_7)$ gives rise to the alternating part
of $\Gr_{3,7}$.

The same argument works when we go from $\Gr_{k,n-1}$ to $\Gr_{k,n}$, but now we work with 
a more general quiver, as in 
\cref{fig:Gkn-signed}.
\begin{figure}[ht]
\begin{center}
 \setlength{\unitlength}{2pt}
\begin{picture}(80,63)(0,0)
\put(20,20){\makebox(0,0){${\dots}$}}
\put(20,40){\makebox(0,0){${\dots}$}}
\put(40,20){\makebox(0,0){${u_2}$}}
	\put(40,40){\makebox(0,0){${u_1}$}}
\put(60,20){\makebox(0,0){${v_2}$}}
\put(60,40){\makebox(0,0){${v_1}$}}

\put(1,59){\makebox(0,0){$\boxed{12\dots k}$}}
\put(20,-1){\makebox(0,0){${\dots}$}}
\put(40,-1){\makebox(0,0){$\boxed{u_k}$}}
\put(60,-1){\makebox(0,0){$\boxed{v_k}$}}
\put(80,-1){\makebox(0,0){$\boxed{w_k}$}}
\put(80,5){\makebox(0,0){${a_k}$}}
\put(81,20){\makebox(0,0){$\boxed{w_2}$}}
\put(81,26){\makebox(0,0){${a_2}$}}
\put(81,40){\makebox(0,0){$\boxed{w_1}$}}
\put(81,46){\makebox(0,0){${a_1}$}}

\put(20,12){\makebox(0,0){$\vdots$}}
\put(40,36){\vector(0,-1){12}}

\put(20,36){\vector(0,-1){12}}
\put(60,36){\vector(0,-1){12}}

\put(26,20){\vector(1,0){8}}
\put(66,20){\vector(1,0){8}}
\put(46,40){\vector(1,0){8}}

\put(26,40){\vector(1,0){8}}
\put(66,40){\vector(1,0){8}}
\put(46,20){\vector(1,0){8}}

\put(6,54){\vector(1,-1){10}}
\put(40,12){\makebox(0,0){$\vdots$}}
\put(60,12){\makebox(0,0){$\vdots$}}

\put(30,12){\makebox(0,0){$\ddots$}}
\put(50,12){\makebox(0,0){$\ddots$}}
\put(70,12){\makebox(0,0){$\ddots$}}
\put(36, 24){\vector(-1,1){12}}
\put(56, 24){\vector(-1,1){12}}
\put(76, 24){\vector(-1,1){12}}
\end{picture}
\end{center}
\caption{}
\label{fig:Gkn-signed}
\end{figure}
Here $u_1$, $v_1$, and $w_1$ represent the Pl\"ucker coordinates
$P_{1\dots k-1,n-2}$,
$P_{1\dots k-1,n-1}$, and
$P_{1\dots k-1,n}$; 
meanwhile $u_2$, $v_2$, and $w_2$ represent the Pl\"ucker coordinates
$P_{1\dots k-2,n-3,n-2}$,
$P_{1\dots k-2,n-2,n-1}$, and
$P_{1\dots k-2,n-1,n}$; etc.
If we start with an arbitrary signed cell $V^{\sigma}_{k,n}$ for $\Gr_{k,n}$,
as before, it restricts to a signed cell for $\Gr_{k,n-1}$, hence by the induction hypothesis
we can find a torus element $(t_1,\dots,t_{n-1})$ whose application produces a signed cell
$V^{\sigma'}_{k,n}$ whose restriction to $\Gr_{k,n-1}$ is 
either: (1) the positive part or (2) the alternating part of 
 $\Gr_{k,n-1}$.  Let $a_1,\dots,a_k$ be the remaining signs on the frozen variables $w_1,\dots,w_k$.

In Case (1), the balancing equation around $v_1, v_2,\dots,v_{k-1}$ implies $a_1=a_2=\cdots = a_k$. Since $w_1,\dots,w_k$ are the only Pl\"ucker coordinates labeling $Q_{k,n}$ involving the index $n$, we can choose an element $t_n$ that will ensure that these Pl\"ucker coordinates are positive. Thus, $(t_1,\dots,t_n) \cdot V^{\sigma}_{k,n}$ gives the positive part of $\Gr_{k,n}$.

In Case (2), the balancing equation around $v_1$ implies that 
$$a_2 (-1)^{1+\dots + k-1 + (n-2)} = a_1 (-1)^{1+ \dots + k-2 + n-2 + n-1},$$ and hence
$a_2 = a_1 (-1)^{n-k}$. 
Meanwhile, the balancing equation around $v_2$ implies that 
$$a_3 (-1)^{1+ \dots + k-2+ (n-3)+(n-2)} (-1)^{1+ \dots + k-1+ (n-1)} = 
a_2 (-1)^{1+ \dots + k-3 + (n-2)+(n-2) + (n-1)} (-1)^{1+ \dots + k-1 + (n-2)},$$ which 
also implies that $a_3 = a_2 (-1)^{n-k}$.
In general, the balancing equation around $v_i$ implies that $a_{i+1} = a_i {(-1)}^{n-k}$
for $1 \leq i \leq k-1$,  which is exactly what we need to make sure that the signs
of $w_1, \ldots , w_k$ are consistent with the alternating part of $\Gr_{k,n}$.
Our last step is to choose $t_n = \pm 1$, in order to ensure that 
after we apply $(t_1,\ldots,t_n)$ to the original signed seed,
the sign of $w_1 = P_{1\cdots k-1,n}$ is correct, that is, $(-1)^{1+ \cdots + k-1 + n}$.
\end{proof}

\subsection{$G$-gradings for the Grassmannian $\widehat{\Gr}_{k, n}$}\label{sec:G-grading-for-Gr}

In this section, we characterize the $G$-gradings on $\widehat{\Gr}_{k, n}$ for general finite abelian $G$ in terms of the $G$-gradings coming from the natural torus action. We first compute the number of $G$-gradings by analyzing the rectangles seed. 

\begin{proposition}\label{prop:number-of-G-gradings-in-Gr}
	The cluster variety $\widehat{\Gr}_{k, n}$ has $|G|^n$ distinct $G$-gradings. These distinct $G$-gradings are determined by associating an element of $G$ to each vertex represented by the empty set, or a single row or a single column in the quiver shown in  the left diagram in~\cref{fig:G37-Le-quiver2}.
\end{proposition}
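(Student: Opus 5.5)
The plan is to work entirely with the rectangles seed $Q_{k,n}$ of \cref{fig:G37-Le-quiver2} and show that the balancing conditions \eqref{eq:grading} let us solve uniquely for every vertex label in terms of the labels on a distinguished set of $n$ vertices. These are the empty rectangle $\varnothing$ (Pl\"ucker coordinate $P_{12\dots k}$), the $n-k$ single-row rectangles $(j)$ for $1\le j\le n-k$, and the $k$ single-column rectangles $(1^i)$ for $1\le i\le k$; the $1\times 1$ box is counted once. A quick count confirms the total: $1+(n-k)+k-1=n$. Since $\mathfrak{gr}(\mathcal{A}(\widetilde B),G)$ is the set of solutions $\sigma\in G^s$ of $\sigma\widetilde B=0$, it suffices to prove two things: the projection of this solution set onto the coordinates at these $n$ vertices is a bijection onto $G^n$, and every free choice extends to a balanced assignment.

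For uniqueness and existence, index the vertex in row $i$ and column $j$ by the $i\times j$ rectangle $(j^i)$, with $1\le i\le k$ and $1\le j\le n-k$. The vertex $(j^i)$ is mutable exactly when $i<k$ and $j<n-k$. From the picture, the arrows at a mutable vertex $(j^i)$ are as follows.
\begin{itemize}
\item Incoming: from $(j-1)^i$ (horizontal), from $(j^{i-1})$ (vertical), and from $((j+1)^{i+1})$ (diagonal), with the conventions that $(0^i)$ and $(j^0)$ mean $\varnothing$ where appropriate.
\item Outgoing: to $((j+1)^i)$, to $(j^{i+1})$, and to $((j-1)^{i-1})$.
\end{itemize}
The balancing condition at $(j^i)$ is therefore linear in $\sigma((j+1)^{i+1})$, which appears with coefficient $\pm1$, and all the other terms involve rectangles that are either smaller or in the same row or column. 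So I would solve row by row, or more simply by increasing $i+j$. The balance at $(j^i)$, with $i<k$ and $j<n-k$, determines $\sigma((j+1)^{i+1})$ uniquely from labels already known. Every rectangle with both dimensions at least $2$ has the form $((j+1)^{i+1})$ for exactly one mutable $(j^i)$, and each mutable vertex is used exactly once. Hence the assignment on the $n$ free vertices extends uniquely, and since each equation is used precisely to define one new unknown, no equation is left over to impose a constraint. This gives exactly $|G|^n$ gradings.

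An alternative route, in the style of the earlier classification results, is to apply \cref{lem:frozen-leaf} repeatedly. The frozen vertex $w_k=(n-k)^k$ attaches only to $v_{k-1}$-type vertices, so I would peel the last column and last row one vertex at a time. I prefer the direct triangular solve, however, since the frozen vertices of $Q_{k,n}$ have degree larger than one in general.

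The main obstacle is bookkeeping at the boundary: verifying the exact arrow pattern near $\varnothing$, in the first row and column (which have missing neighbors), and checking that the diagonal arrow really points into $(j^i)$ from $((j+1)^{i+1})$, so that this vertex is indeed the unique new unknown. I would check this against the $\Gr_{3,7}$ picture and the $\Gr_{2,9}$ picture in \cref{fig:G29-quiver}, where it reproduces the propagation used in \cref{thm:G2nsigned}.
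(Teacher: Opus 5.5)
Your argument is correct, but it takes a different route from the paper's.

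The paper inducts on $n$. The top vertex $((n-k)^1)$ of the last column of $Q_{k,n}$ is a frozen leaf. Each application of \cref{lem:frozen-leaf} freezes its neighbor and deletes the new frozen--frozen arrows, which makes the next vertex down the last column a leaf. After $k-1$ such steps the corner $((n-k)^k)$ is isolated, \cref{lem:isolated-vertex} contributes a factor $|G|$, and what remains is $Q_{k,n-1}$. So the degree obstruction you cite against the peeling route disappears once the freezing is iterated.

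You instead solve $\sigma\widetilde B=0$ directly. You use the unique diagonal in-arrow $((j+1)^{i+1})\to(j^i)$ to get a unitriangular system, solved in order of increasing $i+j$. Your arrow description agrees with \cref{fig:G37-Le-quiver2}: under your convention, the extra arrows to and from $\varnothing$ at the first row and first column cancel in pairs, leaving only $\varnothing\to(1)$.

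The paper's proof is shorter given the two lemmas, and it mirrors the subseed induction of \cref{thm:Gknsigned}. However, if one tracks which values it leaves free, it exhibits $\varnothing$, the first column and the bottom row $(j^k)$ as the determining set. Your propagation, which is the same mechanism as in the proof of \cref{thm:matrices}, shows directly that the set named in the statement ($\varnothing$ together with the one-row and one-column rectangles) determines the grading. So your argument covers the second sentence of the proposition as written, which the paper's proof does not address directly.
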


\begin{proof}
	Apply Lemma~\ref{lem:frozen-leaf} and Lemma~\ref{lem:isolated-vertex} to the right hand side of the rectangles seed for $\widehat{\Gr}_{k, n}$ to reduce it to the rectangles seed for $\widehat{\Gr}_{k, n-1}$ and the result follows from induction. 
\end{proof}

The torus action on $\widehat{\Gr}_{k, n}$ corresponds to the $\ZZ^n$-grading given by 
\[
	\deg (P_I) = \sum_{i \in I} \mathbf{e}_i. 
\]
The $G$-gradings coming from the torus action correspond to the image of $\Psi : G^{\oplus n} \to G^{\oplus Q_0}$ where 
\[
	\mathbf{g} = (g_1, \ldots, g_n) \mapsto \left ( \sum_{i \in I} g_i : I \in Q_0 \right ). 
\]
Let us describe the torus $G$-gradings. Recall that we can view $Q_0$ both as the set of $k$-element subsets of $[n]$ or the set of rectangles contained in a $k \times (n-k)$ rectangle. The kernel consists of the $\mathbf{g}$ satisfying $\sum_{i \in I} g_i = 0$ for $I \in Q_0$. The rectangles of width $1$ correspond to the $k$-subsets of the form $[k+1] \backslash \{\ell\}$ for $1 \leq \ell \leq k+1$. These rectangles give us the equations
\[
	\left(\sum_{i = 1}^{k+1} g_i \right ) - g_{\ell} = 0 \quad \text{ for all } 1 \leq \ell \leq k+1. 
\]
As a consequence, we get $g_1 = \ldots = g_{k+1}$. The rectangles of height $1$ correspond to the $k$-subsets $[k-1] \cup \{\ell\}$ for $k \leq \ell \leq n$. This gives the equations 
\[
	g_{\ell} + \sum_{i = 1}^{k-1} g_i = 0 \quad \text{ for all } k \leq \ell \leq n. 
\]
As a consequence, we get $g_k = g_{k+1} = \ldots = g_n$. Thus, any element in the kernel must be of the form $(g, g, \ldots, g)$ satisfying $kg = 0$. In other words, there are exactly $|G|^n / |G[k]|$ $G$-gradings coming from the torus action.\footnote{For a group $G$, we define $G[k]$ as the \textbf{$k$-torsion subgroup} of $G$. This consists of the elements $g\in G$ with $kg=0$ (if our group operation is addition), or satisfying $g^k = 1$ (if our group operation is 
multiplication).}
Thus, up to the torus action, there are $|G[k]|$ distinct $G$-gradings (where we use Proposition~\ref{prop:number-of-G-gradings-in-Gr}). 

We will explicitly give representatives for these $G$-gradings when $G$ is a finite abelian group. It suffices to work with $G = \ZZ / p^m \ZZ$ for $p$ prime. When $v_p(k) = 0$, then $G[k]$ is trivial and every $G$-grading comes from the torus action. When $v_p(k) \geq 1$, then $|G[k]| = p^{\min \{v_p(k), m\}}$. For every $0 \leq x \leq p^{\min \{v_p(k), m \}} - 1$, consider the ``constant" grading $\sigma_x(P_I) := x$.\footnote{This grading is constant on Pl\"ucker coordinates, but certainly not on all cluster variables.} These are distinct up to the torus action. Indeed, the difference between any pair of these gradings is the constant grading $\sigma_x$ for $1 \leq x \leq p^{\min \{v_p(k), m\}} - 1$. Assume to the contrary that $\sigma_x$ comes from the torus action. Then, there is a choice $g_1, \ldots, g_n \in G$ such that 
\[
	\sum_{i \in I} g_i = x, \quad \text{ for all } I \in \binom{[n]}{k}. 
\]
From these equations, there is a common value $g \in G$ for which $g_1 = \ldots = g_n = g$, and we have $kg = x$. But this is a contradiction because $v_p(kg) > v_p(x)$. To summarize, we have proven the following result: 
\begin{theorem}\label{thm:G-gradings-grassmannian}
	Let $G$ be a finite abelian group. Then $\widehat{\Gr}_{k, n}$ has $|G[k]|$ distinct $G$-gradings up to the torus action. When $G = \mathbb{Z} / p^m \mathbb{Z}$, distinct representatives of the $G$-gradings up to the torus action are given by the constant gradings $\sigma_x(P_I) := x$ for all $I \in \binom{[n]}{k}$ and $0 \leq x \leq p^{\min\{v_p(k), m\}}-1$.
\end{theorem}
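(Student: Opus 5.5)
The argument assembles three ingredients: the count of all $G$-gradings, the kernel of the torus map, and explicit coset representatives. First, I reduce to cyclic groups of prime-power order. The functor $\mathfrak{gr}(\widehat{\Gr}_{k,n},-)$ is representable, so it commutes with finite direct sums. The construction $\Psi_G$ is also compatible with direct sums, by the commutative diagram proposition of \cref{sec:torus-action}. Moreover $(G\oplus H)[k]=G[k]\oplus H[k]$. Hence the cokernel of $\Psi_G$ for a finite abelian $G=\bigoplus \ZZ/p^m\ZZ$ is the direct sum of the cokernels for its cyclic summands. So it suffices to compute $|\mathfrak{gr}/\operatorname{im}\Psi_G|$.

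By \cref{prop:number-of-G-gradings-in-Gr}, $|\mathfrak{gr}(\widehat{\Gr}_{k,n},G)|=|G|^n$. Next I determine $\ker\Psi_G$ using only the Plücker coordinates in the rectangles seed. The width-one rectangles, i.e.\ the subsets $[k+1]\setminus\{\ell\}$, force $g_1=\dots=g_{k+1}$. The height-one rectangles, i.e.\ the subsets $[k-1]\cup\{\ell\}$, force $g_k=\dots=g_n$. Thus any kernel element is $(g,\dots,g)$, and the remaining condition is $kg=0$. Conversely, every such vector lies in the kernel, since every Plücker degree then equals $kg=0$. This gives $\ker\Psi_G\cong G[k]$, so $|\operatorname{im}\Psi_G|=|G|^n/|G[k]|$. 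The quotient therefore has order $|G[k]|$, which proves the first sentence.

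For $G=\ZZ/p^m\ZZ$ we have $|G[k]|=p^{\min\{v_p(k),m\}}$. I then need $p^{\min\{v_p(k),m\}}$ pairwise inequivalent gradings, and the candidates are the constant gradings $\sigma_x$.

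\textbf{Existence.} I must check that $\sigma_x$ is a genuine $G$-grading. In the rectangles seed, each mutable vertex has equal in- and out-degree; indeed this holds in the triangulated grid quiver, where each mutable vertex has in-degree equal to out-degree, namely 3. So assigning $x$ to every vertex satisfies \eqref{eq:grading}. Since the seed's variables are all Plücker coordinates, and every Plücker coordinate is a cluster variable, propagation via \cref{rem:gradingpropagates} assigns $x$ to every Plücker coordinate consistently. This consistency follows because the three-term Plücker relations are homogeneous in the resulting grading, and the grading of a cluster variable is independent of the mutation path. This verification is the main point to check carefully: the degree count has to be done at boundary mutable vertices as well as interior ones.

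\textbf{Inequivalence.} The gradings form a group, so it suffices to show that $\sigma_x$ with $0<x<p^{\min\{v_p(k),m\}}$ is not in $\operatorname{im}\Psi_G$. If $\sigma_x=\Psi_G(\mathbf g)$, then the same width-one and height-one equations, now set equal to $x$ instead of $0$, force $g_1=\dots=g_n=g$ with $kg=x$. But $v_p(kg)\ge v_p(k)\ge\min\{v_p(k),m\}>v_p(x)$, where $v_p$ is computed in $\ZZ/p^m\ZZ$ and $v_p(kg)=m$ if $kg=0$. This is a contradiction. Hence the $\sigma_x$ are distinct representatives, and since their number equals the order of the quotient, they exhaust all classes.
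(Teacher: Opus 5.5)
Your proposal follows the paper's proof: the count $|G|^n$ from the rectangles seed, the kernel $\{(g,\dots,g): kg=0\}$ obtained from the width-one and height-one rectangles, and the valuation argument showing that the constant gradings $\sigma_x$ are pairwise inequivalent. Your extra existence check for $\sigma_x$, which the paper omits, is sound, with two small corrections. First, the mutable vertices in the top row and left column of the grid have in- and out-degree $2$, not $3$; only the equality of in- and out-degree matters. Second, when $v_p(k)>m$ the valuation chain should read $v_p(kg)\ge\min\{v_p(k),m\}$ directly, not pass through $v_p(kg)\ge v_p(k)$.
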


\begin{remark}
	If we specialize to the case $p^m = 2$, then \cref{thm:G-gradings-grassmannian} implies that when $k$ is even, the cluster variety $\widehat{\Gr}_{k, n}$ has two signed seeds up to the torus action. When $k$ is odd, $\widehat{\Gr}_{k, n}$ has only one signed seed up to the torus action. In the even case, the distinct representatives are the \textit{totally positive grading} and the \textit{negative alternating grading} given by $\sigma(P_I) = (-1)^{i_1 + \ldots + i_k + 1}$. Note that when we descend this grading on $\Gr_{k, n}$, it is the same as the alternating grading. 
\end{remark}
\subsection{$G$-gradings for positroid varieties}
Positroid varieties are subvarieties of the Grassmannian which appear in the study of total positivity and Poisson geometry~\cite{Lusztig2,Postnikov,KLS}. Positroid varieties are indexed by combinatorial objects called \textit{Le-diagrams}~\cite{Postnikov}. 

Building on work of~\cite{MR3534832},~\cite{MullerSpeyer}, and~\cite{SSW}, Galashin and Lam show in~\cite{GalashinLam} that the coordinate ring of an open positroid variety has a cluster structure. In their construction, the initial quiver for the cluster structure can be read off directly off of the corresponding Le-diagram. In this section, we enumerate all $G$-gradings for arbitrary positroid varieties.  

\begin{definition}
A \emph{Le-diagram} for $\Gr_{k, n}$ is the data of a partition contained in a $k \times (n-k)$ rectangle with a filling of the boxes with dots, so that whenever a box is both below a dot and to the right of a dot, then it contains a dot. Equivalently, if we place the dots in the center of their corresponding boxes and draw downward and rightward rays from each dot (which we call the \textbf{hook} of that dot), then whenever two hooks intersect they must intersect at a dot. 
See \cref{fig:Le-diagram}.
\end{definition}

	\begin{figure}[h]
\begin{center}
\begin{tikzpicture}
\draw (0, 0) -- (4,0)--(4,-2)--(2,-2)--(2,-3)--(0,-3)--cycle;
\draw (1, 0) -- (1, -3); 
\draw (2, 0) -- (2, -3); 
\draw (3,0) -- (3,-2);
\draw (0, -1) -- (4, -1);
\draw (0, -2) -- (4, -2);
\fill (1.5,-0.5) circle (3pt);
\fill (2.5,-0.5) circle (3pt);
\fill (1.5,-1.5) circle (3pt);
\fill (0.5,-1.5) circle (3pt);
\fill (2.5,-1.5) circle (3pt);

\draw (6, 0) -- (10,0)--(10,-2)--(8,-2)--(8,-3)--(6,-3)--cycle;
\fill (7.5,-0.5) circle (3pt);
\fill (8.5,-0.5) circle (3pt);
\fill (7.5,-1.5) circle (3pt);
\fill (6.5,-1.5) circle (3pt);
\fill (8.5,-1.5) circle (3pt);
\draw (6.5,-1.5)--(6.5,-3);
\draw (7.5, -0.5) -- (7.5,-3);
\draw (8.5,-0.5) -- (8.5,-2);
\draw (6.5,-1.5) -- (10,-1.5);
\draw (7.5,-0.5) -- (10,-0.5);
\end{tikzpicture}
\end{center}
\caption{Le-diagram and the corresponding hook diagram.}
\label{fig:Le-diagram}
\end{figure}

We now describe how to construct the initial quiver associated to a Le-diagram. 
\begin{definition}
	Let $D$ be a Le-diagram. We construct a quiver $Q_D$ with vertices corresponding to the dots in $D$. It is convenient to identify the vertex corresponding to a dot with the region directly south-east of the dot. We say a region is \textbf{frozen} if it touches the boundary of the Young diagram. Otherwise, we call a region \textbf{mutable}. The frozen vertices in $Q_D$ are the vertices which correspond to frozen regions. Given this identification, we have the local rules shown in Figure~\ref{fig:local-rules} for constructing edges in the quiver\footnote{The stars in \cref{fig:local-rules} can either be mutable or frozen vertices. We only construct edges if the endpoints are not both frozen. The top left star might be the top left region which does not correspond to any dot. For the sake of the construction, we assume that this region is a vertex, and then delete this vertex afterwards.}. 

 	See~\cref{fig:quiver-of-Le-diagram} for a Le diagram with $7$ frozen regions, $4$ mutable regions, and also the associated quiver. 
\end{definition}

\begin{figure}[h]
		\begin{center}
			\begin{tikzpicture}
				\draw[opacity=0.5] (0,0) -- (1,0);
				\draw[opacity=0.5] (0,0) -- (0,-1);
				\fill (0,0) circle (2pt);
				\node[draw=blue,thick,minimum size=5pt,inner sep=0pt, star] at (0.5,-0.5) {};
				\node[draw=blue,thick,minimum size=5pt,inner sep=0pt, star] at (-0.5,0.5) {};
				\draw[->, blue, thick] (0.4,-0.4) -- (-0.4,0.4);

				\draw[opacity=0.5] (3,0)--(5,0);
				\draw[opacity=0.5](4,0) -- (4,-1);
				\fill (4,0) circle (2pt);
				\node[draw=blue,thick,minimum size=5pt,inner sep=0pt, star] at (3.5,-0.5) {};
				\node[draw=blue,thick,minimum size=5pt,inner sep=0pt, star] at (4,0.5) {};
				\node[draw=blue,thick,minimum size=5pt,inner sep=0pt, star] at (4.5,-0.5) {};

				\draw[->, blue, thick] (4.4,-0.5) -- (3.6,-0.5);

				\draw[opacity=0.5] (8,0) -- (8,1);
				\draw[opacity=0.5] (8,0) -- (9,0);
				\draw[opacity=0.5] (8,0) -- (8,-1);
				\fill (8,0) circle (2pt);
				\node[draw=blue,thick,minimum size=5pt,inner sep=0pt, star] at (8.5,-0.5) {};
				\node[draw=blue,thick,minimum size=5pt,inner sep=0pt, star] at (8.5,0.5) {};
				\node[draw=blue,thick,minimum size=5pt,inner sep=0pt, star] at (7.5,0) {};
				\draw[->, blue, thick] (8.5,-0.4) -- (8.5, 0.4);

				\draw[opacity=0.5] (12,1) -- (12,-1);
				\draw[opacity=0.5] (11,0) -- (13,0);
				\fill (12,0) circle (2pt);
				\node[draw=blue,thick,minimum size=5pt,inner sep=0pt, star] at (12.5,-0.5) {};
				\node[draw=blue,thick,minimum size=5pt,inner sep=0pt, star] at (12.5,0.5) {};
				\node[draw=blue,thick,minimum size=5pt,inner sep=0pt, star] at (11.5,-0.5) {};
				\node[draw=blue,thick,minimum size=5pt,inner sep=0pt, star] at (11.5,0.5) {};
				\draw[->, blue, thick] (12.5,-0.4) -- (12.5,0.4);
				\draw[->, blue, thick] (12.4,-0.5) -- (11.6,-0.5);
				\draw[->, blue, thick] (11.6,0.4) -- (12.4, -0.4);
		\end{tikzpicture}
	\end{center}
	\caption{Local rules for constructing edges in $Q_D$.  Each star represents
	either a frozen or mutable vertex.}
	\label{fig:local-rules}
\end{figure}

\begin{figure}[h]
	\begin{center}
		\begin{tikzpicture}
			\usetikzlibrary{arrows.meta}
			\tikzset{
				square vertex/.style={
					draw,
					rectangle,
					blue,
					thick,
					minimum size=2mm,
					inner sep=0pt
				},
				circle vertex/.style={
					draw,
					circle,
					fill,
					red,
					minimum size=2mm,
					inner sep=0pt
				}
			}
			\draw (0,0) -- (8,0) -- (8, -5) -- (3, -5) -- (3, -8) -- (0, -8) -- cycle;

			\draw (1,-1) -- (8, -1);
			\draw (1,-1) -- (1, -8);
			\fill (1,-1) circle (3pt);

			\draw (2,-1) -- (2,-8);
			\fill (2,-1) circle (3pt);

			\draw (1,-3) -- (8,-3);
			\fill (1,-3) circle (3pt);
			\fill (2,-3) circle (3pt);

			\draw (1,-6) -- (3,-6);
			\fill (1,-6) circle (3pt);
			\fill (2,-6) circle (3pt);

			\draw (5, -2) -- (8,-2);
			\draw (5,-2) -- (5,-5);
			\fill (5,-2) circle (3pt);
			\fill (5,-3) circle (3pt);

			\draw (7, -1) -- (7, -5);
			\fill (7,-1) circle (3pt);
			\fill (7, -2) circle (3pt);
			\fill (7,-3) circle (3pt);

			\node[square vertex] (A) at (7.5,-1.2) {};
			\node[square vertex] (B) at (7.5,-2.5) {};
			\node[square vertex] (C) at (7.5,-4) {};
			\node[square vertex] (D) at (5.3,-4) {};
			\node[square vertex] (E) at (3,-4) {};
			\node[square vertex] (F) at (2.8,-6.5) {};
			\node[square vertex] (G) at (1.5,-7) {};
			\node[circle vertex] (H) at (1.5,-4) {};
			\node[circle vertex] (I) at (1.5, -2) {};
			\node[circle vertex] (J) at (4.5,-1.25) {};
			\node[circle vertex] (K) at (5.5,-2.5) {};

			\draw[-{Latex[length=2mm, width=1.5mm]}, red!70!black] (G) -- (H);
			\draw[-{Latex[length=2mm, width=1.5mm]}, red!70!black] (H) -- (F);
			\draw[-{Latex[length=2mm, width=1.5mm]}, red!70!black] (E) -- (H);
			\draw[-{Latex[length=2mm, width=1.5mm]}, red!70!black] (H) -- (I);
			\draw[-{Latex[length=2mm, width=1.5mm]}, red!70!black] (I) -- (E);
			\draw[-{Latex[length=2mm, width=1.5mm]}, red!70!black] (J) -- (I);
			\draw[-{Latex[length=2mm, width=1.5mm]}, red!70!black] (E) -- (J);
			\draw[-{Latex[length=2mm, width=1.5mm]}, red!70!black] (K) -- (J);
			\draw[-{Latex[length=2mm, width=1.5mm]}, red!70!black] (A) -- (J);
			\draw[-{Latex[length=2mm, width=1.5mm]}, red!70!black] (J) -- (B);
			\draw[-{Latex[length=2mm, width=1.5mm]}, red!70!black] (B) -- (K);
			\draw[-{Latex[length=2mm, width=1.5mm]}, red!70!black] (D) -- (K);
			\draw[-{Latex[length=2mm, width=1.5mm]}, red!70!black] (K) -- (C);
			\draw[-{Latex[length=2mm, width=1.5mm]}, red!70!black] (J) -- (D);
		\end{tikzpicture}
	\end{center}
	\caption{Quiver coming from a Le-diagram}
\label{fig:quiver-of-Le-diagram}
\end{figure}

\begin{theorem}
	Let $Q_D$ be the quiver associated to a Le-diagram $D$ of 
	$\mathrm{Gr}_{k,n}$. Then the number of $G$-gradings on $\mathcal{A}(Q_D)$ is $|G|^{\# \text{frozen regions}}$. 
	
\end{theorem}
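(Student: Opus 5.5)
We need to show the number of $G$-gradings on $\mathcal{A}(Q_D)$ is $|G|^{\#\text{frozen}}$. Equivalently, we show that the map sending a $G$-grading to its restriction to the frozen vertices is a bijection $\mathfrak{gr}(\mathcal{A}(Q_D),G)\to G^{\#\text{frozen}}$. We argue by induction on the number of dots of $D$, peeling off frozen leaves and isolated vertices with \cref{lem:frozen-leaf} and \cref{lem:isolated-vertex}. This is the same strategy used for $\Mat_{n\times n}$, $\Sk_n$, and the rectangles seed of $\widehat{\Gr}_{k,n}$ in \cref{prop:number-of-G-gradings-in-Gr}.

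The first step is to locate, in any Le-diagram $D$ with at least one mutable region, a frozen vertex of $Q_D$ of degree one. I would look at the last (bottom-most, then right-most) dot on the boundary whose region touches the boundary of the Young diagram. A natural candidate is the dot $d$ that is lowest in the right-most column containing dots, or more generally a dot whose hook has no other dot below or to its right. By the local rules in \cref{fig:local-rules}, such a dot is adjacent in $Q_D$ only to the region immediately north-west (or west/north) of it. In \cref{fig:quiver-of-Le-diagram}, for example, the vertices $A$, $C$, $G$, $F$ each have small degree. Removing that dot from $D$ produces a new Le-diagram $D'$, since deleting a dot whose hook meets no other hook preserves the Le-condition. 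In $D'$ the neighboring region absorbs the old region, now touches the boundary, and becomes frozen. Thus $Q_{D'}$ is exactly the quiver obtained from $Q_D$ by the operation of \cref{lem:frozen-leaf}, after deleting arrows between frozen vertices. The number of frozen regions drops by one when the leaf is removed and rises by one when its neighbor is frozen, so the frozen count is unchanged. \cref{lem:frozen-leaf} therefore gives equal numbers of $G$-gradings for $Q_D$ and $Q_{D'}$.

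The second step handles the situation where no mutable vertices remain. Then every vertex is frozen, and since there are no arrows between frozen vertices, all vertices are isolated. Repeated application of \cref{lem:isolated-vertex} gives $|G|^{\#\text{frozen}}$ gradings. Combining the two steps by induction on the number of mutable vertices yields the theorem.

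The main obstacle is the first step. We must show that a frozen vertex of degree exactly one always exists whenever a mutable vertex exists, and that deleting it corresponds to a Le-diagram (or at least to a quiver of the same type, with the same frozen count). Frozen vertices on the boundary can have degree two or more; for instance, $E$ and $J$ in \cref{fig:quiver-of-Le-diagram} have higher degree. So the choice of dot must be made carefully. What I would try first is to mimic the proof of \cref{prop:number-of-G-gradings-in-Gr}: treat the right-most column of the Young diagram, process its dots from top to bottom, and shed frozen leaves along that column one at a time. This should reduce $D$ to the Le-diagram with that column removed, plus possibly some isolated frozen vertices. The bookkeeping for the frozen count then goes column by column.

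If this local combinatorics proves unwieldy, there is a fallback. We can count the gradings directly as $|\ker(\tilde B^t)|$, using the fact that the principal part of $\tilde B$ for a Le-diagram quiver is invertible over $\Z$. This holds because the cluster structure of \cite{GalashinLam} has full rank with unimodular exchange matrix, by the Muller–Speyer twist/really full rank property. Given this, the frozen values determine the mutable values uniquely for every abelian group $G$, which again gives $|G|^{\#\text{frozen}}$.
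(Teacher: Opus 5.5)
Your strategy of peeling off frozen leaves and isolated vertices with \cref{lem:frozen-leaf} and \cref{lem:isolated-vertex} is the paper's strategy. However, the step you flag as the main obstacle is the entire content of the proof, and your sketch of it fails in the essential case.

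The paper takes $\mathbf{p}$ to be the dot in the last box of the lowest nonempty row, after deleting dotless rows and the boxes to the right of $\mathbf{p}$. By the Le-condition no dots lie above those boxes, so $Q_D$ is unchanged. The south-east region of $\mathbf{p}$ is then bounded only by $\mathbf{p}$'s own rays and the boundary, so all of its arrows come from the local rule at $\mathbf{p}$. There are two cases.
\begin{itemize}
\item Unless hook lines enter $\mathbf{p}$ both from the left and from above, its unique neighbor (NW, SW or NE) touches the bottom or right edge of $\mathbf{p}$'s box. That neighbor is therefore frozen, and $\mathbf{p}$ is \emph{isolated}, not a leaf. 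This happens while mutable vertices still exist. So the induction must be on the number of dots, applying \cref{lem:isolated-vertex} at such steps (the count is multiplied by $|G|$ and the frozen count drops by one), not only at the end as in your second step.
\item If lines do cross at $\mathbf{p}$, its NE and SW neighbors are frozen, so $\mathbf{p}$ is a leaf attached to NW, or isolated if NW is frozen. Here deleting the dot alone violates the Le-condition, since its box lies to the right of a dot and below a dot, and no region is ``absorbed''. You must delete the box as well. This is legitimate because it is the last box of the last row, and it is exactly what makes NW touch the boundary and become frozen.
\end{itemize}
Your justification, that deleting a dot whose hook meets no other hook preserves the Le-condition, covers only the configuration with no lines through $\mathbf{p}$. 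For this dot that configuration never yields a leaf.

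Your opening ``equivalent'' reformulation and your fallback are both false. Take the full $2\times 2$ Le-diagram (top cell of $\Gr_{2,4}$). The regions south-east of the dots in boxes $(1,2)$, $(2,1)$, $(2,2)$ are frozen; call them $A,B,C$. The region south-east of the dot in box $(1,1)$ is mutable; call it $M$. After deleting the top-left region and arrows between frozen vertices, the local rules give $A\to M$, $B\to M$, $M\to C$. The gradings are exactly those with $\sigma_C=\sigma_A+\sigma_B$ and $\sigma_M$ arbitrary. There are $|G|^3$ of them, as the theorem predicts. But restriction to the frozen vertices has image of size $|G|^2$ and fibers of size $|G|$, so frozen values do not determine mutable ones.

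The principal part here is the $1\times 1$ zero matrix. More generally, a skew-symmetric integer matrix of odd size has determinant $0$, so the principal part is never unimodular when the number of mutable vertices is odd. Really full rank is a statement about the \emph{extended} matrix: the rows of $\tilde B$ span $\Z^r$, equivalently $\operatorname{coker}(\tilde B)\cong\Z^{s-r}$. If you can cite that property for $Q_D$, then $\mathfrak{gr}(\mathcal{A}(\tilde B),G)=\operatorname{Hom}(\operatorname{coker}\tilde B,G)\cong G^{s-r}$ gives the count immediately. That is a legitimate shortcut, but it outsources to the literature exactly the combinatorics that the paper's leaf-peeling proves by hand.
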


\begin{proof}
	We describe an algorithm which simplifies a Le-diagram until it is empty and keeps track of the number of $G$-gradings. 

	\textbf{Step 0.} We first remove all rows with no dots. This changes both the bounding permutation and the placement of dots but it doesn't change the quiver. 

	\textbf{Step 1.} Consider the dot $\textbf{p}$ on the lowest row and furthest to the right. There cannot be any dots contained in boxes directly above the boxes directly to the right of $\textbf{p}$. Otherwise, the conditions of a Le-diagram would imply that there would be dots to the direct right of $\textbf{p}$. Thus, we can safely remove the boxes directly to the right of $\textbf{p}$ without changing the underlying quiver nor the number of $G$-gradings.

	\textbf{Step 2.} After \textit{Step 1}, we can assume that $\textbf{p}$ is contained in the last box on the last row. Now consider the part of the quiver local to $\textbf{p}$. There are four cases which are shown in Figure~\ref{fig:local-rules} where $\mathbf{p}$ is the bottom right vertex. In the first three situations, all three vertices are frozen and the vertex corresponding to $\textbf{p}$ is an isolated frozen vertex. Removing $\mathbf{p}$ from the Le-diagram and keeping the bounding partition is the same amounts to removing the corresponding frozen vertex on the quiver side. From Lemma~\ref{lem:isolated-vertex}, the number of $G$-gradings is $|G|$ times the number of $G$-gradings with $\mathbf{p}$ removed. \textit{In these three situations, the number of frozen regions decreases by $1$ and the number of mutable regions stays the same $1$}. In the fourth case, there are two subcases. Either all four vertices are frozen or all except the top left vertex is frozen. In both of these subcases, removing the dot $\mathbf{p}$ \textit{along with} the box from the bounding partition containing $\mathbf{p}$ amounts to removing the vertex corresponding to $\mathbf{p}$ from the quiver and freezing the top left vertex if it is not already frozen. If the top left vertex was originally not frozen, then Lemma~\ref{lem:frozen-leaf} implies that the number of $G$-gradings stays the same. \textit{In this situation, the number of frozen regions stay the same and the number of mutable regions decrease by $1$}. If the top left vertex was originally frozen, then the vertex of $\textbf{p}$ is an isolated frozen. Then, upon removal of $\textbf{p}$ and the box containing $\textbf{p}$, Lemma~\ref{lem:isolated-vertex} implies that the number of $G$-gradings get multiplied by $|G|$. \textit{In this situation, the number of frozen regions decreases by $1$ and the number of mutable regions stays the same.}

	Now, we keep repeating this process until the Le-diagram is empty. Throughout the process, the number of $G$-gradings is multiplied by $|G|$ exactly whenever the number of frozen regions decreases by $1$. Hence, the number of $G$-gradings is $|G|^{\# \text{frozen regions}}$. 
\end{proof}

\section{Signed seeds and the  amplituhedron}\label{sec:amplituhedron}

The \emph{(tree) amplituhedron} 
is the image of the positive Grassmannian under a positive linear map;
it was introduced by 
Arkani-Hamed and Trnka
\cite{arkani-hamed_trnka} in the context of \emph{scattering
amplitudes} in $\mathcal{N}=4$ super Yang-Mills theory.
In this section we will explain how the 
alternating grading on the Grassmannian  
is connected to the amplituhedron $\mathcal{A}_{n,k,m}(Z)$; this was 
previously observed when $m=2$ \cite{PSW2}.
We will also explain how signed seeds come up 
in the context of \emph{tiles} for the amplituhedron.

Let $\Mat_{n,p}^{>0}$ denote the set of $n\times p$ 
matrices whose maximal minors
are positive.

\begin{definition}\label{defn_amplituhedron}
Fix $k, n, m$ positive integers such that $k+m \leq n$.
Let $Z\in \Mat_{n,k+m}^{>0}$. 
       The \emph{amplituhedron map}
$\tilde{Z}:\Gr_{k,n}^{\ge 0} \to \Gr_{k,k+m}$
     is
      defined by
       $\tilde{Z}(C):=CZ$,
       where
 $C$ is a $k \times n$ matrix representing an element of
       $\Gr_{k,n}^{\ge 0}$,  and  $CZ$ is a $k \times (k+m)$ matrix representing an
         element of $\Gr_{k,k+m}$.
       The \emph{amplituhedron} $\mathcal{A}_{n,k,m}(Z) \subset \Gr_{k,k+m}$ is the image
$\tilde{Z}(\Gr_{k,n}^{\ge 0})$
        inside $\Gr_{k,k+m}$.
\end{definition}

\begin{definition}\label{def:twistor}
       Choose
        $Z \in \Mat_{n,k+m}^{>0}$
        with rows  $Z_1,\dots, Z_n \in \R^{k+m}$.
        Given a matrix $Y$
        with rows $y_1,\dots,y_k$
        representing an element of $\Gr_{k,k+m}$,
        and $\{i_1,\dots, i_m\} \subset [n]$,
        we define the \emph{twistor coordinate}
        $\langle Y Z_{i_1} Z_{i_2} \cdots Z_{i_m} \rangle$
        to be  the determinant of the
        matrix with rows
        $y_1, \dots,y_k, Z_{i_1}, \dots, Z_{i_m}$.
        For $I=\{i_1<\dots <i_m\}$, we also
	use  $\lrangle{Z_I(Y)}$ to denote 
        $\langle Y Z_{i_1} Z_{i_2} \dots Z_{i_m} \rangle$.
\end{definition}

The following observation shows that when $k+m=n$, the amplituhedron
can be identified with the alternating part of the Grassmannian
(\emph{cf} \cref{def:alt}). 
\begin{proposition}
Fix $n,k,m$ such that $k+m=n$, 
and choose
$Z \in \Mat_{n,k+m}^{>0}=\Mat_{n,n}^{>0}.$
The amplituhedron 
$\A_{n,k,m}(Z) = \A_{n,n-m,m}(Z)$
is the closure of the set
	$$\{Y \in \Gr_{k,k+m}=\Gr_{k,n} \ \vert \ 
\sgn \lrangle{Y Z_{i_1} \dots Z_{i_m}} = (-1)^{i_1+ \dots + i_m +\delta} 
\text{ for all } 1 \leq i_1 < \dots < i_m \leq n\},$$
	where $\delta = mn + {m \choose 2}$ mod $2$.
\end{proposition}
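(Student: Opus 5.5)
The plan is to exploit that when $k+m=n$ the matrix $Z$ is square and invertible, so the amplituhedron map $C\mapsto CZ$ is a homeomorphism of $\Gr_{k,n}$ onto itself. The twistor coordinates of $Y=CZ$ should then be, up to the explicit sign $(-1)^{i_1+\cdots+i_m+\delta}$, the complementary Pl\"ucker coordinates of $C$. Throughout, sign conditions on $Y\in\Gr_{k,n}$ are read projectively, that is, up to one common sign coming from the choice of representative matrix, as in the remark following \cref{def:TP}.

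\textbf{Step 1 (determinant identity).} Write $e_1,\dots,e_n$ for the standard basis row vectors, so that $Z_i=e_iZ$. Then the matrix with rows $y_1,\dots,y_k,Z_{i_1},\dots,Z_{i_m}$ equals
\[
\begin{bmatrix} C\\ e_{i_1}\\ \vdots\\ e_{i_m}\end{bmatrix} Z .
\]
Taking determinants gives $\lrangle{YZ_I}=\det(Z)\cdot\det\begin{bmatrix} C\\ e_I\end{bmatrix}$, and $\det Z>0$ since $Z\in\Mat_{n,n}^{>0}$.

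\textbf{Step 2 (Laplace expansion).} I will expand $\det\begin{bmatrix} C\\ e_I\end{bmatrix}$ along the last $m$ rows. Row $k+j$ has its single nonzero entry in column $i_j$, so
\[
\det\begin{bmatrix} C\\ e_I\end{bmatrix}=(-1)^{\sum_{j=1}^m (k+j)+\sum_j i_j}\,P_{[n]\setminus I}(C).
\]
Since $k=n-m$, we have
\[
\sum_{j=1}^m (k+j)=km+\tbinom{m+1}{2}\equiv nm+m+\tbinom{m}{2}+m\equiv \delta \pmod 2 .
\]
Hence
\[
\sgn\lrangle{YZ_I}=(-1)^{i_1+\cdots+i_m+\delta}\,\sgn P_{[n]\setminus I}(C).
\]
This parity bookkeeping is the only delicate point, and I would double-check it on small cases such as $n=2$, $k=m=1$.

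\textbf{Step 3 (set equality and closure).} By Step 2, $C\in\Gr_{k,n}^{>0}$ if and only if $Y=CZ$ satisfies all the stated sign conditions. Conversely, given $Y$ satisfying them, set $C=YZ^{-1}$; Step 2 read backwards shows that every Pl\"ucker coordinate of $C$ is positive (after possibly rescaling by $-1$). So $\tilde Z(\Gr_{k,n}^{>0})$ equals the displayed set. Finally, $\Gr_{k,n}^{\geq 0}$ is the closure of $\Gr_{k,n}^{>0}$ \cite{Postnikov}, and $\tilde Z$ is a homeomorphism of the compact space $\Gr_{k,n}$. Therefore $\A_{n,k,m}(Z)=\tilde Z(\overline{\Gr_{k,n}^{>0}})=\overline{\tilde Z(\Gr_{k,n}^{>0})}$, which is the closure of the displayed set.

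The main obstacle is only the sign computation in Step 2; the rest is formal. Combined with \cref{lem:alt1} and \cref{lem:alt2}, this also identifies the set with the alternating part.
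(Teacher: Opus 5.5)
Your proof is correct and follows essentially the paper's route: invertibility of $Z$, the identity $\lrangle{YZ_I}=\det(Z)\,(-1)^{\sum_j i_j+\delta}P_{[n]\setminus I}(C)$, and commuting $\tilde Z$ with closure. The only differences are cosmetic. You obtain the sign from a generalized Laplace expansion along the last $m$ rows, whereas the paper counts the transpositions needed to sort $1,\dots,\hat i_1,\dots,\hat i_m,\dots,n,i_1,\dots,i_m$. You also make explicit the reverse inclusion via $C=YZ^{-1}$, which the paper leaves implicit.
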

\begin{proof}
The matrix $Z$ is invertible hence $\tilde{Z}$ is injective.
Moreover the amplituhedron can be written as
$\tilde{Z}(\Gr_{k,n}^{\ge 0}) = \overline{\tilde{Z}(\Gr_{k,n}^{>0})}$.
Choose $C\in \Gr_{k,n}^{>0}$, let $Y=CZ$, and choose $I=\{i_1< \dots < i_m\}$.
	It is not hard to check (see also \cite[(4.14)]{LKW})
	that  $$\lrangle{Y Z_{i_1} \dots Z_{i_m}} = 
	\lrangle{Z_I(Y)}=
	P_{[n]\setminus I}(C) \cdot \det(Z) (-1)^r,$$
	where $r$ is the number of swaps needed to 
put the sequence $1,2,\dots,\hat{i}_1, \dots, 
\hat{i}_m, \dots, n, i_1,\dots,i_m$ in increasing order.
In fact the number of swaps needed is 
$(n-i_1-(m-1))+(n-i_2-(m-2))+\dots+(n-i_m)$
which is congruent to $i_1+ \dots + i_m + mn+{m \choose 2}$
modulo $2$.
	Thus for all subsets $\{i_1 < \dots < i_m\} \subset [n]$ we have
 $$\lrangle{Y Z_{i_1} \dots Z_{i_m}} = 
	P_{[n]\setminus I}(C) \cdot \det(Z) (-1)^{i_1+ \dots + i_m+\delta}$$
	where $\delta = mn + {m \choose 2}$ mod $2$.
\end{proof}

\begin{remark} 

Twistor coordinates can be identified with Pl\"ucker coordinates as follows, see \cite[Lemma 3.10 and Proposition 3.12]{karpwilliams}.
Fix $Z \in \Mat_{n,k+m}^{>0}$
and let
      $W\in \Gr_{k+m,n}^{>0}$
       be the column span of $Z$.
	Note that $\Gr_m(W) \subseteq Gr_{m,n}$.
        Define a map $f_Z: \Gr_m(W) \to \Gr_{k,k+m}$ by
       $$f_Z(X):= Z(X^{\perp}) = \{Z(x): x\in X^{\perp}\}.$$
        If we let $Y:=f_Z(X)$ then
        we have
        \begin{equation}\label{eq:identify}
		p_{I}(X) = \lrangle{Z_{I}(Y)}
                \qquad \text{ for all } \quad
                I \in {[n] \choose m}. 
        \end{equation}
\end{remark}

\begin{notation}
	Given a cluster variable $x$ in $\Gr_{m,n}$ (which is
	therefore a polynomial in Pl\"ucker coordinates) and 
	$Y\in \Gr_{k,k+m}$, we will denote
	by $x(Y)$ the polynomial in twistor coordinates obtained by 
	replacing each Pl\"ucker coordinate $p_I$ on $\Gr_{m,n}$
	by the corresponding twistor coordinate $\lrangle{Z_I(Y)}$
	on $\Gr_{k,k+m}$.  When it is clear from context, we will
	talk about cluster variables and the corresponding
	polynomials in twistor coordinates interchangeably.
\end{notation}

\subsection{Example: amplituhedron tiles}

Recall from \cref{def:TP} that the \emph{totally nonnegative part} $\Gr_{k,n}^{\geq 0}$ of 
$\Gr_{k,n}$ is the subset where all Pl\"ucker coordinates are nonnegative.
If we partition $\Grk$ into strata based on which Pl\"ucker coordinates are strictly
positive and which are $0$, we obtain a cell decomposition of $\Grk$
into \emph{positroid cells} $S$ \cite{Postnikov}. Since the totally nonnegative Grassmannian surjects onto the
amplituhedron $\AAA(Z)$ (which has dimension $km$), it makes sense to 
try to understand the amplituhedron by partitioning it into subsets
which are images of $km$-dimensional cells of $\Grk$.

\begin{definition}[Tiles]\label{defn_tile}
	Fix $k, n, m$ with $k+m \leq n$ and choose
	$Z\in \Mat_{n,k+m}^{>0}$.  
	Given a positroid cell $S$ of 
	$\Gr_{k,n}^{\ge 0}$, we let 
	$\gto{S} := \tilde{Z}(S)$ and 
	$\gt{S}: = \overline{
		\tilde{Z}(S)} = \tilde{Z}(\overline{S})$.

	We call
	$\gt{S}$  and $\gto{S}$
	a \emph{tile}  and an \emph{open tile}
	for $\AAA$ 
	if $\dim(S) =km$ and 
	$\tilde{Z}$ is injective on $S$. 
\end{definition}

The following conjecture is a strengthening of 
\cite[Conjecture 7.17 (ii)]{ELPSTW}.
\begin{conjecture}\label{conj:signed}
	Let $n,k,m$ be positive integers such that 
	$n \geq k+m$, and let $Z\in \Mat^{>0}_{n,k+m}$.
	Let $s=m(n-m)+1$, and 
	let $\gto{S}$ be an open tile for $\AAA(Z)$.
	There is a  seed  
	$(\widetilde{\mathbf{x}}=(x_1,\dots,x_s),Q)$ for 
	$\Gr_{m,n}$ and signs $\sigma_i\in \{1,-1\}$ for each 
	$x_i\in \widetilde{\mathbf{x}}$ such that 
	\begin{equation*}
	\gto{S}=\{Y\in \Gr_{k,k+m}: \sigma_i \cdot x_i(Y) >0 \text{ for all }
		x_i\in \widetilde{\mathbf{x}}\}.
	\end{equation*}
Moreover, there is a subset $\mathbf{f} \subset \widetilde{\mathbf{x}}$ of \emph{facet variables} which are \emph{facets} of the tile
(see \cite[Definition 2.14]{ELPSTW}), and if we freeze these facet variables,
then the seed 
	$(\widetilde{\mathbf{x}},Q)$ together with the 
	signs $(\sigma_1,\dots,\sigma_s)$ is a signed seed.
\end{conjecture}

While open in general, this conjecture is true for $m=2$, see \cref{cor:signed}; as we will
explain, 
this follows from 
results of \cite{PSW2}.

We start by recalling the classification
of tiles for the $m=2$ amplituhedron, using bicolored subdivisions.

\begin{definition}
Consider an $n$-gon with boundary vertices labeled $1,\dots,n$ in clockwise
order.  We say that a \emph{diagonal} is a segment connecting
two boundary vertices.
	A \emph{bicolored subdivision} is a subdivision 
of the $n$-gon into polygons obtained by adding a collection of noncrossing
diagonals to the $n$-gon and coloring the resulting subpolygons 
black and white,
so that adjacent polygons have different colors.  A \emph{triangulated
	bicolored subdivision} $\mathcal{T}$ is a bicolored subdivision together with a 
triangulation of each black polygon (where the triangulation is 
obtained by adding some diagonals).
See \cref{fig:subdivision}. We say that the bicolored subdivision
	has \emph{type $(k,n)$} if a triangulation of it consists of 
	$k$ black triangles.
\end{definition}

\begin{figure}[htbp]
    \centering
    \resizebox{0.3\linewidth}{!}{
        \begin{tikzpicture}
        \node[draw, minimum size=3.3cm, regular polygon, regular polygon sides=9] (a) {};
        \filldraw[fill=gray!30!white](a.corner 1)--(a.corner 2)--(a.corner 3)--(a.corner 4)--(a.corner 1);
        \filldraw[fill=gray!30!white](a.corner 4)--(a.corner 6)--(a.corner 7)--(a.corner 8)--(a.corner 9)--(a.corner 4);
        \foreach \x in {1,2,...,9}
        \fill (a.corner \x) circle[radius=2pt];
        \draw[black,thin](a.corner 1)--(a.corner 4);
        \draw[black,thin](a.corner 9)--(a.corner 4);
        \draw[black,thin](a.corner 6)--(a.corner 4);
        \node[shift=(a.corner 1), anchor=south] {$1$};
        \node[shift=(a.corner 2),anchor=south] {$9$};
        \node[shift=(a.corner 3),anchor=east] {$8$};
        \node[shift=(a.corner 4),anchor=east] {$7$};
        \node[shift=(a.corner 5),anchor=north] {$6$};
        \node[shift=(a.corner 6),anchor=north] {$5$};
        \node[shift=(a.corner 7),anchor=west] {$4$};
        \node[shift=(a.corner 8),anchor=west] {$3$};
        \node[shift=(a.corner 9),anchor=west] {$2$};
        \end{tikzpicture}
    }
    \resizebox{0.3\linewidth}{!}{
        \begin{tikzpicture}
        \node[draw, minimum size=3.3cm, regular polygon, regular polygon sides=9] (a) {};
        \filldraw[fill=gray!30!white](a.corner 1)--(a.corner 2)--(a.corner 3)--(a.corner 4)--(a.corner 1);
        \filldraw[fill=gray!30!white](a.corner 4)--(a.corner 6)--(a.corner 7)--(a.corner 8)--(a.corner 9)--(a.corner 4);
        \foreach \x in {1,2,...,9}
        \fill (a.corner \x) circle[radius=2pt];
        \draw[black,thin](a.corner 1)--(a.corner 4);
        \draw[black,thin](a.corner 9)--(a.corner 4);
        \draw[black,thin](a.corner 6)--(a.corner 4);
        \draw[black,thin](a.corner 2)--(a.corner 4);
        \draw[black,thin](a.corner 7)--(a.corner 4);
        \draw[black,thin](a.corner 8)--(a.corner 4);
        \node[shift=(a.corner 1), anchor=south] {$1$};
        \node[shift=(a.corner 2),anchor=south] {$9$};
        \node[shift=(a.corner 3),anchor=east] {$8$};
        \node[shift=(a.corner 4),anchor=east] {$7$};
        \node[shift=(a.corner 5),anchor=north] {$6$};
        \node[shift=(a.corner 6),anchor=north] {$5$};
        \node[shift=(a.corner 7),anchor=west] {$4$};
        \node[shift=(a.corner 8),anchor=west] {$3$};
        \node[shift=(a.corner 9),anchor=west] {$2$};
        \end{tikzpicture}
    }
    \par\vspace{5mm}
    \resizebox{0.3\linewidth}{!}{
        \begin{tikzpicture}
        \node[draw, minimum size=3.3cm, regular polygon, regular polygon sides=9] (a) {};
        \filldraw[fill=gray!30!white](a.corner 1)--(a.corner 2)--(a.corner 3)--(a.corner 4)--(a.corner 1);
        \filldraw[fill=gray!30!white](a.corner 4)--(a.corner 6)--(a.corner 7)--(a.corner 8)--(a.corner 9)--(a.corner 4);
        \foreach \x in {1,2,...,9}
        \fill (a.corner \x) circle[radius=2pt];
        \draw[black,thin](a.corner 1)--(a.corner 4);
        \draw[black,thin](a.corner 9)--(a.corner 4);
        \draw[black,thin](a.corner 6)--(a.corner 4);
        \draw[black,thin](a.corner 2)--(a.corner 4);
        \draw[black,thin](a.corner 7)--(a.corner 4);
        \draw[black,thin](a.corner 8)--(a.corner 4);
        \node[shift=(a.corner 1), anchor=south] {$1$};
        \node[shift=(a.corner 2),anchor=south] {$9$};
        \node[shift=(a.corner 3),anchor=east] {$8$};
        \node[shift=(a.corner 4),anchor=east] {$7$};
        \node[shift=(a.corner 5),anchor=north] {$6$};
        \node[shift=(a.corner 6),anchor=north] {$5$};
        \node[shift=(a.corner 7),anchor=west] {$4$};
        \node[shift=(a.corner 8),anchor=west] {$3$};
        \node[shift=(a.corner 9),anchor=west] {$2$};
        \node[rectangle, draw=red, fill=none, thick, minimum size=1mm, inner sep=0pt] at ($(a.corner 1)!0.5!(a.corner 2)$) {};
        \node[rectangle, draw=red, fill=none, thick, minimum size=1mm, inner sep=0pt] at ($(a.corner 2)!0.5!(a.corner 3)$) {};
        \node[rectangle, draw=red, fill=none, thick, minimum size=1mm, inner sep=0pt] at ($(a.corner 3)!0.5!(a.corner 4)$) {};
        \node[rectangle, draw=red, fill=none, thick, minimum size=1mm, inner sep=0pt] at ($(a.corner 1)!0.5!(a.corner 4)$) {};
        \node[rectangle, draw=red, fill=none, thick, minimum size=1mm, inner sep=0pt] at ($(a.corner 9)!0.5!(a.corner 4)$) {};
        \node[rectangle, draw=red, fill=none, thick, minimum size=1mm, inner sep=0pt] at ($(a.corner 9)!0.5!(a.corner 8)$) {};
        \node[rectangle, draw=red, fill=none, thick, minimum size=1mm, inner sep=0pt] at ($(a.corner 7)!0.5!(a.corner 8)$) {};
        \node[rectangle, draw=red, fill=none, thick, minimum size=1mm, inner sep=0pt] at ($(a.corner 6)!0.5!(a.corner 7)$) {};
        \node[rectangle, draw=red, fill=none, thick, minimum size=1mm, inner sep=0pt] at ($(a.corner 6)!0.5!(a.corner 4)$) {};
        \node[circle, draw=red, fill=red,minimum size=1mm, inner sep=0pt] at ($(a.corner 2)!0.5!(a.corner 4)$) {};
        \node[circle, draw=red, fill=red,minimum size=1mm, inner sep=0pt] at ($(a.corner 8)!0.5!(a.corner 4)$) {};
        \node[circle, draw=red, fill=red,minimum size=1mm, inner sep=0pt] at ($(a.corner 7)!0.5!(a.corner 4)$) {};
        \draw[red,thick,shorten <=0mm, shorten >=0.5mm,decoration={markings,mark=at position 0.5 with {\arrow{>}}},postaction={decorate}] ($(a.corner 2)!0.5!(a.corner 4)$)--($(a.corner 1)!0.5!(a.corner 2)$);
        \draw[red,thick,shorten <=0.6mm, shorten >=0.5mm,decoration={markings,mark=at position 0.5 with {\arrow{>}}},postaction={decorate}] ($(a.corner 1)!0.5!(a.corner 4)$)--($(a.corner 2)!0.5!(a.corner 4)$);
        \draw[red,thick,shorten <=0.6mm, shorten >=0.5mm,decoration={markings,mark=at position 0.5 with {\arrow{>}}},postaction={decorate}] ($(a.corner 2)!0.5!(a.corner 3)$)--($(a.corner 2)!0.5!(a.corner 4)$);
        \draw[red,thick,shorten <=0.6mm, shorten >=0.5mm,decoration={markings,mark=at position 0.5 with {\arrow{>}}},postaction={decorate}] ($(a.corner 2)!0.5!(a.corner 4)$)--($(a.corner 3)!0.5!(a.corner 4)$);
        \draw[red,thick,shorten <=0mm, shorten >=0.5mm,decoration={markings,mark=at position 0.5 with {\arrow{>}}},postaction={decorate}] ($(a.corner 4)!0.5!(a.corner 8)$)--($(a.corner 4)!0.5!(a.corner 9)$);
        \draw[red,thick,shorten <=0mm, shorten >=0.5mm,decoration={markings,mark=at position 0.5 with {\arrow{>}}},postaction={decorate}] ($(a.corner 4)!0.5!(a.corner 8)$)--($(a.corner 7)!0.5!(a.corner 8)$);
        \draw[red,thick,shorten <=0.6mm, shorten >=0.5mm,decoration={markings,mark=at position 0.5 with {\arrow{>}}},postaction={decorate}] ($(a.corner 8)!0.5!(a.corner 9)$)--($(a.corner 4)!0.5!(a.corner 8)$);
        \draw[red,thick,shorten <=0.6mm, shorten >=0.5mm,decoration={markings,mark=at position 0.5 with {\arrow{>}}},postaction={decorate}] ($(a.corner 4)!0.5!(a.corner 7)$)--($(a.corner 4)!0.5!(a.corner 8)$);
        \draw[red,thick,shorten <=0.6mm, shorten >=0.5mm,decoration={markings,mark=at position 0.5 with {\arrow{>}}},postaction={decorate}] ($(a.corner 4)!0.5!(a.corner 7)$)--($(a.corner 6)!0.5!(a.corner 7)$);
        \draw[red,thick,shorten <=0.6mm, shorten >=0.5mm,decoration={markings,mark=at position 0.5 with {\arrow{>}}},postaction={decorate}] ($(a.corner 4)!0.5!(a.corner 6)$)--($(a.corner 4)!0.5!(a.corner 7)$);
        \draw[red,thick,shorten <=0.6mm, shorten >=0.5mm,decoration={markings,mark=at position 0.5 with {\arrow{>}}},postaction={decorate}] ($(a.corner 7)!0.5!(a.corner 8)$)--($(a.corner 4)!0.5!(a.corner 7)$);
        \end{tikzpicture}
    }
    \resizebox{0.3\linewidth}{!}{
        \begin{tikzpicture}
        \node[draw, minimum size=3.3cm, regular polygon, regular polygon sides=9] (a) {};
        \filldraw[fill=gray!30!white](a.corner 1)--(a.corner 2)--(a.corner 3)--(a.corner 4)--(a.corner 1);
        \filldraw[fill=gray!30!white](a.corner 4)--(a.corner 6)--(a.corner 7)--(a.corner 8)--(a.corner 9)--(a.corner 4);
        \foreach \x in {1,2,...,9}
        \fill (a.corner \x) circle[radius=2pt];
        \draw[black,thin](a.corner 1)--(a.corner 4);
        \draw[black,thin](a.corner 9)--(a.corner 4);
        \draw[black,thin](a.corner 6)--(a.corner 4);
        \draw[black,thin](a.corner 2)--(a.corner 4);
        \draw[black,thin](a.corner 7)--(a.corner 4);
        \draw[black,thin](a.corner 8)--(a.corner 4);
        \node[shift=(a.corner 1), anchor=south] {$1$};
        \node[shift=(a.corner 2),anchor=south] {$9$};
        \node[shift=(a.corner 3),anchor=east] {$8$};
        \node[shift=(a.corner 4),anchor=east] {$7$};
        \node[shift=(a.corner 5),anchor=north] {$6$};
        \node[shift=(a.corner 6),anchor=north] {$5$};
        \node[shift=(a.corner 7),anchor=west] {$4$};
        \node[shift=(a.corner 8),anchor=west] {$3$};
        \node[shift=(a.corner 9),anchor=west] {$2$};
        \node[rectangle, draw=red, fill=none, thick, minimum size=1mm, inner sep=0pt] at ($(a.corner 1)!0.5!(a.corner 2)$) {};
        \node[rectangle, draw=red, fill=none, thick, minimum size=1mm, inner sep=0pt] at ($(a.corner 2)!0.5!(a.corner 3)$) {};
        \node[rectangle, draw=red, fill=none, thick, minimum size=1mm, inner sep=0pt] at ($(a.corner 3)!0.5!(a.corner 4)$) {};
        \node[rectangle, draw=red, fill=none, thick, minimum size=1mm, inner sep=0pt] at ($(a.corner 1)!0.5!(a.corner 4)$) {};
        \node[rectangle, draw=red, fill=none, thick, minimum size=1mm, inner sep=0pt] at ($(a.corner 9)!0.5!(a.corner 4)$) {};
        \node[rectangle, draw=red, fill=none, thick, minimum size=1mm, inner sep=0pt] at ($(a.corner 9)!0.5!(a.corner 8)$) {};
        \node[rectangle, draw=red, fill=none, thick, minimum size=1mm, inner sep=0pt] at ($(a.corner 7)!0.5!(a.corner 8)$) {};
        \node[rectangle, draw=red, fill=none, thick, minimum size=1mm, inner sep=0pt] at ($(a.corner 6)!0.5!(a.corner 7)$) {};
        \node[rectangle, draw=red, fill=none, thick, minimum size=1mm, inner sep=0pt] at ($(a.corner 6)!0.5!(a.corner 4)$) {};
        \node[circle, draw=red, fill=red,minimum size=1mm, inner sep=0pt] at ($(a.corner 2)!0.5!(a.corner 4)$) {};
        \node[circle, draw=red, fill=red,minimum size=1mm, inner sep=0pt] at ($(a.corner 8)!0.5!(a.corner 4)$) {};
        \node[circle, draw=red, fill=red,minimum size=1mm, inner sep=0pt] at ($(a.corner 7)!0.5!(a.corner 4)$) {};
        \draw[red,thick,shorten <=0mm, shorten >=0.5mm,decoration={markings,mark=at position 0.5 with {\arrow{>}}},postaction={decorate}] ($(a.corner 2)!0.5!(a.corner 4)$)--($(a.corner 1)!0.5!(a.corner 2)$);
        \draw[red,thick,shorten <=0.6mm, shorten >=0.5mm,decoration={markings,mark=at position 0.5 with {\arrow{>}}},postaction={decorate}] ($(a.corner 1)!0.5!(a.corner 4)$)--($(a.corner 2)!0.5!(a.corner 4)$);
        \draw[red,thick,shorten <=0.6mm, shorten >=0.5mm,decoration={markings,mark=at position 0.5 with {\arrow{>}}},postaction={decorate}] ($(a.corner 2)!0.5!(a.corner 3)$)--($(a.corner 2)!0.5!(a.corner 4)$);
        \draw[red,thick,shorten <=0.6mm, shorten >=0.5mm,decoration={markings,mark=at position 0.5 with {\arrow{>}}},postaction={decorate}] ($(a.corner 2)!0.5!(a.corner 4)$)--($(a.corner 3)!0.5!(a.corner 4)$);
        \draw[red,thick,shorten <=0mm, shorten >=0.5mm,decoration={markings,mark=at position 0.5 with {\arrow{>}}},postaction={decorate}] ($(a.corner 4)!0.5!(a.corner 8)$)--($(a.corner 4)!0.5!(a.corner 9)$);
        \draw[red,thick,shorten <=0mm, shorten >=0.5mm,decoration={markings,mark=at position 0.5 with {\arrow{>}}},postaction={decorate}] ($(a.corner 4)!0.5!(a.corner 8)$)--($(a.corner 7)!0.5!(a.corner 8)$);
        \draw[red,thick,shorten <=0.6mm, shorten >=0.5mm,decoration={markings,mark=at position 0.5 with {\arrow{>}}},postaction={decorate}] ($(a.corner 8)!0.5!(a.corner 9)$)--($(a.corner 4)!0.5!(a.corner 8)$);
        \draw[red,thick,shorten <=0.6mm, shorten >=0.5mm,decoration={markings,mark=at position 0.5 with {\arrow{>}}},postaction={decorate}] ($(a.corner 4)!0.5!(a.corner 7)$)--($(a.corner 4)!0.5!(a.corner 8)$);
        \draw[red,thick,shorten <=0.6mm, shorten >=0.5mm,decoration={markings,mark=at position 0.5 with {\arrow{>}}},postaction={decorate}] ($(a.corner 4)!0.5!(a.corner 7)$)--($(a.corner 6)!0.5!(a.corner 7)$);
        \draw[red,thick,shorten <=0.6mm, shorten >=0.5mm,decoration={markings,mark=at position 0.5 with {\arrow{>}}},postaction={decorate}] ($(a.corner 4)!0.5!(a.corner 6)$)--($(a.corner 4)!0.5!(a.corner 7)$);
        \draw[red,thick,shorten <=0.6mm, shorten >=0.5mm,decoration={markings,mark=at position 0.5 with {\arrow{>}}},postaction={decorate}] ($(a.corner 7)!0.5!(a.corner 8)$)--($(a.corner 4)!0.5!(a.corner 7)$);
        \node[blue, yshift=2mm, font = \tiny\boldmath, scale=0.7] at ($(a.corner 1)!0.5!(a.corner 2)$) {$-$};
        \node[blue, xshift=2mm, yshift = 1mm, font = \tiny\boldmath, scale=0.7] at ($(a.corner 1)!0.5!(a.corner 4)$) {$-$};
        \node[blue, xshift=0mm, yshift = 2mm, font = \tiny\boldmath, scale=0.7] at ($(a.corner 9)!0.5!(a.corner 4)$) {$-$};
        \node[blue, xshift=0mm, yshift = -1.5mm, font = \tiny\boldmath, scale=0.7] at ($(a.corner 4)!0.5!(a.corner 7)$) {$-$};
        \node[blue, xshift=-1mm, yshift = 1mm, font = \tiny\boldmath, scale=0.7] at ($(a.corner 2)!0.5!(a.corner 3)$) {$+$};
        \node[blue, xshift=-2mm, yshift = 0mm, font = \tiny\boldmath, scale=0.7] at ($(a.corner 3)!0.5!(a.corner 4)$) {$+$};
        \node[blue, xshift=1mm, yshift = -1mm, font = \tiny\boldmath, scale=0.7] at ($(a.corner 2)!0.5!(a.corner 4)$) {$+$};
        \node[blue, xshift=1mm, yshift = 1mm, font = \tiny\boldmath, scale=0.7] at ($(a.corner 8)!0.5!(a.corner 9)$) {$+$};
        \node[blue, xshift=2mm, yshift = 0mm, font = \tiny\boldmath, scale=0.7] at ($(a.corner 7)!0.5!(a.corner 8)$) {$+$};
        \node[blue, xshift=2mm, yshift = -1mm, font = \tiny\boldmath, scale=0.7] at ($(a.corner 7)!0.5!(a.corner 6)$) {$+$};
        \node[blue, xshift=0mm, yshift = -1.5mm, font = \tiny\boldmath, scale=0.7] at ($(a.corner 4)!0.5!(a.corner 6)$) {$+$};
        \node[blue, xshift=-2mm, yshift = 0mm, font = \tiny\boldmath, scale=0.7] at ($(a.corner 4)!0.5!(a.corner 8)$) {$+$};
        \end{tikzpicture}
    }
    \caption{A bicolored subdivision of type $(5,9)$, a triangulation
    of it, the corresponding quiver, and the signed seed.}\label{fig:subdivision}
\end{figure}

Given a bicolored subdivision, choose $1\leq i<j \leq n$ such 
that the diagonal $i-j$ does not cross any of the diagonals
defining the bicolored subdivision.  If we then choose a triangulation which 
is compatible with $i-j$, 
we define $\area(i\to j)$ to be the number of black triangles to the
left of the arc $i\to j$. For instance, in \cref{fig:subdivision},
$\area(3\to 7) = 2$.

\begin{theorem}\cite[Theorem 4.25 and Theorem 4.28]{PSW2}
The tiles for the amplituhedron $\mathcal{A}_{n,k,2}(Z)$
are in bijection with bicolored subdivisions
of type $(k,n)$.  Moreover, if $\mathcal{T}$
is a triangulated bicolored subdivision, then we have
that the corresponding tile is given by
\begin{equation*}
	\gto{S_{\mathcal{T}}}=\{Y\in \Gr_{k,k+2}: 
	(-1)^{\area(i\to j)} \lrangle{Z_{ij}(Y)} > 0 \text{ for all 
	black arcs }i \to j \text{ of }\mathcal{T} \text{ with }
	i<j\}.
\end{equation*}
\end{theorem}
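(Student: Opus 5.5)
This result is imported from \cite{PSW2}. The plan follows the structure of that argument: first attach a cell to each subdivision, then describe its image by signs, then show the list is complete.

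\textbf{Step 1: the cell attached to a subdivision.} For a bicolored subdivision $\mathcal{T}$ of type $(k,n)$, I would build a positroid cell $S_{\mathcal{T}}\subset \Gr_{k,n}^{\ge 0}$ of dimension $2k$. The construction is recursive, gluing black triangles along the diagonals of a triangulation of $\mathcal{T}$. Each black triangle $\{a,b,c\}$ contributes one row of $C$. That row is supported on the vertices of the black polygon containing the triangle, and its entries are chosen so that every black triangle adds exactly two degrees of freedom. This gives an explicit parametrization of $S_{\mathcal{T}}$ by $(\R_{>0})^{2k}$.

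\textbf{Step 2: signs on the image.} Write $Y=CZ$. Cauchy--Binet gives $\lrangle{Y Z_i Z_j}=\sum_{J}\pm P_J(C)\det(Z_{J\cup\{i,j\}})$, where the sign is the sign of the permutation sorting $J\cup\{i,j\}$.

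For a black arc $i\to j$ of the triangulation, the parametrization from Step 1 should make every surviving term $P_J(C)$ in this sum have $J$ drawn from the rows of the black triangles on one side of the arc. Each of these terms then carries the same sign, namely $(-1)^{\area(i\to j)}$. This proves the inclusion $\tilde Z(S_{\mathcal{T}})\subseteq R_{\mathcal{T}}$, where $R_{\mathcal{T}}$ is the region cut out by the stated sign conditions.

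\textbf{Step 3: injectivity.} The same sign computation should show that the twistor coordinates of the $2k$ black arcs recover the $2k$ positive parameters of $S_{\mathcal{T}}$ up to rescaling. This gives injectivity of $\tilde Z$ on $S_{\mathcal{T}}$, so $Z_{S_{\mathcal{T}}}$ is a tile.

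\textbf{Step 4: the reverse inclusion.} This is the step I expect to be the main obstacle. The strategy is a topological argument.
\begin{itemize}
\item $\tilde Z(S_{\mathcal{T}})$ is open in $\Gr_{k,k+2}$ by invariance of domain, since it is an injective continuous image of an open $2k$-ball.
\item The boundary of $\tilde Z(S_{\mathcal{T}})$ is $\tilde Z(\partial S_{\mathcal{T}})$. On each boundary cell, some black-arc twistor coordinate $\lrangle{Z_{ij}(Y)}$ vanishes. Hence the boundary is disjoint from $R_{\mathcal{T}}$.
\item It follows that $\tilde Z(S_{\mathcal{T}})$ is open and closed in $R_{\mathcal{T}}$. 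It remains to show $R_{\mathcal{T}}$ is connected.
\end{itemize}
For connectedness, I would first try to realize the black-arc twistor coordinates as a signed cluster chart: frozen sides plus black diagonals in $\Gr_{2,n}$, read through \eqref{eq:identify}. Then \cref{lem:ball} suggests $R_{\mathcal{T}}$ is a ball. The catch is that $R_{\mathcal{T}}$ is only defined by $2k$ inequalities inside $\Gr_{k,k+2}$, not by a full chart. So I would need to check separately that the chart map is a homeomorphism onto its image there.

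\textbf{Step 5: completeness.} To see that every tile has this form, I would use the T-duality bijection between $2k$-dimensional cells of $\Gr_{k,n}^{\ge0}$ on which $\tilde Z$ is injective and positroid tiles of the hypersimplex. The latter are classified by the same bicolored subdivisions. Failing that, I would run an induction on $n$ that removes vertex $n$ from the subdivision.
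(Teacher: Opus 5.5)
The paper does not prove this statement; it is quoted from \cite{PSW2}, so your outline has to stand on its own. Steps 1--2 are sound in outline. Because $i\to j$ is an edge of the triangulation, a point of $S_{\mathcal{T}}$ has a representative with one row supported on each black triangle. The arc then separates these rows into $\area(i\to j)$ rows supported in $[i,j]$ and the remaining rows supported off $(i,j)$. Hence every nonzero $P_J(C)$ with $J\cap\{i,j\}=\emptyset$ has $|J\cap(i,j)|=\area(i\to j)$. The genuine gap is Step 4, as you suspected. You prove neither that $R_{\mathcal{T}}$ is connected nor that $\tilde Z(\partial S_{\mathcal{T}})\cap R_{\mathcal{T}}=\emptyset$; the claim that some black-arc coordinate vanishes on every boundary cell is asserted, not argued. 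Moreover, the proposed route through \cref{lem:ball} cannot be completed in general. A black polygon with $t$ triangles has $2t+1$ black arcs, so $\mathcal{T}$ has $2k+p$ of them, where $p$ is the number of black polygons. On the $2k$-dimensional $\Gr_{k,k+2}$ these give $2k+p-1$ projective ratios, so for $p\ge 2$ they satisfy relations and cannot be the extended cluster of any chart. Viewed in $\Gr_{2,n}$ they form only a partial triangulation, which does not restrict to a chart on the $2k$-dimensional locus $\Gr_2(W)$. Your idea works only for $p=1$. There $\Gr_{k,k+2}\cong\Gr_{2,k+2}$ via the $k+2$ polygon vertices, and the black arcs form a full triangulation.

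The missing idea, which settles Steps 3 and 4 together with no topology, is an explicit inverse. The form $(x,y)\mapsto\lrangle{Y\,x\,y}$ descends to a determinant on the $2$-dimensional quotient $\R^{k+2}/Y$, so the three-term relation gives $\lrangle{Y\,b\,c}Z_a-\lrangle{Y\,a\,c}Z_b+\lrangle{Y\,a\,b}Z_c\in Y$. Let $C_{\mathcal{T}}(Y)$ have one row $\lrangle{Y\,b\,c}e_a-\lrangle{Y\,a\,c}e_b+\lrangle{Y\,a\,b}e_c$ for each black triangle $\{a<b<c\}$. For $Y\in R_{\mathcal{T}}$, all three sides of each black triangle are black arcs. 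So the sign conditions, together with $\area(a\to c)=\area(a\to b)+\area(b\to c)+1$, fix the sign pattern of every row. This is exactly the sign pattern in the parametrization of $S_{\mathcal{T}}$ by rows supported on black triangles. Hence $C_{\mathcal{T}}(Y)\in S_{\mathcal{T}}$. Since $C_{\mathcal{T}}(Y)Z\subseteq Y$ and $C_{\mathcal{T}}(Y)Z$ has rank $k$, we get $\tilde Z(C_{\mathcal{T}}(Y))=Y$. Conversely, each triangle-supported row of any preimage lies in the kernel of $\R^{\{a,b,c\}}\to\R^{k+2}/Y$, which is one-dimensional on $R_{\mathcal{T}}$, so the preimage is unique. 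Your Step 3 as written does not achieve this: the Cauchy--Binet count controls signs but does not recover parameters. Separately, Step 5 is circular. The correspondence between $2k$-dimensional cells on which $\tilde Z$ is injective and positroid tiles of $\Delta_{k+1,n}$ is the single-tile form of the \L{}ukowski--Parisi--Williams conjecture. Combined with the classification of hypersimplex positroid tiles by bicolored subdivisions, it is a restatement of the bijection you are proving. Steps 1--4 only show that every bicolored subdivision yields a tile. That every $2k$-dimensional cell with $\tilde Z$ injective is some $S_{\mathcal{T}}$ still needs a real argument, and ``induction on $n$ removing vertex $n$'' is not yet one.
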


\begin{corollary}\label{cor:signed}
Given a triangulated bicolored subdivision of type 
$(k,n)$, we associate a quiver by placing a frozen node
on each boundary segment of a black polygon, and a mutable
node on each diagonal of a black polygon.  In each black triangle
we construct three arrows connecting the three nodes, oriented
clockwise
       as in \cref{fig:subdivision}
	(however as usual we can ignore arrows between
frozen nodes). The assignment ${(-1)}^{\area(i\to j)}$ of sign
to the node on the diagonal $i-j$ (where $i<j$) 
then gives this quiver
the structure of a signed seed.
In particular, \cref{conj:signed} holds for all tiles of the amplituhedron
$\AAA(Z)$ when $m=2$.
\end{corollary}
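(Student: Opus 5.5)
The corollary has two parts: a signed-seed claim, which is a local check at each mutable node, and the conjecture for $m=2$, which follows by combining that check with the $m=2$ tile description of \cite{PSW2} quoted above.

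\textbf{Signed-seed condition.} By \cref{rem:quiver2}, I must verify \eqref{eq:grading2} at every mutable node. Mutable nodes sit on diagonals $i-j$ interior to a black polygon. Such a diagonal lies in exactly two black triangles of the triangulation, say $\{i,j,a\}$ and $\{i,j,b\}$. Every arrow at the node lies inside one of these two triangles. Since arrows are oriented clockwise, in each triangle the node has one incoming and one outgoing arrow. So the condition reads
\[
\sigma(e_1)\sigma(e_2)=\sigma(e_3)\sigma(e_4),
\]
where $e_1,\dots,e_4$ are the four sides $ia,aj,jb,bi$ of the quadrilateral $i,a,j,b$, split into in- and out-neighbours according to orientation.

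\textbf{Computing the signs.} Write $\sigma(p-q)=(-1)^{\area(p\to q)}$ for $p<q$. The key step is to express $\area$ of the quadrilateral sides through $\area(i\to j)$ plus the black triangles $\{i,j,a\}$ and $\{i,j,b\}$.

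Each side $p-q$ of the quadrilateral cuts off a region on one side. That region is either a region cut off by $i-j$ with one triangle removed, or the complementary region with one triangle added. The complication is that the convention ``to the left of $p\to q$ with $p<q$'' flips depending on the cyclic positions of $a,b$ relative to $i<j$.

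I will fix the labelling $i<a<j$ and $b\notin[i,j]$, with $b>j$ or $b<i$; these are the two possible cyclic configurations. Let $N=\area(i\to j)$ be the number of black triangles on the $a$-side, so $k-N$ lie on the $b$-side. In each configuration I will express the four side areas as $N$, $N-1$, $k-N$, or $k-N-1$ plus the counts of black triangles beyond the individual sides. Those side counts are $A_{ia},A_{aj}$ with $A_{ia}+A_{aj}=N-1$, and $A_{jb},A_{bi}$ with sum $k-N-1$.

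The two monomials in the condition pair one side adjacent to $a$ with one side adjacent to $b$. This is forced by clockwise orientation: the out-neighbour in triangle $ija$ and the in-neighbour in triangle $ijb$ are opposite sides. The exponent of $-1$ on each side of the equation is then determined mod $2$. I expect the two exponents to agree, and this parity bookkeeping is the main obstacle. I will do it carefully for one configuration and argue that the other follows by the reflection symmetry $t\mapsto n+1-t$, which reverses orientation and changes every area by a fixed parity correction.

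Frozen nodes impose no condition, but if a mutable node is adjacent to a frozen white-polygon boundary diagonal, that diagonal still carries the sign rule. Its area is well defined because it crosses no subdivision diagonal, so the same computation applies.

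\textbf{Conjecture for $m=2$.} For the second claim I take the seed of $\Gr_{2,n}$ attached to any triangulation of the $n$-gon that refines $\mathcal{T}$, triangulating the white polygons arbitrarily. Freezing all white-polygon sides (the facets) and all white-internal variables disconnects the white parts. Giving the white variables the constant sign $+$ keeps \eqref{eq:grading2}, since those vertices are frozen.

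By the quoted theorem, the open tile is cut out by the black-arc inequalities. I expect the white-internal Plücker variables to be automatically nonzero with fixed sign on the tile, by the same theorem's sign-flip description; if so they can be added to the inequalities. Then \cref{thm:test}, applied to the frozen subseed, gives the description required by \cref{conj:signed}.
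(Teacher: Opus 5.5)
\textbf{The decisive parity check is never carried out.} Your local set-up matches the paper's: you check \eqref{eq:grading2} at the node on an interior diagonal of a black polygon, inside the black quadrilateral formed by the two triangles containing it. But the proposal stops where the proof starts. The agreement of the two parities is the entire content of the paper's argument, and you only ``expect'' it. The bookkeeping you sketch would not deliver it as written:
\begin{itemize}
\item Your list of side areas ($N$, $N-1$, $k-N$, $k-N-1$ plus side counts) omits the long side. For $i<a<j<b$ one has $\area(i\to b)=N+1+A_{jb}$.
\item The reflection step is unsupported. Relabelling by $t\mapsto n+1-t$ (reflecting the picture) preserves every area and reverses every arrow, so no parity correction occurs.
\item Your justification of the pairing is wrong. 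With $i<a<j<b$ the in-neighbours of the node on $i-j$ are $a-j$ and $i-b$, one from each triangle, and these are the opposite pair. The out-neighbour $i-a$ in $\{i,a,j\}$ and the in-neighbour $i-b$ in $\{i,j,b\}$ share the vertex $i$.
\end{itemize}

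\textbf{The missing idea.} Ignore which diagonal is present and sort the four vertices of the black quadrilateral as $p<q<r<s$. For the node on either diagonal, \eqref{eq:grading2} reads $\sigma(p-q)\sigma(r-s)=\sigma(q-r)\sigma(p-s)$. The region to the left of $p\to s$ is the union of the regions to the left of $p\to q$, $q\to r$, $r\to s$ together with the quadrilateral itself, which contributes exactly two black triangles. Hence
\[
\area(p\to s)=\area(p\to q)+\area(q\to r)+\area(r\to s)+2,
\]
and both exponents are congruent to $\area(p\to q)+\area(r\to s)$ modulo $2$. This one line is the paper's proof; no case split or symmetry argument is needed.

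\textbf{The $m=2$ conclusion.} The paper simply combines this check with the quoted tile description from \cite{PSW2}, whose inequalities are indexed by exactly the nodes of this quiver. Your extension to a full triangulation of the $n$-gon has two problems.
\begin{itemize}
\item The signs in \cref{conj:signed} must be the actual signs on the tile, so white variables cannot be given the constant sign $+$. Take $n=5$, $k=1$, black triangle $\{2,3,4\}$, so $Y=c_2Z_2+c_3Z_3+c_4Z_4$ with $c_i>0$. Whichever diagonal you put in the white quadrilateral $\{1,2,4,5\}$, it is negative on the tile: $\langle YZ_1Z_4\rangle<0$ and $\langle YZ_2Z_5\rangle<0$. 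Likewise $\langle YZ_1Z_5\rangle<0$.
\item The conjecture freezes only facet variables, so you may not also freeze white interior diagonals.
\end{itemize}
The natural repair is to give every arc compatible with the subdivision the sign $(-1)^{\area}$. A white diagonal is then balanced by the identity above with $+0$ in place of $+2$. A subdivision diagonal, whose quadrilateral has one black and one white triangle, gives $+1$ and is unbalanced, which is exactly why the facets must be frozen. You would still need to justify that the white arcs carry these signs on the tile; that is your unproved ``I expect'' step.
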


\begin{proof}
We need to show that at each mutable vertex, our assignment of signs
is balanced as in~\eqref{eq:grading2}. Suppose we have a mutable vertex at the diagonal
$i-k$, which lies inside a triangulated black quadrilateral on vertices $i<j<k<l$, see \cref{fig:verify}.

\begin{figure}
	\centering
    \resizebox{0.3\linewidth}{!}{
\begin{tikzpicture}
    \node[draw, circle, minimum size=2cm,
          inner sep=0pt, outer sep=0pt] (a) at (0,0) {};

    \foreach \direction in {north,east,south,west} {
        \fill (a.\direction) circle[radius=1pt];
    }
	\node[font = \tiny, xshift=0mm, yshift = 2mm] at (a.north) {$k$};
	\node[font = \tiny, xshift=2mm, yshift = 0mm] at (a.east) {$\ell$};
	\node[font = \tiny, xshift=-2mm, yshift = 0mm] at (a.west) {$j$};
	\node[font = \tiny, xshift=0mm, yshift = -2mm] at (a.south) {$i$};
	\draw (a.west) -- (a.north) -- (a.east) -- (a.south) -- cycle;
	\draw (a.north) -- (a.south);
	\node[font = \tiny] at ($1/3*(a.north) + 1/3*(a.west) + 1/3*(a.south)$) {$1$};
	\node[font = \tiny] at ($1/3*(a.north) + 1/3*(a.east) + 1/3*(a.south)$) {$1$};
	\node[font = \tiny, xshift = -1mm, yshift =1mm] at ($1/2*(a.north) + 1/2*(a.west)$) {$B$};
	\node[font = \tiny, xshift = 1mm, yshift =1mm] at ($1/2*(a.north) + 1/2*(a.east)$) {$C$};
	\node[font = \tiny, xshift = -1mm, yshift =-1mm] at ($1/2*(a.west) + 1/2*(a.south)$) {$A$};
	\node[font = \tiny, xshift = 1mm, yshift =-1mm] at ($1/2*(a.east) + 1/2*(a.south)$) {$D$};
\end{tikzpicture}
    }
	\caption{Verifying the signed seed property}\label{fig:verify}
\end{figure}

Let $A=\area(i\to j)$,$B=\area(j\to k)$, $C=\area(k\to l)$. To verify \eqref{eq:grading2}, we need to show that $\sigma(i-j) \sigma(k-l) = \sigma(j-k) \sigma(i-l)$, where $\sigma(i-j)$ denotes the sign (either $\pm 1$) associated to the diagonal $i-j$. Thus, we need to show that $\area(i\to j) + \area(k\to l)$ is congruent to $\area(j\to k) + \area(i\to l)$  modulo $2$, or equivalently $A+C$ is congruent to $B + (A+B+C+2)$.   This holds so we are done.
\end{proof}

\cref{conj:signed} is also true 
for $m=4$ BCFW tiles, 
see \cite[Proposition 8.10, Definition 9.8, Proposition 9.27]{ELPSTW}. 
As the combinatorics of these tiles is significantly more complicated
than the $m=2$ case, we will not recall it here, and simply 
refer the reader to~\cite{ELPSTW}.

\section{\texorpdfstring{$\R$}{R}-balanced 
and \texorpdfstring{$\Z$}{Z}-balanced 
exchange matrices and tropical mutation}\label{sec:RZ}

In this section we briefly explain some applications of 
$\R$-balanced and $\Z$-balanced exchange matrices, which 
came up in the study of Newton-Okounkov bodies \cite{RW, RW2}.
The first application 
(\cref{lem:tropmutation})
says that whenever we have an $\R$-balanced
exchange matrix
$(\tilde{B},(\sigma_1,\dots,\sigma_s))$,
the \emph{tropicalized mutation map} is linear at $\sigma$.
The second application 
(\cref{l:monomialXmutation})
says that  the \emph{$Y$-seed mutation}
	$\Mut_{Y,k}(y^{\sigma})$ 
	of a monomial $y^{\sigma}:=y_1^{\sigma_1} \dots y_s^{\sigma_s}$ 
	is a monomial in the new mutated variables $y'_1 \dots y'_s$ 
	if and only if $(\tilde B, (\sigma_1,\dots, \sigma_s))$
	is $\Z$-balanced at $k$.  
And in this case the new exponent vector is obtained from the original
by tropicalized mutation.

\begin{definition}\cite[Definition 7.5]{RW}\label{def:tropmutation}
Let $\Sigma=(\tilde{B}, (x_1,\dots,x_s))$ be a seed and let 
$\Sigma':=\mu_k(\Sigma)$ be the seed obtained by mutation.
	We define the \emph{tropicalized  mutation  map} to be the map
$\Psi_{\Sigma, \Sigma'}: \R^s \to \R^s$ defined by 
\[(\sigma_1,\dots, \sigma_s) \mapsto
	(\sigma_1,\dots, \sigma_{k-1}, \sigma'_k, \sigma_{k+1},\dots, \sigma_s),\] where
\[
	\sigma'_k := \min \left( \sum_{b_{ik}>0} b_{ik} \sigma_i,\sum_{b_{ik}<0} -b_{ik} \sigma_i \right) - \sigma_k.
\]
\end{definition}

Note that the tropicalized mutation map is a piecewise linear map.
However, if $(\tilde{B},(\sigma_1,\dots,\sigma_s))$ is $\R$-balanced, that is, if 
\[
	\sum_{b_{ik}>0} b_{ik} \sigma_i = \sum_{b_{ik}<0} -b_{ik} \sigma_i
\] 
for every mutable direction $k$, we get a strong linearity property.

\begin{lemma}\label{lem:tropmutation}\cite[Lemma 7.12]{RW2}
	We use the notation of \cref{def:tropmutation}. Suppose that $\sigma=(\sigma_1,\dots,\sigma_s)$ and $\tau=(\tau_1,\dots,\tau_s)$ lie in $\R^s$, and $(\tilde{B},\sigma)$ is $\R$-balanced. Then the tropicalized mutation map is linear at $\sigma$: that is, we have 
	\[
		\Psi_{\Sigma,\Sigma'}(\sigma+\tau)=\Psi_{\Sigma,\Sigma'}(\sigma)+\Psi_{\Sigma,\Sigma'}(\tau).
	\]
\end{lemma}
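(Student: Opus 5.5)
The proof is a direct computation from \cref{def:tropmutation}. Only the $k$th coordinate changes under $\Psi_{\Sigma,\Sigma'}$, and every other coordinate is the identity, which is linear. So it suffices to prove additivity of the $k$th coordinate:
\[
\sigma'_k+\tau'_k = (\sigma+\tau)'_k .
\]
Introduce the two linear forms
\[
L_+(\rho):=\sum_{b_{ik}>0} b_{ik}\rho_i, \qquad L_-(\rho):=\sum_{b_{ik}<0} -b_{ik}\rho_i .
\]
Then the $k$th output coordinate is
\[
\rho'_k=\min(L_+(\rho),L_-(\rho))-\rho_k .
\]
The linear term $-\rho_k$ is additive, so only the $\min$ term needs attention.

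The key step uses the $\R$-balanced hypothesis. In the form of \cref{rem:balancepropagates}, it says $L_+(\sigma)=L_-(\sigma)$ for every mutable $k$; write $c$ for this common value. By linearity of $L_\pm$,
\[
\min(L_+(\sigma+\tau),L_-(\sigma+\tau))=\min(c+L_+(\tau),\,c+L_-(\tau))=c+\min(L_+(\tau),L_-(\tau)).
\]
This holds because adding a common constant commutes with $\min$. Also
\[
\min(L_+(\sigma),L_-(\sigma))=c .
\]
Adding these identities and subtracting $\sigma_k+\tau_k$ gives
\[
(\sigma+\tau)'_k=\sigma'_k+\tau'_k .
\]

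There is no real obstacle. The only point needing care is that the balancing condition is used at the specific direction $k$ being mutated. Also, "linear at $\sigma$" should be read as the additivity identity stated in \cref{lem:tropmutation}, not as global linearity: no balancing is needed for $\tau$.
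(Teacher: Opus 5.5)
Your proof is correct and complete. Balancedness at the mutation direction $k$ forces both arguments of the $\min$ to shift by the same constant $c=L_+(\sigma)=L_-(\sigma)$ when you pass from $\tau$ to $\sigma+\tau$, which gives additivity of the $k$th coordinate; the other coordinates are unchanged. The paper does not prove this lemma itself; it only cites \cite[Lemma 7.12]{RW2}. Your direct verification is the natural argument and needs nothing further.
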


\cref{lem:tropmutation} and the notion of $\R$-balanced exchange matrices
 were used as a tool in~\cite{RW2} to prove that the Newton-Okounkov body of a Schubert variety (with respect to a chosen seed) equals the corresponding superpotential polytope; when one changes the seed, both polytopes are transformed via the tropicalized mutation map.

In the earlier parts of this paper we have been working with the usual version of mutation (e.g.~\cref{eq:exch-rel-geom}) which is sometimes called $\mathcal{A}$-cluster mutation~\cite{FG}. There is another version of the mutation rule which is called $Y$-seed mutation~\cite[Definition 2.4]{ca4}, or $\mathcal{X}$-cluster mutation~\cite{FG}.  We'll explain below how $\Z$-balanced exchange matrices have applications to $Y$-seed mutation, and are also connected to tropicalized mutation.

\begin{definition}[$Y$-seed mutation]\label{Xseed}
	Let $\tilde{B}$ be an $s \times r$ exchange matrix and let $(y_1,\dots,y_s)$ be a set of parameters. For $1 \leq k \leq r$, we define a new set of parameters $(y'_1,\dots,y'_s)$ where
	\begin{equation}\label{e:XclusterMut}
		y'_{i} = \begin{cases}
			\frac{1}{y_{k}} & \text{if }i = k, \\
			y_{i}(1+y_{k})^{b_{i k}} & \text{if }b_{i k}\geq 0 \text{ and }i\neq k,\\
			{y_{i}}(1+y_{k}^{-1})^{b_{i k}} & \text{if }b_{i k}\leq 0 \text{ and }i \neq k
		\end{cases}
	\end{equation}
	We say that $(\mu_{k}(\tilde B), (y'_1,\dots,y'_s))$ is obtained from $(\tilde B, (y_1,\dots,y_s))$ by \emph{$Y$-seed mutation} in direction $k$, and we refer to the ordered pairs $(\mu_{k}(\tilde B), (y'_1,\dots,y'_s))$  and $(\tilde B, (y_1,\dots,y_s))$ as \emph{labeled $Y$-seeds}. If $f$ is a rational expression in the parameters $(y_1,\dots,y_s)$, we use $\Mut_{Y,k}(f)$ to denote the new expression for $f$ obtained by rewriting it in terms of   $(y'_1,\dots,y'_s)$.
\end{definition}

\begin{lemma}\label{l:monomialXmutation}~\cite[Lemma 15.9]{RW}
	Let $(\tilde B, (y_1,\dots,y_s))$ be a {labeled ${Y}$-seed} as above, and let $\sigma=(\sigma_1,\dots,\sigma_s)\in\Z^s$. Let $y^{\sigma}:=y_1^{\sigma_1} \dots y_s^{\sigma_s}$ be the corresponding Laurent monomial in $y_1 \dots y_s$. Choose $1 \leq k \leq r$. Then the $Y$-seed mutation $\Mut_{Y,k}(y^{\sigma})$ of $y^{\sigma}$ is a Laurent monomial in $y'_1 \dots y'_s$ if and only if $(\tilde B, (\sigma_1,\dots, \sigma_s))$ is $\Z$-balanced at $k$. Moreover, in that case, $\Mut_{Y,k}(y^{\sigma})$ is a Laurent monomial with new exponent vector $(\sigma'_1,\dots, \sigma'_s)\in \Z^s$ given by 

	\begin{equation}\label{e:balancedMutation}
		\sigma'_j=
			\begin{cases}
				(\sum_{b_{ik}>0} b_{ik} \sigma_i) - \sigma_k, & j=k,\\
				\sigma_j, & j \ne k.
			\end{cases}
	\end{equation}
	Note that since $(\tilde B, \sigma)$ is $\Z$-balanced at vertex $k$, this is an instance of tropicalized  mutation. 
\end{lemma}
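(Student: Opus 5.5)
The plan is to compute directly how $y^{\sigma}$ rewrites under $Y$-seed mutation. We invert \eqref{e:XclusterMut} to express the old parameters in terms of the new ones. From $y'_k = 1/y_k$ we get $y_k = (y'_k)^{-1}$. For $i \neq k$ with $b_{ik}\ge 0$ we have $y_i = y'_i (1+y_k)^{-b_{ik}} = y'_i (1+(y'_k)^{-1})^{-b_{ik}}$. For $b_{ik}\le 0$ we have $y_i = y'_i (1+y_k^{-1})^{-b_{ik}} = y'_i(1+y'_k)^{-b_{ik}}$. Substituting into $y^{\sigma}$ gives
\[
\Mut_{Y,k}(y^{\sigma}) = (y'_k)^{-\sigma_k}\prod_{i\neq k}(y'_i)^{\sigma_i}\cdot (1+(y'_k)^{-1})^{-\sum_{b_{ik}>0} b_{ik}\sigma_i}\,(1+y'_k)^{\sum_{b_{ik}<0} -b_{ik}\sigma_i}.
\]
Here we use $b_{kk}=0$, so the index $k$ itself contributes nothing to either sum.

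Next we rewrite $1+(y'_k)^{-1} = (y'_k)^{-1}(1+y'_k)$. Set $P=\sum_{b_{ik}>0} b_{ik}\sigma_i$ and $N=\sum_{b_{ik}<0}-b_{ik}\sigma_i$. The expression then becomes
\[
(y'_k)^{P-\sigma_k}\prod_{i\ne k}(y'_i)^{\sigma_i}\,(1+y'_k)^{N-P}.
\]
The mutation of $y^{\sigma}$ is therefore a Laurent monomial in $y'_1,\dots,y'_s$ if and only if $(1+y'_k)^{N-P}$ is a Laurent monomial. Since $1+y'_k$ is not a unit in the Laurent polynomial ring (for instance by unique factorization, as $1+y'_k$ is irreducible and not a monomial), this happens precisely when $N=P$. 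By \cref{rem:quiver} and \eqref{gradedseed}, $N=P$ is exactly $\Z$-balancedness at $k$. In that case the exponent vector is $\sigma'_k = P-\sigma_k$ and $\sigma'_j=\sigma_j$ for $j\ne k$, which is \eqref{e:balancedMutation}.

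For the final remark we compare with \cref{def:tropmutation}. When $P=N$ we have $\min(P,N)=P$, so the new vector agrees with $\Psi_{\Sigma,\Sigma'}(\sigma)$. It also agrees with the $G$-mutation of \cref{def:Gmutation} for $G=\Z$.

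The only delicate step is the justification that $(1+y'_k)^{m}$ with $m\neq 0$ is not a Laurent monomial. This is straightforward: for $m>0$ it has more than one term, and for $m<0$ it is not even a Laurent polynomial. Everything else is a routine sign check on the exponents $\pm b_{ik}$ in the inversion step.
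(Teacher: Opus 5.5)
Your proof is correct and complete. Inverting the $Y$-mutation, substituting, and using $1+(y'_k)^{-1}=(y'_k)^{-1}(1+y'_k)$ isolates the factor $(1+y'_k)^{N-P}$, whose exponent $N-P=-\sum_i b_{ik}\sigma_i$ vanishes exactly when $\sigma$ is $\Z$-balanced at $k$.

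The paper gives no proof of its own here; it simply cites \cite[Lemma 15.9]{RW}. Your direct substitution is the natural verification. The one assumption worth stating explicitly is that the $y_i$ are algebraically independent indeterminates, so that ``Laurent monomial'' is meaningful.
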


\begin{corollary}
	Let $(\tilde B, (y_1,\dots,y_s))$ be a {labeled ${Y}$-seed}, and let $\sigma=(\sigma_1,\dots,\sigma_s)\in\Z^s$. If $(\tilde B, (\sigma_1,\dots, \sigma_s))$ is $\Z$-balanced (at all vertices), then $y^{\sigma}$ is regular on the corresponding cluster variety.\footnote{A regular function that looks like an ordinary Laurent monomial in at least one cluster chart is also called a \emph{global monomial}.}
\end{corollary}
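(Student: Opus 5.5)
The corollary should follow from \cref{l:monomialXmutation} applied repeatedly, combined with the propagation result \cref{prop:propagate}. Start from the labeled $Y$-seed $(\tilde B,(y_1,\dots,y_s))$ and the $\Z$-balanced exponent vector $\sigma$. Pick any mutable direction $k$. Since $(\tilde B,\sigma)$ is $\Z$-balanced at $k$, \cref{l:monomialXmutation} says that $\Mut_{Y,k}(y^{\sigma})$ is again a Laurent monomial $(y')^{\sigma'}$. Its exponent vector $\sigma'$ is given by \eqref{e:balancedMutation}. This formula is exactly the mutation rule of \cref{def:Gmutation} for $G=\Z$, so $(\mu_k(\tilde B),\sigma')=\mu_k(\tilde B,\sigma)$.

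By \cref{prop:propagate}, $\mu_k(\tilde B,\sigma)$ is again $\Z$-balanced at every mutable vertex. We can therefore apply \cref{l:monomialXmutation} again in any direction. Inducting on the length of a mutation sequence, we conclude the following. For every seed $\Sigma'$ reachable from the initial one, the function $y^{\sigma}$, rewritten in the $Y$-coordinates of $\Sigma'$, is a Laurent monomial whose exponent vector is the iterated $\Z$-grading mutation of $\sigma$. In particular, on every cluster chart $y^{\sigma}$ is a Laurent monomial in that chart's coordinates, hence a regular function on that torus.

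It remains to pass from the charts to the cluster variety. The cluster variety, in the Fock--Goncharov sense recalled after \cref{def:clustervariety}, is the union of the cluster tori glued by the mutation maps. So a function regular on every chart, and compatible with the gluing, is regular on the union. Compatibility is automatic, since on each chart we are expressing the single rational function $y^{\sigma}$ in local coordinates.

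The main obstacle is this last step: making precise what ``regular on the corresponding cluster variety'' means in the $Y$/$\mathcal{X}$ setting. Here the $y_i$ are $\mathcal{X}$-coordinates rather than $\mathcal{A}$-coordinates, and $\Spec$ of a finitely generated algebra is not the natural object. I would take the cluster variety to be the $\mathcal{X}$-variety glued from tori. Then regularity on the union is by definition regularity on each torus, and the inductive argument above is the whole proof. If one wants the $\mathcal{A}$-side statement instead, I would note that under the map $y_i=\prod_j x_j^{b_{ji}}$, the monomial $y^{\sigma}$ becomes an $x$-monomial whose exponent is built from $\tilde B\sigma$. One could then try to compare with \cref{thm:test}'s Laurent-phenomenon reasoning, but I expect the intended reading is the $\mathcal{X}$-variety one.
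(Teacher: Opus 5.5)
Your proposal is correct and is exactly the argument the paper intends; the corollary is stated without a separate proof. Since \eqref{e:balancedMutation} coincides with the mutation rule of \cref{def:Gmutation}, \cref{prop:propagate} keeps $\sigma$ balanced after every step, so iterating \cref{l:monomialXmutation} shows $y^{\sigma}$ is a Laurent monomial on every cluster torus and hence regular on the ($\mathcal{X}$-)cluster variety glued from them. Your choice to read ``cluster variety'' as the $\mathcal{X}$-variety is the right one here, and the $\mathcal{A}$-side aside is not needed.
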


\bibliographystyle{alpha}
\bibliography{bibliography}
\end{document}